%% file: front.tex
\documentclass[11pt]{amsart}

\input{header} 

\begin{document}
\thispagestyle{empty}

\begin{abstract}
We present a novel classification of unitarizable supermodules over special linear Lie superalgebras using an algebraic quadratic Dirac operator introduced by Huang and Pand\v{z}i\'c and a corresponding Dirac inequality.
\end{abstract}

\maketitle

\setlength{\parindent}{1em}
\setcounter{tocdepth}{1}

\tableofcontents

\input{main} 

\bibliography{literatur}
\bibliographystyle{alpha}
\end{document}

%% file: header.tex
\usepackage{anysize} \marginsize{1.3in}{1.3in}{1in}{1in}
\usepackage{comment}
\usepackage{listings}
\usepackage{xcolor}
\usepackage{amsmath}
\usepackage{mathtools}
\usepackage{booktabs}
\usepackage[all]{xy}
\usepackage[utf8]{inputenc}
\usepackage{varioref}
\usepackage{upgreek}
\usepackage{amsfonts}
\usepackage{amssymb}
\usepackage{bbm}
\usepackage{esint}
\usepackage{graphicx}
\usepackage{subcaption}
\usepackage{tikz}
\usepackage{empheq}
\usepackage{enumitem}
\usepackage{tikz-cd}
\usepackage{todonotes}
\usetikzlibrary{matrix,arrows,decorations.pathmorphing}
\usepackage{mathrsfs}
\usepackage[hypertexnames=false,backref=page,pdftex,
 	pdfpagemode=UseNone,
 	breaklinks=true,
 	extension=pdf,
 	colorlinks=true,
 	linkcolor=blue,
 	citecolor=red,
 	urlcolor=blue,
 ]{hyperref}

\newcommand{\HH}{{\mathbb H}}

\renewcommand{\L}{{\mathbb L}}

\newcommand{\R}{{\mathbb R}}

\newcommand{\tr}{\operatorname{tr}}

\newcommand{\diag}{\operatorname{diag}}

\newcommand{\str}{\operatorname{str}}

\newcommand{\Ext}{\operatorname{Ext}}

\newcommand{\Dirac}{\operatorname{D}}

\def\dd{{\mathfrak{d}}}

\def\CC{{\mathbb C}}
\def\RR{{\mathbb R}}
\def\su{{\mathfrak{su}}}

\def\HH{{\mathcal{H}}}

\def\hh{{\mathfrak h}}
\def\pp{{\mathfrak p}}

\def\kk{{\mathfrak k}}
\def\u{{\mathfrak u}}

\def\HC{{\operatorname{HC}}}
\def\L{{\mathfrak{L}}}
\def\R{{\mathfrak{R}}}

\def\qq{{\mathfrak q}}
\def\st{{\operatorname{st}}}
\def\nst{{\operatorname{nst}}}
\def\sl{{\mathfrak sl}}

\def\bb{{\mathfrak b}}
\def\even{{\mathfrak{g}_{\bar{0}}}}
\def\odd{{\mathfrak{g}_{\bar{1}}}}
\def\pp{{\mathfrak p}}

\def\nn {{\mathfrak{n}}}
\def\pp{{\mathfrak{p}}}
\def\gg{{\mathfrak{g}}}

\def\ZZ{{\mathbb Z}}

\def\Weyl{{\mathscr{W}(\odd)}}
\def\slmn{{\mathfrak{sl}(m\vert n)}}
 
\def\glmn{{\mathfrak{gl}(m\vert n)}} 
\def\UE{{\mathfrak U}}

\newcommand{\gl}{{\mathfrak{gl}}}

\makeatletter
\newcommand*{\da@rightarrow}{\mathchar"0\hexnumber@\symAMSa 4B }
\newcommand*{\da@leftarrow}{\mathchar"0\hexnumber@\symAMSa 4C }
\newcommand*{\xdashrightarrow}[2][]{%
  \mathrel{%
    \mathpalette{\da@xarrow{#1}{#2}{}\da@rightarrow{\,}{}}{}%
  }%
}
\newcommand{\xdashleftarrow}[2][]{%
  \mathrel{%
    \mathpalette{\da@xarrow{#1}{#2}\da@leftarrow{}{}{\,}}{}%
  }%
}
\newcommand*{\da@xarrow}[7]{%
  \sbox0{$\ifx#7\scriptstyle\scriptscriptstyle\else\scriptstyle\fi#5#1#6\m@th$}%
  \sbox2{$\ifx#7\scriptstyle\scriptscriptstyle\else\scriptstyle\fi#5#2#6\m@th$}%
  \sbox4{$#7\dabar@\m@th$}%
  \dimen@=\wd0 %
  \ifdim\wd2 >\dimen@
    \dimen@=\wd2 %
  \fi
  \count@=2 %
  \def\da@bars{\dabar@\dabar@}%
  \@whiledim\count@\wd4<\dimen@\do{%
    \advance\count@\@ne
    \expandafter\def\expandafter\da@bars\expandafter{%
      \da@bars
      \dabar@ 
    }%
  }%
  \mathrel{#3}%
  \mathrel{%
    \mathop{\da@bars}\limits
    \ifx\\#1\\%
    \else
      _{\copy0}%
    \fi
    \ifx\\#2\\%
    \else
      ^{\copy2}%
    \fi
  }%
  \mathrel{#4}%
}
\makeatother

\makeatletter
\newsavebox\myboxA
\newsavebox\myboxB
\newlength\mylenA

\newcommand*\xtilde[2][0.8]{%
    \sbox{\myboxA}{$\m@th#2$}%
    \setbox\myboxB\null
    \ht\myboxB=\ht\myboxA%
    \dp\myboxB=\dp\myboxA%
    \wd\myboxB=#1\wd\myboxA
    \sbox\myboxB{$\m@th\widetilde{\copy\myboxB}$}
    \setlength\mylenA{\the\wd\myboxA}
    \addtolength\mylenA{-\the\wd\myboxB}%
    \ifdim\wd\myboxB<\wd\myboxA%
       \rlap{\hskip 0.5\mylenA\usebox\myboxB}{\usebox\myboxA}%
    \else
        \hskip -0.5\mylenA\rlap{\usebox\myboxA}{\hskip 0.5\mylenA\usebox\myboxB}%
    \fi}

\newbox\usefulbox

\def\getslant #1{\strip@pt\fontdimen1 #1}

\def\xxtilde #1{\mathchoice
 {{\setbox\usefulbox=\hbox{$\m@th\displaystyle #1$}%
    \dimen@ \getslant\the\textfont\symletters \ht\usefulbox
    \divide\dimen@ \tw@ 
    \kern\dimen@ 
    \xtilde{\kern-\dimen@ \box\usefulbox\kern\dimen@ }\kern-\dimen@ }}
 {{\setbox\usefulbox=\hbox{$\m@th\textstyle #1$}%
    \dimen@ \getslant\the\textfont\symletters \ht\usefulbox
    \divide\dimen@ \tw@ 
    \kern\dimen@ 
    \xtilde{\kern-\dimen@ \box\usefulbox\kern\dimen@ }\kern-\dimen@ }}
 {{\setbox\usefulbox=\hbox{$\m@th\scriptstyle #1$}%
    \dimen@ \getslant\the\scriptfont\symletters \ht\usefulbox
    \divide\dimen@ \tw@ 
    \kern\dimen@ 
    \xtilde{\kern-\dimen@ \box\usefulbox\kern\dimen@ }\kern-\dimen@ }}
 {{\setbox\usefulbox=\hbox{$\m@th\scriptscriptstyle #1$}%
    \dimen@ \getslant\the\scriptscriptfont\symletters \ht\usefulbox
    \divide\dimen@ \tw@ 
    \kern\dimen@ 
    \xtilde{\kern-\dimen@ \box\usefulbox\kern\dimen@ }\kern-\dimen@ }}%
 {}}

\newcommand*\xoverline[2][0.75]{%
    \sbox{\myboxA}{$\m@th#2$}%
    \setbox\myboxB\null
    \ht\myboxB=\ht\myboxA%
    \dp\myboxB=\dp\myboxA%
    \wd\myboxB=#1\wd\myboxA
    \sbox\myboxB{$\m@th\overline{\copy\myboxB}$}
    \setlength\mylenA{\the\wd\myboxA}
    \addtolength\mylenA{-\the\wd\myboxB}%
    \ifdim\wd\myboxB<\wd\myboxA%
       \rlap{\hskip 0.5\mylenA\usebox\myboxB}{\usebox\myboxA}%
    \else
        \hskip -0.5\mylenA\rlap{\usebox\myboxA}{\hskip 0.5\mylenA\usebox\myboxB}%
    \fi}

\def\xxoverline #1{\mathchoice
 {{\setbox\usefulbox=\hbox{$\m@th\displaystyle #1$}%
    \dimen@ \getslant\the\textfont\symletters \ht\usefulbox
    \divide\dimen@ \tw@ 
    \kern\dimen@ 
    \overline{\kern-\dimen@ \box\usefulbox\kern\dimen@ }\kern-\dimen@ }}
 {{\setbox\usefulbox=\hbox{$\m@th\textstyle #1$}%
    \dimen@ \getslant\the\textfont\symletters \ht\usefulbox
    \divide\dimen@ \tw@ 
    \kern\dimen@ 
    \xoverline{\kern-\dimen@ \box\usefulbox\kern\dimen@ }\kern-\dimen@ }}
 {{\setbox\usefulbox=\hbox{$\m@th\scriptstyle #1$}%
    \dimen@ \getslant\the\scriptfont\symletters \ht\usefulbox
    \divide\dimen@ \tw@ 
    \kern\dimen@ 
    \xoverline{\kern-\dimen@ \box\usefulbox\kern\dimen@ }\kern-\dimen@ }}
 {{\setbox\usefulbox=\hbox{$\m@th\scriptscriptstyle #1$}%
    \dimen@ \getslant\the\scriptscriptfont\symletters \ht\usefulbox
    \divide\dimen@ \tw@ 
    \kern\dimen@ 
    \xoverline{\kern-\dimen@ \box\usefulbox\kern\dimen@ }\kern-\dimen@ }}%
 {}}
\makeatother

\makeatletter
\newcommand{\mylabel}[2]{#2\def\@currentlabel{#2}\label{#1}}
\makeatother

\makeatletter
\newcommand{\Mac}{}
\DeclareRobustCommand{\Mac}{%
  M%
  \raisebox{\dimexpr\fontcharht\font`M-\height}{%
    \check@mathfonts\fontsize{\sf@size}{0}\selectfont
    c%
  }%
}
\makeatother

\newtheoremstyle{citing}
  {}
  {}
  {\itshape}
  {}
  {\bfseries}
  {\textbf{.}}
  {.5em}
  {\thmnote{#3}}

\theoremstyle{plain}
\newtheorem{theorem}{Theorem}

\newtheorem{lemma}[theorem]{Lemma}
\newtheorem{corollary}[theorem]{Corollary}

\newtheorem{proposition}[theorem]{Proposition}

\theoremstyle{remark}

\theoremstyle{definition}

\newtheorem{definition}[theorem]{Definition}

\numberwithin{equation}{section}

\newtheorem*{problem}{Problem}

\theoremstyle{remark}
\newtheorem{remark}[theorem]{Remark}

{\theoremstyle{citing}
}

{\theoremstyle{definition}
}

\title{On the Full Set of Unitarizable Supermodules over $\mathfrak{sl}(m\vert n)$}

\author{Steffen Schmidt}
\address{Center for Quantum Mathematics, 
University of Southern Denmark, DK-5230 Odense, Denmark}
\email{stschmidt@imada.sdu.dk}

\let\origmaketitle\maketitle
\def\maketitle{
  \begingroup
  \def\uppercasenonmath##1{} 
  \let\MakeUppercase\relax 
  \origmaketitle
  \endgroup
}


%% file: main.tex
\section{Introduction} \noindent
The \emph{special linear Lie superalgebra} $\slmn$ is defined as the Lie subsuperalgebra of the general linear Lie superalgebra $\glmn$ consisting of all block matrices of the form 
\begin{equation} \label{eq::standard_block_form}
X = \begin{pmatrix}
 A & B \\
 C & D
\end{pmatrix},
\end{equation}
where $A$ is an $m\times m$ complex matrix, $B$ an $m\times n$ complex matrix, 
$C$ an $n\times m$ complex matrix, and $D$ an $n\times n$ complex matrix. 
These matrices are subject to the condition that the supertrace vanishes, \emph{i.e.},
\begin{equation}
\str(X) \coloneqq \tr(A) - \tr(D) = 0. 
\end{equation}
Its natural $\ZZ_{2}$-grading is given by 
$
\slmn = \slmn_{\bar{0}} \oplus \slmn_{\bar{1}},
$
where $\slmn_{\bar{0}}$ consists of all block matrices with $B = C = 0$, and 
$\slmn_{\bar{1}}$ consists of all block matrices with $A = D = 0$ 
(still subject to the vanishing supertrace condition). In what follows, we write $\gg=\slmn$ and assume $m+n>2$, thus excluding the nilpotent case $m=n=1$. Since $\sl(m\vert n)\cong\sl(n\vert m)$, we may further assume $m\leq n$.

In this work we give a classification of unitarizable supermodules over $\mathfrak{g}$. They are semisimple, and therefore constitute a fundamental class of supermodules from which one may approach the general theory. They also appear naturally in mathematical physics, in particular in the study of superconformal quantum field theories (see \cite{okazaki2015superconformal,cordova2019multiplets} and the references cited therein).

Unitarizable $\mathfrak{g}$-supermodules are defined with respect to a conjugate-linear
anti-involution $\omega$. The possible choices of $\omega$ (equivalently, the real forms of
$\mathfrak{g}$) are classified in~\cite{ParkerRealForms,serganova1983classification}. A $\gg$-supermodule $M$ is said to be \emph{$\omega$-unitarizable} if there exists a positive definite Hermitian form $\langle\cdot,\cdot\rangle$ on $M$ such that
\begin{equation}
 \langle xv, w\rangle = \langle v, \omega(x)w\rangle \qquad (x \in \gg,\ v,w \in M).
\end{equation}

Unitarizable $\gg$-supermodules are rare. In \cite{neeb2011lie}, Neeb and Salmasian proved that any $\omega$-unitarizable $\gg$-supermodule is trivial unless $\omega$ corresponds to a real form $\su(p,q\vert 0,n)$ or $\su(p,q\vert n,0)$, and in these cases the supermodule is either a highest or a lowest weight supermodule \cite{furutsu1991classification}. 
For $p=0$ or $q=0$, the unitarizable $\gg$-supermodules are finite-dimensional; for $p,q\neq 0$, they are infinite-dimensional. The treatment of highest weight and lowest weight $\mathfrak{g}$-supermodules is analogous.
This reduces the classification to the following problem:

\begin{problem} 
 Describe all unitarizable simple $\gg$-supermodules; 
 equivalently, determine all highest weights 
 $\Lambda \in \hh^{\ast}$ for which there exists an $\omega$-unitarizable 
 simple $\gg$-supermodule of highest weight $\Lambda$.
\end{problem}

Our approach is based on the algebraic Dirac operator $\Dirac$, introduced by Huang and Pand\v{z}i\'c \cite{huang2005dirac} for Lie superalgebras of Riemannian type. The supertrace defines a supersymmetric invariant bilinear form on $\gl(m\vert n)$, which is non-degenerate if and only if $m\neq n$. Accordingly, for $m\neq n$ we work in the quadratic Lie superalgebra $\gg$, whereas in the remaining case we work in $\gl(m\vert n)$. In either case, the restriction of the form to $\odd$ is symplectic and hence gives rise to a Weyl algebra $\Weyl$. After choosing a symplectic basis $\{x_{i},\partial_{i}\}$ of $\odd$, compatible with a fixed conjugate-linear anti-involution $\omega$, one obtains the distinguished element
\begin{equation}
 \Dirac = 2\sum_{i=1}^{mn}(\partial_{i}\otimes x_{i}-x_{i}\otimes\partial_{i}) \in \UE(\gg)\otimes_{\CC}\Weyl.
\end{equation}
It commutes with the adjoint action of $\even$ and its square is given by a sum of quadratic Casimir elements and a scalar. These properties make $\Dirac$ the basic tool in our analysis.

Let $M(\odd)$ be the Weyl module over $\Weyl$, and let $M$ be a highest weight $\gg$-supermodule. The Weyl module $M(\odd)$ is naturally a unitarizable $\even$-module, called the ladder module, and $\Dirac$ acts on $M\otimes M(\odd)$. If $M$ is $\omega$-unitarizable, then $\Dirac^{2}$ is either negative- or positive-definite, according to whether $M$ is finite- or infinite-dimensional. This yields a Dirac inequality for every $\even$-constituent $L_{0}(\mu)$ of $M\otimes M(\odd)$:
\begin{equation}\label{eq::Dirac_Ungleichung}
(\mu+2\rho,\mu)
\begin{cases}
<(\Lambda+2\rho,\Lambda), & \text{if } M \text{ is finite-dimensional},\\[4pt]
>(\Lambda+2\rho,\Lambda), & \text{if } M \text{ is infinite-dimensional}.
\end{cases}
\end{equation}
Conversely, \cite{SchmidtDirac} proves that if $\Lambda$ is the highest weight of a unitarizable $\even$-module satisfying suitable unitarity conditions, and if this inequality is strict for all $\even$-composition factors in a $\even$-filtration of $M$, then $L(\Lambda)$ is unitarizable. This is the criterion on which our classification rests.

\subsection{The Classification Method} \label{subsec::classification_method_general}
We classify all unitarizable highest weight supermodules over $\mathfrak{g}$, equivalently all $\su(p,q\vert 0,n)$-unitarizable supermodules. The cases $p=0$ or $q=0$ give exactly the finite-dimensional unitarizable supermodules, whereas the case $p,q\neq 0$ gives the genuinely infinite-dimensional part of the classification, apart from the trivial supermodule. Accordingly, we consider these cases separately. In each case we fix a certain positive system with Borel $\bb$ and consider highest weight supermodules whose highest weight vectors are even. Accordingly, all classification results are understood up to application of the parity reversion functor $\Pi$.

The classification of unitarizable highest weight $\gg$-supermodules $L(\Lambda)$ starts from the immediate consequences of the definition of unitarity. Since unitarizability with respect to $\gg$ implies unitarizability with respect to $\even$, we may assume that $\Lambda$ is the highest weight of a unitarizable $\even$-module $L_{0}(\Lambda)$. The classification of such modules is well known. We then prove that $L^{\bb}(\Lambda)$ has a $\even$-filtration with simple composition factors, each a unitarizable highest weight $\even$-module. In the finite-dimensional case, $L^{\bb}(\Lambda)$ decomposes into a direct sum of these composition factors. In addition, the unitarizability of $L^{\bb}(\Lambda)$ forces certain conditions on $\Lambda$ and on the relations among its entries in the standard realization; see Lemma~\ref{lemm::unitarity_conditions}. We refer to these as the unitarity conditions. These conditions first reduce the possible highest weights of unitarizable highest weight supermodules to a manageable class. Within this class, the Dirac inequality coming from suitable $\even$-composition factors of $L^{\bb}(\Lambda)$ then singles out exactly those highest weights that are unitarizable.

\subsubsection{The Finite-Dimensional Case}\label{subsubsec::the_method_fd} In the case $p=0$ or $q=0$, we work with the distinguished Borel subalgebra and assume that $\Lambda$ satisfies the unitarity conditions. We then obtain necessary and sufficient conditions for the unitarizability of $L(\Lambda)$ in terms of the Dirac inequalities. Once an explicit parametrization of $\Lambda$ is available, this criterion becomes completely explicit. In parallel, we describe the method both in terms of the Dirac inequalities and in terms of an explicit realization.

More concretely, the highest weight $\Lambda$ admits two equivalent descriptions. In standard coordinates, $\Lambda$ has general form
\begin{equation} \label{eq::form_Lambda_fd}
\Lambda = (\lambda^{1}, \ldots, \lambda^{m} \vert \mu^{1}, \ldots, \mu^{k_{0}-1}, \mu, \ldots, \mu),
\end{equation}
with $\mu^{k_{0}-1}\neq \mu$. Equivalently, using the real form of the even Lie subalgebra, every such $\Lambda$ may be written in the form
\begin{equation} \label{eq::form_Lambda_x_intro}
\Lambda = \Lambda_{0}+\tfrac{x_{0}}{2}(1, \ldots, 1\vert 1, \ldots, 1), \qquad x_{0} \in \RR,
\end{equation}
where $\Lambda_{0}$ is the highest weight of a unitarizable $\even$-module. Thus, for fixed $\Lambda_{0}$, one obtains a one-parameter family of highest weights
\begin{equation}
\Lambda(x)=\Lambda_{0}+\tfrac{x}{2}(1,\ldots,1\vert 1,\ldots,1),
\end{equation}
of which $\Lambda$ is the member corresponding to the value $x=x_{0}$ and each $\Lambda(x)$ is the highest weight of a finite-dimensional unitarizable $\even$-module. The Dirac inequalities then give necessary and sufficient conditions on $x$ for unitarizability.

The classification of all unitarizable supermodules may be viewed either in terms of the Dirac inequalities for $\Lambda$ or, for fixed $\Lambda_{0}$, as a three-step analysis of the parameter $x$. The analysis rests on a substantial simplification of the Dirac inequalities. The key objects are the $\even$-constituents of the form $L_{0}(\Lambda(x)-\alpha)$, with $\alpha \in \Delta_{\bar{1}}^{+}$, together with the associated Dirac inequalities $(\Lambda(x)+\rho, \alpha)>0$.

\begin{enumerate}
 \item The first step is to determine all values of $x$ such that
\[
(\Lambda(x)+\rho,\alpha)>0
\]
for every $\alpha\in\Delta_{\bar1}^{+}$. In this case, the Dirac inequality holds automatically on each $\even$-constituent. This defines a threshold $x_{\max}\in\RR$ with the property that the simple highest weight $\gg$-supermodule $L(\Lambda(x))$ is unitarizable for all $x\in(x_{\max},\infty)$. Equivalently, $L(\Lambda)$ is unitarizable whenever
$
(\Lambda+\rho,\alpha)>0
$
for every $\alpha\in\Delta_{\bar1}^{+}$.
\item The next step is to determine a threshold $x_{\min}\in\RR$, using the unitarity conditions, the Dirac inequality, and the Kac--Shapovalov determinant formula, below which unitarity fails. Namely, $x_{\min}$ is the largest real number such that for every $x<x_{\min}$ there exists $\alpha\in\Delta_{\bar 1}^{+}$ for which $L(\Lambda(x))$ has a $\even$-constituent of highest weight $\Lambda(x)-\alpha$ and the Dirac inequality fails
\[
(\Lambda(x)+\rho,\alpha)<0.
\] For all such $x$, the module $L(\Lambda(x))$ is not unitarizable. By the unitarity conditions, this threshold is determined by the value of $(\Lambda(x)+\rho,\epsilon_{m}-\delta_{k_{0}})$. Equivalently, $L(\Lambda)$ for $\Lambda \in \hh^{\ast}$ of the form \eqref{eq::form_Lambda_fd} is not unitarizable if
$
(\Lambda+\rho,\epsilon_{m}-\delta_{k_{0}})<0.
$
\item It remains to analyze the interval
\[
I\coloneqq [x_{\min},x_{\max}].
\]
We show that, on this interval, $L(\Lambda(x))$ is unitarizable precisely when $x$ is integral, or equivalently, precisely when
\[
(\Lambda(x)+\rho,\epsilon_{m}-\delta_{k})=0
\qquad \text{for some } k=k_{0},\ldots,n.
\]
If $x$ is non-integral, then $\Lambda(x)$ fails to satisfy the unitarity conditions. If $x$ is integral, then atypicality implies that the $\even$-constituents for which the Dirac inequality fails do not occur in the $\even$-decomposition of $L(\Lambda(x))$.
\end{enumerate}

As a consequence, we obtain the following classification of all finite-dimensional unitarizable highest weight $\gg$-supermodules. The following theorem appears as Theorem~\ref{thm::classification_fd} in the main text. 

\begin{theorem}
$\Lambda \in \hh^{\ast}$ is the highest weight of a unitarizable highest weight $\gg$-supermodule if and only if the following conditions hold:
\begin{enumerate}
\item[a)] $\Lambda$ satisfies the unitarity conditions.
\item[b)] If $k_{0}$ is the smallest integer such that $(\Lambda, \delta_{k_{0}}-\delta_{n})=0$, then one of the following holds:
\begin{enumerate}
\item[(i)] $(\Lambda+\rho,\epsilon_{m}-\delta_{k})=0$ for some $k=k_{0},\ldots,n$, or
\item[(ii)] $(\Lambda+\rho,\epsilon_{m}-\delta_{n})>0$.
\end{enumerate}
\end{enumerate}
\end{theorem}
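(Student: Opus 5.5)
We follow the three-step method of Subsection~\ref{subsec::classification_method_general}, for finite-dimensional $L(\Lambda)$ with respect to the distinguished Borel subalgebra.

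\textbf{Necessity of a).} Unitarizability over $\gg$ implies unitarizability over $\even$. Together with the constraints from the odd root vectors (norms of $\omega$-adjoint odd lowering operators applied to the highest weight vector), this gives a), exactly as in Lemma~\ref{lemm::unitarity_conditions}.

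\textbf{Parametrization.} Assuming a), we write $\Lambda=\Lambda(x_{0})=\Lambda_{0}+\tfrac{x_{0}}{2}(1,\ldots,1\vert 1,\ldots,1)$. Then $(\Lambda(x)+\rho,\alpha)$ is affine in $x$ with slope $1$ for every $\alpha=\epsilon_{i}-\delta_{j}\in\Delta_{\bar1}^{+}$, since the form pairs $\epsilon_i$ and $\delta_j$ with opposite signs. Among these, $\epsilon_{m}-\delta_{n}$ gives the smallest value, because $\Lambda_0$ is dominant and $\rho$ is strictly decreasing along $\epsilon$ and increasing along $\delta$ in the relevant sense. So condition (ii) says exactly $x_{0}>x_{\max}$.

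\textbf{Sufficiency of b)(ii).} Every $\even$-constituent of $L(\Lambda)$ has highest weight $\mu=\Lambda-\sum_{\alpha\in S}\alpha$ for some subset $S\subseteq\Delta_{\bar1}^{+}$. We expand
\[
(\Lambda+2\rho,\Lambda)-(\mu+2\rho,\mu)=2\Bigl(\Lambda+\rho,\textstyle\sum_{S}\alpha\Bigr)-\bigl\|\textstyle\sum_{S}\alpha\bigr\|^{2}+\ldots
\]
This should reduce to a positive combination of the quantities $(\Lambda+\rho,\alpha)$, with the standard $\rho$-shift correction from the $\wedge\odd^{-}$ weights. So the strict Dirac inequality holds on every constituent, and the converse criterion of \cite{SchmidtDirac} gives unitarity.

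\textbf{Sufficiency of b)(i).} Here $x_{0}$ lies in $[x_{\min},x_{\max}]$ with $(\Lambda+\rho,\epsilon_m-\delta_k)=0$, so $\Lambda$ is atypical. We use the Kac--Shapovalov determinant: it shows that the singular vector generated along $\epsilon_m-\delta_k$ kills exactly those odd-lowered constituents $L_0(\Lambda-S)$ for which $(\Lambda+\rho,\alpha)\le 0$ for some $\alpha\in S$. The main step is to check that each surviving constituent involves only roots $\epsilon_i-\delta_j$ with $j<k$ or $i<m$, all of which have positive pairing. The strategy is to construct $L(\Lambda)$ as a quotient of the Kac module and identify its $\even$-decomposition, for instance by truncating the Kac module by $\epsilon_m-\delta_{k'}$ for $k'\ge k$. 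The Dirac inequality is then strict on the survivors, and \cite{SchmidtDirac} applies again.

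\textbf{Necessity of b).} If neither (i) nor (ii) holds, there are two cases.
\begin{enumerate}
\item[(1)] Suppose $(\Lambda+\rho,\epsilon_m-\delta_{k_0})<0$. The unitarity conditions force $L_0(\Lambda-(\epsilon_m-\delta_{k_0}))$ to occur: the weight $\Lambda-(\epsilon_m-\delta_{k_0})$ is $\even$-dominant by the choice of $k_0$, and the corresponding Shapovalov factor is nonzero. The Dirac inequality fails on this constituent, so $L(\Lambda)$ is not unitarizable.
\item[(2)] Suppose $x_0\in[x_{\min},x_{\max}]$ but $(\Lambda+\rho,\epsilon_m-\delta_k)\ne 0$ for all $k\ge k_0$. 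Then $x_0$ is non-integral relative to the lattice of atypicality points, which contradicts the integrality built into a) (finite dimensionality of the $\mathfrak{gl}(n)$ part after the odd shifts). If a) still holds, we can alternatively locate some $\alpha=\epsilon_m-\delta_k$ with negative pairing, whose constituent is not removed because $\Lambda$ is typical for that root, and so the Dirac inequality is violated.
\end{enumerate}

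\textbf{Main obstacle.} The hardest step is the precise $\even$-decomposition of atypical $L(\Lambda)$ in case (i): showing that the bad constituents vanish and that the good ones satisfy the strict inequality. We would attack it via the Kac module filtration and the odd reflection $\epsilon_m-\delta_k$.
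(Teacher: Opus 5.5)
The overall architecture is the paper's: the Dirac inequality on every constituent above $x_{\max}$, Kac--Shapovalov to exhibit a bad constituent below $x_{\min}$, and atypicality to remove constituents at the discrete points. The gap is in the step you flag as the main obstacle, sufficiency of b)(i), and the target you set there is false. It is not true that once constituents using $\epsilon_m-\delta_{k'}$, $k'\ge k$, are discarded, the remaining roots (those with $i<m$ or $j<k$) all have positive pairing.

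Suppose $(\Lambda+\rho,\epsilon_m-\delta_k)=0$ with $k\ge k_0$. Since $b_j=0$ for $j\ge k_0$, one has $(\Lambda+\rho,\epsilon_i-\delta_j)=a_m-a_i+m-i+k-j$ for $j\ge k$, and this can be zero or negative for $i<m$. For instance, for $\mathfrak{sl}(2\vert 3)$ and $\Lambda=0$ (so $k=k_0=1$), one gets $(\Lambda+\rho,\epsilon_1-\delta_2)=0$ and $(\Lambda+\rho,\epsilon_1-\delta_3)=-1$. A single singular vector along $\epsilon_m-\delta_k$ does not remove constituents built from these roots. Moreover, the Kac--Shapovalov determinant only detects where the form degenerates; it does not tell you which $\even$-constituents disappear.

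The missing idea is that $\Lambda$ is multiply atypical in a controlled way. In each row $i<m$, the pairing is positive at column $k$ and drops by exactly $1$ per column for $j\ge k$. So if it ever becomes $\le 0$, it vanishes exactly at some $s\ge k$. For each such zero, the paper combines two embeddings: $M(\Lambda-\alpha)\subset M(\Lambda)$ from Lemma~\ref{lemm::atypicality_and_non_exitsence}, and the Verma embeddings for the even reflections $r_{\delta_s-\delta_l}$. The latter fix $\Lambda$ because $\mu^s=\mu^l$ for $s,l\ge k_0$. Together they show (Lemma~\ref{lemm::non_existece_roots}) that no surviving constituent needs $\epsilon_i-\delta_l$ with $l\ge s$. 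Only then does every surviving constituent admit a decomposition with strictly positive pairings, so that Proposition~\ref{prop::continuous_parameter} applies. This is precisely where the hypothesis $k\ge k_0$, i.e.\ $\mu^{k_0}=\cdots=\mu^{n}$, enters, and your sketch never uses it.

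Two smaller points. First, in the sufficiency of b)(ii), the identity is exact: $(\Lambda+2\rho,\Lambda)-(\mu+2\rho,\mu)=2(\Lambda+\rho,\gamma)-(\gamma,\gamma)$, with no further terms, and it is not a positive combination of pairings. The term $(\gamma,\gamma)$ picks up $+1$ from each ordered pair of summands in the same row and $-1$ from each such pair in the same column. To absorb the row contribution, you need that pairings along a row are integers at mutual distance at least $1$. For a row with $l$ summands and minimal pairing $\xi>0$, this gives $2\sum\ge 2l\xi+l(l-1)>l(l-1)$.

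Second, in the necessity of b), the ``integrality built into a)'' is not a valid reason, since $x_0$ is an arbitrary real parameter. In fact no Kac--Shapovalov argument is needed. By a)(ii) and the minimality of $k_0$, the pairings $(\Lambda+\rho,\epsilon_m-\delta_k)$ with $k<k_0$ cannot vanish. So if (i) fails, all of them are nonzero, and a)(iii) forces them to be positive, which is (ii). Thus b) follows from a), and your cases (1) and (2) are vacuous once a) holds.
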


\subsubsection{Infinite-Dimensional Case} \label{subsec::the_method_ifd} In the case $p,q\neq 0$, we work with the standard positive system for $\Delta_{\bar{0}}$ and with the non-standard positive system $\Delta_{\bar{1}}^{+}=A\sqcup B$, where
\begin{equation}
A\coloneqq\{\epsilon_{i}-\delta_{k}:1\leq i\leq p,\ 1\leq k\leq n\},
\qquad
B\coloneqq\{-\epsilon_{j}+\delta_{k}:p+1\leq j\leq m,\ 1\leq k\leq n\},
\end{equation}
as this choice is adapted to the study of unitarity and the Dirac operator. The classification for the distinguished positive system then follows by applying a sequence of odd reflections. As in the finite-dimensional case, we obtain necessary and sufficient conditions for the unitarizability of $L(\Lambda)$ in terms of the Dirac inequalities. Once a suitable parametrization is fixed, these conditions take an explicit form.

Assume that $\Lambda$ satisfies the unitarity conditions. It has in standard coordinates the general form
\begin{equation} \label{eq::lambda_form_ifd}
\Lambda=(\lambda,\ldots,\lambda,\lambda^{i_{0}+1},\ldots,\lambda^{p},\lambda^{p+1},\ldots,\lambda^{m-j_{0}-1},\lambda',\ldots,\lambda' \vert \mu^{1},\ldots,\mu^{n}),
\end{equation}
with $\lambda^{i_{0}+1}\neq \lambda$ and $\lambda^{m-j_{0}-1}\neq \lambda'$. Equivalently,
\begin{equation}
\Lambda=\Lambda_{0}+\tfrac{x_{0}}{2}(1,\ldots,1 \vert 1,\ldots,1), \qquad x_{0}\in\RR,
\end{equation}
where $\Lambda_{0}$ is the highest weight of an infinite-dimensional unitarizable highest weight $\even$-module. Thus, for fixed $\Lambda_{0}$, one obtains a one-parameter family of highest weights
\begin{equation}
\Lambda(x)=\Lambda_{0}+\tfrac{x}{2}(1,\ldots,1\vert 1,\ldots,1),
\end{equation}
of which $\Lambda$ is the member corresponding to the value $x=x_{0}$ and each $\Lambda(x)$ is the highest weight of a unitarizable $\even$-module. As in the finite-dimensional case, the Dirac inequalities provide necessary and sufficient conditions for the unitarizability of $L(\Lambda(x))$. The analysis is governed by the $\even$-composition factors $L_{0}(\Lambda-\alpha)$ and $L_{0}(\Lambda-\beta)$, where $\alpha\in A$ and $\beta\in B$, together with the corresponding Dirac inequalities $(\Lambda+\rho,\alpha)\leq 0$ and $(\Lambda+\rho,\beta)\leq 0$. This leads naturally to a separate treatment of the two families.

From this point of view, the classification of all unitarizable supermodules may be described either in terms of the Dirac inequalities for $\Lambda$ or, for fixed $\Lambda_{0}$, as a three-step analysis of the parameter $x$.

\begin{enumerate}
 \item A first step is to determine the range of values of $x$ for which
\[
(\Lambda(x)+\rho,\alpha)<0 \quad \text{for all } \alpha\in A,
\qquad
(\Lambda(x)+\rho,\beta)<0 \quad \text{for all } \beta\in B.
\]
When these inequalities impose non-trivial restrictions, the Dirac inequality holds automatically on all $\even$-constituents. Indeed, they define two thresholds: a left threshold $x_{\max}^{L}$, determined by the maximal value of $(\Lambda(x)+\rho,\beta)$, and a right threshold $x_{\min}^{R}$, determined by the maximal value of $(\Lambda(x)+\rho,\alpha)$. It then follows that $L(\Lambda(x))$ is unitarizable for all
$
x\in (x_{\max}^{L},x_{\min}^{R}).
$
Equivalently, $L(\Lambda)$ is unitarizable whenever
\[
(\Lambda+\rho,\alpha)<0 \quad \text{for all } \alpha\in A,
\qquad
(\Lambda+\rho,\beta)<0 \quad \text{for all } \beta\in B.
\]
 
 \item The next step is to determine the extremal values $x_{\min}^{L}$ and $x_{\max}^{R}$, using the unitarity conditions, the Dirac inequality, and the Kac--Shapovalov determinant formula, where $x_{\min}^{L}$ is the minimal value such that $L(\Lambda(x))$ is not unitarizable for all $x<x_{\min}^{L}$, and $x_{\max}^{R}$ is the maximal value such that $L(\Lambda(x))$ is not unitarizable for all $x>x_{\max}^{R}$. These thresholds are governed by the occurrence of $\even$-composition factors for which the Dirac inequalities fail. By the unitarity conditions, $x_{\min}^{L}$ is determined by $(\Lambda(x)+\rho,-\epsilon_{m-j_{0}}+\delta_{n})$ and $x_{\max}^{R}$ by $(\Lambda(x)+\rho,\epsilon_{i_{0}}-\delta_{1})$. Equivalently, for $\Lambda$ of the form \eqref{eq::lambda_form_ifd}, $L(\Lambda)$ is not unitarizable whenever $(\Lambda+\rho,\epsilon_{i_{0}}-\delta_{1})>0$ or $(\Lambda+\rho,-\epsilon_{m-j_{0}}+\delta_{1})>0$.

 \item It remains to consider the residual intervals
\[
I^{L}\coloneqq[x_{\min}^{L},x_{\max}^{L}],
\qquad
I^{R}\coloneqq[x_{\min}^{R},x_{\max}^{R}],
\]
and, if $x_{\max}^{L}<x_{\min}^{R}$, then
$
I\coloneqq[x_{\min}^{L},x_{\max}^{R}].
$
As these intervals have integral length, they admit a natural notion of integrality. Unitarizability on these intervals occurs precisely at the integral points, or equivalently, precisely when
\[
(\Lambda+\rho,-\epsilon_{m-j}+\delta_{n})=0
\qquad\text{or}\qquad
(\Lambda+\rho,\epsilon_{i}-\delta_{1})=0
\]
for some $0\leq j\leq j_{0}$ or $1\leq i\leq i_{0}$. The non-integral points are excluded by the unitarity conditions, whereas at the integral points the Dirac inequalities are satisfied on all $\even$-composition factors. Equivalently, let $\Lambda\in\hh^{\ast}$ be a weight of the form \eqref{eq::lambda_form_ifd} such that $(\Lambda+\rho,\alpha)\not<0$ for some $\alpha\in A$ or $(\Lambda+\rho,\beta)\not<0$ for some $\beta\in B$. Then $L(\Lambda)$ is unitarizable if and only if one of the following conditions holds:
\begin{enumerate}
\item[(i)] $(\Lambda+\rho,\beta)<0$ for all $\beta \in B$ and $(\Lambda+\rho,\epsilon_{i}-\delta_{1})=0$ for some $1\leq i\leq i_{0}$;
\item[(ii)] $(\Lambda+\rho,-\epsilon_{m-j}+\delta_{n})=0$ and $(\Lambda+\rho,\epsilon_{i}-\delta_{1})=0$ for some $0\leq j\leq j_{0}$ and some $1\leq i\leq i_{0}$;
\item[(iii)] $(\Lambda+\rho,-\epsilon_{m-j}+\delta_{n})=0$ for some $0\leq j\leq j_{0}$ and $(\Lambda+\rho,\alpha)<0$ for all $\alpha \in A$.
\end{enumerate}
\end{enumerate}

As a consequence, we obtain the following classification of all unitarizable highest weight $\gg$-supermodules in the case $p,q\neq 0$. The following theorem appears as Theorem~\ref{thm::full_set_ifd} in the main text.

\begin{theorem}
Represent any weight $\Lambda \in \hh^{\ast}$ in the form \eqref{eq::lambda_form_ifd}. Then $\Lambda$ is the highest weight of a unitarizable highest weight $\gg$-supermodule if and only if the following holds:
\begin{enumerate}
 \item[a)] $\Lambda$ satisfies the unitarity conditions of Lemma~\ref{lemm::unitarity_conditions}, and
 \item[b)] one of the following conditions hold:
 \begin{enumerate}
 \item[(i)] $(\Lambda+\rho, -\epsilon_{m}+\delta_{n}) < 0$ and $(\Lambda+\rho, \epsilon_{i}-\delta_{1}) = 0$ for $1 \leq i \leq i_{0}$;
 \item[(ii)] $(\Lambda+\rho, -\epsilon_{m-j}+\delta_{n}) = 0$ and $(\Lambda+\rho, \epsilon_{i}-\delta_{1}) = 0$ for $0 \leq j \leq j_{0}$ and $1 \leq i \leq i_{0}$;
 \item[(iii)] $(\Lambda+\rho, -\epsilon_{m-j}+\delta_{n}) = 0$ and $(\Lambda+\rho, \epsilon_{1}-\delta_{1}) < 0$ for $0 \leq j \leq j_{0}$;
 \item[(iv)] $(\Lambda+\rho, -\epsilon_{m}+\delta_{n})<0$ and $(\Lambda+\rho, \epsilon_{1}-\delta_{1})<0$.
 \end{enumerate}
\end{enumerate}
\end{theorem}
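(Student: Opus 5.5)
Necessity and sufficiency will be handled separately. Throughout I work with the non-standard positive system $\Delta_{\bar1}^{+}=A\sqcup B$ and the one-parameter family $\Lambda(x)=\Lambda_{0}+\tfrac{x}{2}(1,\ldots,1\vert 1,\ldots,1)$ through $\Lambda$. The first observation is that $(\Lambda(x)+\rho,\alpha)$ is strictly increasing in $x$ for every $\alpha\in A$, and $(\Lambda(x)+\rho,\beta)$ is strictly decreasing for every $\beta\in B$. Both have slope $\pm 1$ up to a common normalization, since $(\tfrac12(1,\ldots,1\vert 1,\ldots,1),\epsilon_i-\delta_k)$ is a fixed nonzero constant. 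Using the standard forms of $\rho$ and the even dominance built into the unitarity conditions, the maximum over $A$ of $(\Lambda+\rho,\alpha)$ is attained at $\epsilon_{1}-\delta_{1}$, and the maximum over $B$ at $-\epsilon_{m}+\delta_{n}$. Hence condition (iv) is exactly $x\in(x^{L}_{\max},x^{R}_{\min})$.

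\textbf{Sufficiency.} Assume a) holds. In case (iv), every odd positive root has strictly negative pairing with $\Lambda+\rho$. I then show that every $\even$-composition factor $L_{0}(\mu)$ of $L(\Lambda)\otimes M(\odd)$ satisfies the strict Dirac inequality $(\mu+2\rho,\mu)>(\Lambda+2\rho,\Lambda)$. To do this, I write $\mu=\Lambda-\sum\gamma_j$ for distinct odd positive roots $\gamma_j$, shifted by the ladder-module weights. Expanding $(\mu+2\rho,\mu)-(\Lambda+2\rho,\Lambda)$ gives $-2(\Lambda+\rho,\sum\gamma_j)$ plus a norm term. The norm term is nonnegative because the roots are isotropic and the cross terms are controlled by the unitarity of $L_0$. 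The criterion of \cite{SchmidtDirac} then gives unitarizability.

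In cases (i)--(iii), some equalities $(\Lambda+\rho,\epsilon_i-\delta_1)=0$ or $(\Lambda+\rho,-\epsilon_{m-j}+\delta_n)=0$ hold, so $\Lambda$ is atypical. For $\alpha\in A$ with $(\Lambda+\rho,\alpha)\ge 0$, I will show that $\Lambda-\alpha$, and more generally any weight $\Lambda-\sum\gamma_j$ involving such a root, does not occur as a $\even$-highest weight of $L(\Lambda)$. The tool is the Kac--Shapovalov determinant: the atypicality relation $(\Lambda+\rho,\epsilon_{i}-\delta_{1})=0$ makes the vector $f_{\epsilon_i-\delta_1}v_\Lambda$ singular. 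Combined with the equal entries $\lambda=\lambda^{1}=\cdots=\lambda^{i_{0}}$, the same holds for $f_{\epsilon_{i'}-\delta_k}$ with $(\Lambda+\rho,\epsilon_{i'}-\delta_k)\ge0$, by an even-Weyl-group and odd-reflection argument. The symmetric argument handles $B$. On the surviving factors the Dirac inequality is strict as in (iv).

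\textbf{Necessity.} Suppose $L(\Lambda)$ is unitarizable. Then a) is Lemma~\ref{lemm::unitarity_conditions}. If $(\Lambda+\rho,\epsilon_{i_0}-\delta_1)>0$, I exhibit the composition factor $L_0(\Lambda-(\epsilon_{i_0}-\delta_1))$. It is nonzero because the Shapovalov factor for that root is nonvanishing, and it is $\even$-highest because $\lambda^{i_0}$ is the last of the equal block. For this factor the Dirac inequality fails, so $L(\Lambda)$ is not unitarizable; the same argument applies on the $B$ side. On the remaining intervals $I^L$ and $I^R$, non-integral points violate the unitarity conditions: the norm of a suitable odd descendant of $v_\Lambda$ is a product of linear factors in $x$ whose sign alternates between consecutive integral points. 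This leaves exactly the integral points, which are cases (i)--(iii).

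\textbf{Main obstacle.} I expect the hardest step to be the precise nonoccurrence statement at atypical integral points, namely controlling all $\even$-factors and not only $\Lambda-\alpha$. I would first try reducing to the distinguished-type situation by odd reflections, applying the finite-dimensional Theorem~\ref{thm::classification_fd} style argument to $\gl(p\vert n)$ and $\gl(q\vert n)$ blocks separately.
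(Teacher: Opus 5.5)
Your overall skeleton matches the paper's. Necessity comes from Lemma~\ref{lemm::unitarity_conditions}, Kac--Shapovalov non-vanishing and the exclusion of non-integral points; sufficiency comes from strict Dirac inequalities on the $\even$-factors of $L(\Lambda)$. Necessity is not where the trouble lies: given a), condition b) is automatic. Conditions (iv) and (v) of Lemma~\ref{lemm::unitarity_conditions} say that on each side either a boundary pairing vanishes or all pairings are negative, which is exactly the split (i)--(iv).

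\textbf{Case (iv) estimate.} Your claim that the norm term $(\gamma,\gamma)$ is non-negative is false. Isotropy only kills the diagonal terms. Two distinct roots in the same column of the $A$- or $B$-array (same $\delta_k$) pair to $-1$; for example, $\gamma=(\epsilon_1-\delta_1)+(\epsilon_2-\delta_1)$ has $(\gamma,\gamma)=-2$. What keeps the inequality strict is integrality. Along a column the values of $(\Lambda+\rho,\cdot)$ differ by integers $\geq 1$, because $\lambda^{i}-\lambda^{i+1}\in\ZZ_{\ge0}$. Hence $l$ strictly negative entries of one column with top value $-c<0$ contribute at least $2lc+l(l-1)$ to $-2(\Lambda+\rho,\gamma)$. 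This beats their contribution $-l(l-1)$ to $(\gamma,\gamma)$, while same-row and $A$--$B$ pairs contribute non-negatively. This is what Proposition~\ref{prop::continuous_parameter} supplies. It concerns the factors $L_0(\Lambda-\gamma)$ of $L(\Lambda)$ itself (Theorem~\ref{thm::unitarity_via_Dirac_inequality}); no ladder-module shifts enter.

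\textbf{Cases (i)--(iii): the genuine gap.} The non-occurrence step, which you flag but do not supply, is missing, and the mechanism you sketch fails as stated.
\begin{itemize}
\item The odd simple roots of $\Delta^{+}_{\nst}$ are $\epsilon_p-\delta_1$ and $-\epsilon_{p+1}+\delta_n$. So for $i<p$ the vector $f_{\epsilon_i-\delta_1}v_\Lambda$ is not singular even when $(\Lambda+\rho,\epsilon_i-\delta_1)=0$: $e_{\epsilon_i-\epsilon_{i+1}}$ sends it to a nonzero multiple of $f_{\epsilon_{i+1}-\delta_1}v_\Lambda$ in $M^{\bb}(\Lambda)$.
\item A vanishing Kac--Shapovalov factor only shows that the radical meets one weight space, not that a whole family of $\even$-factors disappears.
\item Roots with strictly positive pairing have no atypicality relation of their own.
\item ``No weight involving such a root occurs'' is too strong, since $(\epsilon_i-\delta_1)+(\epsilon_{i+1}-\delta_2)=(\epsilon_i-\delta_2)+(\epsilon_{i+1}-\delta_1)$. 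What is needed is that each surviving factor admits \emph{some} decomposition avoiding the bad roots.
\end{itemize}

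The missing ingredient is Lemma~\ref{lemm::non_existece_roots}. It combines the embedding $M^{\bb}(\Lambda-\alpha)\subset M^{\bb}(\Lambda)$ of Lemma~\ref{lemm::atypicality_and_non_exitsence} (built from Shapovalov elements, not $f_\alpha$) for the atypical root $\alpha=\epsilon_i-\delta_k$ with Verma embeddings $M^{\bb}(r_\eta\cdot\Lambda)\subset M^{\bb}(\Lambda)$ for even reflections $r_\eta$ fixing $\Lambda$. This is where $\lambda^{1}=\cdots=\lambda^{i_0}$ is used. The result is that every factor has a decomposition avoiding $\epsilon_{i'}-\delta_k$ for all $i'\le i$.

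You then need one more observation. Inside the equal block, column entries change by exactly $1$ per row, so every entry with non-negative value sits at or above a zero in its own column and is thereby excluded. Only then does Proposition~\ref{prop::continuous_parameter} apply.

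Your fallback of treating $\gl(p\vert n)$ and $\gl(q\vert n)$ separately via Theorem~\ref{thm::classification_fd} does not substitute for this. The $\even$-constituents are $\su(p,q)^{\CC}$-modules coupling both blocks, and a typical $\gamma$ mixes roots from $A$ and $B$.
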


\subsection{Comparison to the Literature} The classification of unitarizable $\mathfrak{sl}(m\vert n)$-supermodules has been addressed in three main works.

The first is the work of Furutsu--Nishiyama \cite{furutsu1991classification}, which classifies the unitarizable supermodules with integral highest weight. Their method is based on a realization of the Lie superalgebras $\su(p,q\vert 0,n)$ inside suitable orthosymplectic Lie superalgebras. They then study the restriction to $\su(p,q\vert 0,n)$ of the oscillator supermodule, namely the unique unitarizable simple supermodule of the ambient orthosymplectic Lie superalgebra. This construction is the super-analogue of the classical oscillator representation, also known as the Segal–Shale–Weil representation. In this way, their work extends to the super setting a theorem of Kashiwara and Vergne, which states that every unitarizable highest weight $\su(p,q)$-module with integral highest weight occurs inside a tensor product of oscillator representations.

The second approach was developed by Jakobsen \cite{jakobsen1994full}, using his \emph{principle of the last possible place of unitarity}. The idea is to study one-parameter families of highest weight modules and to determine the last parameter value for which unitarity can occur. In practice, this amounts to locating the reducibility points where singular vectors appear and identifying the first point at which the invariant Hermitian form fails to remain positive semidefinite. The classification problem is thus reduced to determining this threshold and describing the modules on the unitary side of it.

Later, Günaydin–Volin observed that Jakobsen’s analysis leaves out certain cases; in particular, for the physically important Lie superalgebras $\su(2,2\vert N)$, his list does not coincide with the classification used in the physics literature, due to Dobrev and Petkova \cite{dobrev1985all}, whereas these cases are included in the work of Günaydin–Volin. Their classification is obtained by first deriving linear inequalities on the highest weight from positivity of the norms of odd-root descendants, expressed in terms of plaquette constraints, and then showing that these inequalities are sufficient as well. To do so, they realize every admissible highest weight in a $\gamma$-deformed oscillator construction on a Fock space in which determinant operators are allowed to appear with nonintegral powers. For this reason, the results of Günaydin–Volin provide the most natural point of comparison for our work.

The classification obtained by Günaydin–Volin may be viewed as an extension of the Furutsu–Nishiyama classification. When their conditions are reformulated in terms of our Dirac inequalities, one finds that their classification agrees with ours. In particular, our results recover the classification of Günaydin–Volin, while recasting it in the language of Dirac inequalities.

\subsection{Notation}
We denote by $\ZZ_{+}$ the set of positive integers. Let $\ZZ_{2} \coloneqq \ZZ / 2\ZZ$ be the ring of integers modulo $2$. We denote the elements of $\ZZ_2$ by $\overline{0}$ (the residue class of even integers) and $\overline{1}$ (the residue class of odd integers). The ground field is $\CC$, unless otherwise stated.

If $V \coloneqq V_{\bar{0}} \oplus V_{\bar{1}}$ is a super vector space and $v \in V$ is a homogeneous element, then $p(v)$ denotes the parity of $v$, meaning $p(v) = \bar{0}$ if $v \in V_{\bar{0}}$ and $p(v) = \bar{1}$ if $v \in V_{\bar{1}}$.

For the Lie superalgebra $\gg = \even \oplus \odd$, we denote its universal enveloping superalgebra by $\UE(\gg)$. The universal enveloping algebra of the Lie subalgebra $\even$ is denoted by $\UE(\even)$. Their centers are denoted by $\mathfrak{Z}(\gg)$ and $\mathfrak{Z}(\even)$, respectively.

Any $\gg$-supermodule $M$ restricts to a $\even$-supermodule, where each $\even$-module is viewed as concentrated in a single parity. In what follows, this $\ZZ_2$-grading is left implicit, and we simply refer to them as $\even$-modules.

\subsection{Leitfaden}
Section~\ref{sec::slmn} presents the structure of the Lie superalgebra $\slmn$ and its relevant real forms. 
Section~\ref{sec::unitarizable_supermodules} introduces unitarizable supermodules, realized as quotients of Verma and generalized Verma supermodules, and describes the associated (unique) Hermitian form. 
The Dirac operator is defined and its relation to unitarity is recalled. 
Section~\ref{sec::classification} gives the classification of unitarizable highest weight $\gg$-supermodules, including parametrizations of highest weights and representative examples.

\medskip\noindent \textit{Acknowledgments.} We extend special thanks to Rainer Weissauer and Johannes Walcher. 
This work is partially funded by the Deutsche Forschungsgemeinschaft (DFG, German Research Foundation) under
project number 517493862 (Homological Algebra of Supersymmetry: Locality, Unitary, Duality), 
and by the Deutsche Forschungsgemeinschaft (DFG, German Research Foundation) under
Germany’s Excellence Strategy EXC 2181/1 — 390900948 (the Heidelberg STRUCTURES Excellence Cluster).

\section{Special Linear Lie Superalgebras 
 \texorpdfstring{$\mathfrak{sl}(m\vert n)$}{sl(m|n)}} \label{sec::slmn} \noindent
 We briefly review the structure theory of $\gg \coloneqq \slmn$, with a focus on the real forms
$\su(p,q\vert r,s)$. These will later prove central in the study of unitarizable
supermodules over $\gg$.

\subsection{Structure Theory} \label{subsec::structure_theory}
The Lie superalgebra $\slmn$ is simple whenever $m \neq n$ and $m+n \geq 2$. In this case, the extension to $\glmn$ admits a splitting, realized by sending 
$1 \in \CC$ to the identity matrix $E_{m+n} \in \glmn$. If $m=n$, then $E_{2n}$ already belongs to $\sl(n\vert n)$. As a result, $\gl(n\vert n)$ admits no splitting, and $\sl(n\vert n)$ is no longer simple, 
while still remaining indecomposable. 
The corresponding simple quotient of codimension one is the 
\emph{projective special linear Lie superalgebra},
$
\mathfrak{psl}(n\vert n) \coloneqq \sl(n\vert n) / \CC E_{2n}.
$

We consider $\slmn$ as a subalgebra of $\glmn$. The abelian Lie subalgebra $\dd \coloneqq \{ H = \diag(h_1,\ldots,h_{m+n})\}$ of diagonal matrices 
in $\glmn$ is a \emph{Cartan subalgebra} for $\glmn$, that is, a maximal ad-diagonalizable subalgebra. As Cartan subalgebra of $\slmn$, we take the subspace $\hh \subset \dd$ of diagonal matrices with vanishing supertrace. The dual space $\dd^{*}$ is equipped with the standard basis 
$(\epsilon_{1},\ldots,\epsilon_{m},\delta_{1},\ldots,\delta_{n})$, 
defined by
\begin{equation}
 \epsilon_{i}(H)=h_{i}, \qquad \delta_{k}(H)=h_{m+k},
\end{equation}
for $H \in \dd$, with $1 \leq i \leq m$ and $1 \leq k \leq n$. 
Accordingly, any weight $\lambda \in \hh^{*}$ can be written as
\begin{equation}
 \lambda = \lambda^{1}\epsilon_{1}+\cdots+\lambda^{m}\epsilon_{m}
 +\mu^{1}\delta_{1}+\cdots+\mu^{n}\delta_{n},
\end{equation}
and we identify $\lambda$ with the tuple 
$(\lambda^{1},\dotsc,\lambda^{m}\vert \mu^{1},\dotsc,\mu^{n})$. 
Note that shifting by $(1,\ldots,1 \vert -1,\ldots,-1)$ leaves the weight unchanged. Since $\hh \subset \even$ is a Cartan subalgebra for $\even$, its action on any finite-dimensional simple $\even$-module is diagonalizable. Hence the adjoint action of $\hh$ on $\gg$ is diagonalizable, and $\gg$ admits a root space decomposition:
\begin{equation}
\gg = \hh \oplus \bigoplus_{\alpha \in \hh^{\ast}\setminus\{0\}} \gg^{\alpha}, 
\qquad \gg^{\alpha}\coloneqq \{ X \in \gg \ : \ [H,X] = \alpha(H)X \ \text{for all} \ H \in \hh\}.
\end{equation}
The \emph{set of roots} is $\Delta=\Delta_{\bar{0}}\sqcup \Delta_{\bar{1}}$ where
\begin{equation}
\label{allroots}
\begin{split}
\Delta_{\bar{0}} &= \{\pm(\epsilon_{i}-\epsilon_{j}),\pm(\delta_{k}-\delta_{l}) \ : \ 1\leq i<j\leq m, \ 1\leq k <l \leq n\}, \\
\Delta_{\bar{1}} &= \{\pm(\epsilon_{i}-\delta_{k}) \ : \ 1\leq i \leq m, \ 1\leq k\leq n\},
\end{split}
\end{equation}
are the \emph{even} and \emph{odd roots}, respectively.

Each root space has superdimension either $(1\vert 0)$ or $(0\vert 1)$. 
The set $\Delta_{\bar{0}}$ decomposes as the disjoint union of the root systems 
of $\sl(m)$ and $\sl(n)$. 
For the even part, we choose once and for all the standard system of positive roots
\begin{equation}
\label{evenpositive}
\Delta_{\bar{0}}^{+} \coloneqq \{\, \epsilon_{i}-\epsilon_{j}, \ \delta_{k}-\delta_{l} 
 \ : \ 1 \leq i < j \leq m, \ 1 \leq k < l \leq n \,\},
\end{equation}
so that the root vectors associated with $\epsilon_{i}-\epsilon_{j}$ for $i<j$ 
are realized as strictly upper triangular matrices in $\sl(m)$, while those associated 
with $\delta_{k}-\delta_{l}$ for $k<l$ are strictly upper triangular matrices in $\sl(n)$; 
both embeddings are taken diagonally inside $\even$. 
The odd positive roots $\Delta_{\bar{1}}^{+}$ will be specified in the next section (\emph{cf.}~ \eqref{eq::positive_systems}). 
In total, the set of positive roots is
$
\Delta^{+} \coloneqq \Delta_{\bar{0}}^{+} \sqcup \Delta_{\bar{1}}^{+},
$
and we define the \emph{Weyl vector} $\rho$ as $\rho=\rho_{\bar{0}}-\rho_{\bar{1}}$ where
\begin{equation} \label{eq::Weyl_vector}
\rho_{\bar{0}} \coloneqq \frac{1}{2}\sum_{\alpha \in \Delta_{\bar{0}}^{+}}\alpha = \frac{1}{2}\left( \sum_{i=1}^{m} (m-2i+1)\epsilon_{i} + \sum_{j=1}^{n}(n-2j+1)\delta_{j}\right), \qquad \rho_{\bar{1}} \coloneqq 
\frac{1}{2}\sum_{\alpha \in \Delta_{\bar{1}}^{+}}\alpha.
\end{equation}

For a fixed positive system $\Delta^{+}$, we define the \emph{fundamental system} $\pi \subset \Delta^{+}$ to be the set of all $\alpha \in \Delta^{+}$ which cannot be written as the sum of two roots in $\Delta^{+}$. Elements of $\pi$ are called \emph{simple} roots. For $\pi = \{\alpha_{1},\dots,\alpha_{r}\}$, any $\alpha \in \Delta$ can be uniquely represented as a linear combination
$
\alpha = \sum_{i = 1}^{r} k_{i}\alpha_{i},
$
where either all $k_{i} \in \ZZ_{\geq 0}$ or all $k_{i} \in \ZZ_{\leq 0}$. 

With respect to a chosen system of positive roots $\Delta^{+}$, the Lie superalgebra 
$\gg$ admits the \emph{triangular decomposition}
\begin{equation}
\gg = \nn^{-}\oplus \hh \oplus \nn^{+}, 
\qquad 
\nn^{\pm} \coloneqq \bigoplus_{\alpha \in \Delta^{+}} \gg^{\pm \alpha},
\end{equation}
where $\nn^{\pm}$ are nilpotent Lie subsuperalgebras, and the even part reproduces the 
usual triangular decomposition of $\even$. 
The corresponding \emph{Borel subalgebra} is $\bb = \hh \oplus \nn^{+}$.

The general linear Lie superalgebra $\glmn$ is equipped with a natural bilinear form
\begin{equation}
\label{killing}
(X,Y) \coloneqq \str(XY), \qquad X,Y \in \glmn,
\end{equation}
which is even, supersymmetric, and invariant. 
Explicitly, the form is symmetric on $\glmn_{\bar{0}}$, skew-symmetric on $\glmn_{\bar{1}}$, 
and $\glmn_{\bar{0}}$ is orthogonal to $\glmn_{\bar{1}}$. 
Invariance means that $([X,Y],Z) = (X,[Y,Z])$ holds for all $X,Y,Z \in \glmn$. The form $(\cdot,\cdot)$ is always non-degenerate on $\glmn$. 
On $\slmn$, however, it remains non-degenerate only if $m \neq n$; 
in the case $m=n$, the one-dimensional center of $\sl(n\vert n)$ coincides with the radical. 
Its restriction to the diagonal subalgebra $\dd$ is still non-degenerate, and the 
induced bilinear form on $\dd^{*}$ will be denoted by the same symbol. 
With respect to the standard basis, we obtain for $1 \leq i,j \leq m$ and $1 \leq k,l \leq n$:
\begin{equation}
(\epsilon_{i},\epsilon_{j})=\delta_{ij}, 
\qquad (\delta_{k},\delta_{l})=-\delta_{kl}, 
\qquad (\epsilon_{i},\delta_{k})=0.
\end{equation}

For $m \neq n$, the bilinear form $(\cdot,\cdot)$ restricts to a non-degenerate form on $\hh$. 
In this situation, every root $\alpha \in \Delta$ corresponds to a uniquely determined element 
$h_{\alpha} \in \hh$, characterized by the identity
$
\alpha(H) = (H,h_{\alpha}) \quad \text{for all } H \in \hh.
$
We refer to $h_{\alpha}$ as the \emph{dual root} of $\alpha$. In the exceptional case $m=n$, we fix the dual roots as elements of $\hh$ by requiring 
$\alpha(H) = (H,h_\alpha)$ for all $H\in\dd$. Extending the assignment by linearity gives a bilinear form on $\hh^{*}$, defined by
\begin{equation}
(\alpha,\beta) \coloneqq (h_{\alpha},h_{\beta}), \qquad \alpha,\beta \in \Delta,
\end{equation}
which is non-degenerate exactly when $m \neq n$. 
It follows immediately that all odd roots are isotropic, that is, $(\alpha,\alpha)=0$ for every $\alpha \in \Delta_{\bar{1}}$.

The root system $\Delta$ admits an action of the Weyl group $W$ of the even part $\even$. 
The group $W$ is isomorphic to $S_{m}\times S_{n}$, the product of the symmetric groups on $m$ and $n$ letters. 
It is generated by the reflections with respect to the even roots,
\begin{equation}
\label{evenreflection}
r_{\alpha}(\beta)=\beta-2\frac{(\alpha,\beta)}{(\alpha,\alpha)}\,\alpha,
\qquad \alpha\in\Delta_{\bar{0}},\ \beta\in\Delta.
\end{equation}
This action extends linearly to $\hh^{*}$ and preserves the bilinear form $(\cdot,\cdot)$. 
The Weyl group of $\gg$ is by definition that of $\even$. 
The dot action of $W$ on $\hh^{*}$ is defined by
\begin{equation}
w\cdot\lambda=w(\lambda+\rho)-\rho,
\qquad \lambda\in\hh^{*},\ w\in W.
\end{equation}
Two weights $\lambda,\mu\in\hh^{*}$ are $W$-linked if $\mu=w\cdot\lambda$ for some $w\in W$; this defines an equivalence relation on $\hh^{*}$. 
The equivalence class $\{w\cdot\lambda:\,w\in W\}$ is called the $W$-linkage class of $\lambda$.

Not all fundamental systems can be transformed into one another through the action of $W$. For this, we need a sequence of \emph{odd reflections}. Given an odd, isotropic simple root $\theta \in \pi$, we recall that an \emph{odd reflection} satisfies:
\begin{equation} \label{oddreflection}
r_{\theta}(\alpha) = \begin{cases}
 \alpha + \theta & \text{if} \ (\alpha, \theta) \neq 0, \\
 \alpha & \text{if} \ (\alpha, \theta) = 0, \\
 -\theta & \text{if} \ \alpha = \theta
\end{cases}
\end{equation}
for any $\alpha \in \pi$. Then, $\Delta_{\theta}^{+} = \{-\theta\} \cup (\Delta^{+} \setminus \{\theta\})$ forms a new positive system with the fundamental system $\pi_{\theta} \coloneqq r_{\theta}(\pi)$. If $\pi$ and $\pi'$ are two fundamental systems such that $\Delta^{+}_{\bar{0}} = (\Delta')^{+}_{\bar{0}}$, then $\pi'$ can be obtained from $\pi$ by a sequence of odd reflections \cite[Proposition 1]{Serganova_reps}.

\subsection{Real Forms 
 \texorpdfstring{$\mathfrak{su}(p,q\vert r,s)$}{su(p,q|r,s)}} \label{subsec::real_forms} Let $V=\CC^{m\vert n}$. 
For $p,q,r,s\in\ZZ_{+}$ with $p+q=m$, $r+s=n$, we define the Hermitian form
\begin{equation}
 \langle v, w \rangle \coloneqq \overline{v}^{T} J_{(p,q\vert r,s)} w, \qquad J_{(p,q\vert r,s)} \coloneqq \left(\begin{array}{@{}c|c@{}}
 I_{p,q}
 & 0 \\
\hline
 0 &
 I_{r,s}
\end{array}\right),
\end{equation}
where $\overline{\ \cdot\ }$ denotes complex conjugation, and vectors of $V$ are taken as columns. The matrix $I_{k,l}$ is the diagonal matrix having the first $k$ entries equal to $1$ followed by the last $l$ entries equal to $-1$. This Hermitian form is \emph{consistent}, that is, $\langle V_{\bar{0}},V_{\bar{1}}\rangle = 0$.

The \emph{unitary Lie superalgebras} $\mathfrak{u}(p,q\vert r,s)$ are defined by
\begin{equation}
\mathfrak{u}(p,q\vert r,s)_{\bar k}
=\{X\in\glmn_{\bar k}:\ \langle Xv,w\rangle+\langle v,Xw\rangle=0,\ \forall v,w\in V\},
\end{equation}
and $\mathfrak{u}(p,q\vert r,s)=\mathfrak{u}(p,q\vert r,s)_{\bar0}\oplus\mathfrak{u}(p,q\vert r,s)_{\bar1}$. The \emph{special unitary Lie superalgebras} are
\begin{equation}
\su(p,q\vert r,s)=\mathfrak{u}(p,q\vert r,s)\cap\slmn
=\{X\in\slmn:\ J_{(p,q\vert r,s)}^{-1}X^\dagger J_{(p,q\vert r,s)}=-X\},
\end{equation}
with $X^\dagger$ being the conjugate transpose. We think of $\su(p,q\vert r,s)$ as the real form of $\slmn$ defined by the fixed point Lie subalegebra of the conjugate-linear anti-involution $\omega$ on $\gg$, where
\begin{equation}
\omega(X)=J_{(p,q\vert r,s)}^{-1}X^\dagger J_{(p,q\vert r,s)},\qquad X\in\gg.
\end{equation}

Among these real forms, only $\su(p,q\vert n,0)$ and $\su(p,q\vert 0,n)$, which are isomorphic, admit nontrivial unitarizable supermodules (Theorem~\ref{thm::HW_property_M}). 
A direct computation, distinguishing the compact case ($p=0$ or $q=0$) and the non-compact case ($p,q\neq 0$), shows that $\omega$ admits exactly two matrix realizations, yielding the explicit conjugate-linear anti-involutions according to whether $p=0$, $q=0$, $r=0$ or $s=0$. We express a general element $X \in \gg$ as
\begin{equation} \label{eq::general_form}
X=\left(\begin{array}{@{}c|c@{}}
 \begin{matrix}
 a & b \\ c & d
 \end{matrix}
 & \begin{matrix}
 P_{1} \\ P_{2}
 \end{matrix} \\
\hline
 \begin{matrix}
 Q_{1} & Q_{2}
 \end{matrix} &
 E
\end{array}\right),
\end{equation}
where $P_{1}$ is a $p \times n$ matrix, $P_{2}$ is a $q \times n $ matrix, $Q_{1}$ is a $n \times p$ matrix, $Q_{2}$ is a $n \times q$-matrix, $\begin{pmatrix}
 a & b\\ c & d 
\end{pmatrix} \in \su(p,q)^{\CC}$, $E \in \su(n)^{\CC}$ and $\tr \begin{pmatrix}
 a & b\\ c & d 
\end{pmatrix} - \tr E = 0$. In the case $p=0$ or $q=0$ we use the notation \eqref{eq::standard_block_form} for the standard block form.

\begin{lemma}[{\cite[Lemma 4.1]{jakobsen1994full}, \cite{SchmidtDirac}}] \label{lemm::anti_involutions} There are exactly two conjugate-linear anti-involutions on $\gg$ compatible with the standard ordering, which produce the even real forms $\su(p,q\vert 0,n)_{\bar{0}} \cong \su(p,q\vert n,0)_{\bar{0}}$:
\begin{enumerate}
 \item[a)] If $p = 0$ or $q = 0$, then 
 \[
 \omega_{\pm}
\left(
\begin{array}{@{}c|c@{}}
 A & B \\
\hline
 C & D
\end{array}
\right)
= 
\left(
\begin{array}{@{}c|c@{}}
 A^{\dagger} & \pm C^{\dagger} \\
\hline
 \pm B^{\dagger} & D^{\dagger}
\end{array}
\right).
 \]
 The real Lie superalgebra $\su(m,0\vert n,0) \cong \su(0,m\vert 0,n)$ corresponds to the conjugate-linear anti-involution $\omega_{+}$, while the real Lie algebra $\su(m,0\vert 0,n) \cong \su(0,m\vert n,0)$ belongs to the conjugate-linear anti-involution $\omega_{-}$.
 \item[b)] If $p,q \neq 0$, then \begin{equation*} \omega_{(-,+)}(X) = \left(\begin{array}{@{}c|c@{}}
 \begin{matrix}
 a^{\dagger} & -c^{\dagger} \\ -b^{\dagger} & d^{\dagger}
 \end{matrix}
 & \begin{matrix}
 -Q_{1}^{\dagger} \\ Q_{2}^{\dagger}
 \end{matrix}\\
\hline
 \begin{matrix}
 -P_{1}^{\dagger} & P_{2}^{\dagger}
 \end{matrix} &
 \begin{matrix}
 E^{\dagger}
 \end{matrix}
\end{array}\right), \qquad
 \omega_{(+,-)}(X) = \left(\begin{array}{@{}c|c@{}}
 \begin{matrix}
 a^{\dagger} & -c^{\dagger} \\ -b^{\dagger} & d^{\dagger}
 \end{matrix}
 & \begin{matrix}
 Q_{1}^{\dagger} \\ -Q_{2}^{\dagger}
 \end{matrix}\\
\hline
 \begin{matrix}
 P_{1}^{\dagger} & -P_{2}^{\dagger}
 \end{matrix} &
 \begin{matrix}
 E^{\dagger}
 \end{matrix}
\end{array}\right), 
\end{equation*}
The real Lie superalgebra $\su(p,q\vert 0,n)$ belongs to $\omega_{(-,+)}$ while $\su(p,q\vert n,0)$ belongs to $\omega_{(+,-)}$. 
\end{enumerate}
\end{lemma}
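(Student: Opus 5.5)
The plan is to classify conjugate-linear anti-involutions $\omega$ on $\gg$ directly. We impose three requirements: $\omega$ preserves the standard ordering, meaning it maps $\hh$ to itself and $\gg^{\alpha}$ to $\gg^{-\alpha}$ for each root $\alpha$. Its restriction to $\even$ yields the real form $\su(p,q)\oplus\su(n)\oplus\mathfrak{u}(1)$. And it respects the $\ZZ_{2}$-grading.

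\textbf{Step 1: reduction to scalars.} Any such $\omega$ sends the elementary matrix $E_{ij}$ to $c_{ij}E_{ji}$ for some scalars $c_{ij}$, and acts on $\hh$ by a conjugate-linear map.

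\textbf{Step 2: the even part.} The real form on $\even$ is fixed by hypothesis. On $\gl(m)$ it is the anti-involution $X\mapsto I_{p,q}X^{\dagger}I_{p,q}$, and on $\gl(n)$ it is $X\mapsto X^{\dagger}$. Up to the standard choice of normalization, this pins down $c_{ij}$ whenever $i,j$ lie in the same block. Concretely, $c_{ij}=1$ inside the blocks $a$, $d$ and $E$, and $c_{ij}=-1$ inside the blocks $b$ and $c$.

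\textbf{Step 3: constraints on the odd part.} For an odd root vector we need $\omega(E_{ik})=c_{ik}E_{ki}$, with $i\le m<k$. Applying $\omega$ to the bracket $[E_{ik},E_{ki}]=E_{ii}+E_{kk}$ forces $c_{ik}c_{ki}$, and hence $c_{ik}$ (after using the involutive property), to be real. Three further relations then follow:
\begin{itemize}
\item The involutive property gives $\overline{c_{ik}}\,c_{ki}=1$.
\item Compatibility with brackets against even root vectors, namely $\omega([E_{ij},E_{jk}])=[\omega(E_{jk}),\omega(E_{ij})]$, gives $c_{ik}=c_{ij}c_{jk}$. This propagates a single sign through each block.
\item Rescaling odd basis vectors (that is, conjugating by a diagonal even automorphism) normalizes $|c_{ik}|=1$. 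Combined with realness, this leaves $c_{ik}=\pm1$.
\end{itemize}

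\textbf{Step 4: counting sign patterns.} Propagating along $\gl(n)$ shows that $c_{ik}$ is independent of $k$. Propagating along $\gl(m)$ shows that the sign flips exactly when $i$ crosses from the first $p$ indices to the last $q$, because of the factor $-1$ in the off-diagonal blocks. This leaves exactly two patterns, according to the overall sign $\varepsilon\in\{\pm1\}$:
\begin{itemize}
\item For $p=0$ or $q=0$, these are $\omega_{\pm}$.
\item For $p,q\neq0$, these are $\omega_{(-,+)}$ and $\omega_{(+,-)}$.
\end{itemize}

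\textbf{Step 5: identifying the real forms.} It remains to check which real form each anti-involution produces. We compute its fixed points in the form $\omega(X)=J^{-1}X^{\dagger}J$ with $J=J_{(p,q\vert r,s)}$, taking $(r,s)=(0,n)$ or $(n,0)$. The sign in front of the odd blocks is then exactly $\pm$, determined by whether $I_{r,s}=\mp I_{n}$ relative to $I_{p,q}$. This confirms the stated correspondences.

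\textbf{Main obstacle.} The delicate point is Step 3. One has to verify that the sign conditions are consistent with the super-bracket signs and the involutivity $\omega^{2}=\mathrm{id}$ on odd elements. If one chose $c_{ki}=-\overline{c_{ik}}^{-1}$ instead, the Hermitian pairing on odd vectors would be wrong. I would check this case explicitly on $\sl(1\vert1)$-type subalgebras spanned by $E_{ik}$, $E_{ki}$, $E_{ii}+E_{kk}$.
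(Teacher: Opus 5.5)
Your proposal is correct and is essentially the direct computation the paper has in mind; the paper gives no argument of its own beyond referring to Jakobsen's Lemma~4.1. You are also right that ``exactly two'' holds only after normalizing by conjugation with $\diag(sI_{m},tI_{n})$, and that this normalization rescales all odd $c_{ik}$ by a common positive factor, so it must come after the propagation step shows all $|c_{ik}|$ agree.

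The one step to tighten is Step~3. The bracket $[E_{ik},E_{ki}]=E_{ii}+E_{kk}$ gives $c_{ik}c_{ki}=1$ exactly, because $\omega$ fixes this real element of $\hh$. You need this equality rather than mere realness, since $c_{ik}c_{ki}\in\RR$ together with $\overline{c_{ik}}\,c_{ki}=1$ would still allow $c_{ik}\in i\RR$.
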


In the compact case ($p=0$ or $q=0$), there exist two convenient systems of odd positive roots,
\begin{equation}
\Delta_{\bar{1},\mathrm{st}}^{+}=\{\epsilon_{i}-\delta_{j}\vert1\leq i\leq m,\ 1\leq j\leq n\},\qquad
\Delta_{\bar{1},-\mathrm{st}}^{+}=\{-\epsilon_{i}+\delta_{j}\vert1\leq i\leq m,\ 1\leq j\leq n\},
\end{equation}
which differ only by sign. In the literature, $\Delta_{\bar{1},\mathrm{st}}^{+}$ is known as the distinguished (or standard) positive system, while $\Delta_{\bar{1},-\mathrm{st}}^{+}$ is called the anti-distinguished positive system. The corresponding Borel subalgebras $\bb_{\mathrm{st}}$ and $\bb_{-\mathrm{st}}$ are referred to as \emph{distinguished} and \emph{anti-distinguished}, respectively. Under the canonical isomorphism $\sl(m\vert n)\cong\sl(n\vert m)$, the anti-distinguished Borel subalgebra $\bb_{-\mathrm{st}}$ of $\sl(m\vert n)$ is mapped to the distinguished one $\bb_{\mathrm{st}}$ of $\sl(n\vert m)$.

In this article, we choose $\Delta_{\bar{1}}^{+} \coloneqq \Delta_{\bar{1},\mathrm{st}}^{+}$, so that $\nn^{+}$ consists of upper block matrices in $\gg$ and $\bb = \bb_{\mathrm{st}}$ is the distinguished Borel. In particular, the odd part of the Weyl vector \eqref{eq::Weyl_vector} is
\begin{equation}
\rho_{\bar{1}} = \frac{1}{2} \left( n\sum_{i=1}^{m}\epsilon_{i} - m \sum_{j = 1}^{n}\delta_{j}\right).
\end{equation}

Taking into account the real structure, in the non-compact case ($p,q\neq 0$) let $\pp_{i}$ (resp.~$\qq_{i}$) denote the subspace of $\gg$ in which only $P_{i}$ (resp.~$Q_{i}$) in \eqref{eq::general_form} is nonzero. 
One obtains three relevant systems of odd positive roots described by:
\begin{equation} \label{eq::positive_systems}
\nn_{\bar{1},\mathrm{st}}^{+}=\pp_{1}\oplus\pp_{2},\qquad
\nn_{\bar{1},-\mathrm{st}}^{+}=\qq_{1}\oplus\qq_{2},\qquad
\nn_{\bar{1},\mathrm{nst}}^{+}=\pp_{1}\oplus\qq_{2},
\end{equation}
called \emph{standard}, \emph{minus standard}, and \emph{non-standard}, respectively. 
We fix the non-standard system, denoted $\Delta_{\bar{1}}^{+}$, since this choice is adapted to the study of unitarity and the Dirac operator \cite{SchmidtDirac}. The relevance of this choice for the classification will be discussed in Section~\ref{subsec::HW}. 
Accordingly,
$
\Delta^{+}=\Delta_{\bar{0}}^{+}\sqcup\Delta_{\bar{1}}^{+},
$
and the corresponding half-sum of positive odd roots is
\begin{equation} \label{eq::Weyl_vector_non_standard}
\rho_{\bar{1}}
=\frac{1}{2}\left(
n\sum_{i=1}^{p}\epsilon_{i}
- n\sum_{j=p+1}^{m}\epsilon_{j}
+ (q-p)\sum_{k=1}^{n}\delta_{k}\right).
\end{equation}

\section{Unitarizable Supermodules and Dirac Operators} \label{sec::unitarizable_supermodules} \noindent
In this section, we introduce unitarizable supermodules over $\gg$ and summarize their basic properties. We also present the (algebraic) quadratic Dirac operator $\Dirac$ and formulate the Dirac inequality, which serves as the main tool for classifying the full set of unitarizable supermodules.

\subsection{Fundamentals} \noindent A \emph{super 
Hilbert space} is a $\ZZ_{2}$-graded complex Hilbert space $(\mathcal{H}=\mathcal{H}_{\bar{0}}
\oplus \mathcal{H}_{\bar{1}}, \langle \cdot, 
\cdot \rangle_{\mathcal{H}})$ such that $\mathcal{H}_{\bar{0}}$ and $\mathcal{H}_{\bar{1}}$ are mutually 
orthogonal subspaces of $\mathcal{H}$ with respect to the inner product $\langle \cdot, 
\cdot \rangle_{\mathcal{H}}$.\footnote{Super Hilbert spaces have an equivalent description in terms of even super positive definite super Hermitian forms. For a detailed comparison, we refer to \cite{jakobsen1994full, SchmidtGeneralizedSuperdimension}. } The inner product is conjugate-linear in the first and linear in the second argument. With this in place, we define \emph{unitarizable supermodules} relative to a real form of $\gg$. We realize the real forms as the fixed point Lie subalgebras of conjugate-linear anti-involutions $\omega$, that is, $\mathfrak{g}^{\omega} \coloneqq \{x \in \mathfrak{g} : \omega(x) = -x\}$.

\begin{definition}[{\cite[Definition 2.3]{jakobsen1994full}}]
Let $\HH$ be a $\gg$-supermodule, and let $\omega$ be a conjugate-linear anti-involution on $\gg$. The supermodule $\HH$ is called an $\omega$\emph{-unitarizable} $\gg$\emph{-supermodule} if $\HH$ is a super Hilbert space such that for all $v,w \in \HH$ and all $X \in \gg$, the Hermitian product $\langle \cdot,\cdot \rangle_{\HH}$ is $\omega$-contravariant:
$$
\langle Xv,w \rangle_{\HH} = \langle v,\omega(X)w \rangle_{\HH}.
$$
\end{definition}

If we work with $\UE(\gg)$-supermodules instead, we extend $\omega$ to $\UE(\gg)$ in the natural way, using the same notation. In this context, a $\gg$-supermodule $\HH$ is unitarizable if and only if it is a Hermitian representation over $(\UE(\gg), \omega)$, meaning that $\langle Xv, w \rangle_{\HH} = \langle v, \omega(X)w \rangle_{\HH}$ holds for all $v, w \in \HH$ and $X \in \UE(\gg)$. When $\omega$ is to be implied from context, we just say ``unitarizable''.

We fix a real form $\mathfrak{g}^{\omega}$. Let $\mathcal{H}$ be an $\omega$-unitarizable $\mathfrak{g}$-supermodule. 
By a standard argument, $\mathcal{H}$ is completely reducible, \emph{i.e.}, the orthogonal complement of any invariant subspace is again invariant. Consider the underlying $\mathfrak{g}_{\bar{0}}$-module
$
\mathcal{H}_{\mathrm{ev}},
$
obtained from $\mathcal{H}$ by restriction to $\mathfrak{g}_{\bar{0}}$ and forgetting the $\mathbb{Z}_{2}$-grading. Then $\mathcal{H}_{\mathrm{ev}}$ is a unitarizable $\mathfrak{g}_{\bar{0}}$-module with respect to the real form $\mathfrak{g}_{\bar{0}}^{\omega}$, and in particular completely reducible as a $\mathfrak{g}_{\bar{0}}$-module in the sense above. This complete reducibility, together with the following theorem, motivates our focus on highest and lowest weight supermodules; we therefore briefly recall their definition. Fix a triangular decomposition $\gg=\nn^{-}\oplus\hh\oplus\nn^{+}$.
A $\gg$-supermodule $M$ is called \emph{highest} (resp.\ \emph{lowest weight}) if there exist
$\Lambda\in\hh^{*}$ and a nonzero vector $v_{\Lambda}\in M$ such that:
\begin{enumerate}
 \item[a)] $Xv_{\Lambda}=0$ for all $X \in \nn^{+}$ (resp. $X \in \nn^{-}$), 
 \item[b)] $Hv_{\Lambda}=\Lambda(H)v_{\Lambda}$ for all $H \in \hh$, and 
 \item[c)] $\mathfrak{U}(\gg)v_{\Lambda}=M$.
\end{enumerate} 
The vector $v_{\Lambda}$ is called a \emph{highest} (resp.\ \emph{lowest}) weight vector of $M$. 
We call $M$ \emph{even} or \emph{odd} according to the parity of $v_\Lambda$. In this article, we place $v_{\Lambda}$ in even degree. Accordingly, all our results are understood up to application of the parity reversion functor $\Pi$.

\begin{theorem}[\cite{furutsu1991classification,neeb2011lie}] 
\label{thm::HW_property_M}
The special linear Lie superalgebra $\gg$ admits non-trivial 
$\omega$-unitarizable supermodules if and only if the conjugate-linear 
anti-involution $\omega$ is associated with one of the real forms 
\[
\mathfrak{su}(p,q\vert n,0) \quad \text{or} \quad \mathfrak{su}(p,q\vert 0,n), 
\qquad p+q=m. 
\]
Moreover, any simple $\omega$-unitarizable supermodule is either a highest weight 
or a lowest weight $\gg$-supermodule.
\end{theorem}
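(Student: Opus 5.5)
The theorem has two parts, and I would handle them in turn. The first is the dichotomy on real forms: only $\su(p,q\vert n,0)$ and $\su(p,q\vert 0,n)$ admit nontrivial unitarizable supermodules. The second is the highest/lowest weight property for simple unitarizable supermodules.

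\textbf{Restriction to the even part.} Let $\HH$ be a nontrivial $\omega$-unitarizable $\gg$-supermodule. Its restriction to $\even$ is unitarizable for $\even^{\omega}$, so it is completely reducible. For $x\in\odd$ and $v\in\HH$, contravariance gives
\[
\langle [x,\omega(x)]v,v\rangle=\|\omega(x)v\|^{2}\pm\|xv\|^{2},
\]
with the sign fixed by the parity conventions of the super Hermitian form. Writing the real form as $\su(p,q\vert r,s)$, I would compute $[x,\omega(x)]$ explicitly using Lemma~\ref{lemm::anti_involutions} and its analogue for general $(r,s)$. This shows that the even elements $[x,\omega(x)]$, $x\in\odd$, span a cone in $i\,\even^{\omega}$ on which every unitary representation is semidefinite. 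If both $r,s\neq 0$, this cone contains elements whose components lie in the noncompact part in incompatible directions, or in both $\su(r,s)$ and $\su(p,q)$ with opposite signs. Positivity then forces these elements to act by zero, and they generate an ideal. Since $\gg$ is simple for $m\neq n$, with the center handled separately when $m=n$, the odd part must act trivially. Hence the whole action is trivial, because $[\odd,\odd]=\even$ modulo the center. This is the Neeb--Salmasian style argument: the even subalgebra generated by the positive cone must be ``compactly embedded'' on the relevant factor.

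\textbf{Highest/lowest weight property.} For the admissible forms, the cone condition says that $\su(r,s)$ must be compact, so $r=0$ or $s=0$. The cone then contains a central element $Z$ of the maximal compact subalgebra $\kk$ of $\even^{\omega}$ that is positive on $\HH$, strictly positive off the trivial part. Through $Z$ I would define a grading
\[
\gg=\gg_{-}\oplus\gg_{0}\oplus\gg_{+}
\]
by eigenvalues of $\operatorname{ad}Z$. On a simple unitarizable $\HH$, $Z$ is bounded below by positivity, so the $\kk$-types with minimal $Z$-eigenvalue are annihilated by $\gg_{-}$. Such a minimal $\kk$-type is finite-dimensional by the usual unitarity argument and generates $\HH$ by simplicity. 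Taking its $\kk$-highest weight vector, with respect to a Borel compatible with the grading, gives a highest weight vector; reversing the order gives a lowest weight vector. The choice between highest and lowest corresponds to the sign of $Z$, that is, to $\omega_{(\pm,\mp)}$.

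\textbf{Main obstacle.} The hard step is the first one: producing, in the case $r,s\neq 0$, an explicit odd $x$ with $[x,\omega(x)]$ indefinite, and then concluding triviality via simplicity and the $m=n$ central extension. I would first try $x=E_{i,m+k}+E_{j,m+l}$ with $k\le r<l$, which should give $\operatorname{diag}$-contributions of opposite signs in $\su(r,s)$.
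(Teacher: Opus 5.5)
The paper gives no proof of its own here; it quotes Furutsu--Nishiyama and Neeb--Salmasian. Your overall strategy (positivity of $[\omega(x),x]$, then a minimal $\kk$-type argument) is the Neeb--Salmasian one. However, the decisive step of your first part is wrong as formulated, and in fact proves too much.

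\textbf{The triviality step.} Positivity only kills elements $y$ with both $y$ and $-y$ in the cone $C$ generated by the $[\omega(x),x]$. An element of $C$ that is an indefinite matrix, or has entries of opposite sign in the $\su(r,s)$-block, is no obstruction, since $\su(r,s)$ has positive energy representations. For $x$ with only upper right block $P=uv^{\dagger}$ one computes
\[
[\omega(x),x]=(v^{\dagger}J_{2}v)\,uu^{\dagger}J_{1}\oplus(u^{\dagger}J_{1}u)\,J_{2}vv^{\dagger},\qquad J_{1}=I_{p,q},\quad J_{2}=I_{r,s}.
\]
Your test vector $E_{i,m+k}+E_{j,m+l}$ with $k\le r<l$ gives the following.
\begin{itemize}
\item For $i\neq j$, nothing beyond the sum of the generators of $E_{i,m+k}$ and $E_{j,m+l}$.
\item For $i=j$, a single null element $\pm J_{2}vv^{\dagger}$ with $v^{\dagger}J_{2}v=0$.
\end{itemize}
Both are compatible with $C$ being pointed. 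A line in $C$ appears only if there are also $u_{\pm}$ with $u_{\pm}^{\dagger}J_{1}u_{\pm}=\pm1$, i.e.\ if $J_{1}$ is indefinite as well. Then $\pm J_{2}vv^{\dagger}\in C$, so this non-central nilpotent element acts by zero, and the kernel is all of $\gg$. This also handles $m=n$, where diagonal candidates can be central, as in $\su(1,1\vert 1,1)$.

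Without indefiniteness of $J_{1}$ your conclusion is false. The block swap $\sl(m\vert n)\cong\sl(n\vert m)$ intertwines the anti-involutions and gives $\su(1,0\vert 1,1)\cong\su(1,1\vert 1,0)\cong\su(1,1\vert 0,1)$. The paper's own $\su(1,1\vert 1)$ example lists nontrivial unitarizable modules for this algebra, even though $r=s=1$. What you can actually prove is that nontrivial unitarizable modules force $\su(p,q)$ or $\su(r,s)$ to be compact. The theorem must then be read up to the isomorphism $\su(p,q\vert r,s)\cong\su(r,s\vert p,q)$. The same correction applies to your later sentence that the cone forces $\su(r,s)$ to be compact.

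\textbf{Existence direction.} You never address it. One must exhibit nontrivial unitarizable modules for $\su(p,q\vert 0,n)$ and $\su(p,q\vert n,0)$. For $p=0$ or $q=0$ the natural module $\CC^{m\vert n}$ or its dual suffices. For $p,q\neq0$ an infinite-dimensional construction is needed, e.g.\ the oscillator modules of Furutsu--Nishiyama.

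\textbf{Highest weight part.} Two points need care.
\begin{itemize}
\item A minimal $Z$-eigenspace exists only if $\HH$ has $\kk$-finite vectors. This holds for $K$-finite vectors of a unitary supergroup representation, but is not automatic for an abstract module with a positive definite contravariant form.
\item $Z$ must be chosen so that $\operatorname{ad}Z$ vanishes only on $\kk^{\CC}$. In $\su(p,q\vert 0,n)$ the generators are $-(E_{ii}+E_{m+k,m+k})$ for $i\le p$ and $E_{jj}+E_{m+k,m+k}$ for $j>p$. Their plain sum $\diag(-nI_{p},nI_{q},(q-p)I_{n})$ commutes with $\pp_{2}$ or $\qq_{1}$ when $n=\lvert p-q\rvert$.
\end{itemize}
The positive combination $Z=\diag(-\tfrac1p I_{p},\tfrac1q I_{q},0)$ works instead. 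It has $\gg_{0}=\kk^{\CC}$, and $\gg_{-}$ equals $\pp_{1}\oplus\qq_{2}$ plus the noncompact positive root spaces. With this choice your argument does produce a $\bb_{\nst}$-highest weight vector.
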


For the remainder, we restrict attention to $\omega$-unitarizable highest weight 
$\mathfrak{g}$-supermodules, since the treatment of the lowest weight case is entirely analogous. 
It is important to note that non-trivial unitarizable highest weight $\mathfrak{g}$-supermodules for $p,q \neq 0$
exist only with respect to $\omega_{(-,+)}$, while $\omega_{(+,-)}$ gives rise to unitarizable 
lowest weight $\mathfrak{g}$-supermodules (see \cite{jakobsen1994full, SchmidtGeneralizedSuperdimension}). 
Accordingly, we work with the real forms $\mathfrak{su}(p,q\vert 0,n)$, and for convenience of 
notation we abbreviate $\mathfrak{su}(p,q\vert 0,n)$ by $\mathfrak{su}(p,q\vert n)$. If either $p=0$ or $q=0$, we set $\su(m\vert n)\coloneqq \su(m,0\vert n,0)$ and fix the conjugate-linear anti-involution $\omega_{+}$. The discussion for $\omega_{-}$ is entirely analogous, and we therefore state only the corresponding results. In the following, the prefix $\omega_{+}/\omega_{(-,+)}$ will be omitted, and we shall simply speak of unitarizable highest weight $\mathfrak{g}$-supermodules.

We are now in a position to state the main problem of this work.

\begin{problem}
Describe the full set of unitarizable simple $\gg$-supermodules; 
equivalently, determine all highest weights 
$\Lambda \in \hh^{\ast}$ for which there exists a unitarizable 
simple $\gg$-supermodule of highest weight $\Lambda$.
\end{problem}

A first step towards a complete description consists in restricting the possible form of the occurring highest weights, which follows directly from the definition of unitarity. We refer to these as the \emph{unitarity conditions}.

\begin{lemma}\label{lemm::unitarity_conditions}
Let $\HH$ be a unitarizable highest weight $\gg$-supermodule with highest weight $\Lambda=(\lambda^{1},\ldots,\lambda^{m}\vert\mu^{1},\ldots,\mu^{n})\in\hh^{\ast}$. Then the following \emph{unitarity conditions} hold:
\begin{enumerate}
\item[a)] If $p=0$ or $q =0$ and $\omega = \omega_{+}$, then $\Lambda$ is the highest weight of a unitarizable highest weight $\even$-module and 
\begin{enumerate}
\item[(i)] $\lambda^{1}\ge\cdots\ge\lambda^{m}\ge-\mu^{n}\ge\cdots\ge-\mu^{1}$,
\item[(ii)] $(\Lambda+\rho,\epsilon_{m}-\delta_{k})=0\Rightarrow (\Lambda+\rho, \epsilon_{m}-\delta_{j})>0$ for all $j=1, \ldots, k-1$ and $(\Lambda,\delta_{k}-\delta_{n})=0$.
\item[(iii)] $(\Lambda+\rho, \epsilon_{m}-\delta_{k})\neq 0$ for all $k=1, \ldots n$ $\Rightarrow$ $(\Lambda+\rho, \epsilon_{m}-\delta_{k})>0$ for $k=1, \ldots, n$.
\end{enumerate}
\item[b)] If $p,q\neq0$ and $\omega = \omega_{(-,+)}$, then $\Lambda$ is the highest weight of a unitarizable highest weight $\even$-module, and
\begin{enumerate}
\item[(i)] $\lambda^{p+1}\ge\cdots\ge\lambda^{m}\ge-\mu^{n}\ge\cdots\ge-\mu^{1}\ge\lambda^{1}\ge\cdots\ge\lambda^{p}$,
\item[(ii)] $(\Lambda+\rho,-\epsilon_{i}+\delta_{n})=0$ for $p+1\le i\le m$ implies $(\Lambda,\epsilon_{i}-\epsilon_{m})=0$,
\item[(iii)] $(\Lambda+\rho,\epsilon_{i}-\delta_{1})=0$ for $1\le i\le p$ implies $(\Lambda,\epsilon_{1}-\epsilon_{i})=0$.
 \item[(iv)] If $(\Lambda+\rho, \epsilon_{i}-\delta_{1}) \neq 0$ for $1 \leq i \leq i_{0}$, then $(\Lambda+\rho, \epsilon_{i}-\delta_{1})<0$ for $i=1,\ldots, i_{0}$.
 \item[(v)] If $(\Lambda+\rho,\epsilon_{m-j}+\delta_{n})\neq 0$ for $0 \leq j \leq j_{0}$, then $(\Lambda+\rho, -\epsilon_{m-j}+\delta_{n})<0$ for $j=0,\ldots, j_{0}$.
\end{enumerate}
\end{enumerate}
\end{lemma}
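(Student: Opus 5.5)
The plan is to derive every condition from one mechanism: positivity of the contravariant form on explicit vectors of $L(\Lambda)$ built from the highest weight vector $v_{\Lambda}$. The $\even$-part is immediate. The $\even$-submodule $\UE(\even)v_{\Lambda}$ is a highest weight $\even$-module of highest weight $\Lambda$, and it inherits a positive definite $\omega$-contravariant form. By complete reducibility it is therefore the simple unitarizable module $L_{0}(\Lambda)$. This gives the first assertion in both a) and b), and it also gives the purely even inequalities among the $\lambda^{i}$ and among the $\mu^{k}$ appearing in (i).

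For the odd conditions the plan is to use vectors $Y_{\alpha}v_{\Lambda}$, with $Y_{\alpha}\in\gg^{-\alpha}$ a negative odd root vector, and more generally vectors of the form $Y_{\alpha}v$ with $v$ an $\even$-highest weight vector. In the compact case with $\omega_{+}$ we have $\omega(X_{\alpha})=Y_{\alpha}$ up to a positive scalar. This gives
\[
\langle Y_{\alpha}v_{\Lambda},Y_{\alpha}v_{\Lambda}\rangle=\langle v_{\Lambda},X_{\alpha}Y_{\alpha}v_{\Lambda}\rangle=\Lambda(h_{\alpha})\|v_{\Lambda}\|^{2}=(\Lambda,\alpha)\|v_{\Lambda}\|^{2}\ge 0 .
\]
Taking $\alpha=\epsilon_{m}-\delta_{n}$, which is simple up to even corrections, yields $\lambda^{m}+\mu^{n}\ge0$. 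Together with dominance this is (i). In the non-compact case the signs in $\omega_{(-,+)}$ reverse the inequality for the roots in $A$ and keep it for those in $B$. Using $\epsilon_{p}-\delta_{1}$ and $-\epsilon_{p+1}+\delta_{n}$ in the same way gives the two middle inequalities of b)(i).

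For (ii)–(v) the plan is to compute the norm of the $\even$-highest weight vector of weight $\Lambda-\alpha$ inside $\UE(\gg)v_{\Lambda}$, for the odd roots $\alpha=\epsilon_{m}-\delta_{k}$, respectively $\epsilon_{i}-\delta_{1}$ and $-\epsilon_{i}+\delta_{n}$. Such a vector has the form $\sum Y_{\beta}u_{\beta}$ with $u_{\beta}\in\UE(\nn_{\bar 0}^{-})$, obtained by projecting $Y_{\alpha}v_{\Lambda}$ onto the $\even$-isotypic component. Its norm is a product of a positive $\even$-factor and $(\Lambda+\rho,\alpha)$. Equivalently, it is the Dirac inequality $(\Lambda+\rho,\alpha)\ge0$ (resp.\ $\le0$), valid whenever this constituent is nonzero in $L(\Lambda)$. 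From this we conclude the following:
\begin{enumerate}
\item[(1)] If $(\Lambda+\rho,\alpha)$ has the wrong sign, the constituent must vanish. By the Kac–Shapovalov determinant this forces $\Lambda-\alpha$ to be an even-singular weight. This is possible only when the relevant $\even$-string degenerates, i.e.\ $(\Lambda,\delta_{k}-\delta_{n})=0$, respectively $(\Lambda,\epsilon_{i}-\epsilon_{m})=0$ or $(\Lambda,\epsilon_{1}-\epsilon_{i})=0$. This gives (ii) and the equality parts in b).
\item[(2)] If instead no equality $(\Lambda+\rho,\alpha)=0$ holds in the relevant range, then all these constituents survive. Strictness of the sign then follows from (1), which gives (iii), (iv) and (v).
\item[(3)] The ordering statement in a)(ii), that $(\Lambda+\rho,\epsilon_{m}-\delta_{j})>0$ for $j<k$, follows because $(\Lambda+\rho,\epsilon_{m}-\delta_{j})$ is strictly decreasing in $j$ (here $\rho$ contributes $+1$ per step). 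So the first zero is unique, and the entries before it are positive.
\end{enumerate}

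The main obstacle is step (1): showing that the constituent $L_{0}(\Lambda-\alpha)$ genuinely occurs in $L(\Lambda)$ unless the stated degeneracy holds. Here I would first compute the relevant Shapovalov factor $(\Lambda+\rho,\alpha)$ on the generalized Verma supermodule induced from $L_{0}(\Lambda)$, and check that the only additional vanishing comes from the even factors $(\Lambda+\rho_{\bar0},\gamma)$ along the $\sl(n)$ (resp.\ $\sl(p)$, $\sl(q)$) string.
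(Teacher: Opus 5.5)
Your treatment of the $\even$-part, of (i) via $\|e_{-\alpha}v_{\Lambda}\|^{2}=\pm(\Lambda,\alpha)\|v_{\Lambda}\|^{2}$, and of the monotonicity in (3) matches the paper. One slip: in b)(i) you need $\epsilon_{1}-\delta_{1}$ and $-\epsilon_{m}+\delta_{n}$. Your roots $\epsilon_{p}-\delta_{1}$ and $-\epsilon_{p+1}+\delta_{n}$ only give the weaker $\lambda^{p}\le-\mu^{1}$ and $\lambda^{p+1}\ge-\mu^{n}$.

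The genuine gap is step (2), i.e.\ (iii)--(v): single-root constituents $L_{0}(\Lambda-\alpha)$ cannot detect the failure. The constituent $L_{0}(\Lambda-\epsilon_{m}+\delta_{k})$ occurs at all only if $\Lambda-\epsilon_{m}+\delta_{k}$ is $\even$-dominant, i.e.\ $k=1$ or $\mu^{k-1}>\mu^{k}$. So the claim that ``all these constituents survive'' is false. On the tail $\mu^{k_{0}}=\cdots=\mu^{n}$ your argument yields only $(\Lambda+\rho,\epsilon_{m}-\delta_{k_{0}})>0$. But (iii) needs $(\Lambda+\rho,\epsilon_{m}-\delta_{n})=(\Lambda+\rho,\epsilon_{m}-\delta_{k_{0}})-(n-k_{0})>0$, and no integrality closes this, since $\lambda^{m}+\mu^{n}$ is a free real parameter.

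For instance, take $\mathfrak{sl}(1\vert 2)$ and $\Lambda=(\tfrac12\vert 0,0)$. The two values $(\Lambda+\rho,\epsilon_{1}-\delta_{k})$ are $\tfrac12$ and $-\tfrac12$. The only level-one constituent, $L_{0}(\Lambda-\epsilon_{1}+\delta_{1})$, has positive norm. Yet $e_{-\epsilon_{1}+\delta_{1}}e_{-\epsilon_{1}+\delta_{2}}v_{\Lambda}$ has norm $-\tfrac14\|v_{\Lambda}\|^{2}$, so $L(\Lambda)$ is not unitarizable. The analogous problem affects (iv) and (v), where only $i=i_{0}$ (resp.\ the endpoint of the $j$-tail) gives a dominant level-one weight.

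The missing idea is to apply several odd lowering operators from the same row, as the paper does. The vector $v_{k}=e_{-\epsilon_{m}+\delta_{1}}\cdots e_{-\epsilon_{m}+\delta_{k}}v_{\Lambda}$ is $\even$-primitive, its weight is always $\even$-dominant, and
$\|v_{k}\|^{2}=\prod_{l=1}^{k}(\Lambda+\rho,\epsilon_{m}-\delta_{l})\|v_{\Lambda}\|^{2}$.
If no factor vanishes, induction on $k$ gives (iii). For (ii) the paper uses $\|e_{-\epsilon_{m}+\delta_{n}}v_{k-1}\|^{2}=(\mu^{n}-\mu^{k})\|v_{k-1}\|^{2}$, where $e_{\epsilon_{m}-\delta_{n}}v_{k-1}=0$; no determinant is needed.

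Your level-one route can give (ii) in case a). If $\mu^{k}>\mu^{n}$, some $L_{0}(\Lambda-\epsilon_{m}+\delta_{k'})$ with $k<k'$ exists and has a nonzero negative value. But this works only on the Kac module $\UE(\gg)\otimes_{\UE(\even\oplus\nn^{+}_{\bar1})}L_{0}(\Lambda)$, where the Casimir shows that a $\gg$-singular level-one vector of weight $\Lambda-\alpha$ forces $(\Lambda+\rho,\alpha)=0$. The Kac--Shapovalov determinant of $M^{\bb}(\Lambda)^{\Lambda-\epsilon_{m}+\delta_{k'}}$ is inconclusive, because it contains the factor $(\Lambda+\rho,\epsilon_{m}-\delta_{k})=0$.

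In case b) the induced module you propose does not exist for $\bb_{\mathrm{nst}}$, since $\pp_{1}\oplus\qq_{2}$ is not $\mathrm{ad}(\even)$-stable. The paper's explicit vectors, by contrast, carry over directly.
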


\begin{proof}
The proofs of~a), and~b) are analogous; for simplicity, we restrict to~a).
Let $v_{\Lambda}$ be the highest weight vector of $\HH$. Then $v_{\Lambda}$ generates, in particular, a unitarizable highest weight $\even$-module of highest weight $\Lambda$. For an odd root $\alpha$, let $\gg^{\alpha}$ be the associated root space of superdimension $(0\vert 1)$. If $e_{\alpha} \in \gg^{\alpha}$ is a root vector corresponding to $\alpha$, we have $\omega(e_{\alpha}) = e_{-\alpha} \in \gg^{-\alpha}$ since $\omega = \omega_{+}$.

Fix an odd positive root $\alpha \coloneqq \epsilon_{i} - \delta_{j} \in \Delta_{\bar{1}}^{+}$. If $\langle \cdot, \cdot \rangle$ denotes the positive-definite Hermitian form on $\HH$, we have by positive-definiteness
\[
0 \leq \langle e_{-\alpha}v_{\Lambda}, e_{-\alpha}v_{\Lambda} \rangle
= \langle v_{\Lambda}, \omega(e_{-\alpha}) e_{-\alpha} v_{\Lambda} \rangle
= \Lambda([\omega(e_{-\alpha}), e_{-\alpha}]) \langle v_{\Lambda}, v_{\Lambda} \rangle
= (\lambda^{i} + \mu^{j}) \langle v_{\Lambda}, v_{\Lambda} \rangle.
\]
Since $\langle v_{\Lambda}, v_{\Lambda} \rangle > 0$, it follows that $\lambda^{i} \geq -\mu^{j}$ for all $1 \leq i \leq m$ and $1 \leq j \leq n$. Moreover, since $\Lambda$ is the highest weight of a finite-dimensional $\gg$-supermodule, it is dominant integral, \emph{i.e.}, $
\lambda^{i} - \lambda^{i+1} \in \ZZ_{\geq 0}$ and $\mu^{i} - \mu^{j} \in \ZZ_{\geq 0}.
$
Thus,
\[
\lambda^{1} \geq \cdots \geq \lambda^{m} \geq -\mu^{n} \geq \cdots \geq -\mu^{1}.
\]

Assume now that $(\Lambda + \rho, \epsilon_{m} - \delta_{k}) = 0$ for some $1 \leq k \leq n$. Then $(\Lambda+\rho, \epsilon_{m}-\delta_{l})> 0$ for all $1 \leq l \leq k-1$. Set
$
v_{k-1} \coloneqq e_{-\epsilon_{m} + \delta_{1}} \cdots e_{-\epsilon_{m} + \delta_{k-1}} v_{\Lambda},
$
and note that 
\[
\begin{aligned}
 \langle v_{k-1}, v_{k-1} \rangle &= (\Lambda-\sum_{i=1}^{k-2}(\epsilon_{m}-\delta_{i}), \epsilon_{m}-\delta_{k-1})\langle v_{k-2},v_{k-2}\rangle = (\Lambda+\rho, \epsilon_{m}-\delta_{k-1}) \langle v_{k-2},v_{k-2}\rangle \\ &= \ldots = \prod_{l=1}^{k-1}(\Lambda+\rho, \epsilon_{m}-\delta_{l}) \langle v_{\Lambda},v_{\Lambda}\rangle > 0,
\end{aligned}
\]
where we used $(\rho, \epsilon_{m} - \delta_{l}) = -l + 1$ and $(\Lambda+\rho, \epsilon_{m}-\delta_{l})>0$ for all $1 \leq l \leq k$.

By positive-definiteness, one has
\[
\begin{aligned}
0 &\leq \langle e_{-\epsilon_{m} + \delta_{n}} v_{k-1}, e_{-\epsilon_{m} + \delta_{n}} v_{k-1} \rangle = (\Lambda - \sum_{i=1}^{k-1}(\epsilon_{m} - \delta_{i}), \epsilon_{m} - \delta_{n}) \langle v_{k-1}, v_{k-1} \rangle \\
&= [(\Lambda, \epsilon_{m} - \delta_{k}) - k + 1 + (\Lambda, \delta_{k} - \delta_{n})] \langle v_{k-1}, v_{k-1} \rangle = [(\Lambda + \rho, \epsilon_{m} - \delta_{k}) + (\Lambda, \delta_{k} - \delta_{n})] \langle v_{k-1}, v_{k-1} \rangle \\
&= (\Lambda, \delta_{k} - \delta_{n}) \langle v_{k-1}, v_{k-1} \rangle.
\end{aligned}
\]
Since $(\Lambda, \delta_{k} - \delta_{n}) = -\mu^{k} + \mu^{n} \leq 0$ and $\langle v_{k-1}, v_{k-1} \rangle > 0$, we conclude that $(\Lambda, \delta_{k} - \delta_{n}) = 0$.

Finally, a similar line of reasoning yields, in the case $(\Lambda+\rho,\epsilon_{m}-\delta_{k})\neq0$ for all $k=1,\ldots,n$, that 
\[
\langle v_{k},v_{k}\rangle=\prod_{l=1}^{k}(\Lambda+\rho,\epsilon_{m}-\delta_{l})\,\langle v_{\Lambda},v_{\Lambda}\rangle>0, \qquad k=1, \ldots, n.
\]
Inductively, starting from $\langle v_{1},v_{1}\rangle>0$, we conclude that $(\Lambda+\rho,\epsilon_{m}-\delta_{k})>0$ for all $k=1,\ldots,n$. This establishes~(iii) and completes the proof.
\end{proof}

The notion of unitarity, and in particular the characterization of the full set of unitarizable supermodules, is closely connected to the (algebraic) Dirac operator. This operator was introduced in \cite{huang2005dirac, huang2007dirac}, and its connection to unitarity was studied in \cite{SchmidtDirac}. Before outlining the main ideas, we recall an explicit realization of simple highest weight $\gg$-supermodules as quotients of Verma supermodules, a construction that will be useful for the subsequent classification.

\subsection{Intermezzo: Highest Weight Supermodules} \label{subsec::HW} 
We study the structure of (unitarizable) highest weight supermodules over $\gg$ and identify them as Harish-Chandra supermodules.
We also introduce the Kac--Shapovalov form and record the restriction properties of the $\even$-constituents needed later.

\subsubsection{Unitarizable Highest Weight \texorpdfstring{$\even$}{}-Modules} \label{subsubsec::unitarizable_even_HWM}
Any unitarizable highest weight $\gg$-supermodule of highest weight $\Lambda$ decomposes as a direct sum of unitarizable highest weight $\even$-modules. Hence $\Lambda$ is itself the highest weight of a unitarizable highest weight $\even$-module. The structure of these supermodules is as follows. One has the decomposition
$\even=\L\oplus\R\oplus\u(1)^{\CC}$,
with $\L=\su(p,q)^{\CC}$ and $\R=\su(n)^{\CC}$.
 We denote the associated root systems of $(\L, \hh\vert_{\L})$ and $(\R,\hh\vert_{\R})$ by $\Delta_{\L}$ and $\Delta_{\R}$, respectively. In particular, $\Delta_{\bar{0}} = \Delta_{\L}\sqcup \Delta_{\R}$. The positive systems are $\Delta_{\L}^{+} \coloneqq \Delta^{+}_{\bar{0}} \cap \Delta_{\L}$ and $\Delta_{\R}^{+} \coloneqq \Delta^{+}_{\bar{0}} \cap \Delta_{\R}$. The highest weight of a unitarizable simple $\gg$-supermodule is of the form $\nu=(\nu^{\L}\vert\nu^{\R})$, where, in standard coordinates, $\nu^{\L}=(\lambda^{1},\ldots,\lambda^{m})$ and $\nu^{\R}=(\mu^{1},\ldots,\mu^{n})$. Here $\nu^{\L}$ is the highest weight of a unitarizable $\L$-module, while $\nu^{\R}$ is the highest weight of a finite-dimensional simple $\R$-module. In particular, one obtains the following description.

\begin{lemma} \label{lemm::structure_even_module}
 Any unitarizable simple highest weight $\even$-module $L_{0}(\mu)$ is given by the outer tensor product of unitarizable simple highest weight $\su(p,q)^{\CC}$-, $\su(n)^{\CC}$-, and $\u(1)^{\CC}$-modules, respectively, \emph{i.e.},
 $$
 L_{0}(\mu) \cong L_{0}(\mu^{\L};\L) \boxtimes L_{0}(\mu^{\R};\R) \boxtimes \CC_{\mu}.
 $$
\end{lemma}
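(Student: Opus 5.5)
The plan is to reduce the statement to the standard structure of reductive Lie algebras and of their highest weight modules.

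First, I use the decomposition $\even=\L\oplus\R\oplus\u(1)^{\CC}$ recorded above, where $\L=\su(p,q)^{\CC}$, $\R=\su(n)^{\CC}$, and the center $\u(1)^{\CC}$ is spanned by the element of $\hh$ whose restriction to the $\gl(m)$- and $\gl(n)$-blocks are suitable multiples of the identity. These three ideals commute pairwise. Hence $\UE(\even)\cong\UE(\L)\otimes\UE(\R)\otimes\UE(\u(1)^{\CC})$. The Cartan subalgebra splits compatibly as $\hh=\hh_{\L}\oplus\hh_{\R}\oplus\CC Z$, and the even Borel splits as $\bb_{\bar0}=\bb_{\L}\oplus\bb_{\R}\oplus\CC Z$ because $\Delta_{\bar0}^{+}=\Delta_{\L}^{+}\sqcup\Delta_{\R}^{+}$. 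The weight $\mu$ then decomposes as $(\mu^{\L},\mu^{\R},\mu(Z))$.

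Second, I form the outer tensor product $N=L_{0}(\mu^{\L};\L)\boxtimes L_{0}(\mu^{\R};\R)\boxtimes\CC_{\mu}$. The tensor product $v=v_{\L}\otimes v_{\R}\otimes 1$ of the highest weight vectors is annihilated by $\nn_{\bar0}^{+}$, has weight $\mu$, and generates $N$. So $N$ is a highest weight $\even$-module of highest weight $\mu$.

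The key step is to show that $N$ is simple. Since $\CC_{\mu}$ is one-dimensional, it suffices to treat $L_{0}(\mu^{\L};\L)\boxtimes L_{0}(\mu^{\R};\R)$. I would argue via singular vectors. Let $U\subset N$ be a nonzero submodule. It is a weight module, so it contains a weight vector; applying $\nn_{\bar0}^{+}$ repeatedly, it contains a vector killed by $\nn_{\bar0}^{+}$. The weights of $N$ are bounded above and each weight space is finite-dimensional, since $L_{0}(\mu^{\R};\R)$ is finite-dimensional and $L_{0}(\mu^{\L};\L)$ has finite-dimensional weight spaces. Write such a vector as $\sum_j a_j\otimes b_j$ with the $b_j$ linearly independent. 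Being killed by $\nn_{\L}^{+}$ forces each $a_j$ to be $\nn_{\L}^{+}$-singular, hence a multiple of $v_{\L}$ by simplicity of $L_{0}(\mu^{\L};\L)$. The vector is then $v_{\L}\otimes b$ with $b$ killed by $\nn_{\R}^{+}$, so $b$ is a multiple of $v_{\R}$. Thus $v\in U$, and $U=N$.

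A simple highest weight module is determined up to isomorphism by its highest weight, so $N\cong L_{0}(\mu)$. Alternatively one can invoke the density theorem, since $\mathrm{End}$ of each factor is $\CC$, using Dixmier's version of Schur's lemma for countable-dimensional modules.

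Finally, unitarity has to be compatible with the factorization. The anti-involution $\omega$ restricted to $\even$ preserves each of the three ideals, because it preserves the block structure and the center. A positive definite contravariant form on $L_{0}(\mu)$ therefore restricts, via the subspace generated by $v_{\L}\otimes v_{\R}$ under $\UE(\L)$ (respectively under $\UE(\R)$ applied to $v_{\L}\otimes v_{\R}$), to positive definite contravariant forms on the factors. Each factor is thus unitarizable, and $\mu(Z)$ is real, so the $\u(1)$ factor is unitary as well.

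I expect the main obstacle to be the simplicity argument for $L_{0}(\mu^{\L};\L)$ being infinite-dimensional. It is handled by the finite-dimensionality of the weight spaces described above.
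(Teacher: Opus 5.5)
Your proof is correct and follows the route the paper has in mind. The paper gives no separate argument and reads the lemma off from the splitting $\even=\L\oplus\R\oplus\u(1)^{\CC}$ and the matching splitting of highest weights, so your steps (simplicity of the outer tensor product, uniqueness of the simple highest weight module, and restriction of the contravariant form to $\UE(\L)v$ and $\UE(\R)v$) supply exactly the standard details left implicit there. Incidentally, your coefficient argument with linearly independent $b_j$ does not need finite-dimensional weight spaces; only the upper bound on weights matters, because it is what produces a singular vector in $U$.
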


All such modules will be listed in Section~\ref{sec::classification}, providing a parametrization of the highest weights of unitarizable highest weight $\even$-modules.

A characteristic property of unitarizable highest weight supermodules is that they belong to the class of \emph{Harish-Chandra modules}. We begin by recalling the definition.

Set $\kk \coloneqq \su(p)\oplus \su(q)\oplus \u(1)\oplus \su(n)\oplus \u(1)$. Then $\kk$ is a maximal compact subalgebra of $\su(p,q\vert n)_{\bar{0}}$, with $\kk = \su(p,q\vert n)_{\bar{0}}$ if $p=0$ or $q=0$. One has the \emph{equal rank condition}
\begin{equation} \label{eq::equal_rank_condition}
 \hh \subset \kk^{\CC} \subset \even \subset \gg,
\end{equation}
where $\kk^{\CC}$ denotes the complexification of $\kk$. The root system $\Delta_{c}$ associated with $(\hh,\kk^{\CC})$ is
\begin{equation}
\begin{aligned}
\Delta_{c}&\coloneqq \{\pm(\epsilon_{i}-\epsilon_{j}) : 1\leq i < j \leq p, \ p+1\leq i < j \leq m\} \cup \{\pm (\delta_{i}-\delta_{j}) : 1\leq i< j \leq n\},
\end{aligned}
\end{equation}
which forms a subset of $\Delta_{\bar{0}}$. Thus, a root $\alpha \in \Delta_{\bar{0}}$ is termed \emph{compact} if $\alpha \in \Delta_{c}$, or equivalently, if the corresponding root vector belongs to $\kk^{\CC}$; otherwise, it is called \emph{non-compact}. 
The Weyl group associated with $\Delta_{c}$ will be denoted by $W_{c}$, which is indeed a subgroup of $W$.
The set $\Delta_{n} \coloneqq \Delta_{\bar{0}} \setminus \Delta_{c}$ will be called the \emph{set of non-compact roots}, so that we have the decomposition:
\begin{equation}
\Delta_{\bar{0}} = \Delta_{c} \sqcup \Delta_{n}.
\end{equation}
Moreover, we define the \emph{set of positive compact} and \emph{positive non-compact roots} by $\Delta_{c,n}^{+} \coloneqq \Delta^{+} \cap \Delta_{c,n}$. The associated Weyl vectors are given by $\rho_{c,n} \coloneqq \frac{1}{2} \sum_{\alpha \in \Delta_{c,n}^{+}} \alpha$, and with respect to $\Delta_{c}^{+}$, an element $\lambda \in \hh^{\ast}$ is called $\Delta_{c}^{+}$\emph{-dominant integral} if it satisfies the following conditions:
\begin{equation} \label{eq::dominant_integral}
 \begin{cases}
 (\lambda + \rho_{c}, \alpha) \in \ZZ_{\geq 0} \ &\text{for all} \ \alpha \in \{\epsilon_{i} - \epsilon_{j} : 1 \leq i < j \leq p \ \text{or} \ p+1 \leq i < j \leq m\}, \\
 (\lambda + \rho_{c}, \alpha) \in \ZZ_{\leq 0} \ &\text{for all} \ \alpha \in \{\delta_{i} - \delta_{j} : 1 \leq i < j \leq n\}.
 \end{cases}
\end{equation}
The dominant integral weights are in one-to-one correspondence with simple $\kk^{\CC}$-modules.

By the equal rank condition \eqref{eq::equal_rank_condition}, every $\even$-module is a $\kk^{\CC}$-module. A $(\even, \kk^{\CC})$-module is called a \emph{Harish-Chandra module} if it is finitely generated and locally finite as a $\kk^{\CC}$-module. 

\begin{lemma}[{\cite[Lemma~IX.3.10]{NeebHW}}]
 Any unitarizable highest weight $\even$-module is a Harish-Chandra module.
\end{lemma}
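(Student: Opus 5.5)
We have to show that a unitarizable highest weight $\even$-module $M$ with highest weight $\mu$ is finitely generated as a $\UE(\even)$-module and locally finite as a $\kk^{\CC}$-module. Finite generation is immediate, since $M=\UE(\even)v_{\mu}$ is cyclic on its highest weight vector. The real work is local $\kk^{\CC}$-finiteness.

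\textbf{Step 1: use the structure lemma.} By Lemma~\ref{lemm::structure_even_module}, $M\cong L_{0}(\mu^{\L};\L)\boxtimes L_{0}(\mu^{\R};\R)\boxtimes\CC_{\mu}$. The factor $L_{0}(\mu^{\R};\R)$ is finite-dimensional, and $\CC_{\mu}$ is one-dimensional. It therefore suffices to prove local finiteness of $L_{0}(\mu^{\L};\L)$ under $\kk_{\L}^{\CC}=\mathfrak{s}(\gl(p)\oplus\gl(q))$. If $p=0$ or $q=0$, then $\kk^{\CC}=\even$ and $M$ is finite-dimensional by unitarity, so we may assume $p,q\neq0$.

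\textbf{Step 2: decompose $\L$.} Write $\L=\pp^{-}\oplus\kk_{\L}^{\CC}\oplus\pp^{+}$, where $\pp^{\pm}$ are the abelian spans of the non-compact root vectors $e_{\pm(\epsilon_{i}-\epsilon_{j})}$ with $i\le p<j$. Because $\Delta_{\bar 0}^{+}$ is standard, the non-compact positive roots all lie in $\pp^{+}$. By PBW,
\[
M=\UE(\pp^{-})\,\UE(\kk_{\L}^{\CC})\,v_{\mu}.
\]

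\textbf{Step 3: show $F:=\UE(\kk^{\CC}_{\L})v_{\mu}$ is finite-dimensional.} This is the main obstacle, and here unitarity enters. $F$ is a highest weight $\kk^{\CC}_{\L}$-module with highest weight $\mu$, and it inherits a positive definite contravariant form with respect to the compact real form $\kk_{\L}$. A unitarizable highest weight module for the compact form is the simple module with that highest weight, and $\mu$ must be dominant integral. The latter holds because each compact $\mathfrak{sl}_{2}$-triple $\{e_{\alpha},e_{-\alpha},h_{\alpha}\}$ acts unitarily with $\omega(e_{\alpha})=e_{-\alpha}$. Indeed, the norms $\|e_{-\alpha}^{k}v_{\mu}\|^{2}=k!\prod_{j=0}^{k-1}(\mu(h_{\alpha})-j)\|v_{\mu}\|^{2}$ must be nonnegative for all $k$, which forces $\mu(h_{\alpha})\in\ZZ_{\ge0}$. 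Hence $F\cong$ the finite-dimensional simple $\kk^{\CC}_{\L}$-module of highest weight $\mu$.

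\textbf{Step 4: conclude local finiteness.} Since $\pp^{-}$ is abelian and $\operatorname{ad}(\kk^{\CC}_{\L})$-stable, each filtration piece $\UE(\pp^{-})_{\le d}=S^{\le d}(\pp^{-})$ is a finite-dimensional $\kk^{\CC}_{\L}$-module. The multiplication map
\[
S^{\le d}(\pp^{-})\otimes F\to M
\]
is $\kk^{\CC}_{\L}$-equivariant, so its image $M_{d}$ is a finite-dimensional $\kk^{\CC}_{\L}$-submodule. Since $M=\bigcup_{d}M_{d}$, every vector lies in a finite-dimensional $\kk^{\CC}$-stable subspace. Hence $M$ is locally $\kk^{\CC}$-finite, and so it is a Harish-Chandra module.
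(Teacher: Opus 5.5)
Your argument is correct. The paper gives no proof of this lemma and simply quotes Neeb's Lemma~IX.3.10, so your proposal is a self-contained replacement for the citation rather than a variant of an argument in the text.

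Yours is the standard algebraic proof:
\begin{itemize}
\item The standard positive system puts every non-compact positive root into the abelian, $\operatorname{ad}(\kk^{\CC})$-stable subspace $\pp^{+}$, so PBW gives $M=\UE(\pp^{-})\UE(\kk^{\CC})v_{\mu}$.
\item Unitarity is used only through the $\mathfrak{sl}_{2}$ norm computation, which makes $\UE(\kk^{\CC})v_{\mu}$ finite-dimensional.
\item The $\operatorname{ad}$-stable filtration by $S^{\le d}(\pp^{-})$ then propagates finiteness to all of $M$.
\end{itemize}
The citation places the result in Neeb's general theory of unitary highest weight representations. Your version instead makes explicit exactly which inputs matter: the choice of Borel adapted to $\pp^{+}$, and positivity on the compact root $\mathfrak{sl}_{2}$'s. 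It also yields finite $\kk^{\CC}$-multiplicities for free, since the centre of $\kk_{\L}^{\CC}$ acts on $S^{d}(\pp^{-})$ by a scalar that is strictly monotone in $d$. The paper's definition does not require this.

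Two small tightenings:
\begin{itemize}
\item Lemma~\ref{lemm::structure_even_module} is stated for simple modules, so Step~1 tacitly uses that a unitarizable highest weight module is simple. This holds because the orthogonal complement of a proper submodule is a submodule containing the one-dimensional highest weight space. Step~1 can in fact be dropped. Steps~2--4 run verbatim for $\even=\pp^{-}\oplus\kk^{\CC}\oplus\pp^{+}$, since $\omega(e_{\alpha})=e_{-\alpha}$ for every root of $\kk^{\CC}$, including $\delta_{k}-\delta_{l}$. This also covers $p=0$ or $q=0$ uniformly, with $\pp^{\pm}=0$.
\item In Step~3 you need not identify $F$ as ``the'' simple module. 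Take $k=\mu(h_{\alpha})+1$ in your norm formula; positive definiteness gives $e_{-\alpha}^{\mu(h_{\alpha})+1}v_{\mu}=0$ for every simple compact root $\alpha$. Hence $F$ is a quotient of the $\kk^{\CC}$-Verma module by these vectors, and that quotient is the finite-dimensional simple module of highest weight $\mu$.
\end{itemize}
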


\subsubsection{Highest Weight Supermodules} \label{subsec::highest_weight_supermodules}
Let $\bb = \hh \oplus \nn^{+}$ be the Borel subalgebra corresponding to a positive system $\Delta^{+}$ of Section~\ref{subsec::real_forms}. 
The enveloping superalgebra $\UE(\gg)$ is viewed as a right $\UE(\bb)$-supermodule by right multiplication. 
For $\Lambda \in \hh^{\ast}$, let $\CC_{\Lambda}$ denote the one-dimensional $\UE(\bb)$-supermodule of weight $\Lambda$, 
with trivial action of $\nn^{+}$. 
The \emph{Verma supermodule} of highest weight $\Lambda$ is then defined by
\begin{equation}\label{eq::Verma_supermodules}
M^{\bb}(\Lambda) = \UE(\gg) \otimes_{\UE(\bb)} \CC_{\Lambda}.
\end{equation}

The Verma supermodule $M^{\bb}(\Lambda)$ is a highest weight $\gg$-supermodule generated by the vector $[1\otimes 1]$ of weight $\Lambda$, with $M^{\bb}(\Lambda) = \UE(\nn^{-})[1\otimes 1]$. For any $\gg$-supermodule $M$ and any $\bb$-eigenvector $v_{\Lambda}\in M$ of weight $\Lambda$, there exists a unique surjective morphism $M^{\bb}(\Lambda)\to M$ sending $[1\otimes 1]$ to $v_{\Lambda}$. The module $M^{\bb}(\Lambda)$ admits a unique maximal subsupermodule, hence a unique simple quotient $L(\Lambda)$; it is therefore indecomposable~\cite[Chap.~8]{musson2012lie}. 
If $\Lambda$ is the highest weight of a unitarizable $\even$-module, $M^{\bb}(\Lambda)$ is an example of a Harish-Chandra supermodule (\emph{cf.}~\cite{Carmeli_Fioresi_Varadarajan_HW}), that is, $M^{\bb}(\Lambda)$ is finitely generated and locally finite as a $\kk^{\CC}$-supermodule. If $\bb$ is fixed, or is clear from the context, we omit the superscript in $M^{\bb}(\Lambda)$ and $L^{\bb}(\Lambda)$.

The definition of the Verma supermodule depends on the choice of a positive system. In Section~\ref{subsec::real_forms} we introduced two positive systems (\emph{cf.}~\eqref{eq::positive_systems}) relevant for unitarity, namely the standard one $\Delta^{+}_{\st}$ and the non-standard one $\Delta^{+}_{\nst}$, which share the same even part $\Delta^{+}_{\bar{0}}$ fixed in \eqref{evenpositive}. We fix $\Delta^{+}_{\st}$ in the case $p=0$ or $q=0$, and $\Delta^{+}_{\nst}$ in the case $p,q\neq 0$. The corresponding Borel subalgebras are denoted $\bb_{\st}$ and $\bb_{\nst}$. The two systems are connected by a sequence of odd reflections defined in \eqref{oddreflection}. Under these reflections one has \cite[Lemma~1.40]{cheng2012dualities}
\begin{equation} \label{eq::relation_Verma_modules_and_systesm}
 M^{\bb_{\st}}(\Lambda) \;\cong\; M^{\bb_{\nst}}(\Lambda'), 
 \qquad 
 L^{\bb_{\st}}(\Lambda) \;\cong\; L^{\bb_{\nst}}(\Lambda'),
\end{equation}
up to parity, where the highest weights $\Lambda,\Lambda' \in \hh^{\ast}$ are related by the odd reflection operations. The superscript indicates the choice of Borel subalgebra. The modules $L^{\bb_{\st}}(\Lambda)$ and $L^{\bb_{\nst}}(\Lambda')$ have the same $\even$-constituents. Moreover, $\Lambda'$ is typical if and only if $\Lambda$ is typical (\emph{cf.}~\cite[Cor.~9.3.5]{musson2012lie}).
 The choice of $\Delta_{\st}^{+}$ is convenient, since it yields a natural $\ZZ$-grading of $\gg$ compatible with the $\ZZ_{2}$-grading. By contrast, $\Delta_{\nst}^{+}$ is better suited to unitarity and infinite-dimensional supermodules, \emph{i.e.}, to the real forms $\su(p,q\vert n)$ with $p,q\neq 0$. Accordingly, we work with $\Delta_{\nst}^{+}$ whenever $p,q\neq 0$; the classification for $\Delta_{\st}^{+}$ follows by a sequence of odd reflections.

For the standard system $\Delta_{\st}^{+}$ it is a classical result of Kac \cite{kac} that $M^{\bb}(\Lambda)$ is simple if and only if $2(\Lambda+\rho,\alpha)\neq m(\alpha,\alpha)$ for all $\alpha \in \Delta_{\bar{0}}^{+}\sqcup \Delta^{+}_{\bar{1},\st}$ and $m \in \ZZ_{>0}$. A weight $\Lambda$ satisfying this condition for all $\alpha \in \Delta_{\bar{1}}^{+}$ is called \emph{typical}; otherwise it is called \emph{atypical}. Using \eqref{eq::relation_Verma_modules_and_systesm}, together with the fact that $\Lambda$ is typical with respect to $\Delta^{+}_{\bar{1},\st}$ if and only if $\Lambda'$ is typical with respect to $\Delta^{+}_{\bar{1},\nst}$, one obtains the analogous statement for both positive systems.

\begin{theorem}\label{thm::KacsTheorem}
Fix either the standard or the non-standard positive system. Then the Verma supermodule $M^{\bb}(\Lambda)$ is simple if and only if 
$
 2(\Lambda+\rho,\alpha)\neq m(\alpha,\alpha)
$ for all $\alpha \in \Delta_{\bar{1}}^{+}$ and $m \in \ZZ_{>0}$.
\end{theorem}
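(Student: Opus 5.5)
Since the standard case is Kac's classical theorem, the plan is to reduce the non-standard case to it by means of odd reflections. By \eqref{eq::relation_Verma_modules_and_systesm}, $M^{\bb_{\st}}(\Lambda)\cong M^{\bb_{\nst}}(\Lambda')$ up to parity, where $\Lambda'$ is obtained from $\Lambda$ by the chain of odd reflections connecting $\Delta^{+}_{\st}$ to $\Delta^{+}_{\nst}$. Simplicity is invariant under parity change, so $M^{\bb_{\nst}}(\Lambda')$ is simple if and only if $M^{\bb_{\st}}(\Lambda)$ is. By Kac's theorem, this holds if and only if $2(\Lambda+\rho_{\st},\alpha)\neq m(\alpha,\alpha)$ for all $\alpha\in\Delta^{+}_{\bar0}\sqcup\Delta^{+}_{\bar1,\st}$. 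It then remains to translate this condition into the corresponding one for $\Lambda'$ with respect to $\Delta^{+}_{\nst}$.

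I would verify this at a single odd reflection $r_\theta$ with $\theta$ simple isotropic, and then iterate. Let $\Delta^{+}_\theta=(\Delta^{+}\setminus\{\theta\})\cup\{-\theta\}$. Then $\rho_\theta=\rho+\theta$. For the highest weight, the rule is $\Lambda_\theta=\Lambda-\theta$ if $(\Lambda,\theta)\neq0$, and $\Lambda_\theta=\Lambda$ if $(\Lambda,\theta)=0$.

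\begin{itemize}
\item In the first case $\Lambda_\theta+\rho_\theta=\Lambda+\rho$, so every pairing $(\Lambda+\rho,\alpha)$ is unchanged.
\item In the second case $(\Lambda+\rho,\theta)=(\Lambda,\theta)=0$, using $(\rho,\theta)=\tfrac12(\theta,\theta)=0$ for $\theta$ simple isotropic. So $\Lambda$ is atypical with respect to $\theta$. The new element $\Lambda_\theta+\rho_\theta=\Lambda+\rho+\theta$ pairs to zero with $-\theta\in\Delta_\theta^+$, so atypicality persists.
\end{itemize}

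Since $\Delta^+_{\bar1}$ and $(\Delta_\theta^+)_{\bar1}$ differ only by the sign of $\theta$, the set of odd positive roots up to sign is preserved. In the first case this shows that the typicality conditions, which read $(\Lambda+\rho,\alpha)\neq0$ for isotropic $\alpha$, coincide. In the second case both sides are atypical. Hence typicality is invariant, as already recorded in the paper via \cite[Cor.~9.3.5]{musson2012lie}, so the odd-root conditions transfer.

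The even part of the condition needs separate treatment, and I expect it to be the main obstacle. The point is that Kac's criterion includes even roots, while the statement lists only odd ones. I would handle it by noting that $\Lambda$ is implicitly the highest weight of a unitarizable $\even$-module in this context; equivalently, one reads the theorem as characterizing simplicity relative to the odd part, in the sense that $M^{\bb}(\Lambda)$ is simple as a Kac-type induced module. Concretely, I would use $M(\Lambda)=\operatorname{Ind}_{\even\oplus\nn^+_{\bar1}}^{\gg}$ of the $\even$-module, and invoke Kac's determinant for the odd factor $\prod_{\alpha\in\Delta^+_{\bar1}}((\Lambda+\rho,\alpha))$.

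If time permits, I would double-check that $\rho_{\nst}$ as given in \eqref{eq::Weyl_vector_non_standard} agrees with $\rho_{\st}$ plus the sum of the reflected roots. This ensures that the iteration correctly tracks $\Lambda'+\rho_{\nst}$.
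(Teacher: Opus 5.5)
Your reduction is the same as the paper's argument: Kac's theorem for $\Delta^{+}_{\st}$, transported to $\Delta^{+}_{\nst}$ through \eqref{eq::relation_Verma_modules_and_systesm}, with typicality preserved under odd reflections. Your one-step computation ($\rho_\theta=\rho+\theta$, and $\Lambda_\theta+\rho_\theta=\Lambda+\rho$ when $(\Lambda,\theta)\neq0$) is correct.

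You are also right that the displayed condition omits $\Delta^{+}_{\bar0}$. As literally stated, the theorem is false for $M^{\bb}(\Lambda)=\UE(\gg)\otimes_{\UE(\bb)}\CC_{\Lambda}$: any typical $\Lambda$ with $2(\Lambda+\rho,\alpha)/(\alpha,\alpha)\in\ZZ_{>0}$ for some $\alpha\in\Delta^{+}_{\bar0}$ gives a non-simple Verma supermodule. For example, in $\mathfrak{sl}(2\vert1)$ the weight $\Lambda=(\tfrac{x}{2},\tfrac{x}{2}\vert\tfrac{x}{2})$ with $x>0$ is typical, yet $(\Lambda+\rho,\epsilon_1-\epsilon_2)=1$.

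However, your repair does not work, for three reasons.
\begin{enumerate}
\item For $p,q\neq0$ there is no subalgebra $\even\oplus\nn^{+}_{\bar1,\nst}$ to induce from. The root vector of $\epsilon_j-\epsilon_i$ with $i\le p<j$ maps $\pp_1$ into $\pp_2\subset\nn^{-}$.
\item In the standard case, inducing $L_0(\Lambda)$ gives the Kac module, not $M^{\bb}(\Lambda)$. Inducing $M_0(\Lambda)$ gives back $M^{\bb}(\Lambda)$, whose simplicity still requires $M_0(\Lambda)$ to be simple.
\item Assuming $L_0(\Lambda)$ unitarizable works against you. It forces $\Lambda$ to be $\Delta_c^{+}$-dominant integral. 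Since $\rho_{\bar1}\perp\Delta_c$, the even condition then fails at every compact simple root, so $M^{\bb}(\Lambda)$ is never simple once $\Delta_c\neq\emptyset$.
\end{enumerate}
The right fix is to keep $\Delta^{+}_{\bar0}$ in the condition, as in the sentence preceding the theorem. The even roots then need no separate treatment. At a typical step, $\Lambda_\theta+\rho_\theta=\Lambda+\rho$ and $\Delta^{+}_{\bar0}$ is unchanged, so the even conditions are literally the same. At an atypical step both criteria fail.

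There is a second gap, inherited from \eqref{eq::relation_Verma_modules_and_systesm}. The rule $\Lambda_\theta=\Lambda$ for $(\Lambda,\theta)=0$ is the odd-reflection rule for \emph{simple} highest weight modules, and it fails for Verma modules:
\begin{enumerate}
\item One always has $\operatorname{ch}M^{\bb_\theta}(\Lambda-\theta)=\operatorname{ch}M^{\bb}(\Lambda)$.
\item The map $M^{\bb_\theta}(\Lambda-\theta)\to M^{\bb}(\Lambda)$, $[1\otimes1]\mapsto e_{-\theta}v_\Lambda$, is an isomorphism exactly when $(\Lambda,\theta)\neq0$, because $e_\theta e_{-\theta}v_\Lambda$ is a nonzero multiple of $(\Lambda,\theta)v_\Lambda$.
\item $M^{\bb_\theta}(\Lambda)\not\cong M^{\bb}(\Lambda)$, since their characters differ by the factor $e^{\theta}$.
\end{enumerate}
So in the atypical case your chain of isomorphisms does not show that $M^{\bb_{\nst}}(\Lambda')$ is non-simple. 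Prove it directly instead. If $(\Lambda'+\rho,\alpha)=0$ for some $\alpha\in\Delta^{+}_{\bar1}$, then Lemma~\ref{lemm::atypicality_and_non_exitsence} produces a proper submodule. Equivalently, use the factor $h_\alpha+(\rho,\alpha)$, which occurs with exponent $P_\alpha(0)=1$ at $\eta=\alpha$ in Theorem~\ref{thm::Kac_Shapovalov}.

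In the typical case, set $\Lambda\coloneqq\Lambda'+\rho_{\nst}-\rho_{\st}$. By induction, each intermediate weight satisfies $(\lambda_k,\theta_k)=(\lambda_k+\rho_k,\theta_k)=(\Lambda+\rho_{\st},\theta_k)\neq0$. Hence every step is a genuine isomorphism and $M^{\bb_{\nst}}(\Lambda')\cong M^{\bb_{\st}}(\Lambda)$. Kac's criterion for $\Lambda$ is then verbatim the criterion for $\Lambda'$, even roots included.
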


\medskip

We are concerned with the structure of $M^{\bb}(\Lambda)$, or more precisely with that of its unique simple quotient, viewed as $\even$-modules neglecting parity. 
To this end, we recall a Jordan--Hölder $\even$-filtration due to Musson~\cite[Chap.~10]{musson2012lie}. Let $\Gamma$ denote the set of sums of distinct odd positive roots. For each $\gamma \in \Gamma$, let $p(\gamma)$ be the number of distinct partitions of $\gamma$ into odd positive roots. For a subset $S \subseteq \Delta_{\bar{1}}^{+}$, set $\Gamma_{S} = \sum_{\gamma \in S} \gamma$ and
$
X_{-S} \coloneqq \prod_{\alpha \in S} X_{-\alpha},
$
where $X_{-\alpha}$ is the root vector corresponding to $-\alpha \in \Delta_{\bar{1}}^{-}$, with $\Delta_{\bar{1}}^{-} \coloneqq -\Delta_{\bar{1}}^{+}$. Choose an ordering $S_{1},\dots,S_{N}$ of the subsets of $\Delta_{\bar{1}}^{+}$ such that $\Gamma_{S_{i}} < \Gamma_{S_{j}}$ implies $i<j$. Define
\begin{equation} \label{eq::construction_Jorad_Hölder}
M_{k} \coloneqq \bigoplus_{1 \leq j \leq k} \UE(\nn_{\bar{0}}^{-}) \, X_{-S_{j}}[1\otimes 1].
\end{equation}
By the PBW theorem this sum is direct. A direct computation shows that the image of $X_{-S_{i+1}}[1\otimes 1]$ in $M/M_{i}$ is a highest weight vector, and the submodule it generates is isomorphic to $M^{\bb_{\bar{0}}}_{0}(\Lambda - \Gamma_{S_{i+1}})$, the Verma (super)module over $\even$ with respect to $\bb_{\bar{0}}$.

\begin{proposition}[{\cite[Theorem 10.4.5]{musson2012lie}}] \label{prop::filtration_Verma_supermodule}
The supermodule $M^{\bb}(\Lambda)$ has a filtration as a $\even$-module:
$$
0 = M_{0} \subset M_{1} \subset M_{2} \subset \ldots \subset M_{r} = M^{\bb}(\Lambda),
$$
such that each factor $M_{i+1}/M_{i}$ is isomorphic to a Verma module $M^{\bb_{\bar{0}}}_{0}(\Lambda - \gamma)$, where $\gamma \in \Gamma$. This module appears with multiplicity $p(\gamma)$ in the filtration.
\end{proposition}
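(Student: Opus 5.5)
We prove the statement by building the filtration explicitly from the PBW theorem, following the construction \eqref{eq::construction_Jorad_Hölder}. Since $\nn^{-}=\nn^{-}_{\bar 0}\oplus\nn^{-}_{\bar 1}$ with $\nn^{-}_{\bar 1}$ spanned by the odd root vectors $X_{-\alpha}$, $\alpha\in\Delta_{\bar 1}^{+}$, the PBW theorem for Lie superalgebras gives a vector space isomorphism
\[
\UE(\nn^{-})\cong \UE(\nn^{-}_{\bar 0})\otimes \Lambda(\nn^{-}_{\bar 1}).
\]
Fixing an order on $\Delta_{\bar1}^{+}$, the monomials $X_{-S}$ with $S\subseteq\Delta_{\bar 1}^{+}$ form a basis of the exterior factor. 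Hence
\[
M^{\bb}(\Lambda)=\bigoplus_{S}\UE(\nn^{-}_{\bar 0})X_{-S}[1\otimes1],
\]
each summand is free of rank one over $\UE(\nn^{-}_{\bar0})$, and $X_{-S}[1\otimes1]$ has weight $\Lambda-\Gamma_S$. With the ordering $S_1,\dots,S_N$ chosen so that $\Gamma_{S_i}<\Gamma_{S_j}$ implies $i<j$ (refine the dominance partial order to a total order; $S_1=\emptyset$), we set $M_k$ as in the paper. The chain $0=M_0\subset\cdots\subset M_N=M^{\bb}(\Lambda)$ is strictly increasing and exhausts the module.

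The key step is to show that each $M_k$ is a $\even$-submodule. It suffices to check that for $Y\in\nn^{+}_{\bar 0}$ and $H\in\hh$ we have $Y X_{-S_j}[1\otimes 1]\in M_{j-1}$, and $H$ acts by the scalar $(\Lambda-\Gamma_{S_j})(H)$. Stability under $\nn^{-}_{\bar0}$ is then clear, and stability under $\nn^{+}_{\bar0}$ follows by commuting $Y$ past $\UE(\nn^-_{\bar0})$, which only produces elements of $\even$. To prove the claim, move $Y$ to the right through the odd factors. Each commutator $[Y,X_{-\alpha}]$ is an odd root vector of weight $-\alpha+\beta$, with $\beta$ a positive even root. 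This vector is either in $\nn^{+}_{\bar1}$, or equals $X_{-\alpha'}$ with $\alpha'=\alpha-\beta<\alpha$. Terms ending in $\nn^{+}$ acting on $[1\otimes1]$ vanish. The remaining terms are weight vectors of weight $\Lambda-\Gamma_{S_j}+\beta$, and after re-expressing them in the PBW basis (using $[\nn^-_{\bar1},\nn^-_{\bar1}]\subset\nn^-_{\bar0}$ and the anticommutation of odd vectors) they lie in $\sum_{i}\UE(\nn^-_{\bar0})X_{-S_i}[1\otimes1]$ with $\Gamma_{S_i}\le\Gamma_{S_j}-\beta+(\text{even positive})$. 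A weight comparison then shows $\Gamma_{S_i}<\Gamma_{S_j}$. Indeed, any monomial $u\,X_{-S_i}$ of weight $-\Gamma_{S_j}+\beta$ with $u\in\UE(\nn^-_{\bar0})$ forces $\Gamma_{S_i}=\Gamma_{S_j}-\beta-(\text{wt of }u)^{-}<\Gamma_{S_j}$, so $i<j$. This bookkeeping of weights and reorderings is the main technical obstacle, and it is where the ordering of the $S_i$ by $\Gamma_{S_i}$ is used.

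Given this, the image $\bar v$ of $X_{-S_{k}}[1\otimes1]$ in $M_k/M_{k-1}$ is annihilated by $\nn^{+}_{\bar0}$ and has weight $\Lambda-\Gamma_{S_k}$. Moreover, $M_k/M_{k-1}=\UE(\nn^-_{\bar0})\bar v$ is free of rank one over $\UE(\nn^-_{\bar0})$ by the PBW directness. The universal property of $M_0^{\bb_{\bar0}}(\Lambda-\Gamma_{S_k})$ gives a surjection onto $M_k/M_{k-1}$. Freeness makes it injective, since both sides are free rank-one $\UE(\nn^-_{\bar0})$-modules and the map sends generator to generator. Finally, the multiplicity statement is pure counting: the factor $M_0^{\bb_{\bar0}}(\Lambda-\gamma)$ occurs exactly once for each subset $S$ with $\Gamma_S=\gamma$. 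The number of such subsets is the number of partitions of $\gamma$ into distinct odd positive roots, which is $p(\gamma)$.
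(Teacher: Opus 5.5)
Your proposal is correct and follows the same route as the paper, which sketches exactly this PBW construction of $M_k$ from an ordering of the subsets $S_j$ compatible with $\Gamma_{S_i}<\Gamma_{S_j}$ and defers the details to Musson. The commutator bookkeeping you flag as the main obstacle is actually unnecessary: $\bigoplus_{S}\UE(\nn^{-}_{\bar0})X_{-S}[1\otimes1]$ is an $\hh$-stable decomposition, so your final weight comparison, applied directly to the vector $YX_{-S_j}[1\otimes1]$ of weight $\Lambda-\Gamma_{S_j}+\beta$, already places it in $M_{j-1}$.
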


In the same way one obtains a $\even$-filtration of $L^{\bb}(\Lambda)$, whose simple highest weight $\even$-composition factors are $L_{0}(\Lambda - \gamma)$ with $\gamma \in \Gamma$ (see for instance \cite[Theorem~2.5]{jakobsen1994full}). Note that each composition factor is a $\gg$-supermodule. Hence, if $L^{\bb}(\Lambda)$ is unitarizable, the complete reducibility of unitarizable supermodules implies that the $\even$-filtration is in fact a direct sum. 

\begin{corollary} \label{Cor::Even_constituents}
Let $\HH$ be a unitarizable simple $\gg$-supermodule with highest weight $\Lambda$. Then, viewed as a $\even$-module, $\HH$ decomposes as
$
\HH = \HH_{1} \oplus \cdots \oplus \HH_{r},
$
where each constituent $\HH_{i}$ is isomorphic to a unitarizable highest weight $\even$-module of the form $L_{0}(\Lambda - \gamma)$ for some $\gamma \in \Gamma$.
\end{corollary}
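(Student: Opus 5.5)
We prove Corollary~\ref{Cor::Even_constituents} by combining the $\even$-filtration of $L^{\bb}(\Lambda)$ with complete reducibility coming from unitarity.

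\textbf{Step 1: the filtration.} By Theorem~\ref{thm::HW_property_M}, $\HH$ is a highest weight supermodule, generated by a highest weight vector $v_{\Lambda}$. Hence $\HH$ is a quotient of $M^{\bb}(\Lambda)$ by the universal property, and by simplicity $\HH\cong L^{\bb}(\Lambda)$. Now push the filtration $M_{0}\subset\cdots\subset M_{r}$ of Proposition~\ref{prop::filtration_Verma_supermodule} forward along the surjection $M^{\bb}(\Lambda)\to\HH$. Write $\overline{M}_{k}$ for the image of $M_{k}$. Each quotient $\overline{M}_{k+1}/\overline{M}_{k}$ is a quotient of $M_{k+1}/M_{k}\cong M_{0}^{\bb_{\bar0}}(\Lambda-\gamma)$. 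It is therefore either zero or a highest weight $\even$-module of highest weight $\Lambda-\gamma$, generated by the image of $X_{-S_{k+1}}[1\otimes 1]$.

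\textbf{Step 2: splitting the filtration.} As noted in the text, the restriction $\HH_{\mathrm{ev}}$ is a unitarizable $\even$-module for the real form $\even^{\omega}$. Take orthogonal complements step by step: set $\HH_{k+1}:=\overline{M}_{k+1}\cap\overline{M}_{k}^{\perp}$. Since $\omega$ preserves $\even$, the orthogonal complement of a $\even$-invariant subspace is $\even$-invariant. Thus each $\HH_{k+1}$ is $\even$-stable, $\HH_{k+1}\cong\overline{M}_{k+1}/\overline{M}_{k}$, and $\HH=\bigoplus_{k}\HH_{k}$ as $\even$-modules. Discard the zero terms.

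\textbf{Step 3: each summand is simple.} Each nonzero $\HH_{k}$ is a highest weight $\even$-module with highest weight $\Lambda-\gamma$, and it inherits a positive definite contravariant form, so it is unitarizable. A unitarizable highest weight module is completely reducible. Being cyclic on a highest weight vector, its weight space of weight $\Lambda-\gamma$ is one-dimensional, and this forces it to be indecomposable. Hence it is simple, and $\HH_{k}\cong L_{0}(\Lambda-\gamma)$ with $\gamma\in\Gamma$.

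The main obstacle is making Step 2 rigorous when $\HH$ is infinite-dimensional ($p,q\neq0$), since we work with pre-Hilbert weight modules rather than complete spaces. The fix is to argue weight space by weight space. Weight spaces for distinct weights are orthogonal, because $\omega(H)=H$ for $H$ in the real Cartan subalgebra, and the weight spaces of $\HH$ are finite-dimensional (there are finitely many $\gamma$, and each Verma factor has finite-dimensional weight spaces). So orthogonal complements can be taken inside each finite-dimensional weight space. The resulting $\HH_{k+1}$ is then a genuine algebraic complement, and $\overline{M}_{k+1}=\overline{M}_{k}\oplus\HH_{k+1}$ holds as weight modules.
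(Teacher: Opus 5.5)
Your argument is correct and follows essentially the paper's route. The paper takes the $\mathfrak{g}_{\bar{0}}$-filtration of $L(\Lambda)$ obtained from Musson's Verma filtration, whose factors are $L_{0}(\Lambda-\gamma)$ (citing Jakobsen), and observes that complete reducibility of the unitarizable module makes this filtration split as a direct sum. Your version is more careful on two points the paper leaves implicit: you deduce the simplicity of each piece from unitarity rather than assuming simple subquotients, and you make the splitting rigorous in the infinite-dimensional case using the finite-dimensional, mutually orthogonal weight spaces.
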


Moreover, if $L^{\bb}(\Lambda)$ is finite-dimensional, then it is a direct sum of simple $\even$-modules of the form $L_{0}(\mu)$, since $\Ext^{1}_{\even}\bigl(L_{0}(\mu),L_{0}(\nu)\bigr)=0$ for any $\even$-composition factors $L_{0}(\mu)$ and $L_{0}(\nu)$ occurring in the $\even$-filtration of $L^{\bb}(\Lambda)$.

\begin{lemma}\label{lemm::decomp_even} Fix either the standard or non-standard positive system. If $L^{\bb}(\Lambda)$ is finite-dimensional, then it decomposes as a finite direct sum of simple $\even$-modules, each of highest weight $\Lambda - \gamma$ for some $\gamma \in \Gamma$.
\end{lemma}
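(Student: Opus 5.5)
The plan is to combine the $\even$-filtration of $L^{\bb}(\Lambda)$ with the semisimplicity of finite-dimensional $\even$-modules. Since $L^{\bb}(\Lambda)$ is a quotient of $M^{\bb}(\Lambda)$, Proposition~\ref{prop::filtration_Verma_supermodule} gives a filtration $0=M_{0}\subset\cdots\subset M_{r}=M^{\bb}(\Lambda)$ by $\even$-submodules whose successive quotients are the $\even$-Verma modules $M_{0}^{\bb_{\bar 0}}(\Lambda-\gamma)$ with $\gamma\in\Gamma$. Let $\overline{M}_{i}$ be the image of $M_{i}$ in $L^{\bb}(\Lambda)$. This is a finite filtration of $L^{\bb}(\Lambda)$ by $\even$-submodules, and each subquotient $\overline{M}_{i+1}/\overline{M}_{i}$ is a quotient of $M_{i+1}/M_{i}\cong M_{0}^{\bb_{\bar 0}}(\Lambda-\gamma)$. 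Because $L^{\bb}(\Lambda)$ is finite-dimensional, every subquotient is finite-dimensional. A nonzero finite-dimensional quotient of a $\even$-Verma module is a highest weight module, and its unique simple quotient is $L_{0}(\Lambda-\gamma)$.

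The second step shows that each finite-dimensional subquotient is already simple. Here I will use that $\even=\sl(m)\oplus\sl(n)\oplus\CC$, where the centre acts on $L^{\bb}(\Lambda)$ through $\hh$ and hence semisimply; for $m=n$ one works inside $\gl(n\vert n)$ as in the introduction. By Weyl's theorem, finite-dimensional $\even$-modules on which the centre acts semisimply are completely reducible. A finite-dimensional highest weight module is indecomposable, since it is generated by a vector of weight $\Lambda-\gamma$ whose weight space is one-dimensional. Being indecomposable and semisimple, it is simple, so $\overline{M}_{i+1}/\overline{M}_{i}$ is either $0$ or $L_{0}(\Lambda-\gamma)$.

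The last step assembles the pieces. Complete reducibility of $L^{\bb}(\Lambda)$ as a $\even$-module, which again follows from Weyl's theorem together with the semisimple action of $\hh$, splits the filtration. This gives $L^{\bb}(\Lambda)\cong\bigoplus_{i}\overline{M}_{i+1}/\overline{M}_{i}$, a finite direct sum of modules $L_{0}(\Lambda-\gamma)$ with $\gamma\in\Gamma$. Equivalently, one can phrase this as the vanishing of $\Ext^{1}_{\even}$ between these composition factors, as remarked before the lemma. The argument is the same for the standard and the non-standard positive system, since Proposition~\ref{prop::filtration_Verma_supermodule} holds for any Borel with even part $\bb_{\bar 0}$.

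The only delicate point I expect is the semisimple action of the centre of $\even$ in the non-simple case $m=n$. There, $E_{2n}$ lies in $\sl(n\vert n)$, but it is diagonal in $\hh$ and therefore acts semisimply on any weight module. Once this is checked, the rest is routine.
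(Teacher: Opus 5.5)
Your proposal is correct and follows the paper's argument: the paper obtains the lemma from the $\even$-filtration of $L^{\bb}(\Lambda)$ together with the vanishing of $\Ext^{1}_{\even}$ between its finite-dimensional composition factors, and that vanishing is exactly the complete reducibility you get from Weyl's theorem. Your explicit check that the centre of $\even$ acts semisimply through $\hh$ is what makes this $\Ext^{1}$-vanishing valid, since it would fail if the centre could act by Jordan blocks, so your version is slightly more careful than the paper's one-line remark.
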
 

In general, however, $L(\Lambda)$ is not semisimple as a $\even$-module, although each simple composition factor is a unitarizable highest weight $\even$-module.

\begin{lemma} \label{lemm::unitarizable_composition_factors}
Fix the non-standard positive system. 
Let $\Lambda$ be the highest weight of a unitarizable $\even$-module. 
Then every $\even$-composition factor $L_{0}(\mu)$ appearing in the $\even$-filtration 
of $L(\Lambda)$ is a unitarizable highest weight $\even$-module.
\end{lemma}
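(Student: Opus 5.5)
The plan is to use the $\even$-filtration of $L(\Lambda)$ inherited from Musson's filtration of the Verma supermodule, Proposition~\ref{prop::filtration_Verma_supermodule}. First I will show that every composition factor of $L(\Lambda)$ has the form $L_{0}(\Lambda-\gamma)$ with $\gamma \in \Gamma$. Then I will check that each $\Lambda-\gamma$ is the highest weight of a unitarizable highest weight $\even$-module. The non-standard system matters here. With $\Delta_{\bar1}^{+}=A\sqcup B$, the space $\nn^{-}_{\bar1}$ is spanned by root vectors for $-A$ and $-B$. The roots $-\epsilon_{i}+\delta_{k}$ ($i\le p$) and $\epsilon_{j}-\delta_{k}$ ($j>p$) lower the $\u(1)$-charge that governs the non-compact part in a uniform direction. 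So $\nn^{-}_{\bar1}$ is an $\ad(\kk^{\CC})$-stable subspace, and commuting it past the positive non-compact roots of $\L$ stays in $\nn^{-}_{\bar1}$ modulo $\nn^{+}$.

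\textbf{Step 1.} Since $\bigwedge \nn^{-}_{\bar1}$ is finite dimensional, $M(\Lambda)$ is a finite extension of $\UE(\even)\otimes_{\UE(\even\cap\bb)}\CC_{\Lambda}$-type pieces. Equivalently, it is the induced module $\UE(\gg)\otimes_{\UE(\even\oplus\nn^{+}_{\bar1})} L_{0}(\Lambda)$ with the analogous filtration. I will pass instead to the generalized Verma supermodule $K(\Lambda)\coloneqq \UE(\gg)\otimes_{\UE(\even\oplus\nn_{\bar1}^{+})}L_{0}(\Lambda)$, which surjects onto $L(\Lambda)$. Here one uses that $\nn^{+}_{\bar1}$ is an $\even$-submodule of $\odd$ for the non-standard system ($\pp_{1}\oplus\qq_{2}$ is $\even$-stable, as a direct matrix check shows). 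Then $K(\Lambda)\cong \bigwedge(\nn^{-}_{\bar1})\otimes L_{0}(\Lambda)$ as a $\even$-module. Hence $L(\Lambda)$, as a $\even$-module, is a subquotient of $F\otimes L_{0}(\Lambda)$, where $F=\bigwedge(\nn_{\bar1}^{-})$ is a finite-dimensional $\even$-module.

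\textbf{Step 2 (unitarity of $F$).} The key point is that $\nn_{\bar1}^{-}=\qq_{1}\oplus\pp_{2}$ should be a unitarizable $\even$-module for the real form $\su(p,q\vert n)_{\bar0}$ in a suitable sense. Being finite dimensional, it is not unitarizable on its own. Instead I will realize it in a tensor product that is. Concretely, $\qq_{1}\oplus\pp_{2}\cong \CC^{p,q}\otimes(\CC^{n})^{\ast}$ up to a character. I will embed $L_{0}(\Lambda)\otimes\bigwedge(\qq_{1}\oplus\pp_{2})$ into $L_{0}(\Lambda)\otimes M(\odd)$, or into the tensor product with the fermionic Fock space. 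This follows the mechanism of the Dirac-operator setup: $M(\odd)$ is a unitarizable $\even$-module (the ladder module), and $\bigwedge$ of the odd part appears in the Clifford/oscillator realization. A tensor product of unitarizable highest weight $\even$-modules of the same holomorphic type is again unitarizable and completely reducible. Its simple constituents are therefore unitarizable highest weight $\even$-modules.

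\textbf{Step 3.} Every composition factor $L_{0}(\mu)$ of $L(\Lambda)$ is then a composition factor of a completely reducible unitarizable $\even$-module. By Jordan--Hölder it is isomorphic to one of its simple summands, and hence is unitarizable. The main obstacle is Step 2: I must make rigorous that $K(\Lambda)$ embeds $\even$-equivariantly into a unitarizable module, with matching holomorphic type. The sign conventions of $\omega_{(-,+)}$ must make the $\qq_{1}$ and $\pp_{2}$ directions positive. If this fails, I will fall back on the Enright--Howe--Wallach style argument: a factor $L_{0}(\Lambda-\gamma)$ with $\gamma\in\Gamma$ has $\L$-part obtained from a unitarizable weight by shifting along $\u(1)$ into the unitary region. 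I would then verify directly from the known parametrization of unitarizable $\su(p,q)$-highest weights that this region is closed under these shifts.
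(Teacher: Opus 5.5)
Your Steps~1--2 do not work. First, $\pp_{1}\oplus\qq_{2}$ is not $\even$-stable. For $i\le p<j$,
$[\gg^{\epsilon_{j}-\epsilon_{i}},\gg^{\epsilon_{i}-\delta_{k}}]=\gg^{\epsilon_{j}-\delta_{k}}\subset\pp_{2}\subset\nn^{-}_{\bar1}$.
The space $\nn^{+}_{\bar1}$ is stable under $\kk^{\CC}$ and under the span $\mathfrak{p}^{+}$ of the positive non-compact root vectors of $\L$, but not under $\even$. Hence $\even\oplus\nn^{+}_{\bar1}$ is not a subalgebra, your $K(\Lambda)$ is undefined, and $\bigwedge(\nn^{-}_{\bar1})$ is not an $\even$-module.

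Second, and more seriously, no argument of the form ``place the constituents inside $F\otimes L_{0}(\Lambda)$ with $F$ a finite-dimensional $\even$-module, then embed into a unitarizable tensor product'' can succeed. For $p,q\neq0$, a unitarizable $\even$-module has no finite-dimensional submodule on which $\L$ acts non-trivially. So $F$ cannot be realized inside $M(\odd)=\CC[x_{1},\ldots,x_{mn}]$, which is in any case a symmetric rather than an exterior algebra. Moreover, $F\otimes L_{0}(\Lambda)$ genuinely has non-unitarizable constituents. If $\Lambda$ is trivial on $\L$, then $L_{0}(\Lambda)\otimes\odd$ is finite-dimensional with non-trivial $\L$-action; for instance $v_{\Lambda}\otimes e_{\epsilon_{1}-\delta_{n}}$ generates $L_{0}(\Lambda+\epsilon_{1}-\delta_{n})$. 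So Step~3 would prove something false.

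Your fallback is essentially the paper's argument. The paper inducts along single roots $\alpha\in A\sqcup B$, notes that $\nu-\alpha$ stays $\kk^{\CC}$-dominant and that $\lambda^{1}-\lambda^{m}$ changes by $0$ or $-1$, and reads off unitarity from \eqref{thisset}. The gap is that the admissible set in \eqref{thisset} also depends on $i_{0}$ and $j_{0}$. For $i_{0}\ge2$, subtracting $\epsilon_{i_{0}}-\delta_{k}$ keeps $\lambda$ fixed but lowers $i_{0}$ by one, so the set is not closed under these shifts.

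In fact the statement is false as written. Take $\su(2,1\vert1)$ and $\Lambda=\tfrac{x}{2}(1,1,1\vert1)$ with $x\neq0$.
\begin{itemize}
\item $L_{0}(\Lambda)$ is one-dimensional and trivial on $\L$, hence unitarizable.
\item The root $\alpha=\epsilon_{2}-\delta_{1}$ is simple for the non-standard system, and $(\Lambda+\rho,\alpha)=x\neq0$. So $e_{-\alpha}v_{\Lambda}$ is a non-zero $\bb_{\bar0}$-singular vector of $L(\Lambda)$, and $L_{0}(\Lambda-\alpha)$ is an $\even$-composition factor.
\item Its $\L$-part is $(0,-1,0)$. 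There $\lambda=0$ but $i_{0}=j_{0}=1$, which lies outside \eqref{thisset}.
\item Concretely, let $E_{ij}$ be the matrix units and $u$ the highest weight vector with $\langle u,u\rangle=1$. For the $\omega_{(-,+)}$-contravariant form, the $\kk^{\CC}$-highest weight vector $E_{31}u+E_{32}E_{21}u$ has norm $-2$.
\end{itemize}
In this example it is condition b)(i) of Lemma~\ref{lemm::unitarity_conditions} that forces $x=0$. That condition is imposed throughout the classification but is not a hypothesis of the lemma. So the lemma needs an extra hypothesis of this kind, and a correct proof must use it. Neither your Step~2 nor your fallback does.
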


\begin{proof}
 Any $\even$-composition factor is of the form 
$L_{0}(\Lambda - \gamma)$, where $\gamma$ is a sum of distinct odd positive roots. 
Let $L_{0}(\nu)$ be such a $\even$-composition factor, and suppose that 
$L_{0}(\nu)$ is a unitarizable highest weight $\even$-module. 
The claim follows if one shows that, for every 
$\alpha \in \Delta_{\bar{1}}^{+}$ such that $L_{0}(\nu - \alpha)$ 
occurs as a composition factor, the module $L_{0}(\nu - \alpha)$ 
is itself a unitarizable highest weight $\even$-module. 

Assume that $L_{0}(\nu - \alpha)$ occurs as a composition factor. 
Since $\nu$ is the highest weight of a unitarizable $\even$-module, 
we may write, in standard coordinates,
$
\nu = (\lambda^{1}, \ldots, \lambda^{m} \vert \mu^{1}, \ldots, \mu^{n}),
$
satisfying
\begin{equation*}
\lambda^{p+1} \ge \cdots \ge \lambda^{m} \ge \lambda^{1} \ge \cdots \ge \lambda^{p},
\qquad
\mu^{1} \ge \cdots \ge \mu^{n}.
\end{equation*}
For $\alpha \in \Delta_{\bar{1}}^{+}$, one has either 
$\alpha = \epsilon_{i} - \delta_{k}$ or $\alpha = -\epsilon_{j} + \delta_{l}$,
where $1 \le i \le p < j \le m$ and $1 \le k,l \le n$. 
Consequently, $\nu-\alpha$ is $\kk^{\CC}$-dominant integral, since $\nu$ is. Moreover, by the form of $\alpha$, it satisfies the same inequalities and changes the parameter $\lambda^{1}-\lambda^{m}$ by $0$ or $-1$. 
Hence, according to the classification of unitarizable $\su(p,q)$-modules 
(\emph{cf.}~\eqref{thisset}), $L_{0}(\nu - \alpha)$ is the highest weight of a unitarizable 
$\even$-module.
\end{proof}

We finally state a non-existence result for certain $\even$-composition factors in the atypical case. For this purpose, we consider the supermodule $M^{\bb}(\Lambda)$. We first require the following lemma.

\begin{lemma}\label{lemm::atypicality_and_non_exitsence}
If $(\Lambda + \rho, \alpha) = 0$ for some odd positive root $\alpha$, then there exists an embedding $M^{\bb}(\Lambda-\alpha)\subset M^{\bb}(\Lambda)$.
\end{lemma}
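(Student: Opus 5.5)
Let $v_{\Lambda} = [1\otimes 1]$ denote the highest weight vector of $M^{\bb}(\Lambda)$ and $X_{-\alpha}$ the odd root vector of weight $-\alpha$. The plan is to build a singular vector of weight $\Lambda-\alpha$ in $M^{\bb}(\Lambda)$ and then show that the map $M^{\bb}(\Lambda-\alpha)\to M^{\bb}(\Lambda)$ it induces is injective.

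\emph{Step 1: reduce to a simple root.} First treat the case where $\alpha$ is a simple root of the fundamental system $\pi$. Since $\alpha$ is odd and isotropic, $X_{-\alpha}^{2}=\tfrac12[X_{-\alpha},X_{-\alpha}]=0$. The candidate singular vector is $w\coloneqq X_{-\alpha}v_{\Lambda}$.

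\emph{Step 2: check that $w$ is singular.*} For a simple root $\beta\neq\alpha$, the bracket $[X_{\beta},X_{-\alpha}]$ has weight $\beta-\alpha$. This is not a root, since $\pi$ is a fundamental system. Hence $X_{\beta}w=\pm X_{-\alpha}X_{\beta}v_{\Lambda}=0$. For $\beta=\alpha$ we get $X_{\alpha}w=[X_{\alpha},X_{-\alpha}]v_{\Lambda}=\Lambda(h_{\alpha})v_{\Lambda}$, up to normalization. Because $\alpha$ is simple and isotropic, $(\rho,\alpha)=\tfrac12(\alpha,\alpha)=0$. Therefore $\Lambda(h_{\alpha})=(\Lambda,\alpha)=(\Lambda+\rho,\alpha)=0$, and $w$ is a $\bb$-singular vector of weight $\Lambda-\alpha$.

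\emph{Step 3: injectivity.} By the universal property of Verma supermodules, $w$ gives a morphism $\varphi\colon M^{\bb}(\Lambda-\alpha)\to M^{\bb}(\Lambda)$ with $u[1\otimes1]\mapsto uX_{-\alpha}v_{\Lambda}$ for $u\in\UE(\nn^{-})$. Since $M^{\bb}(\Lambda)\cong\UE(\nn^{-})$ as a free $\UE(\nn^{-})$-module, injectivity is equivalent to right multiplication by $X_{-\alpha}$ on $\UE(\nn^{-})$ being injective. This fails in general, because $X_{-\alpha}^{2}=0$. So the image is $\UE(\nn^{-})X_{-\alpha}v_\Lambda$, and I expect the correct reading of ``embedding'' is the following. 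Take the PBW basis with $X_{-\alpha}$ ordered last. Then $\UE(\nn^{-})\cong \UE(\nn'^{-})\otimes\Lambda(X_{-\alpha})$, where $\nn'^{-}$ is spanned by the remaining negative root vectors. Right multiplication by $X_{-\alpha}$ is injective on $\UE(\nn'^{-})$. The intended statement is then that $M^{\bb}(\Lambda-\alpha)$ maps onto the submodule generated by $w$, which contains the Verma-type module over the smaller algebra. Alternatively, one uses the odd reflection $r_\alpha$ and the identification \eqref{eq::relation_Verma_modules_and_systesm}: $M^{\bb}(\Lambda)\cong M^{\bb_\alpha}(\Lambda-\alpha)$ exactly when $(\Lambda,\alpha)=0$, and this is the cleaner route to the claim.

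\emph{Step 4: non-simple $\alpha$.} For general odd positive $\alpha$, choose a chain of odd reflections, as in \cite[Proposition 1]{Serganova_reps}, that makes $\alpha$ simple without changing the even positive system. Each reflection $r_\theta$ with $\theta\neq\alpha$ transforms the highest weight by $\Lambda\mapsto\Lambda-\theta$ or leaves it unchanged, and changes $\rho$ by $+\theta$. Hence $(\Lambda+\rho,\alpha)$ is invariant, and the condition is preserved. Apply the simple-root case in the new Borel and transport back via \eqref{eq::relation_Verma_modules_and_systesm}.

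The main obstacle is Step 3, together with making sure that the transported Verma modules are genuinely Verma modules for the original $\bb$. The reflected Verma modules agree with the original ones only for the specific shifted weights. If this fails, I would fall back on Musson's construction of the Shapovalov-type element $\theta_\alpha\in\UE(\nn^{-})$ of weight $-\alpha$ \cite[Chap.~9]{musson2012lie}. It satisfies $\theta_\alpha v_\Lambda$ singular whenever $(\Lambda+\rho,\alpha)=0$, and it is not a zero divisor in the relevant sense, which gives the embedding directly.
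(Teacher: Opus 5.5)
\textbf{There is a genuine gap, and it cannot be closed: the statement is false as written, and your Step 3 observation already shows this.}

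Your Steps 1--2 are correct. For simple odd $\alpha$ one has $(\rho,\alpha)=0$, so $X_{-\alpha}[1\otimes 1]$ is a nonzero $\bb$-singular vector of weight $\Lambda-\alpha$, giving a nonzero map $\varphi\colon M^{\bb}(\Lambda-\alpha)\to M^{\bb}(\Lambda)$. Your Step 3 point is decisive and should not be reinterpreted away: no injective map exists in general. Take $m\neq n$.
\begin{itemize}
\item \emph{Simple $\alpha$.} The simple roots form a basis of $\hh^{*}$, so $\alpha$ has only the trivial partition and $M^{\bb}(\Lambda)^{\Lambda-\alpha}=\CC X_{-\alpha}[1\otimes 1]$. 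Hence every homomorphism $M^{\bb}(\Lambda-\alpha)\to M^{\bb}(\Lambda)$ is a multiple of $\varphi$. It therefore kills the nonzero vector $X_{-\alpha}[1\otimes 1]\in M^{\bb}(\Lambda-\alpha)$. This covers $\epsilon_{m}-\delta_{1}$ in the distinguished system and $\epsilon_{p}-\delta_{1}$, $-\epsilon_{p+1}+\delta_{n}$ in the non-standard one.
\item \emph{Distinguished system, any odd $\alpha$.} A $\ZZ$-degree count gives the same: $M^{\bb}(\Lambda-\alpha)$ is nonzero in degree $-mn-1$ relative to $\Lambda$, where $M^{\bb}(\Lambda)$ vanishes.
\end{itemize}
What is true is a \emph{nonzero} homomorphism $M^{\bb}(\Lambda-\alpha)\to M^{\bb}(\Lambda)$, i.e.\ a singular vector $\theta_{\alpha}[1\otimes 1]\neq 0$. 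This is what Shapovalov elements deliver, and they are the tool behind the references the paper cites in place of a proof. The image is a proper quotient of $M^{\bb}(\Lambda-\alpha)$. This matters where Lemma~\ref{lemm::non_existece_roots} counts all $\even$-factors of $M^{\bb}(\Lambda-\alpha)$ inside $M^{\bb}(\Lambda)$.

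\textbf{The other routes in your proposal fail as written.}
\begin{itemize}
\item \emph{The reflection identity is reversed.} $M^{\bb}(\Lambda)\cong M^{\bb_{\alpha}}(\Lambda-\alpha)$ holds when $(\Lambda,\alpha)\neq 0$. When $(\Lambda,\alpha)=0$, the map $M^{\bb_{\alpha}}(\Lambda-\alpha)\to M^{\bb}(\Lambda)$ sending the generator to $X_{-\alpha}[1\otimes 1]$ is not surjective. Neither version embeds $M^{\bb}(\Lambda-\alpha)$.
\item \emph{Step 4.} $(\Lambda+\rho,\alpha)$ is not invariant under a reflection $r_{\theta}$ with $(\Lambda,\theta)=0$ and $(\theta,\alpha)\neq 0$: the weight stays put while $\rho$ moves by $\theta$. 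In that case the two Verma modules are not identified at all. You flagged this but did not resolve it.
\item \emph{The fallback.} Using $\theta_{\alpha}$ gives a nonzero map, not an injective one. In the cases above no injective map exists.
\end{itemize}

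\textbf{How to prove the corrected statement for non-simple $\alpha$.} Either invoke Shapovalov elements as the paper does, or repair Step 4 by a density argument.
\begin{itemize}
\item Consider the nonempty Zariski-open subset of the hyperplane $(\Lambda+\rho,\alpha)=0$ where $(\Lambda+\rho,\theta_{k})\neq 0$ and $(\Lambda-\alpha+\rho,\theta_{k})\neq 0$ for every reflecting root $\theta_{k}$.
\item There, both $M^{\bb}(\Lambda)$ and $M^{\bb}(\Lambda-\alpha)$ are isomorphic to $\bb'$-Verma modules with the same $\rho$-shifted weights. Your simple-root case then yields a nonzero $\gg$-map.
\item The existence of a nonzero $\bb$-singular vector in $\UE(\nn^{-})^{-\alpha}[1\otimes 1]$ is a Zariski-closed condition on $\Lambda$: it is the vanishing of the maximal minors of a matrix depending polynomially on $\Lambda$. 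So it holds on the whole hyperplane.
\end{itemize}
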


For the proof, we refer to \cite[Lemma~10.3]{serganova_duflo} in the case of the standard positive system, and to \cite[Theorem~3.7]{Musson_Shapovalov} in the non-standard case, where an explicit construction of the Shapovalov elements is given for all Borel subalgebras obtained from the standard one by a sequence of odd reflections. For weights $\Lambda$ satisfying the unitarity conditions of Lemma~\ref{lemm::unitarity_conditions}, this yields the following important implication. 

\begin{lemma} \label{lemm::non_existece_roots}
 Assume $\Lambda$ satisfies the unitarity conditions. Then the following holds:
 \begin{enumerate}
 \item[a)] Assume $p=0$ or $q=0$ and $\Lambda = (\lambda^{1}, \ldots,\lambda^{m}\vert \mu^{1}, \ldots \mu^{k_{0}-1},\mu, \ldots \mu)$ with $\mu^{k_{0}-1}\neq \mu$. If $(\Lambda+\rho, \epsilon_{i}-\delta_{k})=0$ for some $k_{0}\leq k\leq n$, then any $\even$-composition factor of $L^{\bb}(\Lambda)$ has a decomposition of the form $\Lambda-\gamma$ for $\gamma \in \Gamma$ such that $\gamma=\gamma_{1}+\ldots+\gamma_{r}$ and none of the $\gamma_{s}$ is of the form $\epsilon_{i}-\delta_{l}$ for $l=k,\ldots,n$.
 \item[b)] Assume $p,q\neq 0$ and $\Lambda = (\lambda, \ldots, \lambda, \lambda^{i_{0}+1}, \ldots, \lambda^{p}, \lambda^{p+1}, \ldots, \lambda^{m-j_{0}-1}, \lambda', \ldots, \lambda'\vert \mu^{1}, \ldots \mu^{n})$ with $\lambda^{i_{0}+1}\neq \lambda$ and $\lambda^{m-j_{0}-1}\neq \lambda'$. 
 \begin{enumerate}
 \item[(i)] If $(\Lambda+\rho, \epsilon_{i}-\delta_{k})=0$ for some $1 \leq i \leq i_{0}$, then any $\even$-composition factor of $L^{\bb}(\Lambda)$ has a decomposition of the form $\Lambda-\gamma$ for $\gamma \in \Gamma$ such that $\gamma=\gamma_{1}+\ldots+\gamma_{r}$ and none of the $\gamma_{s}$ is of the form $\epsilon_{i'}-\delta_{k}$ for $i'=1,\ldots,i$.
 \item[(ii)] If $(\Lambda+\rho, -\epsilon_{m-j}+\delta_{k})=0$ for some $0 \leq j \leq j_{0}$, any $\even$-composition factor of $L^{\bb}(\Lambda)$ has a decomposition of the form $\Lambda-\gamma$ for $\gamma \in \Gamma$ such that $\gamma=\gamma_{1}+\ldots+\gamma_{r}$ and none of the $\gamma_{s}$ is of the form $-\epsilon_{m-j'}+\delta_{k}$ for $j'=0,\ldots,j$.
 \end{enumerate}
 \end{enumerate}
\end{lemma}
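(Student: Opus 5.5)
We will use Lemma~\ref{lemm::atypicality_and_non_exitsence} to build a nonzero proper submodule of $M^{\bb}(\Lambda)$. We then show that the forbidden $\even$-constituents lie inside this submodule, so they die in the simple quotient $L^{\bb}(\Lambda)$. All three cases a), b)(i), b)(ii) have the same shape, so we describe a) in detail.

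\textbf{Step 1: the Shapovalov vector.} Suppose $(\Lambda+\rho,\epsilon_i-\delta_k)=0$ for some $k\ge k_0$, and write $\alpha_l\coloneqq\epsilon_i-\delta_l$. By the unitarity conditions, $(\Lambda,\delta_k-\delta_l)=0$ for every $l\ge k$, because the $\mu$-entries from $k_0$ on are all equal to $\mu$. Since $(\rho,\alpha_l)$ decreases in $l$, we get $(\Lambda+\rho,\alpha_l)=k-l$. In particular $\alpha_k$ is atypical for $\Lambda$. Lemma~\ref{lemm::atypicality_and_non_exitsence} then gives an embedding $M^{\bb}(\Lambda-\alpha_k)\subset M^{\bb}(\Lambda)$. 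Its image is a proper submodule, so it lies in the maximal submodule $N$. Concretely, this embedding is generated by a singular vector of the form $\theta_k[1\otimes 1]$. Here $\theta_k$ is a Shapovalov element whose leading term is $X_{-\alpha_k}$, corrected by terms of the shape $X_{-\alpha_{l}}E$ with $E\in\UE(\nn_{\bar0}^-)$ lowering along the $\delta_k-\delta_l$ direction.

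\textbf{Step 2: the remaining roots $\alpha_l$, $l>k$.} Because $\mu^k=\cdots=\mu^n$, the $\R$-part of $\Lambda$ is trivial on the $\sl$-block spanned by $\delta_k,\dots,\delta_n$. The key observation is that $X_{-\alpha_l}v_\Lambda$ and $X_{-\alpha_k}v_\Lambda$ lie in the same $\kk^{\CC}$-isotypic piece: they are obtained from one another by the compact root vectors $e_{\pm(\delta_k-\delta_l)}$, modulo $\UE(\nn^-_{\bar0})$-lower terms. So once $X_{-\alpha_k}v_\Lambda\in N$ modulo $\even$-descendants of $v_\Lambda$, the same holds for every $X_{-\alpha_l}$ with $l\ge k$.

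\textbf{Step 3: passing to products.} For a general $S\subseteq\Delta^+_{\bar1}$ containing some $\alpha_l$ with $l\ge k$, we supercommute that factor to the right. Since odd root vectors anticommute up to even elements, which are absorbed into $\UE(\nn^-_{\bar0})$ or act by scalars on $v_\Lambda$, we get
\[
X_{-S}[1\otimes1]\in \UE(\nn^-)X_{-\alpha_l}[1\otimes 1]+\textstyle\sum_{S'<S}\UE(\nn^-_{\bar0})X_{-S'}[1\otimes1].
\]
Combining this with Steps 1–2 and inducting along the Musson filtration \eqref{eq::construction_Jorad_Hölder}, each subquotient $M_{j}/M_{j-1}$ with $S_j$ containing such an $\alpha_l$ maps to zero in $M^{\bb}(\Lambda)/N=L^{\bb}(\Lambda)$ modulo earlier terms. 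Therefore the $\even$-filtration of $L^{\bb}(\Lambda)$ only involves $\Lambda-\Gamma_S$ with $S$ free of these roots. Some of these weights $\Lambda-\Gamma_S$ might coincide numerically with a weight $\Lambda-\Gamma_{S''}$ with $S''$ forbidden. That is harmless, since the statement only asks that some decomposition of $\gamma$ avoid the forbidden roots.

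\textbf{Case b).} The same argument works using the Shapovalov elements of \cite[Theorem~3.7]{Musson_Shapovalov} for the non-standard Borel. For (i), the symmetry used in Step 2 is in the $\epsilon_1,\dots,\epsilon_{i_0}$ block, where $\lambda^1=\cdots=\lambda^{i_0}$. For (ii), it is in the block $\epsilon_{m-j_0},\dots,\epsilon_m$. In each case we now move along $i'\le i$ (respectively $j'\le j$) rather than along $l\ge k$.

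\textbf{Main obstacle.} The delicate point is Step 2 together with the claim in Step 3 that the correction terms of the Shapovalov element do not reintroduce forbidden roots into the surviving constituents. I would first try to settle this by an explicit computation in the rank-two situation ($\sl(1\vert2)$-type, i.e.\ one $\epsilon_i$ against $\delta_k,\delta_l$) and then extend it by the block symmetry.
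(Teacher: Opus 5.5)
\textbf{There is a genuine gap.} Steps~1--2 rest on the claim that $X_{-\alpha_k}v_\Lambda$ lies in the maximal submodule $N$ modulo $\even$-descendants of $v_\Lambda$, via a singular vector with leading term $X_{-\alpha_k}$. This fails exactly under the lemma's hypothesis that the $\mu$-entries are equal.

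Take $\mathfrak{sl}(1\vert 2)$ and $\Lambda=\epsilon_1$, so $L^{\bb}(\Lambda)=\CC^{1\vert 2}$ with basis $e_1$ (even) and $e_2,e_3$ (odd). Here $k_0=1$ and $(\Lambda+\rho,\epsilon_1-\delta_2)=0$, so $k=2$; write $\alpha_1=\epsilon_1-\delta_1$ and $\alpha_2=\epsilon_1-\delta_2$.

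\begin{itemize}
\item The weight space $M^{\bb}(\Lambda)^{\Lambda-\alpha_2}$ is spanned by $X_{-\alpha_2}v_\Lambda$ and $X_{-\alpha_1}e_{-(\delta_1-\delta_2)}v_\Lambda$.
\item A direct check shows that in any singular vector the coefficient of $X_{-\alpha_2}$ is proportional to $\mu^1-\mu^2=0$. So the singular vector is $X_{-\alpha_1}e_{-(\delta_1-\delta_2)}v_\Lambda$, which already lies in the submodule generated by the even singular vector $e_{-(\delta_1-\delta_2)}v_\Lambda$.
\item Your description of the correction terms is also off. For $l>k$, $\delta_k-\delta_l$ is a positive root, so the corrections involve $\epsilon_i-\delta_l$ with $l<k$, not $l>k$.
\end{itemize}

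Thus the embedding of Lemma~\ref{lemm::atypicality_and_non_exitsence} gives no vector-level information beyond dominance. Indeed, the image of $X_{-\alpha_2}v_\Lambda$ in $L^{\bb}(\Lambda)$ is $e_3\neq 0$, and by weights it is not in $\UE(\even)\bar v_\Lambda$, where $\bar v_\Lambda$ is the image of $v_\Lambda$. So the premise of Step~2 is false.

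Step~3 then has nothing to propagate, and commutation arguments cannot see what atypicality actually removes. In this example the constituent that must disappear is $L_0(\Lambda-\alpha_1-\alpha_2)$. It is $\even$-dominant and does occur when $\Lambda$ is typical. Substituting $X_{-\alpha_2}=[e_{-(\delta_1-\delta_2)},X_{-\alpha_1}]$ into $X_{-\alpha_1}X_{-\alpha_2}v_\Lambda$ returns exactly the same vector, so no rearrangement helps. The vector dies because its image in $M^{\bb}(\Lambda)/\UE(\gg)e_{-(\delta_1-\delta_2)}v_\Lambda$ is a new singular vector, which happens precisely when $(\Lambda+\rho,\alpha_2)=0$. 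Your argument never produces this vector.

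The paper's proof never asserts that individual vectors vanish. It uses only $M^{\bb}(\Lambda-\alpha)\subset N$ and compares $\even$-factors. For $\epsilon_i-\delta_l$ with $l>k$ it adds a second Verma submodule $M^{\bb}(r_\eta\cdot\Lambda)$, with $\eta=\delta_k-\delta_l$. The comparison does not depend on where the singular vector sits. In the Grothendieck group of $\even$-modules,
\[
[M^{\bb}(\Lambda)]-[M^{\bb}(\Lambda-\alpha)]=\sum_{S\not\ni\alpha}\bigl([M_0(\Lambda-\Gamma_S)]-[M_0(\Lambda-2\alpha-\Gamma_S)]\bigr).
\]
Hence every $\even$-factor of $L^{\bb}(\Lambda)$ already occurs in some $M_0(\Lambda-\Gamma_S)$ with $\alpha\notin S$. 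When $L^{\bb}(\Lambda)$ is finite-dimensional, dominance then forces that factor to be $L_0(\Lambda-\Gamma_S)$ itself.

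To repair your argument, replace the layer-by-layer vanishing in $L^{\bb}(\Lambda)$ with this kind of factor bookkeeping.
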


\begin{proof} We prove only~a), since~b) is analogous. Consider first $\alpha\coloneqq \epsilon_{i}-\delta_{k}$. By the $\even$-filtration of $M^{\bb}(\Lambda-\alpha)$, its composition factors have highest weights of the form $\Lambda-\alpha-\gamma$, where $\alpha+\gamma$ is a sum of pairwise distinct positive odd roots. Since $(\Lambda+\rho,\alpha)=0$, Lemma~\ref{lemm::atypicality_and_non_exitsence} yields an embedding $M^{\bb}(\Lambda-\alpha)\subset M^{\bb}(\Lambda)$. Thus $M^{\bb}(\Lambda-\alpha)$ is contained in the Shapovalov radical of $M^{\bb}(\Lambda)$. Consequently, every $\even$-composition factor of highest weight $\Lambda-\alpha-\gamma$ arising from a decomposition involving $\alpha$ is already contained in this radical. Moreover, the multiplicity of such a factor in $M^{\bb}(\Lambda-\alpha)$ is exactly the number of distinct odd partitions of $\alpha+\gamma$ involving $\alpha$.

Next, fix $\beta\coloneqq \epsilon_{i}-\delta_{l}$ for $k<l\le n$. We consider the $\even$-composition factors of $L^{\bb}(\Lambda)$ involving $\beta$. Every such factor is of the form $L_{0}(\Lambda-\gamma)$, where $\gamma=\beta+\tilde{\gamma}$ is a sum of pairwise distinct positive odd roots. Since $M^{\bb}(\Lambda-\alpha)\subset M^{\bb}(\Lambda)$, we may assume that no odd root occurring in $\tilde{\gamma}$ equals $\alpha$. We claim that none of these factors occurs in $L^{\bb}(\Lambda)$.

Set $\eta\coloneqq \delta_{k}-\delta_{l}$ for $k<l\le n$. Then $r_{\eta}(\Lambda)=\Lambda$ by assumption, hence $r_{\eta}\cdot\Lambda=\Lambda-\eta$. By Verma's theorem, there is an embedding $M^{\bb}(r_{\eta}\cdot\Lambda)\subset M^{\bb}(\Lambda)$. By the $\even$-filtration, the $\even$-composition factors of $M^{\bb}(r_{\eta}\cdot\Lambda)$ are exactly the modules $M_{0}(r_{\eta}\cdot\Lambda-\nu)$, where $\nu$ is a sum of pairwise distinct positive odd roots. Since $r_{\eta}\cdot\Lambda=\Lambda-\eta$ and $(\Lambda-\eta)-\alpha=\Lambda-\beta$, every factor $M_{0}(r_{\eta}\cdot\Lambda-\nu)$ for which some decomposition of $\nu$ involves $\alpha$ is of the form $M_{0}(\Lambda-\gamma)$, where some decomposition of $\gamma$ involves $\beta$. Indeed, if $\nu=\alpha+\nu_{1}$, then
\[
r_{\eta}\cdot\Lambda-\nu=(r_{\eta}\cdot\Lambda-\alpha)-\nu_{1}=(\Lambda-\beta)-\nu_{1}=\Lambda-(\beta+\nu_{1}).
\]

In $L^{\bb}(\Lambda)$, let $L_{0}(\Lambda-\gamma)$ be a $\even$-composition factor with $\gamma=\beta+\tilde{\gamma}$. By the preceding argument, we may assume that no decomposition of $\gamma$ involves $\alpha$. In particular, $\nu\coloneqq \alpha+\tilde{\gamma}$ is a sum of pairwise distinct positive odd roots. Hence $M_{0}(\Lambda-\gamma)$ occurs as a $\even$-composition factor of $M^{\bb}(r_{\eta}\cdot\Lambda)$. Therefore $M^{\bb}(r_{\eta}\cdot\Lambda)$ contains every $\even$-composition factor of the form $M_{0}(\Lambda-\gamma)$ for which some decomposition of $\gamma$ involves $\beta$. Since $M^{\bb}(r_{\eta}\cdot\Lambda)\subset M^{\bb}(\Lambda)$ is a proper submodule, all $\even$-factors arising from it lie in the Shapovalov radical of $M^{\bb}(\Lambda)$. Consequently, $M_{0}(\Lambda-\beta)$, and more generally every $\even$-factor arising from a decomposition involving $\beta$, does not survive in the irreducible quotient $L^{\bb}(\Lambda)$.
\end{proof}

\subsubsection{Shapovalov Form} \label{subsec::Shapovalov}
Every highest weight $\gg$-supermodule is the unique simple quotient of a Verma supermodule. Verma supermodules carry a natural $\omega$-contravariant Hermitian form, the \emph{Shapovalov form}. A highest weight $\gg$-supermodule is unitarizable precisely when the induced form on its simple quotient is positive definite. We now define the \emph{Shapovalov form}.

To introduce the Shapovalov form we recall the notion of infinitesimal characters. An \emph{infinitesimal character} is an algebra homomorphism $\chi:\mathfrak{Z}(\gg)\to\CC$, obtained from the Harish-Chandra homomorphism. Concretely, one has the decomposition of super vector spaces
\begin{equation}
 \UE(\gg)=\UE(\hh)\oplus\bigl(\nn^{-}\UE(\gg)+\UE(\gg)\nn^{+}\bigr),
\end{equation}
which is a direct consequence of the PBW theorem for $\gg$. This decomposition is preserved by $\omega$, extended to $\UE(\gg)$. The projection $p:\UE(\gg)\to\UE(\hh)$ is the \emph{Harish-Chandra projection}; its restriction to $\mathfrak{Z}(\gg)$ defines an algebra homomorphism
\begin{equation}
 p|_{\mathfrak{Z}(\gg)}:\mathfrak{Z}(\gg)\to\UE(\hh)\cong S(\hh).
\end{equation}
The \emph{Harish-Chandra homomorphism} $\HC:\mathfrak{Z}(\gg)\to S(\hh)$ is defined as the composition of $p|_{\mathfrak{Z}(\gg)}$ with the automorphism $\zeta:S(\hh)\to S(\hh)$ given by
\begin{equation}
 \lambda(\zeta(f))=(\lambda-\rho)(f)\qquad \lambda\in\hh^{*},\; f\in S(\hh),
\end{equation}
where $\rho$ is the Weyl vector of the chosen system $\Delta^{+}$. For $\Lambda\in\hh^{*}$, the map
\begin{equation}
 \chi_{\Lambda}(z)=(\Lambda+\rho)(\HC(z)),
\end{equation}
defines an algebra homomorphism $\chi_{\Lambda}:\mathfrak{Z}(\gg)\to\CC$.

A $\gg$-supermodule $M$ is said to admit an \emph{infinitesimal character} if there exists $\Lambda\in\hh^{*}$ such that every $z\in\mathfrak{Z}(\gg)$ acts on $M$ as the scalar $\chi_{\Lambda}(z)$. The map $\chi_{\Lambda}$ is then the \emph{infinitesimal character} of $M$. Highest weight $\gg$-supermodules provide particular examples of supermodules with an infinitesimal character.

\medskip

A Verma supermodule carries a contravariant Hermitian form whenever its highest weight $\Lambda\in\hh^{*}$ satisfies
$\Lambda(\omega(H))=\overline{\Lambda(H)}
$ for any $H \in \hh$. Such a weight is called \emph{symmetric}. Every highest weight of a unitarizable highest weight $\gg$-supermodule is symmetric (see \cite{SchmidtDirac}).

Let $\chi_{\Lambda}:\mathfrak{Z}(\gg)\to\CC$ be the infinitesimal character of $M^{\bb}(\Lambda)$. If $\Lambda$ is symmetric, define
\begin{equation} \label{eq::definition_F}
F(X,Y) \coloneqq \chi_{\Lambda}(\operatorname{p}(X\,\omega(Y))),\qquad X,Y\in\UE(\gg),
\end{equation}
where $p: \UE(\gg) \to \UE(\hh)$ is the Harish-Chandra projection from above. Recall $M^{\bb}(\Lambda) = \UE(\nn^{-})[1\otimes 1]$. Then
\begin{equation}
 \langle X[1\otimes 1],\,Y[1\otimes 1]\rangle\coloneqq F(X,Y),\qquad X,Y\in\UE(\nn^{-}),
\end{equation}
defines an $\omega$-contravariant Hermitian form on $M^{\bb}(\Lambda)$, denoted $\langle\cdot,\cdot\rangle$. $\omega$-contravariance means
$
 \langle Xv,w\rangle=\langle v,\omega(X)w\rangle$ for $X\in\gg$ and $v,w\in M^{\bb}(\Lambda).
$ This form is called the \emph{Shapovalov form} on $M^{\bb}(\Lambda)$.
The form is unique up to scalar multiples and satisfies
\begin{equation} \label{eq::orthogonal_weight_spaces}
 \langle M^{\bb}(\Lambda)^{\mu}, M^{\bb}(\Lambda)^{\nu}\rangle = 0 
 \quad \text{unless} \quad \mu = \nu.
\end{equation}
The Shapovalov form induces on $L^{\bb}(\Lambda)$ a non-degenerate $\omega$-contravariant form, denoted again by $\langle\cdot,\cdot\rangle$. We call the induced form $\langle\cdot,\cdot\rangle$ on $L^{\bb}(\Lambda)$ again \emph{Shapovalov form}. 

We use the Shapovalov form as a tool to test the existence of $\even$-constituents in the $\even$-filtration of $L^{\bb}(\Lambda)$, when $\Lambda$ is the highest weight of a unitarizable $\even$-module, \emph{i.e.}, $\Lambda$ is symmetric.

\begin{lemma}[{\cite[Chap.~8]{musson2012lie}}] \label{lemm:Shapovalov} Suppose $\Lambda \in \hh^{\ast}$ is symmetric. Then the radical of the Shapovalov form $\langle\cdot,\cdot\rangle$ on $M^{\bb}(\Lambda)$ is the largest proper subsupermodule of $M^{\bb}(\Lambda)$. For $Y\in\UE(\gg)$ the following are equivalent:
\begin{enumerate}
 \item[a)] $Y[1\otimes 1]$ lies in a proper submodule of $M^{\bb}(\Lambda)$.
 \item[b)] $F(X,Y)=0$ for all $X\in\UE(\gg)$.
\end{enumerate}
\end{lemma}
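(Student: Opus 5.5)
The plan is to characterize the radical of the Shapovalov form weight by weight, using only the definition \eqref{eq::definition_F}, $\omega$-contravariance and the orthogonality \eqref{eq::orthogonal_weight_spaces}. Write $v_{0}\coloneqq[1\otimes 1]$ and let $\operatorname{Rad}$ denote the radical of $\langle\cdot,\cdot\rangle$ on $M^{\bb}(\Lambda)$.

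\emph{Step 1: evaluation as a highest weight coefficient.} For $Z\in\UE(\gg)$, decompose $Z=p(Z)+Z'$ with $Z'\in\nn^{-}\UE(\gg)+\UE(\gg)\nn^{+}$. Since $\nn^{+}v_{0}=0$, the vector $Zv_{0}$ has $\Lambda$-weight component $\Lambda(p(Z))\,v_{0}$. Thus the value $F$ assigns to $Z$ is exactly the coefficient of $v_{0}$ in the $\Lambda$-component of $Zv_{0}$, because the $\rho$-shift in $\HC$ is undone by the $+\rho$ in $\chi_{\Lambda}$. Together with contravariance and the normalization $\langle v_{0},v_{0}\rangle=1$, this gives the identity
\[
\langle Xv_{0},Yv_{0}\rangle = F(X,Y)\qquad\text{for all } X,Y\in\UE(\gg),
\]
not only for $X,Y\in\UE(\nn^{-})$. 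Up to the conjugate-linearity convention, $F(X,Y)$ is the $v_{0}$-coefficient of the $\Lambda$-component of $\omega(X)Yv_{0}$. The only delicate point here is bookkeeping of which slot is conjugate-linear and where $\omega$ sits. Symmetry of $\Lambda$ ensures $\Lambda(p(\omega(Z)))=\overline{\Lambda(p(Z))}$, so the form is Hermitian and these conventions are consistent.

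\emph{Step 2: the radical is the largest proper submodule.} Contravariance shows $\operatorname{Rad}$ is $\gg$-stable: if $v\in\operatorname{Rad}$, then $\langle w,Xv\rangle=\langle \omega(X)w,v\rangle=0$ for all $w$. It is also $\hh$-graded. It is proper because $\langle v_{0},v_{0}\rangle=1\neq 0$. Conversely, let $N\subsetneq M^{\bb}(\Lambda)$ be a proper submodule. Since $M^{\bb}(\Lambda)^{\Lambda}=\CC v_{0}$ and $v_{0}$ generates, $N$ is graded with $N^{\Lambda}=0$. For $v\in N$ and $X\in\UE(\gg)$,
\[
\langle Xv_{0},v\rangle=\langle v_{0},\omega(X)v\rangle .
\]
By \eqref{eq::orthogonal_weight_spaces} this only sees the $\Lambda$-component of $\omega(X)v\in N$, which is zero. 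Hence $N\subseteq\operatorname{Rad}$. So $\operatorname{Rad}$ contains every proper submodule and is itself proper, i.e.\ it is the largest one.

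\emph{Step 3: the equivalence.} By Step 2, $Y v_{0}$ lies in a proper submodule if and only if $Yv_{0}\in\operatorname{Rad}$. Since the vectors $Xv_{0}$, for $X\in\UE(\gg)$, exhaust $M^{\bb}(\Lambda)$, this holds if and only if $\langle Xv_{0},Yv_{0}\rangle=0$ for all $X$. By Step 1 that is exactly b). I expect no real obstacle beyond the convention check in Step 1.
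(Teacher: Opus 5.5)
Your strategy is the standard one behind the paper's citation of Musson; the paper gives no proof of its own. There are two problems, one in each step.

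\textbf{Step~1: this is a correction, not a convention.} Whichever slot is conjugate-linear, contravariance puts $\omega$ on the \emph{left} factor of the product acting on $v_{0}$. You get $\Lambda(p(\omega(X)Y))$ or $\Lambda(p(\omega(Y)X))$, and the paper's formula $F(X,Y)=\chi_{\Lambda}(p(X\,\omega(Y)))$ is neither. Taken literally it is degenerate. For $Y\in\UE(\nn^{-})$ of positive degree one has $\omega(Y)\in\UE(\gg)\nn^{+}$, so $F(X,Y)=0$ for all $X$. Then b) holds for every such $Y$, which would force $L(\Lambda)$ to be one-dimensional. This contradicts, e.g., $\langle e_{-\beta}v_{\Lambda},e_{-\beta}v_{\Lambda}\rangle=x-a\neq0$ for $x\neq a$ in the $\mathfrak{su}(2\vert 1)$ example. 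So your identity $\langle Xv_{0},Yv_{0}\rangle=F(X,Y)$ is false for the $F$ in the statement. You should say explicitly that you replace it by $F(X,Y)\coloneqq\Lambda(p(\omega(X)Y))$. Since $\omega$ is a bijection of $\UE(\gg)$, b) then reads $\Lambda(p(XY))=0$ for all $X$. This matches the reduction to $X\in\UE(\nn^{+})^{\eta}$, $Y\in\UE(\nn^{-})^{-\eta}$ that the paper makes right after the lemma.

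\textbf{Step~2: a genuine gap when $m=n$.} The claim $M^{\bb}(\Lambda)^{\Lambda}=\CC v_{0}$ fails for $m=n$ with the distinguished Borel, which is in scope (the $\mathfrak{su}(2\vert 2)$ example). The same failure affects the first claim of Step~1. Indeed, $\sigma\coloneqq\sum_{i=1}^{n}(\epsilon_{i}-\delta_{i})$ is a sum of distinct positive odd roots and vanishes on $\hh$. So $Y_{0}\coloneqq e_{-(\epsilon_{1}-\delta_{1})}\cdots e_{-(\epsilon_{n}-\delta_{n})}$ satisfies $p(Y_{0})=0$, while $Y_{0}v_{0}$ is a nonzero vector of $\hh$-weight $\Lambda$. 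In particular a proper submodule may have $N^{\Lambda}\neq0$.

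Step~1 is repaired without weight orthogonality. Use the PBW coordinate of $v_{0}$, together with $\langle v_{0},Y'v_{0}\rangle=\langle\omega(Y')v_{0},v_{0}\rangle=0$ for $Y'\in\UE(\nn^{-})$ of positive degree.

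In Step~2 what you actually need is that no vector of a proper submodule $N$ has nonzero $v_{0}$-coordinate $c$. Suppose $v\in N$ did.
\begin{itemize}
\item Its component of $\hh$-weight $\Lambda$ lies in $N$ and equals $(c+Y')v_{0}$, where $Y'$ is a sum of elements of weights $-k\sigma$ with $k\geq1$.
\item So every PBW term of $Y'$ has at least $n$ odd factors.
\item Here the number of odd factors is additive (it is minus the sum of the $\epsilon$-coefficients of the weight) and never exceeds $n^{2}$. Hence $Y'^{\,n+1}=0$.
\item Therefore $c+Y'$ is invertible in $\UE(\nn^{-})$, so $v_{0}\in N$, a contradiction.
\end{itemize}

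Your weight argument is fine as written for $m\neq n$. It is also fine for the non-standard Borel with $p,q\neq0$: comparing $\epsilon_{1}$- and $\epsilon_{m}$-coefficients shows that no nonzero multiple of $\sigma$ is a sum of positive roots.
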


We consider the equation $F(X,Y)=0$. 
By \eqref{eq::orthogonal_weight_spaces}, it suffices to take $X \in \UE(\nn^{+})^{\eta}$ and $Y \in \UE(\nn^{-})^{-\eta}$, where $\UE(\nn^{\pm})$ is decomposed under the action of $\hh$, \emph{i.e.},
\begin{equation}
 \UE(\nn^{-}) = \bigoplus_{\eta \in \hh^{*}} \UE(\nn^{-})^{-\eta}, 
 \qquad 
 \UE(\nn^{+}) = \bigoplus_{\eta \in \hh^{*}} \UE(\nn^{+})^{\eta}.
\end{equation}
Let $F_{\eta}$ be the restriction of $F$ to the weight space $\UE(\nn^{-})^{-\eta}$. 
Since this space is finite-dimensional, $F_{\eta}$ may be represented by a square matrix, and its determinant can be considered. 
If this determinant is nonzero, no nonzero vector belongs to the radical of $\langle \cdot , \cdot \rangle$, and consequently no element $Y[1\otimes 1]$ for $Y \in \UE(\nn^{-})^{-\eta}$ belongs to a proper submodule of $M^{\bb}(\Lambda)$. 

The determinant is given by an explicit formula, the \emph{Kac–Shapovalov determinant formula}. 
Denote by $P(\gamma)$ the \emph{Kostant partition function}, \emph{i.e.}, the number of decompositions
$
 \gamma = \sum_i \gamma_i
$
into positive roots, subject to the condition that if $\gamma_i \in \Delta_{\bar{1}}^{+}$, then $\gamma_j \neq \gamma_i$ for all $j \neq i$. 
Denote by $P_{\gamma}(\gamma)$ the number of partitions of $\gamma$ in which $\gamma$ itself does not occur.

\begin{theorem}[\cite{kac1986highest,GorelikHC}]\label{thm::Kac_Shapovalov} Up to a nonzero constant factor, $\det F_{\eta} = D_{1}D_{2}$, where 
\begin{equation*}
 \begin{aligned}
 D_{1} &= \prod_{\gamma \in \Delta_{\bar{0}}^{+}} \prod_{r=1}^{\infty}(h_{\gamma} + (\rho,\gamma)-r)^{P(\eta-r\gamma)}, \\
 D_{2} &= \prod_{\gamma \in \Delta_{\bar{1}}^{+}} (h_{\gamma} + (\rho,\gamma))^{P_{\gamma}(\eta-\gamma)}.
 \end{aligned}
\end{equation*}
\end{theorem}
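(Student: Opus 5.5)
This is the Kac--Shapovalov determinant formula (Theorem~\ref{thm::Kac_Shapovalov}) for the Shapovalov form on $M^{\bb}(\Lambda)$. I would follow the standard Jantzen--Kac strategy: treat $\det F_{\eta}$ as a polynomial function of $\Lambda\in\hh^{\ast}$, equivalently as an element of $S(\hh)$, and determine it in three steps.

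\textbf{Step 1: the leading term.} Choose a PBW basis of $\UE(\nn^{-})^{-\eta}$. Odd root vectors occur with exponent at most one, so the basis elements are indexed by the partitions counted by $P(\eta)$. Pair it with the $\omega$-dual basis of $\UE(\nn^{+})^{\eta}$. The Harish-Chandra projection of $X_{-\underline{k}}\,\omega(X_{-\underline{k}'})$ has degree at most the length of the partition, with equality only on the diagonal. Hence the highest-degree part of $\det F_{\eta}$ is, up to a nonzero constant, $\prod_{\underline{k}}\prod_{\gamma}h_{\gamma}^{k_{\gamma}}$. Rearranging this product gives the exponent of $h_{\gamma}$:
\begin{itemize}
\item $\sum_{r\geq 1}P(\eta-r\gamma)$ for even $\gamma$;
\item $P_{\gamma}(\eta-\gamma)$ for odd $\gamma$, where $\gamma$ can appear at most once.
\end{itemize}
This matches the total degree of $D_{1}D_{2}$.

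\textbf{Step 2: each proposed factor divides the determinant.} The factors are irreducible linear polynomials, so it suffices to check each on a Zariski-dense set and then count multiplicities.

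For an even root $\gamma$ with $(\Lambda+\rho,\gamma)=r\in\ZZ_{>0}$, generic on that hyperplane, there is a singular vector of weight $\Lambda-r\gamma$ (the Verma/Shapovalov element). It generates a submodule contained in the radical, and that submodule has dimension $P(\eta-r\gamma)$ in weight $\Lambda-\eta$. Jantzen's standard argument, restricting the form to a generic line transversal to the hyperplane, then shows that the linear factor divides $\det F_{\eta}$ with multiplicity at least $P(\eta-r\gamma)$.

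For an odd isotropic $\gamma$ with $(\Lambda+\rho,\gamma)=0$, Lemma~\ref{lemm::atypicality_and_non_exitsence} gives $M^{\bb}(\Lambda-\gamma)\subset M^{\bb}(\Lambda)$. The embedded copy lies in the radical, and its $\Lambda-\eta$ weight space has dimension $P_{\gamma}(\eta-\gamma)$. This count uses the fact that $\gamma$ cannot be reused, since $X_{-\gamma}^{2}=0$. Note that $\gamma$ is isotropic, so only $r=1$ occurs; $2(\Lambda+\rho,\gamma)=m(\gamma,\gamma)$ forces $(\Lambda+\rho,\gamma)=0$.

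I expect the main obstacle to be exactly these multiplicity lower bounds in the odd case. There the radical may be larger on special loci, and the naive dimension count must be justified by a Jantzen filtration argument: the determinant's order of vanishing along the hyperplane equals $\sum_{i\geq1}\dim M^{i}_{\Lambda-\eta}$, which is bounded below by the dimension of the embedded submodule. For non-standard Borels I would instead reduce to the distinguished one via odd reflections and \eqref{eq::relation_Verma_modules_and_systesm}, since $\rho$ and the hyperplanes transform compatibly.

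\textbf{Step 3: conclude.} Different pairs $(\gamma,r)$ give non-associate linear factors, because the odd roots are pairwise non-proportional to each other and to the even roots. So $D_{1}D_{2}$ divides $\det F_{\eta}$. Step 1 shows the degrees agree, and comparing top-degree terms shows the quotient is a nonzero constant.
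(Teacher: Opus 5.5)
The paper gives no proof of this theorem; it quotes it from Kac and Gorelik. Your skeleton is the standard Shapovalov--Jantzen--Kac argument: compute the top-degree term of $\det F_{\eta}$, show divisibility by each linear factor via the corank at generic points of its hyperplane, then count degrees. Steps~1 and~3 are fine. Step~2, however, has a genuine gap for odd roots and a normalization error for even roots.

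\textbf{Odd roots.} You take from Lemma~\ref{lemm::atypicality_and_non_exitsence} an embedding $M^{\bb}(\Lambda-\gamma)\subset M^{\bb}(\Lambda)$, and then say its $\Lambda-\eta$ weight space has dimension $P_{\gamma}(\eta-\gamma)$. These claims are incompatible.
\begin{itemize}
\item $M^{\bb}(\Lambda-\gamma)$ is free over $\UE(\nn^{-})$, so that weight space has dimension $P(\eta-\gamma)$, which also counts partitions containing $\gamma$.
\item A genuine embedding would force the exponent to be at least $P(\eta-\gamma)$. Your Step~1 caps it at $P_{\gamma}(\eta-\gamma)$, the power of $h_{\gamma}$ in the top-degree term.
\item No embedding exists. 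In $\mathfrak{sl}(1\vert 2)$ with the distinguished Borel, take $\gamma=\epsilon_{1}-\delta_{2}$ and $\eta=2\epsilon_{1}-2\delta_{2}$; then $P(\eta)=1<2=P(\eta-\gamma)$.
\end{itemize}
What one actually has is a nonzero homomorphism $M^{\bb}(\Lambda-\gamma)\to M^{\bb}(\Lambda)$ sending $v_{\Lambda-\gamma}$ to the singular vector $\theta_{\gamma}v_{\Lambda}$, and it has nontrivial kernel. For simple $\gamma$ one has $\theta_{\gamma}=X_{-\gamma}$, and $X_{-\gamma}v_{\Lambda-\gamma}\mapsto X_{-\gamma}^{2}v_{\Lambda}=0$. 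The real content of the odd case is the lower bound
\[
\dim\bigl(\UE(\nn^{-})\,\theta_{\gamma}v_{\Lambda}\bigr)_{\Lambda-\eta}\;\geq\;P_{\gamma}(\eta-\gamma)
\]
for generic $\Lambda$ on $(\Lambda+\rho,\gamma)=0$. For non-simple $\gamma$, the remark $X_{-\gamma}^{2}=0$ does not give this. One way to supply it is by odd reflections:
\begin{itemize}
\item Pass by odd reflections to a Borel with the same even part in which $\gamma$ is simple.
\item At generic points of the hyperplane these reflections are isomorphisms of Verma modules and preserve $\Lambda+\rho$, so the hyperplane is unchanged. Maximal submodules, i.e.\ radicals, correspond.
\item There the submodule is $\UE(\mathfrak{m})X_{-\gamma}v$, free over $\UE(\mathfrak{m})$, where $\mathfrak{m}$ is the ideal spanned by the other negative root vectors.
\item Its character yields exactly $P_{\gamma}(\eta-\gamma)$ in weight $\Lambda-\eta$.
\end{itemize}
Musson's explicit Shapovalov elements give an alternative route. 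The Jantzen filtration cannot replace this step: it only bounds the order of vanishing by the dimension of the radical, which is exactly what must be estimated. Larger radicals on special loci are irrelevant, since divisibility is tested at generic points.

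\textbf{Even roots.} With the paper's conventions, $\Lambda(h_{\gamma})=(\Lambda,\gamma)$ for the supertrace form, and $(\gamma,\gamma)=-2$ for $\gamma=\delta_{k}-\delta_{l}$. So the singular vector of weight $\Lambda-r\gamma$ appears at $(\Lambda+\rho,\gamma)=-r$, not at $+r$. Your Step~2 claim therefore fails for these roots, and the factor must be $h_{\gamma}+(\rho,\gamma)-\tfrac{r}{2}(\gamma,\gamma)$. Already in $\mathfrak{sl}(1\vert 2)$ with $\Lambda=(\lambda^{1}\vert\mu^{1},\mu^{2})$ and $\eta=\delta_{1}-\delta_{2}$, the $1\times 1$ Gram matrix is
\[
\mu^{1}-\mu^{2}=-(\Lambda+\rho,\delta_{1}-\delta_{2})-1,
\]
not a multiple of $(\Lambda+\rho,\delta_{1}-\delta_{2})-1$. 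State the even hyperplanes as $(\Lambda+\rho,\gamma^{\vee})=r$.

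Two smaller points:
\begin{itemize}
\item $\UE(\nn^{-})$ has zero divisors, so injectivity of $u\mapsto u\,\theta_{\gamma,r}$ needs justification. For the distinguished Borel it holds: the even singular vector can be taken in $\UE(\nn^{-}_{\bar{0}})v_{\Lambda}$, and $\operatorname{gr}\UE(\nn^{-})\cong S(\nn^{-}_{\bar{0}})\otimes\bigwedge(\nn^{-}_{\bar{1}})$ is torsion-free over $S(\nn^{-}_{\bar{0}})$.
\item Odd reflections help only at the level of coranks at generic points. The isomorphism \eqref{eq::relation_Verma_modules_and_systesm} fails for Verma modules when $(\Lambda,\alpha)=0$ and does not identify Gram matrices, so it does not transport the determinant itself. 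Run Steps~1 and~3 directly for the given Borel.
\end{itemize}
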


\subsection{The Relative Dirac Operator \texorpdfstring{$\Dirac_{\gg,\even}$}{}}
\label{bsec::Dirac_operator}

We briefly construct the relative (quadratic) Dirac operator $\Dirac_{\gg,\even}$, following \cite{huang2005dirac,xiao2015dirac}.
We emphasize, however, that it arises naturally from the quantum Weil superalgebra; see \cite{SchmidtDirac,Schmidt_Families}.

Since $(\cdot,\cdot)$ is non-degenerate if and only if $m\neq n$, we distinguish the cases $m\neq n$ and $m=n$, and begin with $m\neq n$.
Fix $B(\cdot,\cdot)\coloneqq \tfrac12(\cdot,\cdot)$ on $\gg$; this normalization will be convenient later. Its restriction to $\odd$ is
symplectic. We then fix two complementary Lagrangian subspaces of $\odd$
with bases $\{\partial_{i}\}$ and $\{x_{i}\}$, for $1 \leq i \leq mn$, such that
$B(\partial_{i},x_{j}) = -B(x_{j},\partial_{i}) = \tfrac{1}{2}\delta_{ij}$ and $\omega(x_{i}) = - \partial_{i}$. Here, $\omega = \omega_{(-,+)}$ in the infinite-dimensional case and $\omega_{+}$ in the finite-dimensional case. This choice is compatible with our fixed $\Delta^{+}$ (\emph{cf.} Section~\ref{subsec::real_forms}) in the sense that the basis $\{\partial_{i}\}$ spans the odd positive roots $\nn_{\bar{1}}^{+}$ and the basis $\{x_{i}\}$ spans the odd negative roots $\nn_{\bar{1}}^{-}$. Let $T(\odd)$ denote the \emph{tensor algebra} of the vector space $\odd$, regarded as concentrated in even degree. The \emph{Weyl algebra} is then defined as the quotient
$\mathscr{W}(\odd) = T(\odd)/I$,
where $I$ is the two-sided ideal generated by the relations
$v \otimes w - w \otimes v - 2\,B(v,w)$ for all $v,w \in \odd$.
Equivalently, $\mathscr{W}(\odd)$ can be realized as the algebra of differential operators with
polynomial coefficients in the variables $x_{1},\dotsc,x_{mn}$, under the identification of the
generators $\partial_{i}$ with the partial derivatives $\frac{\partial}{\partial x_{i}}$ for
$i=1,\dotsc,mn$. We equip $\Weyl$ with the canonical Lie bracket $[\cdot,\cdot]_{\mathcal{W}}$.
 
Using the relations $[x_{i},x_{j}]_{\mathcal{W}} = 0$, $[\partial_{i},\partial_{j}]_{\mathcal{W}} = 0$, and 
$[\partial_{i},x_{j}]_{\mathcal{W}} = \delta_{ij}$ for all $1 \leq i,j \leq mn$, one checks that 
the Lie algebra $\even$ embeds as a subalgebra of $\Weyl$. The corresponding Lie algebra 
homomorphism $\alpha : \even \to \Weyl$ is given explicitly in \cite[Equation~2]{xiao2015dirac}:
\begin{multline}
\alpha(X)= \sum_{k,j=1}^{mn}(B(X,[\partial_{k},\partial_{j}])x_{k}x_{j}+B(X,[x_{k},x_{j}])\partial_{k}\partial_{j}) 
\\ -\sum_{k,j=1}^{mn}2B(X,[x_{k},\partial_{j}])x_{j}\partial_{k}-\sum_{l=1}^{mn}B(X,[\partial_{l},x_{l}]).
\end{multline}
Let $\Omega_{\gg}$ and $\Omega_{\even}$ denote the quadratic Casimir operators of $\gg$ and $\even$, respectively. 
We write $\Omega_{\even,\Delta}$ for the image of $\Omega_{\even}$ under the diagonal embedding
\begin{equation}
 \even \;\longrightarrow\; \mathfrak{U}(\gg) \otimes \mathscr{W}(\odd), 
 \qquad X \mapsto X \otimes 1 + 1 \otimes \alpha(X).
\end{equation}

The \textit{Dirac element} $\Dirac$ for $\gg$ is defined as the odd element
\begin{equation}
 \Dirac \coloneqq \Dirac_{\gg,\even} \;=\; 2 \sum_{i=1}^{mn} \bigl( \partial_{i} \otimes x_{i} - x_{i} \otimes \partial_{i} \bigr)
 \;\in\; \UE(\gg) \otimes \mathscr{W}(\odd).
\end{equation}
It is independent of the choice of basis of $\odd$ and is $\gg_{0}$-invariant with respect to the $\even$-action on 
$\UE(\gg) \otimes \mathscr{W}(\odd)$ induced by the adjoint action on both factors 
\cite[Lemma~10.2.1]{huang2007dirac}; that is, $[\even, \Dirac] = 0$. 
In analogy with the case of reductive Lie algebras, the Dirac element admits a particularly nice square.

\begin{proposition}[{\cite[Proposition 10.2.2]{huang2007dirac}}] \label{SquareDirac} 
The Dirac element $\Dirac\in \mathfrak{U}(\gg)\otimes \mathscr{W}(\odd)$ satisfies
\[
\Dirac^{2}=-\Omega_{\gg}\otimes 1+\Omega_{\even, \Delta}-C,
\]
where $C$ is a constant that equals $1/8$ of the trace of $\Omega_{\even}$ on $\odd$.
\end{proposition}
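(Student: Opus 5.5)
The plan is to expand $\Dirac^{2}$ directly in $\UE(\gg)\otimes\Weyl$ and to compare it term by term with the expansion of $\Omega_{\even,\Delta}$. We use that $\Weyl$ is concentrated in even degree, so that $(a\otimes u)(b\otimes v)=ab\otimes uv$. Write
\[
\Dirac^{2}=4\sum_{i,j}\bigl(\partial_{i}\partial_{j}\otimes x_{i}x_{j}-\partial_{i}x_{j}\otimes x_{i}\partial_{j}-x_{i}\partial_{j}\otimes\partial_{i}x_{j}+x_{i}x_{j}\otimes\partial_{i}\partial_{j}\bigr).
\]
The steps are as follows.

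\textbf{Step 1 (pure terms).} In the first and last sums, symmetrize in $(i,j)$. The Weyl factors $x_{i}x_{j}$ and $\partial_{i}\partial_{j}$ are symmetric. The enveloping factors, being products of odd elements, symmetrize to $\partial_{i}\partial_{j}+\partial_{j}\partial_{i}=[\partial_{i},\partial_{j}]$ and $[x_{i},x_{j}]$, which lie in $\even$.

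\textbf{Step 2 (mixed terms).} Using $\partial_{j}x_{i}=x_{i}\partial_{j}+\delta_{ij}$ in $\Weyl$, rewrite the two mixed sums as
\[
-\sum_{i,j}\bigl(\partial_{i}x_{j}+x_{j}\partial_{i}\bigr)\otimes x_{i}\partial_{j}\;-\;\sum_{i}x_{i}\partial_{i}\otimes 1 ,
\]
where again $\partial_{i}x_{j}+x_{j}\partial_{i}=[\partial_{i},x_{j}]\in\even$.

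\textbf{Step 3 (matching the cross term).} Expand $X\mapsto X\otimes1+1\otimes\alpha(X)$ on $\Omega_{\even}=\sum_{a}X_{a}X^{a}$, with $\{X_{a}\},\{X^{a}\}$ dual bases of $\even$ for $B$. This gives
\[
\Omega_{\even,\Delta}=\Omega_{\even}\otimes1+2\sum_{a}X_{a}\otimes\alpha(X^{a})+1\otimes\alpha(\Omega_{\even}).
\]
Insert the explicit formula for $\alpha$ from \cite{xiao2015dirac}. Invariance of $B$ gives
\[
\sum_{a}B(X^{a},[\partial_{k},\partial_{j}])X_{a}=[\partial_{k},\partial_{j}]_{\even},
\]
and similarly for the brackets $[x_{k},x_{j}]$ and $[x_{k},\partial_{j}]$. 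Hence the cross term $2\sum_{a}X_{a}\otimes\alpha(X^{a})$ reproduces exactly the $\even\otimes(\text{quadratic in }\Weyl)$ terms obtained in Steps 1 and 2. Any remainder is of the form $\even\otimes1$, coming from the linear constant in $\alpha$; this vanishes since $\sum_{l}[\partial_{l},x_{l}]$ is supertrace-like and $B$ pairs it against $\even$ to zero, or else it cancels with Step 2.

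\textbf{Step 4 (Casimir terms).} The terms left in $\UE(\gg)\otimes1$ are $-\Omega_{\even}\otimes1$, combined with the odd part $\sum_{i}(\ldots)\otimes1$ produced in Step 2. The odd part of the Casimir of $\gg$ in the symplectic basis is
\[
\Omega_{\gg}-\Omega_{\even}=2\sum_{i}(x_{i}\partial_{i}-\partial_{i}x_{i}),
\]
up to the normalization $B=\tfrac12(\cdot,\cdot)$. Rewriting $x_{i}\partial_{i}$ as $\tfrac12(x_{i}\partial_{i}-\partial_{i}x_{i})+\tfrac12[x_{i},\partial_{i}]$ turns these into $-\Omega_{\gg}\otimes1+\Omega_{\even}\otimes1$. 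Any leftover $\sum_{i}[x_{i},\partial_{i}]\otimes1$ must be identified with an element of the centre of $\even$ that is cancelled by the matching leftover in Step 3.

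\textbf{Step 5 (the main obstacle).} It remains to show that $1\otimes\alpha(\Omega_{\even})$ is the scalar $C$, since $\Dirac^{2}$ contains no quartic Weyl terms. I expect this to be the hardest step. I would compute $\alpha(\Omega_{\even})=\sum_{a}\alpha(X_{a})\alpha(X^{a})$ directly. The quartic part $\sum_{a}B(X^{a},[u,v])B(X_{a},[w,z])\,uvwz$ vanishes by the super Jacobi identity for three odd elements, $[[u,v],w]+\text{cyclic}=0$, after symmetrization in the commuting Weyl generators. This is the analogue of the argument for Kostant's cubic Dirac operator. The quadratic parts cancel by $\even$-invariance of $\alpha$, leaving a constant. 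Evaluating that constant, for instance on the vacuum $1$ of the Weyl module $M(\odd)$, gives $\frac18\tr_{\odd}\Omega_{\even}$, up to sign conventions that must be tracked against the definition of $C$.

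For $m=n$ the same computation is carried out in $\gl(n\vert n)$ with the non-degenerate form.
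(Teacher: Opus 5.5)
Your Steps 1--4 follow the direct expansion behind the cited result. The paper gives no proof of its own, only the reference to Huang--Pand\v{z}i\'c. These steps close exactly: the leftover $-2\sum_i[x_i,\partial_i]\otimes 1$ from the mixed terms equals the leftover $-2\sum_l[\partial_l,x_l]\otimes 1$ produced by the constant in $\alpha$. So you get precisely $\Dirac^{2}=-\Omega_{\gg}\otimes 1+\Omega_{\even,\Delta}-1\otimes\alpha(\Omega_{\even})$. Drop your first alternative in Step 3, though. $H_{0}\coloneqq\sum_l[\partial_l,x_l]$ is not $B$-orthogonal to $\even$; it is the $B$-dual of $\rho_{\bar 1}$. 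For the splitting $\nn^{\pm}_{\bar1}$ of the distinguished Borel, $H_{0}=nI_{m}\oplus mI_{n}$.

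The genuine gap is Step 5, which is the real content of the proposition.

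First, $\even$-invariance cannot remove the quadratic part of $\alpha(\Omega_{\even})$. It only puts $\alpha(\Omega_{\even})$ in the commutant of $\alpha(\even)$. For the distinguished splitting, $\alpha(\even)$ preserves polynomial degree, so this commutant contains the non-scalar Euler element $\sum_i x_i\partial_i$. Concretely, the normally ordered quadratic part of $\alpha$ contributes $0$ on constants and $2(m-n)$ on linear polynomials of $M(\odd)$. Only the cross term with the constant $-B(X,H_0)$ in $\alpha(X)$ cancels this.

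The missing argument is as follows. The given $\alpha(X)$ is exactly the Weyl symmetrization of a quadratic; its constant is the reordering correction. For two such elements $a,b$, the anticommutator $ab+ba$ is twice the symmetrization of the product of their symbols plus a scalar, with no quadratic term. Since $\sum_a X_a\otimes X^a$ is symmetric, $\alpha(\Omega_{\even})=\tfrac12\sum_a\bigl(\alpha(X_a)\alpha(X^a)+\alpha(X^a)\alpha(X_a)\bigr)$. Hence only your (correct) Jacobi-vanishing quartic term and scalars survive. For $a=\alpha(X)$, $b=\alpha(Y)$, the scalar in $\tfrac12(ab+ba)$ is $-\tfrac18\tr_{\odd}(\operatorname{ad}X\operatorname{ad}Y)$.

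Second, the constant is then forced rather than a matter of convention, and its sign matters. Evaluate on the vacuum with the distinguished splitting, where $[\partial_k,\partial_j]=[x_k,x_j]=0$. Then $\alpha(X)1=-B(X,H_0)$, so $\alpha(\Omega_{\even})=B(H_0,H_0)=\tfrac{mn(n-m)}{2}$. Meanwhile $\Omega_{\even}$ acts on $\odd$ by $2(m-n)$, so $\tfrac18\tr_{\odd}\Omega_{\even}=\tfrac{mn(m-n)}{2}$. Thus the scalar term of $\Dirac^{2}$ is $-\alpha(\Omega_{\even})=+\tfrac18\tr_{\odd}\Omega_{\even}=\|\rho_{\bar0}\|^{2}-\|\rho\|^{2}$ (norms from $B$). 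This is as it must be, since $\Dirac(v_{\Lambda}\otimes 1)=0$. With the paper's conventions, the displayed $-C$ is therefore right only if the trace over the odd space $\odd$ is read as a supertrace. With the ordinary trace, the sign in front of $C$ is $+$.
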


Any Dirac element $\Dirac$ may be realized as a Dirac operator. For a $\gg$-supermodule $M$, it acts componentwise on
$M\otimes M(\odd)$, where $M(\odd)=\CC[x_{1},\ldots,x_{mn}]$ is the oscillator module for $\Weyl$ introduced below. We refer to
this operator as the \emph{Dirac operator} and denote it by the same symbol $\Dirac$.

Finally, consider the case $m=n$. We then work in $\gl(n\vert n)$, since its supertrace form is non-degenerate, and define the corresponding
Dirac element $\Dirac$. As we are mainly interested in highest weight supermodules (\emph{cf.}~Theorem~\ref{thm::HW_property_M}) and every
highest weight $\sl(n\vert n)$-supermodule is the restriction of a highest weight $\gl(n\vert n)$-supermodule, the same element
$\Dirac$ acts on $\sl(n\vert n)$-supermodules via restriction. We refer to the resulting operator as the Dirac operator
for $\sl(n\vert n)$, and we denote it by the same symbol.

\subsection{Dirac Operators and Unitarity} The simple $\Weyl$-module $M(\odd)$ carries a unique Hermitian form $\langle\cdot,\cdot\rangle_{M(\odd)}$, known as the \emph{Bargmann--Fock Hermitian form} or \emph{Fischer--Fock Hermitian form} \cite{BargmannHermitianForm, FischerHermitianForm, FockHermitianForm}, such that $\partial_{k}$ and $x_{k}$ are adjoint to each other and the following orthogonality relations hold:
\begin{equation}
\begin{split}
\langle \prod_{k=1}^{mn}x_{k}^{p_{k}},\prod_{k=1}^{mn}x_{k}^{q_{k}}\rangle_{M(\odd)}=\begin{cases}\prod_{k=1}^{mn}p_{k}! \qquad &\text{if} \ p_{k}=q_{k} \ \text{for all} \ k, \\
0 \qquad &\text{otherwise}.\end{cases}
\end{split}
\end{equation}
If $\bigl(\HH,\langle\cdot,\cdot
\rangle_\HH\bigr)$ is a unitarizable $\gg$-supermodule, we equip the $\UE(\gg) \otimes \mathscr{W}
(\odd)$-supermodule $\HH \otimes M(\odd)$ with the positive definite Hermitian form 
\begin{equation}
\langle\cdot,\cdot \rangle_{\HH \otimes M(\odd)} =
\langle \cdot, \cdot \rangle_{\HH} \otimes \langle \cdot, \cdot \rangle_{M(\odd)}.
\end{equation}
Up to multiplication by a real scalar, $\langle\cdot,\cdot \rangle_{\HH \otimes 
M(\odd)}$ is the unique Hermitian form that is $\gg$-anti-contravariant in the first factor, and satisfies 
$x_i^\dagger = \partial_i$ in the second. For our choice of positive system, this is compatible with
the extension of the conjugate-linear anti-involution $\omega$ to $\UE(\gg)$, which implies 
that the Dirac operator is \emph{self-adjoint} on $\HH\otimes M(\odd)$. We arrive at the following \emph{Dirac inequality}: 
\begin{equation}
 \langle v,\Dirac^{2}v\rangle_{\HH\otimes M(\odd)} \geq 0
\end{equation}
for all $v\in \HH\otimes M(\odd)$. The Dirac inequality, together with Proposition~\ref{SquareDirac}, provides a complete characterization of unitarity in terms of the highest weights of the simple composition factors in the $\even$-filtration of $M$ described in Section~\ref{subsec::highest_weight_supermodules}. 

\begin{theorem} \label{thm::unitarity_via_Dirac_inequality} Let $M$ be a highest weight $\gg$-supermodule with highest weight $\Lambda$. 
Then $M$ is unitarizable if and only if the following two conditions are satisfied:
 \begin{enumerate}
 \item[a)] $\Lambda$ is the highest weight of a unitarizable highest weight $\even$-module.
 \item[b)] If $L_{0}(\mu)$ is a simple $\even$-composition factor in the $\even$-filtration of $M$ with highest weight $\mu$, then
\begin{equation*}
 \begin{aligned}
 (\mu + 2\rho, \mu) \begin{cases}> (\Lambda+2\rho,\Lambda) \qquad & p,q\neq 0, \\
 <(\Lambda+2\rho, \Lambda) \qquad &p=0 \ \text{or} \ q=0. \end{cases}
 \end{aligned}
\end{equation*}
 \end{enumerate}
\end{theorem}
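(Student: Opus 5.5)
The plan is to compute the scalar by which $\Dirac^{2}$ acts on a suitable vector of $M\otimes M(\odd)$ attached to each $\even$-constituent $L_{0}(\mu)$, and to show that the Dirac inequality is equivalent to the sign condition b). For necessity, a) is immediate: the highest weight vector of a unitarizable $M$ generates a unitarizable $\even$-module. For b) I use Corollary~\ref{Cor::Even_constituents}: $M$ is an orthogonal direct sum of unitarizable $\even$-modules $L_{0}(\mu)$, so the Hermitian form restricts positively to each summand $L_{0}(\mu)\otimes M(\odd)$. In this summand I take the vector $v=v_{\mu}\otimes 1$, where $v_{\mu}$ is the highest weight vector of $L_{0}(\mu)$ and $1\in M(\odd)=\CC[x_{1},\ldots,x_{mn}]$ is the vacuum.

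I then evaluate the three terms of Proposition~\ref{SquareDirac} on $v$.
\begin{enumerate}
\item[(1)] $\Omega_{\gg}$ acts on $M$ by its infinitesimal character, i.e.\ by $(\Lambda+2\rho,\Lambda)$.
\item[(2)] From the explicit formula for $\alpha$, the vacuum is annihilated by all $\partial_{i}$ and has $\hh$-weight $-\rho_{\bar 1}$ under $\alpha$. Because $\{\partial_i\}$ spans $\nn^{+}_{\bar1}$, the vector $v$ is a $\even$-highest weight vector of weight $\mu-\rho_{\bar 1}$ for the diagonal action. Hence $\Omega_{\even,\Delta}v=(\mu-\rho_{\bar1}+2\rho_{\bar0},\mu-\rho_{\bar1})v=\bigl((\mu+\rho,\mu+\rho)-(\rho_{\bar0},\rho_{\bar0})\bigr)v$.
\item[(3)] The constant $C$ is computed as $\tfrac18\operatorname{tr}_{\odd}\Omega_{\even}$. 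The claim to check is that $(\rho_{\bar0},\rho_{\bar0})+C=(\rho,\rho)$, which I expect to verify by a direct computation in the standard coordinates of Section~\ref{subsec::structure_theory}.
\end{enumerate}
Granting the identity in (3), $\Dirac^{2}v=\pm\bigl((\mu+2\rho,\mu)-(\Lambda+2\rho,\Lambda)\bigr)v$. The overall sign is determined by whether the contravariant form on $M(\odd)$ has $x_{i}^{\dagger}=\partial_{i}$ or $-\partial_{i}$ relative to $\omega(x_i)=-\partial_i$. This differs between $\omega_{+}$ and $\omega_{(-,+)}$, which explains the two opposite inequalities. Self-adjointness of $\Dirac$ then gives $\langle v,\Dirac^{2}v\rangle=\|\Dirac v\|^{2}\ge0$, i.e.\ the non-strict version of b).

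To obtain strict inequality for $\mu\neq\Lambda$, I would argue that equality forces $\Dirac v=0$. This makes $v$ a nonzero class in the Dirac cohomology of $M$. By the Huang--Pand\v{z}i\'c analogue of Vogan's conjecture, its weight $\mu-\rho_{\bar1}$ then satisfies $\mu+\rho\in W(\Lambda+\rho)$. Combined with $\mu=\Lambda-\gamma$ for $\gamma\in\Gamma$ and the unitarity of both weights, I expect this to force $\gamma=0$. Pinning down this case rigorously, and in particular checking that no atypical coincidence survives, is one delicate point.

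For sufficiency, I would use a deformation argument along the line $\Lambda(x)=\Lambda+\tfrac{x}{2}(1,\ldots,1\vert1,\ldots,1)$. For $x$ large (respectively, of the appropriate sign), the Kac--Shapovalov formula (Theorem~\ref{thm::Kac_Shapovalov}) shows that $M(\Lambda(x))$ is simple, and the Shapovalov form is positive there by a direct leading-order estimate. Signatures of the form on each $\even$-isotypic piece can only change at zeros of $\det F_\eta$. Since all constituents are unitarizable $\even$-modules, either by Lemma~\ref{lemm::unitarizable_composition_factors} or by finite-dimensionality, the form on each constituent is definite. Its sign is $\operatorname{sign}\langle v,\Dirac^2 v\rangle$ divided by $\operatorname{sign}\|\Dirac v\|^2$, and the latter is computed by the same scalar. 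Running the Dirac identity in reverse, I would show that a constituent on which the form turns negative forces $\Dirac^{2}$ to take the wrong sign on some vector. Such a vector would come from a constituent $L_{0}(\mu)$ violating b).

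The main obstacle is this converse direction. The form on $M\otimes M(\odd)$ is a priori indefinite, so positivity cannot be read off directly. One must propagate the sign from the highest constituent to lower ones through the odd root vectors $e_{-\alpha}$, constituent by constituent along the filtration of Proposition~\ref{prop::filtration_Verma_supermodule}. At each step the Dirac scalar for $L_{0}(\Lambda-\gamma-\alpha)$ relative to $L_{0}(\Lambda-\gamma)$ should control the norm, in the way the products $\prod(\Lambda+\rho,\epsilon_m-\delta_l)$ arise in Lemma~\ref{lemm::unitarity_conditions}. I would try this inductive norm comparison first, falling back on the general criterion of \cite{SchmidtDirac} if it becomes unwieldy.
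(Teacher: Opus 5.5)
Your necessity argument is sound. It is the mechanism the paper points to: the paper gives no proof of this theorem in the text. It derives the inequalities from the Dirac inequality together with Proposition~\ref{SquareDirac}, and quotes the converse from \cite{SchmidtDirac}.

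Your sign diagnosis is correct. With $B(\partial_i,x_i)=\tfrac12$ and $\{x_i\}$ spanning $\nn^{-}_{\bar1}$, one can achieve $\omega(x_i)=-\partial_i$ for $\omega_{(-,+)}$, but only $\omega(x_i)=+\partial_i$ for $\omega_{+}$. So $\Dirac$ is self-adjoint in the first case and anti-self-adjoint in the second. However, the $\pm$ belongs in $\langle v,\Dirac^{2}v\rangle=\pm\langle\Dirac v,\Dirac v\rangle$, not in the eigenvalue. The eigenvalue is, up to positive normalization, $(\mu+2\rho,\mu)-(\Lambda+2\rho,\Lambda)$ in both cases.

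For strictness you do not need Vogan's conjecture. That route would need extra work for $p,q\neq0$ anyway, since $\rho_{\bar1}$ is not $W$-invariant there and $\Lambda+\rho$, $\mu+\rho$ are only $W_c$-dominant. Instead, since $\partial_i\cdot 1=0$ in $M(\odd)$,
\[
\Dirac(v_{\mu}\otimes 1)=2\sum_{i=1}^{mn}\partial_i v_{\mu}\otimes x_i .
\]
The $x_i$ are linearly independent, so $\Dirac v=0$ forces $\nn^{+}_{\bar1}v_\mu=0$. Then $v_\mu$ is $\bb$-singular in the simple module $M$, hence $\mu=\Lambda$.

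The genuine gap is sufficiency, and your deformation fails at four concrete points.
\begin{enumerate}
\item[(1)] $M^{\bb}(\Lambda(x))$ is never simple once a compact simple root exists, which is always the case for $p=0$ or $q=0$. Indeed, $\Lambda(x)$ is $\kk^{\CC}$-dominant integral, so a factor of $D_{1}$ in Theorem~\ref{thm::Kac_Shapovalov} vanishes. You would have to pass to Kac or generalized Verma modules.
\item[(2)] For $p,q\neq0$ there is no asymptotically unitary regime in $x$. By \eqref{eq::A} and \eqref{eq::B}, $(\Lambda+\rho,\epsilon_{1}-\delta_{1})\to+\infty$ as $x\to+\infty$, and $(\Lambda+\rho,-\epsilon_{m}+\delta_{n})\to+\infty$ as $x\to-\infty$. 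Already for $\su(1,1\vert 1)$, unitarity holds only for $2x\in[\lambda,-\lambda]$; a deformation would have to run in $\lambda$ instead.
\item[(3)] Condition b) is assumed only at the endpoint. Along the path the constituents change at every reducibility point, so nothing controls the sign there.
\item[(4)] The step ``its sign is $\operatorname{sign}\langle v,\Dirac^{2}v\rangle$ divided by $\operatorname{sign}\langle\Dirac v,\Dirac v\rangle$'' is circular. The quantity $\langle\Dirac v,\Dirac v\rangle$ is computed with the same a priori indefinite form. Moreover, ``the form on each constituent'' presupposes that the $\even$-filtration already splits.
\end{enumerate}

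What is missing is to use the Dirac identity inductively rather than globally. This is the precise version of the sign propagation you gesture at, but it runs through the raising operators $\partial_i$, not the $e_{-\alpha}$. Take any $\nn^{+}_{\bar0}$-singular $w\in M$ of weight $\mu$. The computation above, applied to the Shapovalov form, gives
\[
\bigl((\mu+2\rho,\mu)-(\Lambda+2\rho,\Lambda)\bigr)\langle w,w\rangle=\pm 4\sum_{i=1}^{mn}\langle\partial_i w,\partial_i w\rangle .
\]
The (anti-)self-adjointness of $\Dirac$ used here is purely formal and needs no positivity. The right-hand side involves only the strictly higher weights $\mu+\alpha_i$.

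Fix $H\in\hh$ positive on $\Delta^{+}$, and process constituents in decreasing order of $\mu(H)$. At each step, split off the already positive definite $\even$-submodule $U$, and take $w\in U^{\perp}$ of maximal $H$-value. Then $\partial_i w\in U$, and not all $\partial_i w$ vanish because $w$ is not $\bb$-singular. So $\langle w,w\rangle>0$ becomes exactly condition b) for $\mu$.

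You would still have to show that the $\even$-submodule generated by $w$ is the simple unitarizable module $L_{0}(\mu)$. This is where Lemma~\ref{lemm::unitarizable_composition_factors} and the possibly non-split filtration for $p,q\neq0$ enter. As written, your proposal establishes necessity only; for the converse it ends where the paper does, at \cite{SchmidtDirac}.
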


Theorem~\ref{thm::unitarity_via_Dirac_inequality} is the basis of the classification of unitarity. The crucial point is the system of inequalities imposed by the weights $\Lambda-\alpha$ with $\alpha\in\Delta_{\bar1}^{+}$. A direct computation yields the following lemma.

\begin{lemma}\label{lemm::Dirac_basic_const}
Let $\mu=\Lambda-\alpha$ with $\alpha\in\Delta_{\bar1}^{+}$. Then
\[
(\mu+2\rho,\mu)\ge(\Lambda+2\rho,\Lambda)\quad\Longleftrightarrow\quad(\Lambda+\rho,\alpha)\le0.
\]
\end{lemma}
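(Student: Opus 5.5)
Expand both sides directly: the statement is a short bilinear-form computation, using only that odd roots are isotropic.

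With $\mu=\Lambda-\alpha$, bilinearity and symmetry of $(\cdot,\cdot)$ on $\hh^{\ast}$ give
\[
(\mu+2\rho,\mu)=(\Lambda-\alpha+2\rho,\Lambda-\alpha)=(\Lambda+2\rho,\Lambda)-(\Lambda,\alpha)-(\alpha,\Lambda)+(\alpha,\alpha)-2(\rho,\alpha).
\]
Since $\alpha\in\Delta_{\bar1}^{+}$ is isotropic, as recorded in Section~\ref{subsec::structure_theory}, we have $(\alpha,\alpha)=0$. The expansion therefore collapses to
\[
(\mu+2\rho,\mu)-(\Lambda+2\rho,\Lambda)=-2(\Lambda+\rho,\alpha).
\]
The stated equivalence follows immediately. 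The same identity also gives the strict versions: $(\mu+2\rho,\mu)>(\Lambda+2\rho,\Lambda)$ holds if and only if $(\Lambda+\rho,\alpha)<0$, and reversing both inequalities gives the finite-dimensional case.

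The only point needing care is the case $m=n$, where the form on $\hh^{\ast}$ is degenerate. There I would compute in $\dd^{\ast}$ with the non-degenerate form $(\epsilon_i,\epsilon_j)=\delta_{ij}$, $(\delta_k,\delta_l)=-\delta_{kl}$. Then I would check that both sides are unchanged when $\Lambda$ is shifted by $(1,\ldots,1\vert -1,\ldots,-1)$. This holds because the shift vector is orthogonal to every root, so it pairs to zero with $\alpha$. It also pairs to zero with $\rho$ and $\Lambda$, since the shift is supertrace-like and isotropic. Hence the quantities are well defined, and the computation above goes through verbatim.

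I expect no genuine obstacle. The main care is the sign bookkeeping, together with confirming that $(\rho,\alpha)$ is computed with the same $\rho=\rho_{\bar0}-\rho_{\bar1}$ of the fixed positive system as in Theorem~\ref{thm::unitarity_via_Dirac_inequality}. The identity is independent of which system is chosen, provided the same $\rho$ is used on both sides.
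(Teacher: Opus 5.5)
Your expansion is exactly the paper's ``direct computation'': the difference equals $-2(\Lambda+\rho,\alpha)+(\alpha,\alpha)$, and isotropy of odd roots gives the equivalence. One correction to your $m=n$ aside: the shift vector $s=(1,\ldots,1\vert -1,\ldots,-1)$ does \emph{not} pair to zero with $\Lambda$ in general, since $(s,\Lambda)=\sum_i\lambda^i+\sum_k\mu^k$. Consequently $(\Lambda+2\rho,\Lambda)$ by itself depends on the chosen representative. What is invariant is the difference $(\mu+2\rho,\mu)-(\Lambda+2\rho,\Lambda)$, because $\mu$ and $\Lambda$ are shifted together and $(s,\alpha)=(s,\rho)=0$, and this is all the lemma needs.
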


As a first consequence, we obtain the following proposition, which will be our main tool for proving unitarity.

\begin{proposition}\label{prop::continuous_parameter}
Let $M$ be a simple highest weight $\gg$-supermodule of highest weight $\Lambda\in\hh^{*}$, and assume that $\Lambda$ is the highest weight of a unitarizable highest weight $\even$-module. Suppose that for every highest weight $\mu=\Lambda-\gamma$ of a $\even$-composition factor of $M$, one of the following holds:
\begin{enumerate}
\item[a)] $p=0$ or $q=0$, $M$ is finite-dimensional, and $\gamma=\gamma_{1}+\cdots+\gamma_{k}$ with $(\Lambda+\rho,\gamma_i)>0$ for all $i=1,\dots,k$;
\item[b)] $p,q\neq0$, $M$ is infinite-dimensional, and $\gamma=\gamma_{1}+\cdots+\gamma_{k}$ with $(\Lambda+\rho,\gamma_i)<0$ for all $i=1,\dots,k$.
\end{enumerate}
Then $M$ is unitarizable. In particular, $M$ is unitarizable if $(\Lambda+\rho,\alpha)>0$ for all $\alpha\in\Delta^+_{\bar1}$ in the finite-dimensional case, and if $(\Lambda+\rho,\alpha)<0$ for all $\alpha\in\Delta^+_{\bar1}$ in the infinite-dimensional case.
\end{proposition}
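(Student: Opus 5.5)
The plan is to reduce everything to Theorem~\ref{thm::unitarity_via_Dirac_inequality}. Hypothesis a) of that theorem is assumed directly, so only the Dirac inequality b) has to be checked on every $\even$-composition factor $L_{0}(\mu)$ of $M$, with $\mu=\Lambda-\gamma$ and $\gamma=\gamma_{1}+\cdots+\gamma_{k}$ a sum of pairwise distinct odd positive roots. The central computation is the one behind Lemma~\ref{lemm::Dirac_basic_const}, extended from a single odd root to such a sum. Expanding,
\[
(\Lambda-\gamma+2\rho,\Lambda-\gamma)-(\Lambda+2\rho,\Lambda)=-2(\Lambda+\rho,\gamma)+(\gamma,\gamma).
\]
I would then use that the $\gamma_{i}$ are distinct and isotropic, which gives $(\gamma,\gamma)=2\sum_{i<j}(\gamma_{i},\gamma_{j})$.

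The main obstacle is controlling these cross terms. For distinct odd roots of the same sign type, $(\epsilon_{i}-\delta_{k},\epsilon_{j}-\delta_{l})=\delta_{ij}-\delta_{kl}$, and this can have either sign. So I would not estimate $\gamma$ in one stroke. Instead I would proceed inductively along the ordered decomposition, passing through the intermediate weights $\Lambda_{s}=\Lambda-\gamma_{1}-\cdots-\gamma_{s}$. At each step the increment is
\[
-2(\Lambda_{s-1}+\rho,\gamma_{s}),\qquad (\Lambda_{s-1}+\rho,\gamma_{s})=(\Lambda+\rho,\gamma_{s})-\sum_{i<s}(\gamma_{i},\gamma_{s}).
\]
To make this work, the ordering of the decomposition would be chosen compatibly with the $\even$-filtration from Proposition~\ref{prop::filtration_Verma_supermodule}, so that each $\Lambda_{s}$ is itself the highest weight of a factor.

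In the finite-dimensional case I would rather use the sharper route through Lemma~\ref{lemm::decomp_even}: there $M$ is a direct sum of its constituents. The Casimir $\Omega_{\even}$ acts on $L_{0}(\mu)$ by $(\mu+2\rho_{\bar0},\mu)$, and $\Dirac^{2}$ acts on the $\even$-isotypic pieces of $M\otimes M(\odd)$ through Proposition~\ref{SquareDirac}. The hypothesis $(\Lambda+\rho,\gamma_{i})>0$ is exactly what forces each step to decrease the value, while the pairings $(\gamma_{i},\gamma_{j})$ between distinct roots contribute with the correct sign once the roots are ordered decreasingly. For the infinite-dimensional case b) the argument is the same with all inequalities reversed, using $(\Lambda+\rho,\gamma_{i})<0$ and the signs of $\omega_{(-,+)}$, in the form of Lemma~\ref{lemm::Dirac_basic_const}.

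Finally, the last assertion of the statement follows at once. If $(\Lambda+\rho,\alpha)$ has the required sign for every $\alpha\in\Delta^{+}_{\bar1}$, then every decomposition of every $\gamma\in\Gamma$ satisfies the hypothesis, whatever factors actually occur. In case b) one additionally invokes Lemma~\ref{lemm::unitarizable_composition_factors}, which ensures that all factors are unitarizable $\even$-modules, so that the Dirac inequality is meaningful.
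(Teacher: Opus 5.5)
You reduce correctly to Theorem~\ref{thm::unitarity_via_Dirac_inequality}, and your identity $(\Lambda-\gamma+2\rho,\Lambda-\gamma)-(\Lambda+2\rho,\Lambda)=-2(\Lambda+\rho,\gamma)+(\gamma,\gamma)$ matches the paper. But the only step with real content is not proved: that this number has the required sign despite the cross terms. The mechanism you assert for it is also incorrect.

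Telescoping along the $\Lambda_{s}$ is only a rewriting. It helps only if every increment has a sign; in the finite case this means $(\Lambda+\rho,\gamma_{s})>\sum_{i<s}(\gamma_{i},\gamma_{s})$. Two roots $\epsilon_{r}-\delta_{k}$, $\epsilon_{r}-\delta_{l}$ in the same row pair to $+1$. So they enter with the unfavourable sign for every ordering, and positivity of the individual $(\Lambda+\rho,\gamma_{i})$ cannot absorb them.

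For a concrete failure, take $n\ge 2$, $b_{1}=\cdots=b_{n}=0$, $\gamma=\sum_{k=1}^{n}(\epsilon_{m}-\delta_{k})$, and $x$ such that $\xi\coloneqq(\Lambda+\rho,\epsilon_{m}-\delta_{n})\in(0,1)$. Then every $(\Lambda+\rho,\alpha)$ is positive, so the hypothesis holds. The row values are $\xi,\xi+1,\ldots,\xi+n-1$. Ordering by decreasing value gives $(\Lambda_{n-1}+\rho,\gamma_{n})=\xi-(n-1)<0$ at the last step, so ``each step decreases'' fails.

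Your third paragraph does not address this. Once Theorem~\ref{thm::unitarity_via_Dirac_inequality} is invoked, nothing about $\Omega_{\even}$ or $\Dirac^{2}$ is needed, only this numerical inequality, and it involves $\rho$, not $\rho_{\bar 0}$. Likewise, requiring each $\Lambda_{s}$ to be the highest weight of a factor is unnecessary, since the telescoping is an identity, and it could not be guaranteed anyway.

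What is missing is the integral spacing along a row, which comes from $\even$-dominance of $\Lambda$: for $k<l$,
\[
(\Lambda+\rho,\epsilon_{r}-\delta_{k})-(\Lambda+\rho,\epsilon_{r}-\delta_{l})=\mu^{k}-\mu^{l}+l-k\ \ge\ l-k .
\]
If by ``decreasingly'' you meant decreasing $\delta$-index (increasing value), this rescues your induction. The $s$-th root of a row then has value at least $\xi_{r}+(s-1)$, where $\xi_{r}>0$ is the smallest value in that row. This exactly cancels its $s-1$ same-row predecessors, while same-column predecessors contribute $-1$ and only help.

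The paper uses the same input without an ordering. It drops the favourable same-column terms, so one row at a time suffices. Within a row, the excesses over the minimum $\xi$ are distinct positive integers, whence $2(\Lambda+\rho,\sum_{i}\gamma'_{i})\ge 2l\xi+l(l-1)$, which beats $(\sum_{i}\gamma'_{i},\sum_{i}\gamma'_{i})=l(l-1)$.

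Case b) is not just ``all inequalities reversed''. You first need $(\epsilon_{i}-\delta_{k},-\epsilon_{j}+\delta_{l})=\delta_{kl}\in\{0,1\}$ to separate the $A$- and $B$-parts of $\gamma$. Inside $A$, the harmful pairs are now those in the same column, and they are compensated by $(\Lambda+\rho,\epsilon_{i}-\epsilon_{i'})\ge i'-i$ for $i<i'\le p$. The same holds inside $B$, with the spacing for $p+1\le i<i'\le m$.
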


\begin{proof} We show that for every highest weight $\mu$ of any composition factor $L_{0}(\mu)$ of $M$ the Dirac inequality holds, that is, $(\mu+2\rho,\mu)<(\Lambda+2\rho,\Lambda)$ if $M$ is finite-dimensional and $(\mu+2\rho,\mu)>(\Lambda+2\rho,\Lambda)$ if $M$ is infinite-dimensional. The proposition then follows from Theorem~\ref{thm::unitarity_via_Dirac_inequality}.

Any highest weight $\mu$ is of the form $\mu=\Lambda-\gamma$, where $\gamma$ is a sum of pairwise distinct positive odd roots. Fix an arbitrary decomposition
\[
\gamma=\gamma_{1}+\cdots+\gamma_{k},\qquad \gamma_{i}\in\Delta_{\bar1}^{+}.
\]
We call $k$ the \emph{length} of $\gamma$. It is unique. By assumption,
\[
(\Lambda+\rho,\gamma_{i})\begin{cases}
>0,&\text{if $M$ is finite-dimensional},\\
<0,&\text{if $M$ is infinite-dimensional},
\end{cases}
\]
for all $i$.

We first consider the case where $M$ is finite-dimensional. 
Here the positive system $\Delta^{+}$ is the standard one $\Delta_{\bar{1},\text{st}}^{+}$. The values $(\Lambda+\rho,\alpha)$ for $\alpha\in\Delta_{\bar{1}}^{+}$ arrange as follows:
\[
\begin{array}{cccccccccc}
(\Lambda+\rho, \epsilon_{1}-\delta_{1}) & > & \ldots & > & (\Lambda+\rho, \epsilon_{1}-\delta_{k}) & > & \ldots & > & (\Lambda+\rho, \epsilon_{1}-\delta_{n}) \\
\vee & & & & \vee & & & & \vee \\
\vdots & & & & \vdots & & & & \vdots \\
\vee & & & & \vee & & & & \vee \\
(\Lambda+\rho, \epsilon_{m}-\delta_{1}) & > & \ldots & > & (\Lambda+\rho, \epsilon_{m}-\delta_{k}) & > & \ldots & > & (\Lambda+\rho, \epsilon_{m}-\delta_{n})
\end{array}
\]

For any $\mu = \Lambda - \sum_{i=1}^{k}\gamma_{i}$, we compute
\begin{equation} \label{eq::to_estimate}
\begin{aligned}
(\Lambda+2\rho,\Lambda) - (\mu+2\rho,\mu)
 &= 2(\Lambda+\rho, \sum_{i=1}^{k}\gamma_{i})
 - \bigl(\sum_{i=1}^{k}\gamma_{i}, \sum_{j=1}^{k}\gamma_{j}\bigr).
\end{aligned}
\end{equation}
Note that $(\gamma_i,\gamma_j)\in\{-1,0,1\}$. Interpreting the diagram above as a matrix, we have, for
$i\neq j$, that $(\gamma_i,\gamma_j)=1$ if $\gamma_i$ and $\gamma_j$ lie in the same row, $(\gamma_i,\gamma_j)=-1$ if they lie in
the same column, and $(\gamma_i,\gamma_j)=0$ otherwise. By assumption $(\Lambda+\rho,\gamma_{i})>0$ for all $i$. In view of \eqref{eq::to_estimate}, it therefore suffices to consider
subsets $\{\gamma_{i}\}$ lying in a single row.

Fix such a subset. Assume the subset has cardinality $k'$ and denote the elements for clarity by $\gamma_{1}', \ldots, \gamma_{k'}'$. Without loss of generality, order the $\gamma'_{1},\ldots,\gamma'_{k'}$ so that
\[
(\Lambda+\rho,\gamma'_{i}) < (\Lambda+\rho,\gamma'_{j}) \;\Rightarrow\; i<j.
\]
This is possible by the diagram above. Set $\xi \coloneqq (\Lambda+\rho,\gamma'_{1})$ with $\gamma'_{1} \coloneqq \epsilon_{r}-\delta_{s}$. Since $\xi$ is minimal, the only roots $\epsilon_{r}-\delta_{j}$ that may appear among the $\gamma'_{i}$ are those with $j\in\{1,\ldots,s-1\}$.

Assume that $\epsilon_{r}-\delta_{s_{1}},\ldots,\epsilon_{r}-\delta_{s_{l}}$ are pairwise distinct elements of $\{\gamma'_{1},\ldots,\gamma'_{k}\}$ for some $l\leq k'$ and $s_{1},\ldots,s_{l}\in\{1,\ldots,s-1\}$. 
Then
\[
(\Lambda+\rho,\epsilon_{r}-\delta_{s_{i}}) - \xi
 = (\Lambda+\rho,\delta_{s}-\delta_{s_{i}})
 = \mu^{s_{i}} - \mu^{s} + s - s_{i} \ge 1.
\]

Altogether, we obtain for the single row we obtain the estimate
\[
\begin{aligned}
 2(\Lambda+\rho, \sum_{i=1}^{k'}\gamma'_{i})
 - \bigl(\sum_{i=1}^{k'}\gamma'_{i}, \sum_{j=1}^{k'}\gamma'_{j}\bigr)
&\ge 2(\Lambda+\rho,\sum_{i=1}^{l}(\epsilon_{r}-\delta_{s_{i}}))
 - \begin{cases}
 0,& l=1,\\
 1,& l=2,\\
 2l,& \text{else},
 \end{cases} 
\\ &\ge 2\xi + 2l
 - \begin{cases}
 0,& l=1,\\
 1,& l=2,\\
 2l,& \text{else},
 \end{cases} 
> 0,
\end{aligned}
\]
since $\xi>0$. 
This completes the proof for the finite-dimensional case.

For the infinite-dimensional case, set $A \coloneqq \{\epsilon_{i}-\delta_{k} : 1 \leq i \leq p,\ 1 \leq k \leq n\}$ and $B \coloneqq \{-\epsilon_{j}+\delta_{k} : p+1 \leq j \leq m,\ 1 \leq k \leq n\}$. We refine the decomposition of $\gamma$ to
\[
\gamma = \sum_{i=1}^{k_{1}}\alpha_{i} + \sum_{j=1}^{k_{2}}\beta_{j}, \qquad \alpha_{i} \in A, \quad \beta_{j} \in B, \quad k=k_{1}+k_{2}.
\]
Using $(\alpha_{i},\beta_{j})\in\{0,1\}$ for all $i=1,\ldots,k_{1}$ and $j=1,\ldots,k_{2}$, we obtain
\[
\begin{aligned}
(\mu+2\rho,\mu) - (\Lambda+2\rho,\Lambda)
&= -2\sum_{i=1}^{k_{1}}(\Lambda+\rho,\alpha_{i})
 -2\sum_{j=1}^{k_{2}}(\Lambda+\rho,\beta_{j}) 
 + \bigl(\sum_{i=1}^{k_{1}}\alpha_{i}+\sum_{j=1}^{k_{2}}\beta_{j},
 \sum_{i=1}^{k_{1}}\alpha_{i}+\sum_{j=1}^{k_{2}}\beta_{j}\bigr) \\
&\ge -2\sum_{i=1}^{k_{1}}(\Lambda+\rho,\alpha_{i})
 + \bigl(\sum_{i=1}^{k_{1}}\alpha_{i},\sum_{j=1}^{k_{1}}\alpha_{j}\bigr)
 -2\sum_{j=1}^{k_{2}}(\Lambda+\rho,\beta_{j})
 + \bigl(\sum_{i=1}^{k_{2}}\beta_{i},\sum_{j=1}^{k_{2}}\beta_{j}\bigr),
\end{aligned}
\]
so it remains to show
\[
-2\sum_{i=1}^{k_{1}}(\Lambda+\rho,\alpha_{i})
 + \Bigl(\sum_{i=1}^{k_{1}}\alpha_{i},\sum_{j=1}^{k_{1}}\alpha_{j}\Bigr) > 0,
\qquad
-2\sum_{j=1}^{k_{2}}(\Lambda+\rho,\beta_{j})
 + \Bigl(\sum_{i=1}^{k_{2}}\beta_{i},\sum_{j=1}^{k_{2}}\beta_{j}\Bigr) > 0.
\]
The proof of these inequalities is identical to the finite-dimensional case, where we now work with the inequalities
\[
\begin{array}{ccccc}
(\Lambda+\rho,\varepsilon_1-\delta_1) & > & \cdots & > & (\Lambda+\rho,\varepsilon_1-\delta_n)\\
\vee &&&& \vee\\
\vdots &&&& \vdots\\
\vee &&&& \vee\\
(\Lambda+\rho,\varepsilon_i-\delta_1) & > & \cdots & > & (\Lambda+\rho,\varepsilon_i-\delta_n)\\
\vee &&&& \vee\\
\vdots &&&& \vdots\\
\vee &&&& \vee\\
(\Lambda+\rho,\varepsilon_p-\delta_1) & > & \cdots & > & (\Lambda+\rho,\varepsilon_p-\delta_n)
\end{array}
\]
and
\[
\begin{array}{ccccc}
(\Lambda+\rho,-\varepsilon_m+\delta_n) & > & \cdots & > & (\Lambda+\rho,-\varepsilon_m+\delta_1)\\
\vee &&&& \vee\\
\vdots &&&& \vdots\\
\vee &&&& \vee\\
(\Lambda+\rho,-\varepsilon_{m-j}+\delta_n) & > & \cdots & > & (\Lambda+\rho,-\varepsilon_{m-j}+\delta_1)\\
\vee &&&& \vee\\
\vdots &&&& \vdots\\
\vee &&&& \vee\\
(\Lambda+\rho,-\varepsilon_{p+1}+\delta_n) & > & \cdots & > & (\Lambda+\rho,-\varepsilon_{p+1}+\delta_1)
\end{array}
\]
This finishes the proof.
\end{proof}

\section{Classifying the Full Set}\label{sec::classification} \noindent
In this section, we classify the unitarizable highest weight $\gg$-supermodules, following the method of Section~\ref{subsec::classification_method_general}. We distinguish two cases: $p,q\neq 0$ and $p=0$ or $q=0$.
Any unitarizable simple $\gg$-supermodule decomposes as a finite direct sum of unitarizable simple $\even$-modules, where
$\even\simeq \su(p,q)^{\CC}\oplus \su(n)^{\CC}\oplus \u(1)^{\CC}$.
Each $\even$-constituent is an outer tensor product of unitarizable simple $\su(p,q)$-, $\su(n)$-, and $\u(1)$-modules.
Since $\su(p,q)$ admits nontrivial finite-dimensional unitarizable modules only when $p=0$ or $q=0$, it follows that $\gg$ has no nontrivial unitarizable finite-dimensional supermodules if $p,q\neq 0$.
If $p=0$ or $q=0$, then every simple $\even$-module is finite-dimensional, and hence every unitarizable $\gg$-supermodule is finite-dimensional.
In particular, if $p,q\neq 0$, then every nontrivial unitarizable $\gg$-supermodule is necessarily infinite-dimensional.

\subsection{Finite-Dimensional Unitarizable Supermodules}

Assume that $p=0$ or $q=0$ and fix the standard positive system. We begin by parametrizing the possible highest weights of unitarizable $\gg$-supermodules. Since any such highest weight is, in particular, the highest weight of a unitarizable highest weight $\even$-module, the description from Section~\ref{subsubsec::unitarizable_even_HWM} applies. On this basis, we formulate the classification method and illustrate it by two explicit examples, namely $\sl(2\vert 1)$ and $\sl(2\vert 2)$. The general classification is then obtained by extending the same argument.

\subsubsection{Parametrization of the Weight Space}

The finite-dimensional simple supermodules are classified by the dominant integral weights $\lambda\in\hh^{\ast}$, taken with respect to a Borel subalgebra $\bb=\bb_{\bar{0}}\oplus\bb_{\bar{1}}$ determined by a positive system $\Delta^{+}=\Delta_{\bar{0}}^{+}\sqcup\Delta_{\bar{1}}^{+}$. Recall that $\Delta_{\bar{0}}^{+}$ was fixed in Section~\ref{subsec::structure_theory}. There are two natural choices of Borel subalgebras, namely the \emph{distinguished Borel subalgebra} $\bb_{\mathrm{st}}$ and the \emph{anti-distinguished Borel subalgebra} $\bb_{-\mathrm{st}}$, corresponding respectively to
\begin{equation}
\Delta_{\bar{1},\mathrm{st}}^{+}=\{\epsilon_{i}-\delta_{j}\vert1\leq i\leq m,\ 1\leq j\leq n\},\quad
\Delta_{\bar{1},-\mathrm{st}}^{+}=\{-\epsilon_{i}+\delta_{j}\vert1\leq i\leq m,\ 1\leq j\leq n\}.
\end{equation}
Recall that under the canonical isomorphism $\sl(m\vert n)\cong\sl(n\vert m)$, the anti-distinguished Borel subalgebra $\bb_{-\mathrm{st}}$ of $\sl(m\vert n)$ maps to the distinguished one $\bb_{\mathrm{st}}$ of $\sl(n\vert m)$. We work with the distinguished Borel subalgebra $\bb_{\mathrm{st}}$ and hence write $\bb \coloneqq \bb_{\mathrm{st}}$ and $\Delta_{\bar{1}}^{+}\coloneqq \Delta_{\bar{1},\mathrm{st}}^{+}$ in the sequel. 

A weight $\lambda$ is \emph{dominant integral} if and only if
\begin{equation}
(\lambda + \rho_{\bar{0}}, \alpha) \in \ZZ_{>0} 
\quad \text{for all } \alpha \in \Delta_{\bar{0}}^{+},
\end{equation}
equivalently, if there exists a finite-dimensional simple $\even$-module of highest weight $\lambda$ with respect to $\bb_{\bar{0}}$. If we denote the highest weight in terms of standard coordinates on $\mathfrak{d}^{*}$ by $
\label{standardCoordinates}
\Lambda = (\lambda^1,\ldots,\lambda^m\vert\mu^1,\ldots,\mu^n)$
(modulo shifts by $(1,\ldots,1\vert {-}1,\ldots,-1)$), this imposes a classical sequence of standard conditions on $\Lambda$, which are
\begin{equation}
\lambda^{1}\ge \cdots \ge \lambda^m,
\qquad
\mu^1\ge \cdots \ge \mu^n 
\end{equation}
and the differences in the two chains have to be integral. We denote the set of such $\bb$-dominant integral weights by $P^{++}$, and refer to them as $\Delta^{+}$-dominant integral weights.

For $\lambda \in P^{++}$, we let $L(\lambda)$ denote the simple supermodule of highest weight $\lambda$ with respect to $\bb$, whose highest weight vector is even. With this notation, the simple finite-dimensional $\gg$-supermodules are parameterized by
\begin{equation}
\{\,L(\lambda), \Pi L(\lambda) : \lambda \in P^{++}\,\}.
\end{equation}

By definition, finite-dimensional unitarizable $\gg$-supermodules are precisely those that are unitarizable with respect to one of the conjugate-linear anti-involutions (\emph{cf.} Lemma~\ref{lemm::anti_involutions})
\begin{equation}
\omega_{\pm}\left(\begin{array}{@{}c|c@{}}
 A & B \\
 \hline
 C & D
\end{array}\right)
=
\left(\begin{array}{@{}c|c@{}}
 A^{\dagger} & \pm C^{\dagger} \\
 \hline
 \pm B^{\dagger} & D^{\dagger}
\end{array}\right),
\qquad
\left(\begin{array}{@{}c|c@{}}
 A & B \\
 \hline
 C & D
\end{array}\right) \in \gg.
\end{equation}
These conjugate-linear anti-involutions correspond to the compact real form of $\even$, namely 
$\su(m)^{\CC} \oplus \su(n)^{\CC} \oplus \mathfrak{u}(1)^{\CC}$. In the sequel, we consider only $\omega \coloneqq \omega_{+}$, as the $\omega_{-}$-case is analogous.

\medskip

\noindent
We find it convenient to describe $\Lambda\in P^{++}$ more explicitly, so that $\Lambda$ may be regarded as a one-parameter family in $\hh^{\ast}$. Assume that the finite-dimensional simple $\gg$-supermodule $L(\Lambda)$ is $\omega$-unitarizable. Then $\Lambda$ is the highest weight of a finite-dimensional unitarizable simple $\even$-module $L_{0}(\Lambda)$. Such a module is (isomorphic to) the outer tensor product of simple $\su(m)^{\CC}$-, $\su(n)^{\CC}$-, and $\mathfrak{u}(1)^{\CC}$-modules. The simple $\su(m)^{\CC}$-modules have highest weights of the form
\begin{equation}
(-a_{1}, -a_{2}, \ldots, -a_{m} \,\vert\, 0, \ldots, 0),
\end{equation}
with $a_{1},\ldots,a_{m} \in \ZZ_{\geq 0}$ and $0 = -a_{1} \geq -a_{2} \geq \cdots \geq -a_{m}$. 
Analogously, the simple $\su(n)^{\CC}$-modules have highest weights of the form
\begin{equation}
(0, \ldots, 0 \,\vert\, b_{1}, \ldots, b_{n}),
\end{equation}
with $b_{1},\ldots,b_{n} \in \ZZ_{\geq 0}$ and $b_{1} \geq \cdots \geq b_{n} = 0$. 
Since $\mathfrak{u}(1)$ is abelian, Schur’s lemma implies that its simple modules are one-dimensional and, by unitarity, uniquely determined by a positive real number. For convenience we write
\begin{equation} \label{eq::general_form_Lambda_fd}
\Lambda = (0, -a_2, \ldots, -a_m \,\vert\, b_1, \ldots, b_{n-1}, 0) 
 + \tfrac{x_{0}}{2}(1, \ldots, 1 \,\vert\, 1, \ldots, 1), 
 \qquad x_{0} \in \RR.
\end{equation}

To track the nontrivial components among the $a_i$ and $b_j$, we define $i_0$ as the largest integer such that $a_{i_0} = 0$, and $k_0$ as the smallest integer such that $b_{k_0} = 0$. 

In general, a highest weight $\gg$-supermodule $L(\Lambda)$ with $\Lambda$ as in \eqref{eq::general_form_Lambda_fd} need not be $\omega$-unitarizable. To classify the $\omega$-unitarizable simple $\gg$-supermodules, we regard any $\Lambda$ as a particular value of a one-parameter family in $\hh^{\ast}$, indexed by $x \in \RR$, that is
\begin{equation} \label{eq::general_HW_fd}
\Lambda(x) = \Lambda_{0} + \tfrac{x}{2}(1, \ldots, 1 \,\vert\, 1, \ldots, 1),
\end{equation}
where $\Lambda_{0} = (0, -a_{2}, \ldots, -a_{m} \,\vert\, b_{1}, \ldots, b_{n-1}, 0)$ with 
$a_{i}, b_{j} \in \ZZ_{\geq 0}$, 
$0 \geq -a_{2} \geq \cdots \geq -a_{m}$, and 
$b_{1} \geq \cdots \geq b_{n-1} \geq 0$ as above. 
Note that $(\alpha, (1, \ldots, 1 \,\vert\, 1, \ldots, 1)) = 0$ for all $\alpha \in \Delta_{\bar{0}}$. In standard coordinates, we write
\begin{equation}
\Lambda(x)=(\lambda^{1}(x),\ldots,\lambda^{m}(x)\vert\mu^{1}(x),\ldots,\mu^{n}(x)),\qquad \mu^{k_{0}}(x)=\cdots=\mu^{n}(x).
\end{equation}
Throughout, we assume the unitarity conditions from Lemma~\ref{lemm::unitarity_conditions}:
\begin{enumerate}
\item[(i)] $\lambda^{1}(x)\ge\cdots\ge\lambda^{m}(x)\ge-\mu^{n}(x)\ge\cdots\ge-\mu^{1}(x)$;
\item[(ii)] if $(\Lambda(x)+\rho,\epsilon_{m}-\delta_{k})=0$, then $(\Lambda(x)+\rho,\epsilon_{m}-\delta_{j})>0$ for all $j=1,\ldots,k-1$, and $(\Lambda(x),\delta_{k}-\delta_{n})=0$;
\item[(iii)] if $(\Lambda(x)+\rho,\epsilon_{m}-\delta_{k})\neq0$ for all $k=1,\ldots,n$, then $(\Lambda(x)+\rho,\epsilon_{m}-\delta_{k})>0$ for all $k=1,\ldots,n$.
\end{enumerate}

In what follows, we write $\Lambda=\Lambda(x)$ and suppress the dependence on $x$ whenever no confusion can arise. The classification is obtained by applying the method of Section~\ref{subsubsec::the_method_fd}.

\subsubsection{Examples} \label{subsubsec::examples_fd}
We illustrate our classification method by considering two examples: the real Lie superalgebras 
$\su(2\vert 1)$ and $\su(2\vert 2)$.

\medskip
\noindent
$\underline{\mathfrak{su}(2\vert 1)}$: Set $\gg \coloneqq \su(2\vert 1)$. The set of positive roots is 
$\Delta^{+} = \{\epsilon_{1}-\epsilon_{2}, \epsilon_{1}-\delta_{1}, \epsilon_{2}-\delta_{1}\}$. 
In particular, $\rho = (0,-1\vert 1)$. For convenience, set $\alpha\coloneqq \epsilon_{1}-\delta_{1}$ and $\beta\coloneqq \epsilon_{2}-\delta_{1}$. The family $\Lambda$ has general form
\begin{equation}
 \Lambda = \Lambda(x) = \bigl(\tfrac{x}{2},\, -a+\tfrac{x}{2}\big\vert \tfrac{x}{2}\bigr), \qquad x \in \RR
\end{equation}
for some $a \in \ZZ_{\geq 0}$. By Lemma~\ref{lemm::decomp_even}, $L(\Lambda)$ decomposes as a direct sum of simple $\even$-modules. The only possibilities are $L_{0}(\Lambda), L_{0}(\Lambda-\alpha)$, $L_{0}(\Lambda-\beta)$, and $L_{0}(\Lambda-\alpha-\beta)$.

The Dirac inequalities for the $\even$-constituents $L_{0}(\Lambda-\alpha)$ for $\alpha \in \Delta_{\bar{1}}^{+}$ read (Lemma~\ref{lemm::Dirac_basic_const})
\begin{equation} \label{eq::Dirac_inquelities_N_equals_1}
 \begin{aligned}
 0 &\leq (\Lambda + \rho, \alpha) = x+1 \Leftrightarrow x \geq -1 \\ 
 0 &\leq (\Lambda + \rho, \beta) = x-a \Leftrightarrow x \geq a .
 \end{aligned}
\end{equation}

We now obtain a complete classification, parametrized by $x$, following the method of Section~\ref{subsubsec::the_method_fd}.

We begin by determining $x_{\max}$ such that $L(\Lambda)$ is unitarizable for all $x>x_{\max}$. If $x>a$, then $\Lambda$ is typical, and all $\even$-constituents listed above occur. By~\eqref{eq::Dirac_inquelities_N_equals_1}, the Dirac inequality is strict on each of them. Hence Proposition~\ref{prop::continuous_parameter} implies that $\Lambda$ is the highest weight of a unitarizable $\gg$-supermodule. Therefore $x_{\max}=a$.

We next determine $x_{\min}$. Let $x<a$. Then the $\even$-constituent $L_{0}(\Lambda-\beta)$ occurs in $L(\Lambda)$, but the Dirac inequality fails on it. Indeed, $\beta=\epsilon_{2}-\delta_{1}$ is a simple root, and the weight space $M^{\bb}(\Lambda)^{\Lambda-\beta}$ is one-dimensional, spanned by $e_{-\beta}v_{\Lambda}$ where $e_{-\beta}$ is the root vector associated with the root $-\beta$. Since $x<a$, this vector does not lie in the radical of the Shapovalov form, because
\begin{equation}
\langle e_{-\beta}v_{\Lambda},e_{-\beta}v_{\Lambda}\rangle=(\Lambda+\rho,\beta)\langle v_{\Lambda},v_{\Lambda}\rangle=(x-a)\langle v_{\Lambda},v_{\Lambda}\rangle<0,
\end{equation}
where $\langle v_{\Lambda},v_{\Lambda}\rangle=1$. Hence $L_{0}(\Lambda-\beta)$ occurs in the $\even$-filtration of $L(\Lambda)$. Since the Dirac inequality does not hold on this constituent, $L(\Lambda)$ is not unitarizable for $x<a$ by Theorem~\ref{thm::unitarity_via_Dirac_inequality}. Therefore $x_{\min}=a$.

It remains to study unitarity on $I=[x_{\min},x_{\max}]=\{a\}$. Assume $x=a$. By Theorem~\ref{thm::unitarity_via_Dirac_inequality}, the module $L(\Lambda)$ is unitarizable if and only if $(\mu+2\rho,\mu)-(\Lambda+2\rho,\Lambda)>0$ for every $\even$-constituent of $L(\Lambda)$. Since $x=a$, one has $(\Lambda+\rho,\beta)=0$, so $\Lambda$ is atypical. Hence the $\even$-constituents of highest weights $\Lambda-\beta$ and $\Lambda-\alpha-\beta$ do not occur; see Lemma~\ref{lemm::non_existece_roots}. Since $(\Lambda+\rho,\alpha)>0$, $L(\Lambda)$ is unitarizable.

Altogether, the full set of unitarizable highest weight $\su(1,1\vert1)$-supermodules is
\begin{equation}
\{\Lambda=(\tfrac{x}{2},-a+\tfrac{x}{2}\vert\tfrac{x}{2}) : a\in\ZZ_{\ge0},\;x\in[a,\infty)\}.
\end{equation}
Equivalently, $\Lambda$ is the highest weight of a unitarizable highest weight $\gg$-supermodule if and only if
\begin{enumerate}
\item[a)] $\Lambda$ satisfies the unitarity conditions;
\item[b)] either $(\Lambda+\rho,\epsilon_{2}-\delta_{1})=0$ or $(\Lambda+\rho,\epsilon_{2}-\delta_{1})>0$.
\end{enumerate}

\medskip

\noindent
$\underline{\vphantom{A}\mathfrak{su}(2\vert 2)}$: Set $\gg \coloneqq \su(2\vert 2)$. The positive root system is 
$\Delta^{+} = \{\epsilon_{1}-\epsilon_{2}, \delta_{1}-\delta_{2}, \epsilon_{1}-\delta_{1}, \epsilon_{1}-\delta_{2}, \epsilon_{2}-\delta_{1}, \epsilon_{2}-\delta_{2}\}$. 
In particular, 
$\rho = \bigl(-\tfrac{1}{2}, -\tfrac{3}{2}\big\vert \tfrac{3}{2}, \tfrac{1}{2}\bigr)$. 
The family $\Lambda = \Lambda(x)$ has general form 
\begin{equation}
 \Lambda = \bigl(\tfrac{x}{2},\, -a+\tfrac{x}{2}\big\vert b + \tfrac{x}{2},\, \tfrac{x}{2}\bigr), \qquad x \in \RR,
\end{equation}
with $a,b \in \ZZ_{\geq 0}$. The unitarity conditions are:
\begin{equation}\label{eq::unitarity_conditions_example_fd}
\begin{array}{c}
\lambda^{1}\ge\lambda^{2}\ge-\mu^{2}\ge-\mu^{1},\qquad 
(\Lambda+\rho,\epsilon_{2}-\delta_{k})=0\Rightarrow(\Lambda,\delta_{k}-\delta_{2})=0,\\[2mm]
(\Lambda+\rho,\epsilon_{2}-\delta_{k})\neq0\ \text{for}\ k=1,2\ \Rightarrow\ 
(\Lambda+\rho,\epsilon_{2}-\delta_{k})>0\ \text{for}\ k=1,2.
\end{array}
\end{equation}

Moreover, the Dirac inequalities are
\begin{equation} \label{eq::Beispiel_sl22}
 \begin{aligned}
 0 \leq (\Lambda + \rho, \epsilon_{1} - \delta_{1}) &= x+b+1 \;\Leftrightarrow\; x \geq -b-1, \\
 0 \leq (\Lambda + \rho, \epsilon_{1} - \delta_{2}) &= x \;\Leftrightarrow\; x \geq 0, \\
 0 \leq (\Lambda + \rho, \epsilon_{2} - \delta_{1}) &= x+b-a \;\Leftrightarrow\; x \geq -b+a, \\
 0 \leq (\Lambda + \rho, \epsilon_{2} - \delta_{2}) &= x-a-1 \;\Leftrightarrow\; x \geq a+1.
 \end{aligned}
\end{equation}
As in the case of $\mathfrak{sl}(2\vert 1)$, we now obtain a complete classification parametrized by $x$, following the method of Section~\ref{subsubsec::the_method_fd}.

We first determine $x_{\max}$. The most restrictive condition is imposed by $\alpha\coloneqq \epsilon_{2}-\delta_{2}$, namely $x\ge a+1$. If $x>a+1$, then $(\Lambda+\rho,\alpha)>0$ for all $\alpha\in\Delta_{\bar1}^{+}$. Hence the Dirac inequalities are strict for all $\even$-constituents, and Proposition~\ref{prop::continuous_parameter} implies that $\Lambda$ is the highest weight of a unitarizable $\gg$-supermodule. Therefore $x_{\max}=a+1$. 

Next, we identify $x_{\min}$. To treat the cases $b=0$ and $b\neq0$ uniformly, we introduce $k_{0}$ defined by $k_{0}=1$ if $b=0$ and $k_{0}=2$ if $b\neq0$. The idea is to consider the $\even$-constituent of $L(\Lambda)$ that imposes the strongest condition on $x$. If $b\neq0$, this is $L_{0}(\Lambda-\epsilon_{2}+\delta_{2})$, and if $b=0$, this is $L_{0}(\Lambda-\epsilon_{2}+\delta_{1})$ according to dominance; equivalently, we write $L_{0}(\Lambda-\epsilon_{2}+\delta_{k_{0}})$. We show that $L_{0}(\Lambda-\epsilon_{2}+\delta_{k_{0}})$ exists as a $\even$-constituent whenever $(\Lambda+\rho,\epsilon_{2}-\delta_{k_{0}})<0$, using the Kac–Shapovalov determinant formula (Theorem~\ref{thm::Kac_Shapovalov}). In that case, $L(\Lambda)$ cannot be unitarizable, since the Dirac inequality fails on this constituent.

The determinant of the Shapovalov form restricted to $M(\Lambda)^{\Lambda-\epsilon_{2}+\delta_{k_{0}}}$ is
\begin{equation}
 \left((\Lambda+\rho, \delta_{1}-\delta_{k_{0}})-1\right)\cdot (\Lambda+\rho, \prod_{k=1}^{k_{0}}\epsilon_{2}-\delta_{k}). 
\end{equation}
The first factor equals $-1$ if $k_{0}=1$ and $-b-1<0$ if $k_{0}=2$, hence never vanishes. The second factor is zero only if $(\Lambda+\rho,\epsilon_{2}-\delta_{k})=0$ for some $k=1,\ldots,k_{0}$. Since by assumption $(\Lambda+\rho,\epsilon_{2}-\delta_{k_{0}})<0$, zeros may occur only for $k<k_{0}$. However, by the unitarity conditions (see~\eqref{eq::unitarity_conditions_example_fd}), this would imply $(\Lambda,\delta_{k}-\delta_{n})=0$, contradicting the minimality of $k_{0}$. We conclude that the Kac–Shapovalov determinant does not vanish on $M^{\bb}(\Lambda)^{\Lambda+\epsilon_{2}-\delta_{k_{0}}}$, and $L_{0}(\Lambda+\epsilon_{2}-\delta_{k_{0}})$ cannot lie in the radical. Since the Dirac inequality fails on this constituent, $L(\Lambda)$ is not unitarizable for $x<a+k_{0}-1$. Therefore $x_{\min}=a+k_{0}-1$.

Set $I\coloneqq [x_{\min},x_{\max}]=[a+k_{0}-1,a+1]$. It remains to consider $x\in I$. The integral points of $I$ are $\{a+k_{0}-1,a+1\}$, and non-integral points occur precisely when $k_{0}=1$. We first treat this case. Let $k_{0}=1$ and $x\in(a,a+1)$. Then $(\Lambda+\rho,\epsilon_{2}-\delta_{j})\neq0$ for $j=1,2$, and
\begin{equation}
(\Lambda+\rho,\epsilon_{2}-\delta_{2})<0<(\Lambda+\rho,\epsilon_{2}-\delta_{1}),
\end{equation}
which contradicts the unitarity conditions for $\Lambda$. Hence $L(\Lambda)$ is not unitarizable.

The integral points of this interval are precisely the points at which $\Lambda$ is atypical, that is,
\begin{equation}
(\Lambda+\rho,\epsilon_{2}-\delta_{k})=0\qquad\Longleftrightarrow\qquad x=a+k-1,\qquad k=k_{0},\ldots,2.
\end{equation}
We treat only the case $k_{0}=2$; the remaining cases are analogous. Then
\begin{equation}
\begin{array}{ccccc}
(\Lambda+\rho,\epsilon_{1}-\delta_{1})&>&(\Lambda+\rho,\epsilon_{1}-\delta_{2})\\
\vee&&\vee\\
(\Lambda+\rho,\epsilon_{2}-\delta_{1})&>&0=(\Lambda+\rho,\epsilon_{2}-\delta_{2})
\end{array}
\end{equation}
and, by Lemma~\ref{lemm::non_existece_roots}, the $\even$-constituent $L_{0}(\Lambda-\epsilon_{2}+\delta_{2})$ does not occur in $L(\Lambda)$. Moreover, for any $\even$-constituent with highest weight $\mu=\Lambda-\gamma$ one can choose a
decomposition $\gamma=\gamma_{1}+\cdots+\gamma_{r}$ that does not involve the root
$\epsilon_{2}-\delta_{2}$. By the diagram above, this implies $(\Lambda+\rho,\gamma_i)>0$ for all
$i$, so we are in the situation of the proof of Proposition~\ref{prop::continuous_parameter}.
Consequently, $L(\Lambda)$ is unitarizable for $x=a+1$.

Altogether, the full set of unitarizable highest weight $\su(1,1\vert 2)$-supermodules is 
\begin{equation}
 \left\{\, \Lambda = \bigl(\tfrac{x}{2},\, -a+\tfrac{x}{2}\big\vert b+ \tfrac{x}{2}, \tfrac{x}{2}\bigr) 
 : a,b \in \ZZ_{\geq 0}, \ x \in \{a+k_{0}-1, \ldots, a+1\} \sqcup (a+1,\infty) \,\right\}.
\end{equation}
Equivalently, $\Lambda$ is the highest weight of a unitarizable highest weight $\gg$-supermodule if and only if
\begin{enumerate}
\item[a)] $\Lambda$ satisfies the unitarity conditions;
\item[b)] one of the following holds:
\begin{enumerate}
\item[(i)] $(\Lambda+\rho,\epsilon_{2}-\delta_{k})=0$ for some $k=k_{0},\ldots,2$, or
\item[(ii)] $(\Lambda+\rho,\epsilon_{2}-\delta_{2})>0$.
\end{enumerate}
\end{enumerate}

\subsubsection{Classification} We start with a family of highest weights $\Lambda$ of unitarizable highest weight $\even$-modules of the form
\begin{equation} \label{eq::form_HW_fd}
\Lambda=(0,-a_{2},\ldots,-a_{m}\vert b_{1},\ldots,b_{n-1},0)+\tfrac{x}{2}(1,\ldots,1\vert 1,\ldots,1),\qquad x\in\RR,
\end{equation}
where $a_{2}\le\cdots\le a_{m}$ and $b_{1}\ge\cdots\ge b_{n}$ are non-negative integers. We assume throughout that for each $\Lambda$ satisfies for each $x$ the unitarity conditions of Lemma~\ref{lemm::unitarity_conditions}. We now apply the method of Section~\ref{subsubsec::the_method_fd} step by step. The analysis rests on the following computation. The fixed standard system of positive odd roots is
$
\Delta_{\bar1}^{+}=\{\epsilon_{i}-\delta_{j}:1\le i\le m,\ 1\le j\le n\}.
$
Moreover,
\begin{equation}\label{eq::Dirac_inequality_fd}
(\Lambda+\rho,\epsilon_{i}-\delta_{j})=\lambda^{i}+\mu^{j}+m-i-j+1=-a_{i}+b_{j}+x+m-i-j+1,
\end{equation}
where $a_{1}=0$ and $b_{n}=0$. Recall that $k_{0}$ is the smallest index such that $b_{k_{0}}=0$. Together with the form of the highest weight~\eqref{eq::general_form_Lambda_fd}, this immediately gives the following lemma.

\begin{lemma}\label{lemm::minimum}
The numbers $(\Lambda+\rho,\alpha)$, with $\alpha\in\Delta_{\bar1}^{+}$, arrange as follows:
\[
\begin{array}{ccccccccc}
(\Lambda+\rho,\epsilon_{1}-\delta_{1})&>&\cdots&>&(\Lambda+\rho,\epsilon_{1}-\delta_{k})&>&\cdots&>&(\Lambda+\rho,\epsilon_{1}-\delta_{n})\\
\vee&&&&\vee&&&&\vee\\
\vdots&&&&\vdots&&&&\vdots\\
\vee&&&&\vee&&&&\vee\\
(\Lambda+\rho,\epsilon_{m}-\delta_{1})&>&\cdots&>&(\Lambda+\rho,\epsilon_{m}-\delta_{k})&>&\cdots&>&(\Lambda+\rho,\epsilon_{m}-\delta_{n})
\end{array}
\]
\end{lemma}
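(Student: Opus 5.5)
The plan is a direct computation from the explicit formula \eqref{eq::Dirac_inequality_fd}. For $1\le i\le m$ and $1\le j\le n$ this formula gives
\[
(\Lambda+\rho,\epsilon_{i}-\delta_{j})=-a_{i}+b_{j}+x+m-i-j+1,
\]
with $a_{1}=0$ and $b_{n}=0$. It would be convenient to check it once from $\rho=\rho_{\bar0}-\rho_{\bar1}$. Pairing with $\epsilon_i$ yields $\tfrac12(m-2i+1)-\tfrac n2$, and pairing with $-\delta_j$ yields $\tfrac12(n-2j+1)-\tfrac m2$ (recall $(\delta_j,\delta_j)=-1$). Their sum is $m-i-j+1$, as claimed. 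The dependence on $x$ is the same additive constant $x$ in every entry, so $x$ plays no role in the comparisons.

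Next, compare adjacent entries in each row. Moving from column $j$ to column $j+1$ changes the value by
\[
(\Lambda+\rho,\delta_{j+1}-\delta_{j})=b_{j+1}-b_{j}-1\le -1<0,
\]
because $b_{j}\ge b_{j+1}$. Hence each row is strictly decreasing from left to right, exactly as the horizontal symbols ``$>$'' assert.

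Then compare adjacent entries in each column. Moving from row $i$ to row $i+1$ changes the value by
\[
(\Lambda+\rho,\epsilon_{i+1}-\epsilon_{i})=a_{i}-a_{i+1}-1\le -1<0,
\]
because $a_{i}\le a_{i+1}$; this is just the dominance condition $\lambda^{i}-\lambda^{i+1}\in\ZZ_{\ge0}$. Hence each column is strictly decreasing from top to bottom, which is the content of the symbols ``$\vee$''. By transitivity, the maximal value is the entry at $(1,1)$ and the minimal value is the entry at $(m,n)$.

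The only point needing care is the sign convention for $\rho_{\bar1}$ and for $(\delta_j,\delta_j)=-1$, so I would recheck one entry, say $(\Lambda+\rho,\epsilon_1-\delta_1)=x+b_1+m$. Beyond that, no step is a real obstacle: the even dominance conditions, together with the integrality of the differences of the $a_i$ and of the $b_j$, make every step at least $1$ in size.
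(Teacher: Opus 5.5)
Your proof is correct and is the same direct computation the paper intends: the lemma is read off from \eqref{eq::Dirac_inequality_fd}, with row steps $b_{j+1}-b_j-1\le -1$ and column steps $a_i-a_{i+1}-1\le -1$. Your side checks contain slips that do not affect the differences you use: $(\rho,-\delta_j)=\tfrac12(n-2j+1)+\tfrac m2$ (with your sign the two pairings would sum to $1-i-j$), the corner entry is $x+b_1+m-1$, and the row step is $(\Lambda+\rho,\delta_j-\delta_{j+1})$ rather than $(\Lambda+\rho,\delta_{j+1}-\delta_j)$.
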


As a direct consequence, one can determine $x_{\max}$, that is, the value such that $L(\Lambda)$ is unitarizable for all $x>x_{\max}$.

\begin{lemma}
If $(\Lambda+\rho,\epsilon_{m}-\delta_{n})>0$, equivalently if $x>a_{m}+n-1$, then $L(\Lambda)$ is unitarizable.
\end{lemma}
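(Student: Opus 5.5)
The plan is to reduce the statement directly to Proposition~\ref{prop::continuous_parameter}~a), with Lemma~\ref{lemm::minimum} supplying the positivity of every relevant pairing.

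\emph{The equivalence.} By \eqref{eq::Dirac_inequality_fd} with $i=m$, $j=n$ and $b_{n}=0$,
\[
(\Lambda+\rho,\epsilon_{m}-\delta_{n})=-a_{m}+x+m-m-n+1=x-a_{m}-n+1.
\]
So positivity of this pairing is exactly the condition $x>a_{m}+n-1$.

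\emph{All odd positive roots are positive on $\Lambda+\rho$.} Suppose $(\Lambda+\rho,\epsilon_{m}-\delta_{n})>0$. Lemma~\ref{lemm::minimum} says that $(\Lambda+\rho,\cdot)$ decreases along each row and down each column of the array of odd positive roots. Hence $\epsilon_{m}-\delta_{n}$, in the bottom-right corner, gives the minimal value. Therefore $(\Lambda+\rho,\alpha)>0$ for every $\alpha\in\Delta_{\bar1}^{+}$.

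The monotonicity can also be seen directly from \eqref{eq::Dirac_inequality_fd}. The differences are
\[
(\Lambda+\rho,\epsilon_{i}-\epsilon_{i+1})=a_{i+1}-a_{i}+1\geq 1,\qquad (\Lambda+\rho,\delta_{j+1}-\delta_{j})=b_{j}-b_{j+1}+1\geq1 .
\]

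\emph{Unitarity.} The hypotheses of Proposition~\ref{prop::continuous_parameter}~a) now need checking.

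First, $\Lambda$ has the form \eqref{eq::form_HW_fd}, so it is the highest weight of a finite-dimensional unitarizable $\even$-module $L_{0}(\Lambda)$.

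Second, $L(\Lambda)$ is finite-dimensional, since $\Lambda$ is $\bb$-dominant integral for the even part. By Lemma~\ref{lemm::decomp_even}, every $\even$-constituent has highest weight $\Lambda-\gamma$ with $\gamma=\gamma_{1}+\cdots+\gamma_{k}$, a sum of distinct $\gamma_{i}\in\Delta_{\bar1}^{+}$.

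By the previous step, $(\Lambda+\rho,\gamma_{i})>0$ for every $i$, whichever decomposition is chosen. The ``in particular'' clause of Proposition~\ref{prop::continuous_parameter} then gives that $L(\Lambda)$ is unitarizable.

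\emph{Where the care is needed.} The real work, namely showing the strict Dirac inequality for sums of several odd roots lying in a common row, is already carried out in the proof of Proposition~\ref{prop::continuous_parameter}. The only point needing attention here is that Lemma~\ref{lemm::minimum} truly identifies $\epsilon_{m}-\delta_{n}$ as the global minimum. This requires the strict integer gaps $\geq1$ displayed above, which hold by the integrality of the $a_{i}$ and $b_{j}$. Once that is checked, the argument is routine.
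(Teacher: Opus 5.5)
Your proposal is correct and follows the paper's own argument. Lemma~\ref{lemm::minimum} makes $(\Lambda+\rho,\epsilon_{m}-\delta_{n})$ the minimum over $\Delta_{\bar1}^{+}$, so its positivity gives $(\Lambda+\rho,\alpha)>0$ for every odd positive root, and Proposition~\ref{prop::continuous_parameter} then gives unitarity. Your explicit checks of the equivalence with $x>a_{m}+n-1$ and of the unit gaps are a useful supplement; note, though, that the gaps $\geq 1$ come from the orderings $a_{i}\le a_{i+1}$ and $b_{j}\ge b_{j+1}$ together with the $\rho$-shift, not from integrality.
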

\begin{proof} By Lemma~\ref{lemm::minimum}, the minimum of all $(\Lambda+\rho, \epsilon_{i}-\delta_{j})$ occurs at $(\Lambda+\rho, \epsilon_{m}-\delta_{n})$, \emph{i.e.},\ at $x = a_{m} + n-1$. Hence, if $(\Lambda+\rho, \epsilon_{m}-\delta_{n}) > 0$, we have $(\Lambda+\rho, \alpha)>0$ for all $\alpha \in \Delta_{\bar{1}}^{+}$, and the statement follows with Proposition~\ref{prop::continuous_parameter}.
\end{proof}

The lemma yields $x_{\max}=a_{m}+n-1$. On the other hand, the unitarity conditions show that the weakest constraint is imposed by the Dirac inequality for $L_{0}(\Lambda-\epsilon_{m}+\delta_{k_{0}})$. This determines $x_{\min}$, namely the largest value such that $L(\Lambda)$ is not unitarizable for all $x<x_{\min}$. The following lemma makes this precise.

\begin{lemma}
 If $(\Lambda+\rho, \epsilon_{m}-\delta_{k_{0}})<0$, that is, $x < a_{m}+k_{0}-1$, then $L(\Lambda)$ is not unitarizable.
\end{lemma}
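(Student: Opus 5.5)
I will show that, for $x<a_{m}+k_{0}-1$, the $\even$-constituent $L_{0}(\Lambda-\alpha)$ with $\alpha\coloneqq\epsilon_{m}-\delta_{k_{0}}$ survives in $L(\Lambda)$. Since $(\Lambda+\rho,\alpha)<0$, Lemma~\ref{lemm::Dirac_basic_const} gives $(\mu+2\rho,\mu)>(\Lambda+2\rho,\Lambda)$ for $\mu=\Lambda-\alpha$. This violates condition b) of Theorem~\ref{thm::unitarity_via_Dirac_inequality} in the case $p=0$ or $q=0$, so $L(\Lambda)$ is not unitarizable. The plan follows the $\su(2\vert 2)$ example verbatim, with $2$ replaced by $m$ and $n$.

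The first step is to make precise that the constituent is present. By Lemma~\ref{lemm::decomp_even}, $L(\Lambda)$ is a direct sum of $\even$-modules with highest weights $\Lambda-\gamma$. It therefore suffices to find a vector of weight $\Lambda-\alpha$ in $L(\Lambda)$ that is annihilated by $\nn^{+}_{\bar 0}$, or more simply to show that $M^{\bb}(\Lambda)^{\Lambda-\alpha}$ meets the radical trivially. In the latter case the weight space of $L(\Lambda)$ at $\Lambda-\alpha$ has the same dimension as in $M^{\bb}(\Lambda)$, and counting via Proposition~\ref{prop::filtration_Verma_supermodule} forces $L_{0}(\Lambda-\alpha)$ to appear. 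To make this count clean, I observe that the weights $\Lambda-\gamma$ that can contribute to the weight $\Lambda-\alpha$ are only $\Lambda$ itself and $\Lambda-(\epsilon_{m}-\delta_{k})$ for $k\le k_{0}$, and the $\even$-modules involved are finite-dimensional. Comparing characters then shows that the multiplicity of $L_{0}(\Lambda-\alpha)$ is $\dim M^{\bb}(\Lambda)^{\Lambda-\alpha}$ minus the contributions of the others, which is what it is in the Verma module, namely at least $1$.

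The second step computes the relevant part of the Kac–Shapovalov determinant (Theorem~\ref{thm::Kac_Shapovalov}) with $\eta=\alpha$. The even factors come from $\gamma=\delta_{k}-\delta_{k_{0}}$ with $k<k_{0}$ and $r=1$, giving $((\Lambda+\rho,\delta_{k}-\delta_{k_{0}})-1)$. Since $\mu^{k}-\mu^{k_{0}}=b_{k}\ge0$, these equal $-b_{k}-(k_{0}-k)-1\neq0$. The even roots $\epsilon_{i}-\epsilon_{m}$ do not contribute, since $\alpha-r(\epsilon_{i}-\epsilon_{m})$ is not a sum of positive roots. The odd factors are $(\Lambda+\rho,\epsilon_{m}-\delta_{k})$ for $k=1,\ldots,k_{0}$.

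The third step, which is the main obstacle, is to exclude the vanishing of the odd factors. The factor $k=k_{0}$ is negative by hypothesis. For $k<k_{0}$, vanishing would force $(\Lambda,\delta_{k}-\delta_{n})=0$ by unitarity condition (ii) of Lemma~\ref{lemm::unitarity_conditions}, that is, $b_{k}=0$, which contradicts the minimality of $k_{0}$. Hence the determinant is nonzero and the radical meets this weight space trivially. I expect the delicate point to be the multiplicity and character bookkeeping of the first step, since the weight space has dimension larger than one when $k_{0}>1$. If that count proves awkward, I would instead exhibit the explicit vector $e_{-\epsilon_{m}+\delta_{k_{0}}}v_{\Lambda}$ corrected by lower terms into an $\nn^{+}_{\bar0}$-singular vector, and use the nonzero determinant to show that it is not in the radical.
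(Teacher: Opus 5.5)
Your route is the one the paper takes: nonvanishing of the Kac--Shapovalov determinant on $M^{\bb}(\Lambda)^{\Lambda-\alpha}$, followed by failure of the Dirac inequality on $L_{0}(\Lambda-\alpha)$. It has a genuine gap at the determinant step, and the paper's proof enumerates the even factors in the same incomplete way.

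\textbf{The missing even factors.} For $1\le a<b<k_{0}$ one also has $\alpha-(\delta_{a}-\delta_{b})=(\epsilon_{m}-\delta_{a})+(\delta_{b}-\delta_{k_{0}})$. So $\gamma=\delta_{a}-\delta_{b}$ with $r=1$ also enters $D_{1}$.

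\textbf{The normalization.} For the $\delta$-roots one has $(\gamma,\gamma)=-2$, so the correctly normalized even factor is $(\Lambda+\rho,\gamma)-\tfrac{r}{2}(\gamma,\gamma)=(\Lambda+\rho,\gamma)+r$. Your factors for $\gamma=\delta_{k}-\delta_{k_{0}}$ are therefore $-b_{k}-(k_{0}-k)+1$; these are still nonzero. For $\gamma=\delta_{a}-\delta_{a+1}$ with $a\le k_{0}-2$, however, the factor is $-(b_{a}-b_{a+1})$. It vanishes whenever $b_{a}=b_{a+1}$.

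\textbf{Why this is not a convention issue.} In that case $e_{-\delta_{a}+\delta_{a+1}}v_{\Lambda}$ is a $\bb$-singular vector. Hence $e_{-\epsilon_{m}+\delta_{a}}\,e_{-\delta_{a+1}+\delta_{k_{0}}}\,e_{-\delta_{a}+\delta_{a+1}}v_{\Lambda}$ is a nonzero vector of $M^{\bb}(\Lambda)^{\Lambda-\alpha}$ lying in a proper submodule, i.e.\ in the radical. Your own first-step bookkeeping detects the same failure. The bookkeeping itself is fine; the problem is that the nondegeneracy it relies on fails. Indeed, $\Lambda-\epsilon_{m}+\delta_{a+1}$ is then not $\even$-dominant, so $L_{0}(\Lambda-\epsilon_{m}+\delta_{a+1})$ cannot occur in the finite-dimensional $L(\Lambda)$. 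This forces $\dim L(\Lambda)^{\Lambda-\alpha}<\dim M^{\bb}(\Lambda)^{\Lambda-\alpha}$. So for, e.g., $\mathfrak{sl}(1\vert 3)$ with $b_{1}=b_{2}>0=b_{3}$, the determinant vanishes and your argument says nothing about whether $L_{0}(\Lambda-\alpha)$ occurs.

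\textbf{A repair.} The cleanest fix avoids the determinant altogether. The hypothesis is incompatible with the unitarity conditions, which are necessary for unitarizability by Lemma~\ref{lemm::unitarity_conditions}.
\begin{enumerate}
\item[1.] Since $\mu^{k}\ge\mu^{k+1}$, the numbers $(\Lambda+\rho,\epsilon_{m}-\delta_{k})=\lambda^{m}+\mu^{k}-k+1$ strictly decrease in $k$. Hence they are negative for all $k\ge k_{0}$.
\item[2.] A zero at some $k<k_{0}$ would force $(\Lambda,\delta_{k}-\delta_{n})=-b_{k}=0$ by condition (ii). This contradicts the minimality of $k_{0}$.
\item[3.] So none of these numbers vanishes. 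Condition (iii) would then force all of them to be positive, contradicting $(\Lambda+\rho,\epsilon_{m}-\delta_{k_{0}})<0$.
\end{enumerate}
Hence $L(\Lambda)$ is not unitarizable. In fact, under the standing assumption that $\Lambda$ satisfies the unitarity conditions, the hypothesis of the lemma never occurs.

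If you prefer to keep the Dirac-inequality route, you must show that $L_{0}(\Lambda-\alpha)$ survives in $L(\Lambda)$ without using nondegeneracy of the whole weight space. One way is to evaluate the form on an $\nn_{\bar 0}^{+}$-singular vector of weight $\Lambda-\alpha$. That is a genuinely additional computation.
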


\begin{proof} 
We show that the $\even$-constituent $L_{0}(\Lambda-\epsilon_{m}+\delta_{k_{0}})$ occurs in $L(\Lambda)$, hence $L(\Lambda)$ is not unitarizable as the Dirac inequality does not hold. It suffices to prove that the weight space $M(\Lambda)^{\Lambda-\epsilon_{m}+\delta_{k_{0}}}$ does not lie in the radical of the Shapovalov form $\langle\cdot,\cdot\rangle$ on $M(\Lambda)$; equivalently, the Kac–Shapovalov determinant on this weight space is nonzero. 

Set $\eta \coloneqq \epsilon_{m}-\delta_{k_{0}}$. Consider the factor from $\Delta_{\bar{0}}^{+}$ in Theorem~\ref{thm::Kac_Shapovalov},
\[
D_{1}=\prod_{\gamma\in\Delta_{\bar{0}}^{+}}\prod_{r=1}^{\infty}(h_{\gamma}+(\rho,\gamma)-r)^{P(\eta-r\gamma)}.
\]
Now $\eta-r\gamma$ is a sum of positive roots only if $\gamma=\delta_{k}-\delta_{k_{0}}$ and $r=1$ with $1\le k\le k_{0}$. For these $\gamma$ one has $(\Lambda+\rho,\delta_{k}-\delta_{k_{0}})=(\mu^{k_{0}}-\mu^{k})-k_{0}+k<0$, so the factor is never zero.

Consider the factor from $\Delta_{\bar{1}}^{+}$,
\[
D_{2}=\prod_{\gamma\in\Delta_{\bar{1}}^{+}}(h_{\gamma}+(\rho,\gamma))^{P_{\gamma}(\eta-\gamma)}.
\]
Here $\eta-\gamma$ is a sum of positive roots if and only if $\eta=\epsilon_{m}-\delta_{k}$ for $1\le k\le k_{0}$. Hence the zeros of the Kac–Shapovalov determinant on $M(\Lambda)^{\Lambda-\epsilon_{m}+\delta_{k_{0}}}$ occur precisely when
\[
(\Lambda+\rho,\epsilon_{m}-\delta_{k})=0,\qquad 1\le k\le k_{0}.
\]
Since $k_{0}$ is the minimal positive integer with $b_{k}=0$, these values are excluded by the unitarity conditions (Lemma~\ref{lemm::unitarity_conditions}). Thus $D_{2}\neq0$, and the Kac–Shapovalov determinant is nontrivial. This completes the proof.
\end{proof}

The lemma yields $x_{\min}=a_{m}+k_{0}-1$. It remains to consider the range $x \in [x_{\min},x_{\max}]$. The integer values in this interval correspond to atypicality of $\Lambda$, \emph{i.e.},
\begin{equation}
 (\Lambda+\rho,\epsilon_{m}-\delta_{k})=0 
 \;\;\Longleftrightarrow\;\; 
 x=a_{m}+k-1, 
 \qquad k_{0}\leq k \leq n.
\end{equation}

These points correspond to points of unitarity, that is, unitarizable supermodules.

\begin{lemma} \label{lemm::discrete_points_are_unitarizable_fd}
 Assume $(\Lambda+\rho, \epsilon_{m}-\delta_{k})=0$ for some $k_{0}\leq k \leq n$. Then $L(\Lambda)$ is unitarizable.
\end{lemma}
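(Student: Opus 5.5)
We argue exactly as in the $\su(2\vert 2)$ example, combining Lemma~\ref{lemm::non_existece_roots}~a) with the estimate in the proof of Proposition~\ref{prop::continuous_parameter}. Since $(\Lambda+\rho,\epsilon_{m}-\delta_{k})=0$ with $k_{0}\le k\le n$, the weight $\Lambda$ is atypical, and by the unitarity conditions $(\Lambda,\delta_{k}-\delta_{n})=0$, i.e.\ $\mu^{k}=\cdots=\mu^{n}$. Lemma~\ref{lemm::non_existece_roots}~a) then shows that every $\even$-constituent $L_{0}(\Lambda-\gamma)$ of $L(\Lambda)$ admits a decomposition $\gamma=\gamma_{1}+\cdots+\gamma_{r}$ into distinct odd positive roots, none of which is of the form $\epsilon_{m}-\delta_{l}$ with $k\le l\le n$. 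By Lemma~\ref{lemm::decomp_even}, $L(\Lambda)$ is a direct sum of such constituents, so it suffices to verify the strict Dirac inequality of Theorem~\ref{thm::unitarity_via_Dirac_inequality}~b) for each of them. Condition~a) of that theorem holds because $\Lambda$ is assumed to be the highest weight of a unitarizable $\even$-module.

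The key step is to show that $(\Lambda+\rho,\gamma_{s})>0$ for every odd root $\gamma_{s}$ that can still occur. By \eqref{eq::Dirac_inequality_fd}, the values $(\Lambda+\rho,\epsilon_{i}-\delta_{j})$ are integers, since $x=a_{m}+k-1$ is an integer. By Lemma~\ref{lemm::minimum}, they strictly decrease along rows and down columns. In the bottom row we have $(\Lambda+\rho,\epsilon_{m}-\delta_{j})>(\Lambda+\rho,\epsilon_{m}-\delta_{k})=0$ for $j<k$; this also follows from unitarity condition~(ii). For $i<m$, the strict decrease down columns, together with integrality, gives $(\Lambda+\rho,\epsilon_{i}-\delta_{j})\ge(\Lambda+\rho,\epsilon_{m}-\delta_{j})+(m-i)$.

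This leaves the entries $\epsilon_{i}-\delta_{j}$ with $i<m$ and $j\ge k$, which are the main obstacle. Here I would compute $(\Lambda+\rho,\epsilon_{i}-\delta_{j})=(\Lambda+\rho,\epsilon_{m}-\delta_{k})+(a_{m}-a_{i})+(m-i)-(j-k)$, using $b_{j}=b_{k}=0$. This quantity can be $\le0$ when $j-k\ge m-i+a_{m}-a_{i}$. In that case positivity alone is not enough, and we must either exclude such roots using the embedding $M^{\bb}(r_{\eta}\cdot\Lambda)\subset M^{\bb}(\Lambda)$ exactly as in the proof of Lemma~\ref{lemm::non_existece_roots}, for $\eta=\delta_{k}-\delta_{j}$, or, better, run the direct estimate. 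For the latter, I would first try to show the Dirac inequality directly via the identity \eqref{eq::to_estimate}. Cross terms $(\gamma_{s},\gamma_{t})=-1$ from roots in a common column only help. Within a row $i$, the admissible roots $\epsilon_{i}-\delta_{j}$ carry pairwise distinct integer values $(\Lambda+\rho,\cdot)$. The row-wise estimate of Proposition~\ref{prop::continuous_parameter} then applies as soon as the minimal value $\xi$ in each row is $\ge0$, rather than $>0$. This holds for all rows once $x$ is at least the threshold of row $i$; otherwise the offending roots are removed by the $r_{\eta}$-argument.

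Assembling these pieces, every constituent $\mu=\Lambda-\gamma$ satisfies $(\Lambda+2\rho,\Lambda)-(\mu+2\rho,\mu)>0$, and Theorem~\ref{thm::unitarity_via_Dirac_inequality} yields the unitarizability of $L(\Lambda)$. I expect the delicate point to be the bookkeeping for rows $i<m$ with $j\ge k$, that is, making rigorous that the atypicality along $\epsilon_{m}$ propagates through the equal entries $\mu^{k}=\cdots=\mu^{n}$. The row estimate with $\xi=0$ allowed also needs a small check when $l=1$, where the contribution is $2\xi$ and strictness requires $\xi>0$. Such terms must therefore be shown absent, again by Lemma~\ref{lemm::non_existece_roots}.
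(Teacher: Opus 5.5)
You have the right reduction: discard roots via Lemma~\ref{lemm::non_existece_roots}~a), show that every surviving odd root $\gamma_s$ has $(\Lambda+\rho,\gamma_s)>0$, and invoke Proposition~\ref{prop::continuous_parameter}. You also correctly isolate the problematic entries and compute $(\Lambda+\rho,\epsilon_i-\delta_j)=(a_m-a_i)+(m-i)-(j-k)$ for $i<m$, $j\ge k$.

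\textbf{The gap.} At exactly this point you have no working mechanism to remove those entries. Your $r_\eta$-argument with $\eta=\delta_k-\delta_j$ must be anchored at an atypical root. The only one available to you is $\epsilon_m-\delta_k$, and with it one gets $(\Lambda-\eta)-(\epsilon_m-\delta_k)=\Lambda-(\epsilon_m-\delta_j)$. So it removes only bottom-row roots, never roots in a row $i<m$, because $(\Lambda+\rho,\epsilon_i-\delta_k)>0$ there. Atypicality along $\epsilon_m$ does not ``propagate'' to other rows. The alternative ``row estimate with $\xi\ge 0$'' also fails: $x=a_m+k-1$ is fixed, and rows with $a_m-a_i+m-i<n-k$ genuinely contain negative entries. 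For example, for the trivial $\mathfrak{sl}(2\vert 3)$-module row~$1$ reads $1,0,-1$, and no estimate can absorb the negative entry.

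\textbf{The missing observation.} It is elementary and uses only ingredients you already have. For $j\ge k\ge k_0$ one has $b_j=0$. Hence, in a fixed row $i<m$, the values $(\Lambda+\rho,\epsilon_i-\delta_j)$ for $j=k,\ldots,n$ are integers decreasing by exactly $1$, starting from $(a_m-a_i)+(m-i)>0$. So if some entry in that range is $\le 0$, there is $s$ with $k<s\le n$ and $(\Lambda+\rho,\epsilon_i-\delta_s)=0$. That is, $\Lambda$ is also atypical with respect to $\epsilon_i-\delta_s$.

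Lemma~\ref{lemm::non_existece_roots}~a) is stated for an arbitrary row $i$, and here $s\ge k_0$. Applying it to this new atypical root removes $\epsilon_i-\delta_s,\ldots,\epsilon_i-\delta_n$, which are precisely the zero and negative entries of row $i$. This is what the paper does, row by row. Afterwards every surviving root has strictly positive value, so Proposition~\ref{prop::continuous_parameter} applies verbatim. The $\xi\ge0$ variant of the row estimate and your concern about $l=1$, $\xi=0$ then become unnecessary, since zero-valued roots are discarded as well.
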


\begin{proof}
By Proposition~\ref{prop::continuous_parameter}, it suffices to show that every $\even$-constituent $L_{0}(\Lambda-\gamma)$ admits a decomposition $\gamma=\gamma_{1}+\cdots+\gamma_{r}$ into pairwise distinct positive odd roots such that $(\Lambda+\rho,\gamma_{i})>0$ for all $i$. Consider the diagram of Lemma~\ref{lemm::minimum}. By monotonicity, negative values can occur only for roots of the form $\epsilon_{i}-\delta_{j}$ with $j>k$. For such roots,
\[
\begin{aligned}
(\Lambda+\rho,\epsilon_{i}-\delta_{j})&=(\Lambda+\rho,\epsilon_{i}-\epsilon_{m})+(\Lambda+\rho,\epsilon_{m}-\delta_{j})\\
&=a_{m}-a_{i}+m-i+(\Lambda+\rho,\epsilon_{m}-\delta_{k})+(\Lambda+\rho,\delta_{k}-\delta_{j})\\
&=a_{m}-a_{i}+m-i+k-j.
\end{aligned}
\]
Hence the relevant part of the diagram has the form \vspace{0.3cm}
{\small
\[
\setlength{\arraycolsep}{2pt}
\renewcommand{\arraystretch}{0.85}
\begin{array}{ccccccccccc}
(\Lambda+\rho,\epsilon_{1}-\delta_{1})&>&\cdots&>&(\Lambda+\rho,\epsilon_{1}-\delta_{k})&>&(a_m-a_1)+m-1&>&\cdots&>&(a_m-a_1)+m-1-(n-k)\\[0.4ex]
\vee&&&&\vee&&\vee&&&&\vee\\
\cdots &&&&\cdots&&\cdots&&&&\cdots\\[0.4ex]
\vee&&&&\vee&&\vee&&&&\vee\\[0.4ex]
(\Lambda+\rho,\epsilon_{m-1}-\delta_{1})&>&\cdots&>&(\Lambda+\rho,\epsilon_{m-1}-\delta_{k})&>&(a_m-a_{m-1})&>&\cdots&>&(a_m-a_{m-1})-(n-k)\\[0.4ex]
\vee&&&&\vee&&\vee&&&&\vee\\[0.4ex]
(\Lambda+\rho,\epsilon_{m}-\delta_{1})&>&\cdots&>&0&>&-1&>&\cdots&>&-(n-k)
\end{array}
\]
}
\ \vspace{0.3cm}

Since $(\Lambda+\rho,\epsilon_{m}-\delta_{k})=0$, Lemma~\ref{lemm::non_existece_roots} implies that every $\even$-constituent of $L(\Lambda)$ admits a decomposition which does not involve any of the roots $\epsilon_{m}-\delta_{k},\ldots,\epsilon_{m}-\delta_{n}$. Now assume that $(\Lambda+\rho,\epsilon_{i}-\delta_{j})<0$ for some $1\le i< m$ and $k<j\le n$. Since $(\Lambda+\rho,\epsilon_{i}-\delta_{k})>0$ and
\[
(\Lambda+\rho,\epsilon_{i}-\delta_{k})-(\Lambda+\rho,\epsilon_{i}-\delta_{j})=j-k,
\]
monotonicity implies that there exists $s$ with $k\le s<j$ such that $(\Lambda+\rho,\epsilon_{i}-\delta_{s})=0$. By Lemma~\ref{lemm::non_existece_roots}, every composition factor $L_{0}(\Lambda-\gamma)$ of $L(\Lambda)$ therefore admits a decomposition into pairwise distinct positive odd roots which does not involve any of the roots
\[
\epsilon_{i}-\delta_{s},\ldots,\epsilon_{i}-\delta_{n}.
\]
Applying this argument to each $i$, one obtains that every such $\even$-constituent factor admits a decomposition $\gamma=\gamma_{1}+\cdots+\gamma_{r}$ into pairwise distinct positive odd roots such that $(\Lambda+\rho,\gamma_{i})>0$ for all $i$. This proves the claim.
\end{proof}

It remains to consider the non-integral points of the interval $I$.

\begin{lemma} If $(\Lambda+\rho, \epsilon_{m}-\delta_{k}) < 0 < (\Lambda+\rho, \epsilon_{m}-\delta_{k-1})$, then $L(\Lambda)$ is not unitarizable.
\end{lemma}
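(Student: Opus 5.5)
The plan is to show that the hypothesis violates the unitarity conditions of Lemma~\ref{lemm::unitarity_conditions}a), which are necessary for unitarity. This mirrors the $\su(2\vert 2)$ example, where non-integral $x$ in $(a,a+1)$ was excluded the same way.

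First, I observe that the hypothesis forces $(\Lambda+\rho,\epsilon_{m}-\delta_{j})\neq 0$ for every $j=1,\ldots,n$. By Lemma~\ref{lemm::minimum}, the row $j\mapsto(\Lambda+\rho,\epsilon_{m}-\delta_{j})$ is strictly decreasing. Hence all entries with $j\le k-1$ are $\ge(\Lambda+\rho,\epsilon_{m}-\delta_{k-1})>0$, and all entries with $j\ge k$ are $\le(\Lambda+\rho,\epsilon_{m}-\delta_{k})<0$. So none vanishes, and there are both positive and negative values.

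If one prefers to see this through formula \eqref{eq::Dirac_inequality_fd}, the computation is as follows. For $j\ge k_{0}$ one has
\[
(\Lambda+\rho,\epsilon_{m}-\delta_{j})=x-a_{m}-j+1 ,
\]
so consecutive values differ by exactly $1$ there. A strict sign change between $k-1$ and $k$ inside the range $j\ge k_{0}$ therefore forces $x$ to be non-integral. If instead $k-1<k_{0}$, one uses the gap $b_{k-1}-b_{k}+1$ between the columns $k-1$ and $k$. In either case, the conclusion that no entry of the last row vanishes follows directly from strict monotonicity, without any integrality argument.

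Second, I invoke unitarity condition (iii) of Lemma~\ref{lemm::unitarity_conditions}a). If $(\Lambda+\rho,\epsilon_{m}-\delta_{j})\neq0$ for all $j$, then a unitarizable $L(\Lambda)$ would require $(\Lambda+\rho,\epsilon_{m}-\delta_{j})>0$ for all $j$. This contradicts $(\Lambda+\rho,\epsilon_{m}-\delta_{k})<0$, so $L(\Lambda)$ is not unitarizable.

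For completeness, I would recall why (iii) holds, following its proof via the norms of $v_{j}=e_{-\epsilon_{m}+\delta_{1}}\cdots e_{-\epsilon_{m}+\delta_{j}}v_{\Lambda}$. These satisfy
\[
\langle v_{j},v_{j}\rangle=\prod_{l=1}^{j}(\Lambda+\rho,\epsilon_{m}-\delta_{l})\,\langle v_{\Lambda},v_{\Lambda}\rangle .
\]
At $j=k$ this product has exactly one negative factor and no zero factor, so it is negative. Because it is nonzero, the vector $v_{k}$ is not in the radical, and its norm is negative, contradicting positivity.

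The only delicate point is the norm computation: one must check that $\omega(e_{-\epsilon_{m}+\delta_{j}})=e_{\epsilon_{m}-\delta_{j}}$ and that the commutation through the earlier factors yields exactly the shift by $\rho$. This is already carried out in the proof of Lemma~\ref{lemm::unitarity_conditions}, so I expect no real obstacle beyond citing it.
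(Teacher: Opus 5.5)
Your proposal is correct and matches the paper's proof: strict monotonicity of the last row (Lemma~\ref{lemm::minimum}) shows that no $(\Lambda+\rho,\epsilon_{m}-\delta_{j})$ vanishes, and the negative entry at $j=k$ then contradicts unitarity condition (iii) of Lemma~\ref{lemm::unitarity_conditions}~a). Your recap of the negative norm of $v_{k}$ is the proof of (iii) specialized to this case, so it makes the argument self-contained without adding a new ingredient.
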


\begin{proof}
 If $(\Lambda+\rho,\epsilon_{m}-\delta_{k})<0<(\Lambda+\rho,\epsilon_{m}-\delta_{k-1})$, then $(\Lambda+\rho,\epsilon_{m}-\delta_{k})\neq0$ for all $k=1,\ldots,n$. Yet the condition $(\Lambda+\rho,\epsilon_{m}-\delta_{k})<0$ is incompatible with the unitarity conditions imposed on $\Lambda$, and $L(\Lambda)$ is not unitarizable.
\end{proof}

Combining the preceding lemmas, we obtain a complete classification of the finite-dimensional unitarizable $\gg$-supermodules. 

\begin{theorem}\label{thm::classification_fd}
The full set of unitarizable $\gg$-supermodules is parameterized by the union over all
\[
\left\{\Lambda(x) = \Lambda_{0}+\tfrac{x}{2}(1, \ldots, 1\vert 1, \ldots,1)\;:\;x\in\{a_{m}+k_{0}-1,\ldots,a_{m}+n-1\}\sqcup(a_{m}+n-1,\infty)\right\}
\]
with $\Lambda_{0} = (0, -a_{2}, \ldots, -a_{m} \,\vert\, b_{1}, \ldots, b_{n-1}, 0)$ with 
$a_{i}, b_{j} \in \ZZ_{\geq 0}$, 
$0 \geq -a_{2} \geq \cdots \geq -a_{m}$, and 
$b_{1} \geq \cdots \geq b_{k_{0}-1}>b_{k_{0}}=\ldots = b_{n}=0$. Equivalently, $\Lambda \in \hh^{\ast}$ is the highest weight of a unitarizable highest weight $\gg$-supermodule if and only if it satisfies the following two conditions:
\begin{enumerate}
\item[a)] $\Lambda$ satisfies the unitarity conditions.
\item[b)] one of the following holds:
\begin{enumerate}
\item[(i)] $(\Lambda+\rho,\epsilon_{m}-\delta_{k})=0$ for some $k=k_{0},\ldots,n$, or
\item[(ii)] $(\Lambda+\rho,\epsilon_{m}-\delta_{n})>0$.
\end{enumerate}
\end{enumerate}
\end{theorem}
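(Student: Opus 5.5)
The plan is to assemble the theorem from the lemmas just proved, organised by the three-step analysis of the parameter $x$ in Section~\ref{subsubsec::the_method_fd}. First, reduce to the parametrized families. If $L(\Lambda)$ is unitarizable, then $\Lambda$ is the highest weight of a finite-dimensional unitarizable $\even$-module. Hence it can be written as $\Lambda(x)=\Lambda_{0}+\tfrac{x}{2}(1,\ldots,1\vert 1,\ldots,1)$ with $\Lambda_{0}$ as in the statement, and by Lemma~\ref{lemm::unitarity_conditions} it satisfies the unitarity conditions. So condition a) is necessary, and we may fix $\Lambda_{0}$ (hence $a_{m}$ and $k_{0}$) and analyse $x\in\RR$.

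By \eqref{eq::Dirac_inequality_fd} we have $(\Lambda(x)+\rho,\epsilon_{m}-\delta_{k})=x-a_{m}-k+1$. This identifies the two descriptions in the theorem:
\begin{itemize}
\item condition (ii) is exactly $x>a_{m}+n-1$;
\item condition (i) is exactly $x\in\{a_{m}+k_{0}-1,\ldots,a_{m}+n-1\}$.
\end{itemize}
It therefore suffices to show that, for fixed $\Lambda_{0}$, $L(\Lambda(x))$ is unitarizable if and only if $x$ lies in $\{a_{m}+k_{0}-1,\ldots,a_{m}+n-1\}\sqcup(a_{m}+n-1,\infty)$.

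Sufficiency: for $x>a_{m}+n-1$, use the lemma giving $x_{\max}$, which rests on Lemma~\ref{lemm::minimum} and Proposition~\ref{prop::continuous_parameter}. For $x=a_{m}+k-1$ with $k_{0}\le k\le n$, use Lemma~\ref{lemm::discrete_points_are_unitarizable_fd}.

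Necessity: split $x\le a_{m}+n-1$ outside the discrete set into two ranges.
\begin{itemize}
\item If $x<a_{m}+k_{0}-1$, the lemma yielding $x_{\min}$ applies. The Kac--Shapovalov determinant on the weight space $\Lambda-\epsilon_{m}+\delta_{k_{0}}$ is nonzero, so that constituent survives, while by Lemma~\ref{lemm::Dirac_basic_const} the Dirac inequality fails on it. Theorem~\ref{thm::unitarity_via_Dirac_inequality} then rules out unitarity.
\item If $x\in(a_{m}+k_{0}-1,a_{m}+n-1)$ is non-integral relative to the grid $a_{m}+\ZZ$, there is a unique $k$ with $k_{0}<k\le n$ and $(\Lambda+\rho,\epsilon_{m}-\delta_{k})<0<(\Lambda+\rho,\epsilon_{m}-\delta_{k-1})$. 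The last lemma then applies, since all values $(\Lambda+\rho,\epsilon_{m}-\delta_{j})$ are nonzero and unitarity condition (iii) fails.
\end{itemize}
No other real $x$ remains, because the values $x-a_{m}-k+1$ differ by integers and the interval endpoints lie on the grid.

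The main obstacle is a consistency point in the discrete case. One must verify that the points $x=a_{m}+k-1$ with $k<k_{0}$ are correctly excluded; these points satisfy $x<a_{m}+k_{0}-1$. For such $x$, vanishing at $\epsilon_{m}-\delta_{k}$ would force $(\Lambda,\delta_{k}-\delta_{n})=0$ by unitarity condition (ii), contradicting the minimality of $k_{0}$. These points are therefore already excluded by a), which is consistent with the $x_{\min}$ lemma's use of nonvanishing of $D_{2}$.

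A second check concerns the equivalence between the parametrized set and conditions a), b). For $x$ in the stated set, the unitarity conditions hold automatically: (ii) holds since $k\ge k_{0}$ gives $b_{k}=b_{n}=0$, and (iii) holds for $x>a_{m}+n-1$ by monotonicity in Lemma~\ref{lemm::minimum}. With this, both directions of the equivalence follow.
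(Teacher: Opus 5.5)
Your proposal is correct and follows the paper's own argument: the paper proves the theorem simply by combining the $x_{\max}$ lemma, the $x_{\min}$ lemma, Lemma~\ref{lemm::discrete_points_are_unitarizable_fd} and the non-integral-point lemma, organised exactly as you do. One small slip that changes nothing: the formula $(\Lambda(x)+\rho,\epsilon_{m}-\delta_{k})=x-a_{m}-k+1$ holds only for $k\ge k_{0}$, since for $k<k_{0}$ there is an extra $+b_{k}$ and those zeros sit at $x=a_{m}-b_{k}+k-1$, but these points still lie below $a_{m}+k_{0}-1$ and are therefore handled by the $x_{\min}$ lemma together with unitarity condition (ii).
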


\begin{remark}
We assume highest weight vectors to be even. Accordingly, the full set of unitarizable highest weight $\gg$-supermodules is obtained up to application of the parity reversion functor $\Pi$.
\end{remark}

\subsection{Infinite-Dimensional Unitarizable Supermodules} We assume $p,q \ne 0$ and, without loss of generality, $p \le q$. We fix the non-standard system of positive roots. Recall the expression for the Weyl vector from~\eqref{eq::Weyl_vector_non_standard}. We first describe the highest weights that can
occur for unitarizable $\mathfrak{g}$-supermodules, in direct analogy with the finite-dimensional
case. We then
state the classification strategy and illustrate it in the examples $\mathfrak{sl}(2\vert 1)$ and
$\mathfrak{sl}(2\vert 2)$. The general classification follows by extending the same argument.

\subsubsection{Parametrization of the Weight Space} Let $L(\Lambda)$ be a simple $\omega$-unitarizable $\gg$-supermodule, where we recall that $\omega \coloneqq \omega_{(-,+)}$. It follows from the definition of unitarity that a necessary 
condition for $L(\Lambda)$ to be unitarizable as a $\gg$-supermodule is that $\Lambda$ is the highest weight of a unitarizable 
$\even$-module, denoted by $L_{0}(\Lambda)$. Recall
$
\even =
 \su(p,q)^{\CC} \oplus \su(n)^{\CC} \oplus \u(1)^{\CC}
$
to emphasize the real form. This imposes a classical sequence of standard conditions on the highest weight, which we recall 
is parameterized in terms of the standard coordinates on $\hh^*$ as
$
\Lambda = (\lambda^1,\ldots,\lambda^m\vert\mu^1,\ldots,\mu^n),
$
modulo shifts by $(1,\ldots,1\vert {-}1,\ldots,-1)$. We begin by recalling the parametrization of unitarizable highest weight $\even$-modules. First, we consider the restriction to the maximal compact subalgebra $\kk \coloneqq \su(p) \oplus \su(q) \oplus \u(1) \oplus \su(n) \oplus \u(1)$ of $\su(p,q\vert n)$. If 
$L_0(\Lambda)$ is unitarizable as a $\even$-module, then as a $\kk^\CC$-module it is semisimple with 
finite multiplicities. In particular, $\Lambda$ is the highest weight of a unitarizable simple 
(hence finite-dimensional) $\kk^\CC$-module, which appears with multiplicity one. Namely, $\Lambda$ must be 
integral and dominant with respect to the positive system induced from $\gg$, that is 
\begin{equation}
\label{inequalities}
\lambda^{p+1}\ge \cdots \ge \lambda^m \ge \lambda^1 \cdots \ge\lambda^p,
\qquad
\mu^1\ge \cdots \ge \mu^n
\end{equation}
Following \cite{jakobsen1994full}, we parameterize the solution to these constraints by writing
\begin{multline}
\label{jakobsenpara}
\Lambda = (0,a_{2},\ldots,a_{m-1},0\vert b_{1},\ldots,b_{n-1},0) + \tfrac{\lambda}{2} 
(1,\ldots,1,-1,\ldots,-1\vert 0,\ldots,0) \\ + \tfrac{x}{2}(1,\ldots,1\vert 1,\ldots,1),
\end{multline}
with integers $a_i$ satisfying $a_{p+1}\ge \cdots \ge a_{m-1} \ge 0 \ge a_2 \ge \cdots \ge a_p$, integers 
$b_k$ satisfying $b_1\ge \cdots\ge b_{n-1}\ge b_{n} \coloneqq 0$ and real numbers $x \in \RR$ and $\lambda \in \RR_{\geq 0}$. In order to obtain this parameterization, we use the shift-invariance to impose the relation 
$\lambda^{1}+\lambda^{m}=2\mu^{n}=:x$, and set $\lambda\coloneqq \lambda^{1}-\lambda^{m}$. 
The conditions on the $a_{i}$ and $b_{k}$ then follow from the preceding discussion. 
It is known \cite{enright1983classification,jakobsen1994full} that, 
for fixed $a_{i}$ and $b_{k}$ subject to these conditions, 
the weight $\Lambda$ in~\eqref{jakobsenpara} is the highest weight of a unitarizable 
simple highest weight $\even$-module if and only if
\begin{multline}
\label{thisset}
\lambda \in \bigl(-\infty,-m+\max(i_0,j_0) + 1 \bigr) \\ \cup 
\bigl\{-m+\max(i_0,j_0) + 1, -m + \max(i_0,j_0) + 2 ,\ldots,-m+ i_0+j_0\bigr\},
\end{multline}
where $i_{0}$ is the largest index for which $a_{i}=0$, and $j_{0}$ is the largest integer for which 
$a_{m-j}= 0$ (if $a_{p+1}=0$ then $j_{0}=q$). Moreover, we define $k_{0}$ to be the smallest integer such that $b_{k_{0}} = 0$. The values $i_0, j_0$ and $k_{0}$ are part of the 
dominance data with respect to the maximal compact subalgebra.

We suppose that $\Lambda$ satisfies the unitarity conditions (Lemma~\ref{lemm::unitarity_conditions}), \emph{i.e.}, 
\begin{enumerate}
\item[(i)] $\lambda^{p+1}\ge\cdots\ge\lambda^{m}\ge-\mu^{n}\ge\cdots\ge-\mu^{1}\ge\lambda^{1}\ge\cdots\ge\lambda^{p}$,
\item[(ii)] $(\Lambda+\rho,-\epsilon_{i}+\delta_{n})=0$ for $p+1\le i\le m$ implies $(\Lambda,\epsilon_{i}-\epsilon_{m})=0$,
\item[(iii)] $(\Lambda+\rho,\epsilon_{i}-\delta_{1})=0$ for $1\le i\le p$ implies $(\Lambda,\epsilon_{1}-\epsilon_{i})=0$.
 \item[(iv)] If $(\Lambda+\rho, \epsilon_{i}-\delta_{1}) \neq 0$ for $1 \leq i \leq i_{0}$, then $(\Lambda+\rho, \epsilon_{i}-\delta_{1})<0$ for $i=1,\ldots, i_{0}$.
 \item[(v)] If $(\Lambda+\rho,\epsilon_{m-j}+\delta_{n})\neq 0$ for $0 \leq j \leq j_{0}$, then $(\Lambda+\rho, -\epsilon_{m-j}+\delta_{n})<0$ for $j=0,\ldots, j_{0}$.
\end{enumerate}
In general, a highest weight $\gg$-supermodule $L(\Lambda)$ with $\Lambda$ satisfying (i)-(v) need not be $\omega$-unitarizable. To classify the $\omega$-unitarizable simple $\gg$-supermodules, it is convenient to introduce one-parameter families in $\hh^{\ast}$, as in the finite-dimensional case, indexed by $x\in\RR$, of the form 
\begin{equation} \label{eq::form_Lambda_x}
\Lambda(x) = \Lambda_{0}+\tfrac{x}{2}(1,\ldots, 1\vert 1, \ldots,1),
\end{equation}
where $\Lambda_{0}=
(0,a_{2},\ldots,a_{m-1},0\vert b_{1},\ldots,b_{n-1},0) + \tfrac{\lambda}{2} 
(1,\ldots,1,-1,\ldots,-1\vert 0,\ldots,0)$ with $\lambda$ satisfying \eqref{thisset}, 
and integers $a_{p+1}\ge \cdots \ge a_{m-1} \geq a_{m}= 0 = a_{1} \ge a_2 \ge \cdots \ge a_p$ and 
$b_{1}\ge\cdots\ge b_{n-1}\ge0$ as above. We assume that the unitarity conditions hold for all $x \in \RR$. In what follows, we write $\Lambda=\Lambda(x)$ and suppress the dependence on $x$ whenever no confusion can arise. The classification is obtained by applying the method of Section~\ref{subsec::the_method_ifd}.

\subsubsection{Examples}
We now illustrate the classification procedure on the Lie superalgebras $\mathfrak{su}(1, 1\vert 1)$ and $\mathfrak{su}(1,1\vert 2)$. These are specific instances of supersymmetry algebras of superconformal quantum mechanics 
\cite{Fubini:1984hf,okazaki2015superconformal}.

\medskip
\noindent
$\underline{\mathfrak{su}(1,1\vert 1)}$. Set $\gg\coloneqq\mathfrak{su}(1,1\vert 1)$. The non-standard positive system is $\Delta^{+}=\{\epsilon_{1}-\epsilon_{2},\epsilon_{1}-\delta_{1},-\epsilon_{2}+\delta_{1}\}$, the corresponding Weyl vector is $\rho=(0,0\vert 0)$, and we set $\alpha\coloneqq\epsilon_{1}-\delta_{1}$ and $\beta\coloneqq-\epsilon_{2}+\delta_{1}$. In particular, $A =\{\alpha\}$ and $B=\{\beta\}$.

A general family $\Lambda$ of highest weights of a unitarizable $\even$-module is of the form
\begin{equation}
\Lambda=\bigl(\tfrac{\lambda}{2}+\tfrac{x}{2},\, -\tfrac{\lambda}{2}+\tfrac{x}{2}\big\vert \tfrac{x}{2}\bigr), \qquad x \in \RR,
\end{equation}
with $\lambda\in\RR_{\le0}$. The possible simple $\even$-composition factors are $L_{0}(\Lambda), L_{0}(\Lambda-\alpha), L_{0}(\Lambda-\beta)$, and $L_{0}(\Lambda-\alpha-\beta)$. Since $\rho=(0,0\vert0)$, the Dirac inequalities for the $\even$-constituents $L_{0}(\Lambda-\gamma)$, 
$\gamma\in\Delta_{\bar{1}}^{+}$, are
\begin{equation}
\begin{aligned}
0 \ge (\Lambda+\rho,\alpha)=\tfrac{\lambda}{2}+x,\qquad
0 \ge (\Lambda+\rho,\beta)=\tfrac{\lambda}{2}-x,
\end{aligned}
\end{equation}
which is equivalent to
\begin{equation}
\tfrac{\lambda}{2}\le x\le -\tfrac{\lambda}{2}.
\end{equation}
We now obtain a complete classification, following the classification method in Section~\ref{subsec::the_method_ifd}. 

We first determine the interval on which unitarity holds automatically. Set $x_{\max}^{L}\coloneqq\max\{(\Lambda+\rho,\alpha):\alpha\in A\}$ and $x_{\min}^{R}\coloneqq\min\{(\Lambda+\rho,\beta):\beta\in B\}$. Then $x_{\max}^{L}=\tfrac{\lambda}{2}$ and $x_{\min}^{R}=-\tfrac{\lambda}{2}$. Hence, if $x_{\max}^{L}<x<x_{\min}^{R}$, then $(\Lambda+\rho,\gamma)<0$ for all $\gamma\in\Delta_{\bar1}^{+}$, so the Dirac inequality is strict for every odd positive root. Therefore Proposition~\ref{prop::continuous_parameter} implies that $L(\Lambda)$ is unitarizable on this interval.

Next, one has $x_{\min}^{L}=x_{\max}^{L}$ and $x_{\max}^{R}=x_{\min}^{R}$, and we set $x^{L}\coloneqq x_{\min}^{L}=x_{\max}^{L}$ and $x^{R}\coloneqq x_{\max}^{R}=x_{\min}^{R}$. We show that $L(\Lambda)$ is not unitarizable for $x<x^{L}$ or $x>x^{R}$. Indeed, if $x<x^{L}$, then $L(\Lambda)$ has the non-trivial $\even$-composition factor $L_{0}(\Lambda-\alpha)$, and if $x>x^{R}$, then it has the non-trivial $\even$-composition factor $L_{0}(\Lambda-\beta)$. In both cases, the Dirac inequality fails on this factor. To show that these $\even$-composition factors do indeed occur, it suffices to prove that under these conditions the corresponding weight spaces in $M(\Lambda)$ are not contained in the radical; equivalently, that the Shapovalov form restricted to these weight spaces is non-degenerate. This follows once the Kac–Shapovalov determinant (see Theorem~\ref{thm::Kac_Shapovalov}) is shown to be nonzero. Set $\eta \in \{\epsilon_{1}-\delta_{1}, -\epsilon_{2}+\delta_{1}\}$. The Kac–Shapovalov determinant on $M(\Lambda)^{\Lambda-\eta}$ is $(\Lambda+\rho,\eta)$, which is nonzero for $x<x^{L}=\tfrac{\lambda}{2}$ or $x>x^{R}=-\tfrac{\lambda}{2}$. Hence the Dirac inequality fails on these $\even$-constituents, and $L(\Lambda)$ is not unitarizable. 

Hence, one can assume $x^{L}\leq x^{R}$, since for $x^{L}>x^{R}$ the module $L(\Lambda)$ is not unitarizable. It remains to consider the boundary points $x=x^{L}$ and $x=x_{\max}^{R}$, equivalently the atypicality conditions $(\Lambda+\rho,\alpha)=0$ and $(\Lambda+\rho,\beta)=0$. We treat only the case $x=x^{L}<x_{R}$, since the cases $x=x^{L}=x^{R}$ and $x^{L}<x=x^{R}$ are analogous. 

Assume therefore that $x=x^{L}$, so that $(\Lambda+\rho,\alpha)=0$. By Lemma~\ref{lemm::non_existece_roots}, every $\even$-composition factor $L_{0}(\Lambda-\gamma)$ admits a decomposition of $\gamma$ into pairwise distinct positive odd roots which does not involve $\alpha$. Hence the only possible non-trivial $\even$-composition factors are $L_{0}(\Lambda)$ and $L_{0}(\Lambda-\beta)$. Since $x^{L}<x^{R}$, the Dirac inequality is strict on the factor $L_{0}(\Lambda-\beta)$. Proposition~\ref{prop::continuous_parameter} therefore implies that $L(\Lambda)$ is unitarizable at $x=x^{L}$ if $x^{L}< x^{R}$.

Hence, the set of all unitarizable highest weights for $\su(1,1\vert1)$ is
\begin{equation}
\Bigl\{
\bigl(\tfrac{\lambda}{2}+\tfrac{x}{2},\,-\tfrac{\lambda}{2}+\tfrac{x}{2}\big\vert\tfrac{x}{2}\bigr)
:\ \lambda\in\RR_{\le0},\ 2x\in[\lambda,-\lambda]
\Bigr\}.
\end{equation}
Equivalently, $\Lambda$ is the highest weight of a unitarizable highest weight $\gg$-supermodule if and only if
\begin{enumerate}
 \item[a)] $\Lambda$ satisfies the unitarity conditions of Lemma~\ref{lemm::unitarity_conditions}.
 \item[b)] $(\Lambda+\rho, \epsilon_{1}-\delta_{1}) \leq 0$ and $(\Lambda+\rho,-\epsilon_{2}+\delta_{1})\leq 0$.
\end{enumerate}

\medskip 

\noindent
$\underline{\mathfrak{su}(1,1\vert 2)}$: Set $\gg \coloneqq \su(1,1\vert 2)$. The non-standard positive root system is 
$\Delta^{+} = \{\epsilon_{1}-\epsilon_{2}, \delta_{1}-\delta_{2}, \epsilon_{1}-\delta_{1}, \epsilon_{1}-\delta_{2}, -\epsilon_{2}+\delta_{1}, -\epsilon_{2}+\delta_{2}\}$. 
In particular, 
$\rho = \bigl(-\tfrac{1}{2}, \tfrac{1}{2}\big\vert \tfrac{1}{2},-\tfrac{1}{2}\bigr)$. 
A general family $\Lambda$ of highest weight of a unitarizable highest weight $\even$-module has the general form 
\begin{equation}
 \Lambda = (\lambda^{1}, \lambda^{2}\vert \mu^{1},\mu^{2}) = \bigl(\tfrac{\lambda}{2}+\tfrac{x}{2},\, -\tfrac{\lambda}{2}+\tfrac{x}{2}\big\vert b + \tfrac{x}{2},\, \tfrac{x}{2}\bigr),
\end{equation}
with $b \in \ZZ_{\geq 0}$, $\lambda \in \RR_{\leq 0}$ and $x \in \RR$. We assume $\Lambda$ satisfies the additional unitarity conditions of Lemma~\ref{lemm::unitarity_conditions}. The Dirac inequalities are
\begin{equation} \label{eq::Dirac_inequalities_Bsp_sl22_ifd}
 \begin{aligned}
 0 \geq (\Lambda + \rho, \epsilon_{1} - \delta_{1}) &= \tfrac{\lambda}{2} + x + b \;\Leftrightarrow\; x \leq -\tfrac{\lambda}{2}-b, \\
 0 \geq (\Lambda + \rho, \epsilon_{1} - \delta_{2}) &= \tfrac{\lambda}{2}+x-1 \;\Leftrightarrow\; x \leq -\tfrac{\lambda}{2}+1, \\
 0 \geq (\Lambda + \rho, -\epsilon_{2} + \delta_{1}) &= \tfrac{\lambda}{2}-x-b-1\;\Leftrightarrow\; x \geq \tfrac{\lambda}{2}-b-1, \\
 0 \geq (\Lambda + \rho, -\epsilon_{2} + \delta_{2}) &= \tfrac{\lambda}{2}-x \;\Leftrightarrow\; x \geq \tfrac{\lambda}{2}.
 \end{aligned}
\end{equation}
These constraints on $x$ yield all necessary information to classify all unitarizable highest weight $\gg$-supermodules. We use the method of Section~\ref{subsec::the_method_ifd}. 

By definition, $x^{L}=\tfrac{\lambda}{2}$ and $x^{R}=-\tfrac{\lambda}{2}-b$. Hence, if $x^{L}<x^{R}$, then $(\Lambda+\rho,\gamma)<0$ for all $\gamma\in\Delta_{\bar1}^{+}$ whenever $\tfrac{\lambda}{2}<x<-\tfrac{\lambda}{2}-b$. Therefore Proposition~\ref{prop::continuous_parameter} implies that $L(\Lambda)$ is unitarizable on this interval. 

Note that $i_{0}=1$ and $j_{0}=0$. Hence $x^{R}_{\min}=x^{R}_{\max}$, and $x^{L}_{\min}=x^{L}_{\max}$, which we will denote by $x^{R}$ and $x^{L}$, respectively. In particular, unitarity does not hold for all $x>x^{R}$ and $x<x^{L}$. Indeed, for $x>x^{R}$, the module $L(\Lambda)$ has the $\even$-composition factor $L_{0}(\Lambda-\epsilon_{1}+\delta_{1})$, and the Dirac inequality fails on this factor. Its occurrence follows from the non-vanishing of the Kac--Shapovalov form on $M^{\Lambda-\epsilon_{1}+\delta_{1}}$. Namely, for $\gamma\in\Delta^{+}$, the difference $(-\epsilon_{1}+\delta_{1})-\gamma$ is equal to $0$ or a sum of positive roots only for $\gamma=\epsilon_{1}-\delta_{1}$. Thus the determinant vanishes only at $(\Lambda+\rho,\epsilon_{1}-\delta_{1})=0$. If $x>x^{R}$, it follows that this $\even$-composition factor occurs, and hence unitarity fails. Analogously, one shows that $L(\Lambda(x))$ is not unitarizable for $x<x^{L}$, since for such $x$ the Dirac inequality fails on the non-vanishing composition factor $L_{0}(\Lambda+\epsilon_{2}-\delta_{2})$.

It remains to prove unitarity in the degenerate case where one of the residual intervals collapses to a single point, namely when
\begin{equation}
x=x^{L}\leq x^{R},
\qquad\text{or}\qquad
x=x^{R}\geq x^{L}.
\end{equation}

It suffices to prove unitarity in the case $x=x^{L}=x^{R}$, since the other cases are analogous. Then
\begin{equation}
(\Lambda+\rho,-\epsilon_{2}+\delta_{2})=0,
\qquad
(\Lambda+\rho,\epsilon_{1}-\delta_{1})=0.
\end{equation}
By Lemma~\ref{lemm::non_existece_roots}, every $\even$-composition factor $L_{0}(\Lambda-\gamma)$, where $\gamma=\gamma_{1}+\cdots+\gamma_{r}$, admits a decomposition in which neither $\epsilon_{1}-\delta_{1}$ nor $-\epsilon_{2}+\delta_{2}$ appears. Since \eqref{eq::Dirac_inequalities_Bsp_sl22_ifd} gives $(\Lambda+\rho,\gamma_{i})<0$ for all other positive odd roots $\gamma_{i}$, Proposition~\ref{prop::continuous_parameter} shows that $L(\Lambda)$ is unitarizable.

Altogether, the full set of unitarizable highest weight $\su(1,1\vert 2)$-supermodules is
\begin{equation}
 \{ \Lambda = \bigl(\tfrac{\lambda}{2}+\tfrac{x}{2},\, -\tfrac{\lambda}{2}+\tfrac{x}{2}\big\vert b + \tfrac{x}{2},\, \tfrac{x}{2}\bigr) : b \in \ZZ_{+}, \ \lambda \in \RR_{\leq 0}, \ 2x \in [\lambda,-\lambda+2b]\}
\end{equation}
Equivalently, $\Lambda$ is the highest weight of a unitarizable $\gg$-supermodule if and only if
\begin{enumerate}
 \item[a)] $\Lambda$ satisfies the unitarity conditions of Lemma~\ref{lemm::unitarity_conditions}, and
 \item[b)] $(\Lambda+\rho, \epsilon_{1}-\delta_{1})\leq 0$ and $(\Lambda+\rho, -\epsilon_{2}+\delta_{2})\leq 0$.
\end{enumerate}

\subsubsection{General Case} We start with a highest weight $\Lambda$ of a unitarizable highest weight $\even$-module of the form~\eqref{eq::form_Lambda_x}, parametrized by $x$, namely
\begin{equation}
\Lambda(x)=\Lambda_{0}+\tfrac{x}{2}(1,\ldots,1\vert 1,\ldots,1),
\end{equation}
where $\Lambda_{0}=(0,a_{2},\ldots,a_{m-1},0\vert b_{1},\ldots,b_{n-1},0)+\tfrac{\lambda}{2}(1,\ldots,1,-1,\ldots,-1\vert 0,\ldots,0)$, with $\lambda$ satisfying~\eqref{thisset}, $a_{p+1}\ge\cdots\ge a_{m-1}\ge a_{m}=0=a_{1}\ge a_{2}\ge\cdots\ge a_{p}$, and $b_{1}\ge\cdots\ge b_{n-1}\ge0$. Recall that $i_{0}$ is the largest index such that $a_{i}=0$, and that $j_{0}$ is the largest integer such that $a_{m-j}=0$; if $a_{p+1}=0$, then $j_{0}=q$. Assume moreover that $\Lambda$ satisfies the unitarity conditions of Lemma~\ref{lemm::unitarity_conditions}. The classification of unitarizable highest weight $\gg$-supermodules is then obtained by analyzing $L(\Lambda(x))$ as a function of $x$. In what follows, we suppress the parameter $x$ and write simply $\Lambda$ for $\Lambda(x)$.

Our classification method is independent of the explicit choice in~\eqref{eq::form_Lambda_x}; we retain it only for the sake of an explicit realization. Our result will also be formulated entirely in terms of Dirac inequalities as seen in the examples above.

We apply the method of Section~\ref{subsec::the_method_ifd}. Recall the decomposition $\Delta_{\bar1}^{+}=A\sqcup B$. The analysis reduces to the values $(\Lambda+\rho,\gamma)$ for $\gamma\in\Delta_{\bar1}^{+}$. For $\epsilon_{i}-\delta_{k}\in A$, with $1\le i\le p$ and $1\le k\le n$, one has
\begin{equation} \label{eq::A}
(\Lambda+\rho,\epsilon_{i}-\delta_{k})=\lambda^{i}+\mu^{k}+p-i-k+1=x+\tfrac{\lambda}{2}+a_{i}+b_{k}+p-i-k+1.
\end{equation}
For $-\epsilon_{j}+\delta_{k}\in B$, with $p+1\le j\le m$ and $1\le k\le n$, one has
\begin{equation} \label{eq::B}
(\Lambda+\rho,-\epsilon_{j}+\delta_{k})=-\lambda^{j}-\mu^{k}-n-p+j+k-1=-x+\tfrac{\lambda}{2}-a_{j}-b_{k}-n-p+j+k-1.
\end{equation}

By the form of $\Lambda$ in~\eqref{eq::form_Lambda_x} and the dominance relations, one obtains the following lemma.

\begin{lemma} \label{lemm::min_max_ifd} One has:
\begin{enumerate}
\item[a)] The numbers $(\Lambda+\rho,\alpha)$, with $\alpha \in A$, arrange as follows: \[
\begin{array}{ccccc}
(\Lambda+\rho,\varepsilon_1-\delta_1) & > & \cdots & > & (\Lambda+\rho,\varepsilon_1-\delta_n)\\
\vee &&&& \vee\\
\vdots &&&& \vdots\\
\vee &&&& \vee\\
(\Lambda+\rho,\varepsilon_i-\delta_1) & > & \cdots & > & (\Lambda+\rho,\varepsilon_i-\delta_n)\\
\vee &&&& \vee\\
\vdots &&&& \vdots\\
\vee &&&& \vee\\
(\Lambda+\rho,\varepsilon_p-\delta_1) & > & \cdots & > & (\Lambda+\rho,\varepsilon_p-\delta_n)
\end{array}
\]
\item[b)] The numbers $(\Lambda+\rho, \beta)$, with $\beta \in B$, arrange as follows: 
\[
\begin{array}{ccccc}
(\Lambda+\rho,-\varepsilon_m+\delta_n) & > & \cdots & > & (\Lambda+\rho,-\varepsilon_m+\delta_1)\\
\vee &&&& \vee\\
\vdots &&&& \vdots\\
\vee &&&& \vee\\
(\Lambda+\rho,-\varepsilon_{m-j}+\delta_n) & > & \cdots & > & (\Lambda+\rho,-\varepsilon_{m-j}+\delta_1)\\
\vee &&&& \vee\\
\vdots &&&& \vdots\\
\vee &&&& \vee\\
(\Lambda+\rho,-\varepsilon_{p+1}+\delta_n) & > & \cdots & > & (\Lambda+\rho,-\varepsilon_{p+1}+\delta_1)
\end{array}
\]
\end{enumerate}
\end{lemma}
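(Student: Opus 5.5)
The plan is to read off both diagrams directly from the explicit formulas \eqref{eq::A} and \eqref{eq::B}, computing differences of adjacent entries. In each case the difference splits into a dominance term, which is non-negative by \eqref{inequalities}, plus a $\rho$-contribution, which is a positive integer. Because $x$ and $\lambda$ enter every entry of $A$ with the same coefficient (namely $x+\tfrac{\lambda}{2}$), and every entry of $B$ with the same coefficient (namely $-x+\tfrac{\lambda}{2}$), all dependence on the parameters cancels in these differences. The ordering is therefore independent of $x$ and $\lambda$.

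For a), first fix a row $i$ with $1\le i\le p$ and compare columns $k$ and $k+1$. From \eqref{eq::A},
\[
(\Lambda+\rho,\epsilon_i-\delta_k)-(\Lambda+\rho,\epsilon_i-\delta_{k+1})=b_k-b_{k+1}+1=(\Lambda+\rho,\delta_k-\delta_{k+1})\geq 1,
\]
since $b_1\ge\cdots\ge b_n=0$. This gives the strict horizontal inequalities. Next fix a column $k$ and compare rows $i$ and $i+1$ with $i+1\le p$:
\[
(\Lambda+\rho,\epsilon_i-\delta_k)-(\Lambda+\rho,\epsilon_{i+1}-\delta_k)=a_i-a_{i+1}+1\geq 1,
\]
using $0=a_1\ge a_2\ge\cdots\ge a_p$, i.e.\ $\lambda^1\ge\cdots\ge\lambda^p$ from \eqref{inequalities}. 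This gives the vertical inequalities.

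For b), the same computation with \eqref{eq::B} applies. For fixed $j\in\{p+1,\dots,m\}$,
\[
(\Lambda+\rho,-\epsilon_j+\delta_{k+1})-(\Lambda+\rho,-\epsilon_j+\delta_k)=b_k-b_{k+1}+1\geq 1,
\]
so within each row the entries decrease from $\delta_n$ down to $\delta_1$, as displayed. For fixed $k$,
\[
(\Lambda+\rho,-\epsilon_{j+1}+\delta_k)-(\Lambda+\rho,-\epsilon_j+\delta_k)=a_j-a_{j+1}+1\geq1,
\]
by $a_{p+1}\ge\cdots\ge a_{m-1}\ge a_m=0$. Hence the row for $\epsilon_m$ lies on top and the row for $\epsilon_{p+1}$ at the bottom, which matches the vertical ordering in the diagram.

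There is no real obstacle here; the only thing to watch is bookkeeping. One must check that the $\rho$-contributions in \eqref{eq::A} and \eqref{eq::B}, taken from \eqref{eq::Weyl_vector_non_standard} together with $\rho_{\bar0}$, really give $+1$ per step, with the correct sign in each index. One must also make sure the orientation of the $B$-diagram, which is reversed in both indices, is reproduced correctly. I would verify the formulas once on the example $\su(1,1\vert2)$ from \eqref{eq::Dirac_inequalities_Bsp_sl22_ifd} as a consistency check.
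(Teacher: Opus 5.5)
Your proposal is correct and is the paper's own argument: the paper simply says the orderings follow from the form of $\Lambda$ and the dominance relations, and your adjacent differences from \eqref{eq::A} and \eqref{eq::B}, namely $b_k-b_{k+1}+1\geq 1$ and $a_i-a_{i+1}+1\geq 1$ (independent of $x$ and $\lambda$), spell out exactly that. One harmless slip: because $(\delta_k,\delta_k)=-1$, the horizontal difference in a) equals $(\Lambda+\rho,\delta_{k+1}-\delta_k)$, not $(\Lambda+\rho,\delta_k-\delta_{k+1})$, although the value $b_k-b_{k+1}+1$ you actually use is correct.
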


Note that the maximum of $(\Lambda+\rho,\alpha)$ for $\alpha\in A$ is attained at $(\Lambda+\rho,\epsilon_{1}-\delta_{1})$, while the maximum of $(\Lambda+\rho,\beta)$ for $\beta\in B$ is attained at $(\Lambda+\rho,-\epsilon_{m}+\delta_{n})$. In particular, if $(\Lambda+\rho,\epsilon_{1}-\delta_{1})<0$ and $(\Lambda+\rho,-\epsilon_{m}+\delta_{n})<0$, then $\Lambda$ is typical. By the classification method, $x_{\min}^{R}$ is determined by $(\Lambda+\rho,\epsilon_{1}-\delta_{1})=0$, that is,
\begin{equation}
(\Lambda+\rho,\epsilon_{1}-\delta_{1})=0\Longleftrightarrow x=-\tfrac{\lambda}{2}-b_{1}-p+1,
\end{equation}
hence $x_{\min}^{R}=-\tfrac{\lambda}{2}-b_{1}-p+1$. Likewise, $x_{\max}^{L}$ is determined by $(\Lambda+\rho,-\epsilon_{m}+\delta_{n})=0$, that is,
\begin{equation}
(\Lambda+\rho,-\epsilon_{m}+\delta_{n})=0\Longleftrightarrow x=\tfrac{\lambda}{2}+q-1,
\end{equation}
hence $x_{\max}^{L}=\tfrac{\lambda}{2}+q-1$. This yields an interval on which unitarity is immediate.

\begin{lemma} \label{lemm::unitarity_ifd}
If $(\Lambda+\rho, \epsilon_{1}-\delta_{1})<0$ and $(\Lambda+\rho, -\epsilon_{m}+\delta_{n})<0$, then $L(\Lambda)$ is unitarizable.
\end{lemma}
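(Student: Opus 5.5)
The plan is to reduce the statement to the last assertion of Proposition~\ref{prop::continuous_parameter}, case~b): it suffices to check that $(\Lambda+\rho,\gamma)<0$ for every $\gamma\in\Delta_{\bar1}^{+}=A\sqcup B$. Its other hypothesis, that $\Lambda$ is the highest weight of a unitarizable highest weight $\even$-module, holds by construction. Indeed, $\Lambda$ lies in the family~\eqref{eq::form_Lambda_x}, where $\lambda$ satisfies~\eqref{thisset} and the $a_i,b_k$ obey the dominance conditions~\eqref{inequalities}.

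For the sign condition I would use Lemma~\ref{lemm::min_max_ifd}. On $A$, the values $(\Lambda+\rho,\epsilon_i-\delta_k)$ strictly decrease as $i$ or $k$ increases, so their maximum is $(\Lambda+\rho,\epsilon_1-\delta_1)$. On $B$, the maximum is $(\Lambda+\rho,-\epsilon_m+\delta_n)$. The monotonicity itself I would read off directly from the explicit formulas~\eqref{eq::A} and~\eqref{eq::B}, as follows.

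On $A$:
\begin{itemize}
\item Increasing $k$ by one changes~\eqref{eq::A} by $b_{k+1}-b_k-1\le -1$, since $b_k$ is non-increasing.
\item Increasing $i$ by one (within $1\le i\le p$) changes~\eqref{eq::A} by $a_{i+1}-a_i-1\le -1$, since $0=a_1\ge a_2\ge\cdots\ge a_p$.
\end{itemize}

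On $B$:
\begin{itemize}
\item Decreasing $k$ changes~\eqref{eq::B} by $b_{k}-b_{k+1}\cdot(-1)$ terms, which is at most $-1$ in the direction of decreasing $k$.
\item Decreasing $j$ from $m$ toward $p+1$ changes~\eqref{eq::B} by at most $-1$ per step, since $a_{p+1}\ge\cdots\ge a_m=0$.
\end{itemize}

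Given the two hypotheses $(\Lambda+\rho,\epsilon_1-\delta_1)<0$ and $(\Lambda+\rho,-\epsilon_m+\delta_n)<0$, every odd positive root therefore pairs negatively with $\Lambda+\rho$.

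Proposition~\ref{prop::continuous_parameter} then applies to every $\even$-composition factor $L_0(\Lambda-\gamma)$, since any decomposition $\gamma=\gamma_1+\cdots+\gamma_k$ into distinct odd positive roots has all $(\Lambda+\rho,\gamma_i)<0$. It yields the strict Dirac inequality, and Theorem~\ref{thm::unitarity_via_Dirac_inequality} gives unitarizability of $L(\Lambda)$.

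The only point needing care is the monotonicity bookkeeping in~\eqref{eq::B}, with its sign conventions. Everything else is a direct citation of Proposition~\ref{prop::continuous_parameter}, whose proof already handles the quadratic correction terms $(\gamma_i,\gamma_j)$.
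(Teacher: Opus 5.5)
Your proposal is correct and follows the paper's proof: by Lemma~\ref{lemm::min_max_ifd}, the two hypotheses are conditions on the maxima of $(\Lambda+\rho,\cdot)$ over $A$ and over $B$, so $(\Lambda+\rho,\gamma)<0$ for all $\gamma\in\Delta_{\bar1}^{+}$, and the final assertion of Proposition~\ref{prop::continuous_parameter}~b) then gives unitarizability. Checking the monotonicity directly from the formulas is a harmless addition, but tidy the garbled $B$-bullet: passing from $k+1$ to $k$ changes~\eqref{eq::B} by $-(b_k-b_{k+1})-1\le -1$.
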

\begin{proof}
 If the conditions hold, one has $(\Lambda+\rho, \gamma)<0$ for all $\gamma \in \Delta_{\bar{1}}^{+}$ by Lemma~\ref{lemm::min_max_ifd}. The statement follows by Proposition~\ref{prop::continuous_parameter}.
\end{proof}

If $(\Lambda+\rho,\epsilon_{1}-\delta_{1})<0$ and $(\Lambda+\rho,-\epsilon_{m}+\delta_{n})>0$, then
\begin{equation}
x\in(x_{\max}^{L},x_{\min}^{R})=\bigl(\tfrac{\lambda}{2}+q-1,-\tfrac{\lambda}{2}-b_{1}-p+1\bigr).
\end{equation}
If these inequalities are not satisfied, this interval may be empty. For this reason, we formulate the result in terms of Dirac inequalities rather than the interval itself. 

Recall that $\Lambda$ satisfies the unitarity conditions; see Lemma~\ref{lemm::unitarity_conditions}. In particular, in standard coordinates,
\begin{equation}
\lambda^{p+1}\ge\cdots\ge\lambda^{m}\ge-\mu^{n}\ge\cdots\ge-\mu^{1}\ge\lambda^{1}\ge\cdots\ge\lambda^{p}.
\end{equation}
Together with~\eqref{eq::A} and~\eqref{eq::B}, this yields the definitions
\begin{equation}
x_{\min}^{L}\coloneqq \tfrac{\lambda}{2}+q-j_{0}-1,\qquad x_{\max}^{R}\coloneqq -\tfrac{\lambda}{2}-b_{1}-p+i_{0},
\end{equation}
equivalently $(\Lambda+\rho,-\epsilon_{m-j_{0}}+\delta_{n})=0$ and $(\Lambda+\rho,\epsilon_{i_{0}}-\delta_{1})=0$. Moreover, $x_{\min}^{L}\le x_{\max}^{L}$ and $x_{\min}^{R}\le x_{\max}^{R}$. These are the weakest bounds. 

\begin{lemma} \label{lemm::non_unitarity_ifd}
 If one of the following two equivalent conditions holds, then $L(\Lambda)$ is not unitarizable:
 \begin{enumerate}
 \item[a)] $(\Lambda+\rho, \epsilon_{i_{0}}-\delta_{1})>0$ or $(\Lambda+\rho, -\epsilon_{m-j_{0}}+\delta_{n})>0$.
 \item[b)] $x>x_{\max}^{R}$ or $x<x_{\min}^{L}$.
 \end{enumerate}
\end{lemma}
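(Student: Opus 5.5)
The plan mirrors the finite-dimensional lemma giving $x_{\min}$. First I would check the equivalence of a) and b). By \eqref{eq::A}, $(\Lambda+\rho,\epsilon_{i_{0}}-\delta_{1})$ is $x$ plus a constant. Since $a_{i_{0}}=0$, this value vanishes exactly at $x=x_{\max}^{R}$, and it is positive iff $x>x_{\max}^{R}$. Likewise, by \eqref{eq::B}, $(\Lambda+\rho,-\epsilon_{m-j_{0}}+\delta_{n})$ is $-x$ plus a constant. Using $a_{m-j_{0}}=0$ and $b_{n}=0$, it vanishes at $x=x_{\min}^{L}$ and is positive iff $x<x_{\min}^{L}$.

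I treat the case $(\Lambda+\rho,\eta)>0$ with $\eta\coloneqq\epsilon_{i_{0}}-\delta_{1}\in A$; the case $\eta'\coloneqq-\epsilon_{m-j_{0}}+\delta_{n}\in B$ is symmetric. By Lemma~\ref{lemm::Dirac_basic_const}, if $L_{0}(\Lambda-\eta)$ occurs as a $\even$-composition factor of $L(\Lambda)$, then $(\mu+2\rho,\mu)<(\Lambda+2\rho,\Lambda)$ for $\mu=\Lambda-\eta$. This violates the required inequality in Theorem~\ref{thm::unitarity_via_Dirac_inequality}. So it suffices to show that the weight space $M(\Lambda)^{\Lambda-\eta}$ is not contained in the radical of the Shapovalov form. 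By Lemma~\ref{lemm:Shapovalov}, the image of this weight space in $L(\Lambda)$ is then nonzero. Since $\Lambda-\eta$ is maximal among the weights of $L(\Lambda)$ below $\Lambda$ that are not reached from $\Lambda$ by even lowering operators, a nonzero image yields the factor $L_{0}(\Lambda-\eta)$ in the $\even$-filtration.

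To get non-degeneracy I compute the Kac--Shapovalov determinant of Theorem~\ref{thm::Kac_Shapovalov} on $M(\Lambda)^{\Lambda-\eta}$.
\begin{itemize}
\item Even factor $D_{1}$: $\eta-r\gamma$ is a sum of positive roots only for $r=1$ and $\gamma=\epsilon_{i}-\epsilon_{i_{0}}$ with $1\le i<i_{0}$. Then $\eta-\gamma=\epsilon_{i}-\delta_{1}\in A$. The corresponding factors are $(\Lambda+\rho,\epsilon_{i}-\epsilon_{i_{0}})-1=i_{0}-i-1$, using $a_{i}=a_{i_{0}}=0$. These vanish only for $i=i_{0}-1$, which is the main obstacle. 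I would first check the sign convention for $\rho$ on this compact root. If the factor really vanishes, I would bypass it: use the vector $e_{-\eta}v_{\Lambda}$ directly, as in the examples.
\item Odd factor $D_{2}$: the relevant $\gamma$ are $\epsilon_{i}-\delta_{1}$ with $i\le i_{0}$. The factors are $(\Lambda+\rho,\epsilon_{i}-\delta_{1})=(\Lambda+\rho,\epsilon_{i_{0}}-\delta_{1})+i_{0}-i>0$, so none vanish.
\end{itemize}

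The most economical route is therefore the norm computation. Set $v\coloneqq e_{-\epsilon_{1}+\delta_{1}}\cdots e_{-\epsilon_{i_{0}}+\delta_{1}}v_{\Lambda}$, following the chain used in the proof of Lemma~\ref{lemm::unitarity_conditions}. Its norm factors as
\[
\langle v,v\rangle=\prod_{i\le i_{0}}(\Lambda+\rho,\epsilon_{i}-\delta_{1})\,\langle v_{\Lambda},v_{\Lambda}\rangle,
\]
with the sign twist $\omega(e_{\alpha})=-e_{-\alpha}$ for $\alpha\in A$. Every factor is positive, so the form is nonzero on a vector whose weight lies below $\Lambda-\eta$ along the $A$-chain. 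This gives a constituent violating the Dirac inequality. Alternatively, it directly contradicts positivity as in condition (iv) of Lemma~\ref{lemm::unitarity_conditions}, since sign requirements for $\omega_{(-,+)}$ force these norms to be negative.
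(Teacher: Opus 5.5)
Your equivalence of a) and b), and the reduction via Lemma~\ref{lemm::Dirac_basic_const} to showing that $L_0(\Lambda-\eta)$ occurs, are fine. The gap is in the determinant computation, where you have the root order reversed.

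For $\eta=\epsilon_{i_0}-\delta_1$, the weight $\eta-\gamma$ is a sum of positive roots for $\gamma=\epsilon_{i_0}-\epsilon_{i'}$ with $i_0<i'\le p$, since then $\eta-\gamma=\epsilon_{i'}-\delta_1\in A$. It is not a sum of positive roots for $\gamma=\epsilon_i-\epsilon_{i_0}$ with $i<i_0$, which gives $2\epsilon_{i_0}-\epsilon_i-\delta_1$. The correct factors are $(\Lambda+\rho,\epsilon_{i_0}-\epsilon_{i'})-1=-a_{i'}+(i'-i_0)-1\ge 1$, so your ``main obstacle'' is an artifact of the reversal.

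In the same way, the odd roots entering $D_2$ are $\epsilon_{i'}-\delta_1$ with $i_0\le i'\le p$, and they lie \emph{below} $\eta$. For $i'>i_0$ one has $(\Lambda+\rho,\epsilon_{i'}-\delta_1)=(\Lambda+\rho,\eta)+a_{i'}-(i'-i_0)<(\Lambda+\rho,\eta)$, and this can vanish. For example, $p=3$, $i_0=1$, $a_2=-1$, $\lambda^1+\mu^1=0$ gives $(\Lambda+\rho,\eta)=2$ and $(\Lambda+\rho,\epsilon_2-\delta_1)=0$. So monotonicity proves nothing here. The missing input is unitarity condition b)(iii) of Lemma~\ref{lemm::unitarity_conditions}: a zero at $i'>i_0$ would force $(\Lambda,\epsilon_1-\epsilon_{i'})=-a_{i'}=0$, which is impossible. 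This is exactly how the paper removes the zeros in the case $\eta=-\epsilon_{m-j_0}+\delta_n$ that it writes out, using b)(ii).

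The same reversal makes your maximality claim false. The weights $\Lambda-(\epsilon_{i'}-\delta_1)$ with $i_0<i'\le p$ lie above $\Lambda-\eta$, and $\Lambda-\eta$ is a weight of $L_0(\Lambda-(\epsilon_{i'}-\delta_1))$. So a nonzero image of the weight space does not by itself produce $L_0(\Lambda-\eta)$; you need the whole weight space to survive, plus a dimension count. Note also that $D_1$ contains factors for $\gamma=\epsilon_a-\epsilon_b$ with $i_0<a<b\le p$, because $\eta-\gamma=(\epsilon_{i_0}-\epsilon_a)+(\epsilon_b-\delta_1)$. These equal $a_a-a_b+b-a-1$ and vanish when $b=a+1$ and $a_a=a_{a+1}$, so a literal ``the full determinant is nonzero'' argument needs refining there.

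Your fallback norm computation does not repair this, for three reasons.
\begin{enumerate}
\item[(1)] The vector $v$ has weight $\Lambda-\sum_{i\le i_0}(\epsilon_i-\delta_1)\neq\Lambda-\eta$.
\item[(2)] Its norm is not the product you state. In your ordering the recursion is clean, but the step applying $e_{-\epsilon_k+\delta_1}$ contributes $(p-i_0)-(\Lambda+\rho,\epsilon_k-\delta_1)$, not $\pm(\Lambda+\rho,\epsilon_k-\delta_1)$. The reason is that $(\rho,\epsilon_k-\delta_1)=p-k$, while the chain only supplies the shift $i_0-k$. For $i_0=1$ this reads $\|e_{-\epsilon_1+\delta_1}v_\Lambda\|^2=-(\lambda^1+\mu^1)\|v_\Lambda\|^2\ge 0$ by condition (i). Yet the hypothesis holds whenever $0\le-(\lambda^1+\mu^1)<p-1$, so no negative norm appears.
\item[(3)] A vector of nonzero, even positive, norm neither contradicts unitarity nor identifies an $\even$-composition factor on which the Dirac inequality fails.
\end{enumerate}

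The chain that works starts at the simple odd root $\epsilon_p-\delta_1$. Set $w_k\coloneqq e_{-\epsilon_k+\delta_1}\cdots e_{-\epsilon_p+\delta_1}v_\Lambda$. Then $\|w_k\|^2=\prod_{k'=k}^{p}\bigl(-(\Lambda+\rho,\epsilon_{k'}-\delta_1)\bigr)\|v_\Lambda\|^2$. The factor at $k'=i_0$ is negative, and b)(iii) excludes zeros for $k'>i_0$. Hence the first negative factor met as $k$ decreases from $p$ gives a vector of negative norm.

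Even more directly: under the hypothesis, Lemma~\ref{lemm::min_max_ifd} gives $(\Lambda+\rho,\epsilon_i-\delta_1)=(\Lambda+\rho,\epsilon_{i_0}-\delta_1)+(i_0-i)>0$ for all $i\le i_0$. This contradicts b)(iv) of Lemma~\ref{lemm::unitarity_conditions} outright; in the $B$-case one uses b)(v) instead. Your last sentence points toward this, but you derive it from the incorrect product formula rather than citing the condition.
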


\begin{proof}
 We show that if $(\Lambda+\rho, \epsilon_{i_{0}}-\delta_{1})>0$ or $(\Lambda+\rho, -\epsilon_{m-j_{0}}+\delta_{n})>0$, then $L(\Lambda)$ contains the $\even$-constituent $L_{0}(\Lambda-\epsilon_{i_{0}}+\delta_{1})$ or $L_{0}(\Lambda+\epsilon_{m-j_{0}}-\delta_{n})$, respectively. Since the Dirac inequality fails on these $\even$-constituents, $L(\Lambda)$ is not unitarizable.

We show that $L_{0}(\Lambda+\epsilon_{m-j_{0}}-\delta_{n})$ is a $\even$-constituent if $(\Lambda+\rho, -\epsilon_{m-j_{0}}+\delta_{n})>0$. 
The remaining case is analogous. It suffices to prove that the weight space $M^{\bb}(\Lambda)^{\Lambda-\eta}$ for $\eta \coloneqq -\epsilon_{m-j_{0}}+\delta_{n}$ does not lie in the radical of the Shapovalov form of $M^{\bb}(\Lambda)$, equivalently, the Kac–Shapovalov determinant (see Theorem~\ref{thm::Kac_Shapovalov}) does not vanish. Then $L_{0}(\Lambda-\eta)$ does not belong to the radical and is a non-trivial $\even$-constituent of $L(\Lambda)$. 

Consider the factor from $\Delta_{\bar{0}}^{+}$,
\[
D_{1}=\prod_{\gamma\in\Delta_{\bar{0}}^{+}}\prod_{r=1}^{\infty}(h_{\gamma}+(\rho,\gamma)-r)^{P(\eta-r\gamma)}.
\]
Here $\eta-r\gamma$ is a sum of positive roots only if $\gamma=\epsilon_{i}-\epsilon_{m-j_{0}}$ and $r=1$ for $p+1\le i<m-j_{0}$. Then
\[
h_{\gamma}(\Lambda)+(\rho,\gamma)-1=(\Lambda+\rho,\epsilon_{i}-\epsilon_{m-j_{0}})-1=a_{i}+(m-j_{0})-i-1>0,
\]
since $a_{i}\ge1$ and $m-j_{0}-i\ge1$. This shows $D_{1}\neq 0$.

For the factor from $\Delta_{\bar{1}}^{+}$,
\[
D_{2}=\prod_{\gamma\in\Delta_{\bar{1}}^{+}}(h_{\gamma}+(\rho,\gamma))^{P_{\gamma}(\eta-\gamma)},
\]
one has $\eta-\gamma$ a sum of positive roots if and only if $\gamma=-\epsilon_{i}+\delta_{n}$ for $p+1\le i\le m-j_{0}$. The possible zeros of the Kac–Shapovalov determinant are
\[
(\Lambda+\rho,-\epsilon_{i}+\delta_{n})=0 \qquad p+1\le i\le m-j_{0},
\]
By the unitarity conditions none of the values $(\Lambda+\rho,-\epsilon_{i}+\delta_{n})$ can vanish for $p+1\le i<m-j_{0}$, since $j_{0}$ is the largest integer with $a_{m-j_{0}}=0$, implying $(\Lambda,\epsilon_{i}-\epsilon_{n})\ne0$. For $i=m-j_{0}$ we have $(\Lambda+\rho, -\epsilon_{m-j_{0}}+\delta_{n})>0$ by assumption. Hence $D_{2}\ne0$, completing the proof.
\end{proof}

Combining Lemma~\ref{lemm::unitarity_ifd} and Lemma~\ref{lemm::non_unitarity_ifd}, it remains to
analyze the ranges
\begin{equation}
\begin{aligned}
x\in I^{L}\coloneqq [x_{\min}^{L},x_{\max}^{L}]
&=\Bigl[\tfrac{\lambda}{2}+q-j_{0}-1,\ \tfrac{\lambda}{2}+q-1\Bigr],\\
x\in I^{R}\coloneqq [x_{\min}^{R},x_{\max}^{R}]
&=\Bigl[-\tfrac{\lambda}{2}-b_{1}-p+1,\ -\tfrac{\lambda}{2}-b_{1}-p+i_{0}\Bigr].
\end{aligned}
\end{equation}
By Lemma~\ref{lemm::non_unitarity_ifd}, unitarity can occur only if $x_{\min}^{L}\leq x_{\max}^{R}$. We call the sets of integers
\begin{equation}
I^{L}\cap\mathbb{Z}
=\{x_{\min}^{L},x_{\min}^{L}+1,\ldots,x_{\max}^{L}\},
\qquad
I^{R}\cap\mathbb{Z}
=\{x_{\min}^{R},x_{\min}^{R}+1,\ldots,x_{\max}^{R}\},
\end{equation}
the \emph{step~$1$ partition points} of $I^{L}$ and $I^{R}$. They are precisely the following
atypicality points:
\begin{equation}
\begin{aligned}
I^{L}\cap\mathbb{Z}
&=\left\{\,x\in I^{L} \big\vert (\Lambda+\rho,\epsilon_{i}-\delta_{1})=0\text{ for some }1\leq i\leq i_{0}\,\right\},\\
I^{R}\cap\mathbb{Z}
&=\left\{\,x\in I^{R}\ \big\vert\ (\Lambda+\rho,-\epsilon_{m-j}+\delta_{n})=0\text{ for some }1 \leq j\leq j_{0}\,\right\}.
\end{aligned}
\end{equation}

We will see that, whenever $x\in I^{L}\cup I^{R}$, the module $L(\Lambda)$ is unitarizable if and only if
$x\in (I^{L}\cup I^{R})\cap\mathbb{Z}$. If $x\in I^{L}\cup I^{R}$ but
$x\notin (I^{L}\cup I^{R})\cap\mathbb{Z}$, we obtain the following lemma.

\begin{lemma}
 If one of the following two equivalent conditions holds, then $L(\Lambda)$ is not unitarizable.
 \begin{enumerate}
 \item[a)] $(\Lambda+\rho, \epsilon_{i+1}-\delta_{1})<0<(\Lambda+\rho,\epsilon_{i}-\delta_{1})$ or $(\Lambda+\rho, -\epsilon_{m-j+1}+\delta_{n})<0<(\Lambda+\rho, -\epsilon_{m-j}+\delta_{n})$ for $1 \leq i < i_{0}$ or $1\leq j < j_{0}$.
 \item[b)] The point $x\in I^{L}\cup I^{R}$ is not a step~$1$ partition point of $I^{L}$ or $I^{R}$, \emph{i.e.},
$x\notin I^{L}\cap\mathbb{Z}$ and $x\notin I^{R}\cap\mathbb{Z}$.
 \end{enumerate}
\end{lemma}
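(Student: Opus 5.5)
The plan mirrors the finite-dimensional case: condition a) will be shown to contradict the unitarity conditions (iv) or (v) of Lemma~\ref{lemm::unitarity_conditions}. First, the equivalence of a) and b) comes from the explicit formulas \eqref{eq::A} and \eqref{eq::B}. For $1\le i\le i_{0}$ one has $a_{i}=0$, so
\[
(\Lambda+\rho,\epsilon_{i}-\delta_{1})=x+\tfrac{\lambda}{2}+b_{1}+p-i.
\]
Consecutive values therefore differ by exactly $1$, and these values vanish precisely at the step~$1$ partition points of $I^{R}$. Similarly, for $0\le j\le j_{0}$,
\[
(\Lambda+\rho,-\epsilon_{m-j}+\delta_{n})=-x+\tfrac{\lambda}{2}+q-j-1,
\]
again in unit steps, with zeros exactly at the integral points of $I^{L}$. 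Consequently, $x$ lies in $I^{R}$ (resp.\ $I^{L}$) but is not integral if and only if two consecutive values in the relevant column or row straddle $0$. This is condition a). I would record this as a short computation.

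Next, take $x$ as in a), say $(\Lambda+\rho,\epsilon_{i+1}-\delta_{1})<0<(\Lambda+\rho,\epsilon_{i}-\delta_{1})$ for some $1\le i<i_{0}$. Since the values for $1\le i'\le i_{0}$ form an arithmetic progression of step $1$ and $x$ is non-integral relative to it, none of $(\Lambda+\rho,\epsilon_{i'}-\delta_{1})$ vanishes. Unitarity condition (iv) then forces all of them to be negative. This contradicts $(\Lambda+\rho,\epsilon_{i}-\delta_{1})>0$, so $\Lambda$ violates the unitarity conditions. By Lemma~\ref{lemm::unitarity_conditions}, $L(\Lambda)$ cannot be unitarizable. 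The $B$-case is identical and uses (v) with the $j$-row.

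The only real point is making sure that (iv) and (v), as proved in Lemma~\ref{lemm::unitarity_conditions}, cover the range $i\le i_{0}$ and $j\le j_{0}$. If one wants to check this rather than cite it, one would take norms of
\[
e_{-(\epsilon_{i_{0}}-\delta_{1})}\cdots e_{-(\epsilon_{1}-\delta_{1})}v_{\Lambda}.
\]
Using $\omega_{(-,+)}$, these norms factor as products of the values $-(\Lambda+\rho,\epsilon_{i'}-\delta_{1})$, up to a fixed sign convention. Positivity of these products forces all factors to have the same sign, and the first one, $i'=1$, already yields the sign constraint. This sign bookkeeping, together with the $B$-analogue, is the main obstacle I would verify, in the same way as part~a) of Lemma~\ref{lemm::unitarity_conditions}.
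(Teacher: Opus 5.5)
Your proposal is correct and is essentially the paper's proof. Unitarizability would force, via (iv) and (v) of Lemma~\ref{lemm::unitarity_conditions}, all of the nonvanishing unit-step values $(\Lambda+\rho,\epsilon_{i}-\delta_{1})$, $i\le i_{0}$ (resp.\ $(\Lambda+\rho,-\epsilon_{m-j}+\delta_{n})$, $j\le j_{0}$) to be negative, which contradicts the positive one; your explicit a)$\Leftrightarrow$b) bookkeeping is a useful addition, correctly attaching the first family to $I^{R}$ and the second to $I^{L}$ (the paper's display swaps them) and reading the $B$-part of a) as a straddle of $0$.

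One caution concerns your optional check of (iv). In a unitarizable module the successive norm ratios for $e_{-(\epsilon_{i_{0}}-\delta_{1})}\cdots e_{-(\epsilon_{1}-\delta_{1})}v_{\Lambda}$ are $-(\lambda^{1}+\mu^{1}+s-1)$, $s=1,\ldots,i_{0}$. These coincide with the values $-(\Lambda+\rho,\epsilon_{i}-\delta_{1})$ only if $i_{0}=p$, because $(\rho,\epsilon_{i}-\delta_{1})=p-i$, so for $i_{0}<p$ that computation yields only $(\Lambda+\rho,\epsilon_{1}-\delta_{1})<p-i_{0}$. Since you, like the paper, simply cite the lemma, this does not affect your argument.
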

\begin{proof}
Assume that $L(\Lambda)$ is unitarizable. By Lemma~\ref{lemm::unitarity_conditions}, if
$(\Lambda+\rho,\epsilon_i-\delta_{1})\neq 0$ for some $1\le i\le i_{0}$, or
$(\Lambda+\rho,-\epsilon_{m-j}+\delta_{n})\neq 0$ for some $0\le j\le j_{0}$, then
\[
(\Lambda+\rho,\epsilon_i-\delta_{1})<0 \quad (i=1,\ldots,i_{0}),\qquad
(\Lambda+\rho,-\epsilon_{m-j}+\delta_{n})<0 \quad (j=0,\ldots,j_{0}).
\]
If $x$ is not a step~$1$ partition point, then
$(\Lambda+\rho,\epsilon_i-\delta_{1})\neq 0$ for all $i=1,\ldots,i_{0}$ and
$(\Lambda+\rho,-\epsilon_{m-j}+\delta_{n})\neq 0$ for all $j=1,\ldots,j_{0}$; moreover, by the
definition of $I^{L}$ and $I^{R}$, one has
\[
(\Lambda+\rho,\epsilon_i-\delta_{1})>0 \ \text{for some } i
\quad\text{or}\quad
(\Lambda+\rho,-\epsilon_{m-j}+\delta_{n})>0 \ \text{for some } j.
\]
This contradicts Lemma~\ref{lemm::unitarity_conditions}. Hence $L(\Lambda)$ is not unitarizable.
\end{proof}

\begin{lemma}
 If one of the following conditions holds, then $L(\Lambda)$ is unitarizable:
\begin{enumerate}
 \item[a)] $(\Lambda+\rho, -\epsilon_{m}+\delta_{n}) < 0$ and $(\Lambda+\rho, \epsilon_{i}-\delta_{1})=0$ for $1\leq i \leq i_{0}$.
 \item[b)] $(\Lambda+\rho, -\epsilon_{m-j}+\delta_{n})=0$ and $(\Lambda+\rho, \epsilon_{i}-\delta_{1})=0$ for $0 \le j \leq j_{0}$ and $1\leq i \leq i_{0}$.
 \item[c)] $(\Lambda+\rho, -\epsilon_{m-j}+\delta_{n})=0$ and $(\Lambda+\rho, \epsilon_{1}-\delta_{1})<0$ for $0 \leq j \leq j_{0}$.
\end{enumerate}
\end{lemma}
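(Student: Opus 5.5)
The plan is to imitate the proof of Lemma~\ref{lemm::discrete_points_are_unitarizable_fd}, adapted to the two families $A$ and $B$. By Proposition~\ref{prop::continuous_parameter}~b), it suffices to show that every $\even$-constituent $L_{0}(\Lambda-\gamma)$ of $L(\Lambda)$ admits a decomposition $\gamma=\gamma_{1}+\cdots+\gamma_{r}$ into pairwise distinct positive odd roots with $(\Lambda+\rho,\gamma_{s})<0$ for all $s$. Moreover, the proof of that proposition separates the $A$-part and the $B$-part of $\gamma$ (the cross terms $(\alpha_{i},\beta_{j})\geq 0$ only help). So I will treat the two arrays of Lemma~\ref{lemm::min_max_ifd} independently. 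For each array, either every entry is already negative, or one of the vanishing conditions holds and certain roots can be discarded. Case a) uses the first alternative for $B$ and the second for $A$; case c) is the mirror image; case b) uses the second alternative for both.

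\emph{Array $A$ when $(\Lambda+\rho,\epsilon_{i}-\delta_{1})=0$ for some $1\le i\le i_{0}$.} By the unitarity condition (iii), $(\Lambda,\epsilon_{1}-\epsilon_{i})=0$, so $a_{1}=\cdots=a_{i}=0$. From \eqref{eq::A}, $(\Lambda+\rho,\epsilon_{i'}-\delta_{1})=i-i'\geq 0$ for $i'\le i$, and all other entries of column $1$ are negative. Lemma~\ref{lemm::non_existece_roots}~b)(i) lets us drop every $\epsilon_{i'}-\delta_{1}$ with $i'\le i$ from the decomposition. For the remaining nonnegative entries $\epsilon_{i'}-\delta_{k}$ with $k>1$, I argue row by row as in the finite-dimensional case. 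Along row $i'$ the values decrease by at least $1$ per step and are integers, since $b_{k}$ is integral and $x$ is fixed by an integral condition. Hence a nonnegative entry in row $i'$ forces a zero $(\Lambda+\rho,\epsilon_{i'}-\delta_{s})=0$ at some $s\geq k$. The version of Lemma~\ref{lemm::non_existece_roots} with $\delta_{s}$ in place of $\delta_{1}$ then removes $\epsilon_{i''}-\delta_{s}$ for $i''\le i'$.

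I expect the main obstacle to be this removal step. Lemma~\ref{lemm::non_existece_roots}~b)(i) only removes roots in a fixed column $\delta_{k}$ and rows $\le i$, so the removed roots are not automatically those with nonnegative value. I would first check that the nonnegative region of the $A$-array is a staircase ``upper-left'' set, in which rows $i'\le i$ are the only ones meeting it and, in each row, the zero sits at the right boundary. Then each nonnegative entry lies above-left of a zero and gets eliminated, possibly after applying Lemma~\ref{lemm::non_existece_roots} at several zeros and composing the resulting exclusions, as was done in Lemma~\ref{lemm::discrete_points_are_unitarizable_fd}. The integrality of the row differences $(\Lambda+\rho,\delta_{k}-\delta_{k'})=b_{k}-b_{k'}+k'-k$ is what guarantees that the zeros exist.

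\emph{Array $B$.} The treatment is symmetric. Under $(\Lambda+\rho,-\epsilon_{m}+\delta_{n})<0$, every $\beta\in B$ gives a negative value by Lemma~\ref{lemm::min_max_ifd}~b). Under $(\Lambda+\rho,-\epsilon_{m-j}+\delta_{n})=0$, condition (ii) gives $a_{m-j}=\cdots=a_{m}=0$. Lemma~\ref{lemm::non_existece_roots}~b)(ii) then removes the roots $-\epsilon_{m-j'}+\delta_{n}$ for $j'\le j$, and the same staircase argument handles the remaining columns. Combining the two halves, every constituent admits a decomposition with all values negative. So the Dirac inequality is strict on every $\even$-constituent, and Theorem~\ref{thm::unitarity_via_Dirac_inequality} yields unitarity in each of the cases a), b), c).
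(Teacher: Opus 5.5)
\textbf{There is a genuine gap in your elimination step for the $A$-array.} Your reduction to Proposition~\ref{prop::continuous_parameter}~b), the separate treatment of the $A$- and $B$-arrays, and the removal of column~$1$ all match the paper.

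The problem is that you search for a zero \emph{horizontally}, along row $i'$, and claim that integrality plus ``decrease by at least $1$'' forces one. It does not. The horizontal step is
\[
(\Lambda+\rho,\epsilon_{i'}-\delta_{k})-(\Lambda+\rho,\epsilon_{i'}-\delta_{k+1})=b_{k}-b_{k+1}+1,
\]
which exceeds $1$ whenever $b_{k}>b_{k+1}$, so $0$ can be skipped. Explicitly, for $i'\le i_{0}$ one has $(\Lambda+\rho,\epsilon_{i'}-\delta_{k})=(i-i')-(b_{1}-b_{k})-(k-1)$. For example, with $n=3$, $b_{1}=b_{2}=1$, $b_{3}=0$ and $i\ge 3$, row $i-2$ reads $2,1,-1$. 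So the nonnegative entry at $(i-2,2)$ has no zero to its right.

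More importantly, even when a zero $(\Lambda+\rho,\epsilon_{i'}-\delta_{s})=0$ with $s>k$ exists, Lemma~\ref{lemm::non_existece_roots}~b)(i) only removes $\epsilon_{1}-\delta_{s},\ldots,\epsilon_{i'}-\delta_{s}$, a piece of column $s$. The positive entry $\epsilon_{i'}-\delta_{k}$ in column $k$ is untouched. Your proposed repair (``each nonnegative entry lies above-left of a zero'', with zeros at the right end of each row) does not fix this. The lemma only eliminates entries straight above a zero in the same column.

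You are transplanting the geometry of Lemma~\ref{lemm::discrete_points_are_unitarizable_fd}. There, Lemma~\ref{lemm::non_existece_roots}~a) removes along a row, and the horizontal steps past $k_{0}$ are exactly $1$ because $b_{k_{0}}=\cdots=b_{n}=0$. In the non-compact case the roles of rows and columns are swapped.

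\textbf{The missing idea is to search \emph{vertically}, in the same column.} This is what the paper does (``monotonicity in the $i$-direction (step~$1$)''). For $i'<i_{0}$ one has $(\Lambda+\rho,\epsilon_{i'}-\epsilon_{i'+1})=a_{i'}-a_{i'+1}+1=1$. Hence, by the formula above, a nonnegative entry of value $c$ at $(i',k)$ has a zero directly below it at $(i'+c,k)$. Moreover $i'+c\le i\le i_{0}$, since $c\le i-i'$.

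Applying Lemma~\ref{lemm::non_existece_roots}~b)(i) at that zero removes $\epsilon_{1}-\delta_{k},\ldots,\epsilon_{i'+c}-\delta_{k}$, in particular $\epsilon_{i'}-\delta_{k}$. So the nonnegative region is a union of column segments, each ending in a zero at its bottom, which is exactly the shape the lemma handles. No control of the horizontal steps is needed.

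The same correction is needed in your $B$-array argument. There one uses $a_{m-j_{0}}=\cdots=a_{m}=0$, so the vertical step is exactly $1$ in rows $m,\ldots,m-j_{0}$, together with Lemma~\ref{lemm::non_existece_roots}~b)(ii), which again removes only along a column $\delta_{k}$.
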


\begin{proof}
We prove~a); the arguments for~b) and~c) are analogous.

We show that for any $\even$-composition factor $L_{0}(\Lambda-\gamma)$ of $L(\Lambda)$, the element
$\gamma$ admits a decomposition into pairwise distinct positive odd roots $\gamma_{s}$ such that
$(\Lambda+\rho,\gamma_{s})<0$ for all $s$. The statement then follows from
Proposition~\ref{prop::continuous_parameter}. Indeed, in each of the cases~a)--c) the proof is
analogous to the proof of Proposition~\ref{prop::continuous_parameter}, using
Lemma~\ref{lemm::non_existece_roots}(b).

As shown in the proof of Proposition~\ref{prop::continuous_parameter}, it suffices to prove that
$(\Lambda+\rho,\alpha)<0$ whenever $\alpha\in A$ occurs as a direct summand of some $\gamma$, and
separately that $(\Lambda+\rho,\beta)<0$ whenever $\beta\in B$ occurs as a direct summand of some
$\gamma$. By assumption $(\Lambda+\rho,-\epsilon_{m}+\delta_{n})<0$, hence $(\Lambda+\rho,\beta)<0$
for all $\beta\in B$ by Lemma~\ref{lemm::min_max_ifd}; it remains to treat the case $\alpha\in A$.

Consider the diagram of Lemma~\ref{lemm::min_max_ifd}. By monotonicity, non-negative values can occur
only for roots of the form $\epsilon_{i'}-\delta_{k}$ with $i'\geq i$ and $k=1,\ldots,n$. For such
roots one has
\begin{equation}
(\Lambda+\rho,\epsilon_{i'}-\delta_{k})=-(b_{1}-b_{k})-(k-1)+(i'-i).
\end{equation}
Hence the diagram takes the form
\[
\begin{array}{ccccccc}
i-1 & > & -(b_1-b_2)+i-2 & > & \cdots & > & -(b_1-b_n)-n+i-1 \\
\vee && \vee &&&& \vee \\
\vdots && \vdots &&&& \vdots \\
\vee && \vee &&&& \vee \\
1 & > & -(b_1-b_2) & > & \cdots & > & -(b_1-b_n)-n+2 \\
\vee && \vee &&&& \vee \\
0 & > & -(b_1-b_2)-1 & > & \cdots & > & -(b_1-b_n)-n+1 \\
\vee && \vee &&&& \vee \\
(\Lambda+\rho,\epsilon_{i}-\delta_1) & > &
(\Lambda+\rho,\epsilon_{i}-\delta_2) & > & \cdots & > &
(\Lambda+\rho,\epsilon_{i}-\delta_n) \\
\vee && \vee &&&& \vee \\
\vdots && \vdots &&&& \vdots \\
\vee && \vee &&&& \vee \\
(\Lambda+\rho,\epsilon_{p}-\delta_1) & > &
(\Lambda+\rho,\epsilon_{p}-\delta_2) & > & \cdots & > &
(\Lambda+\rho,\epsilon_{p}-\delta_n)
\end{array}
\]
Since $(\Lambda+\rho,\epsilon_{i}-\delta_{1})=0$, Lemma~\ref{lemm::non_existece_roots} implies that
every $\even$-composition factor of $L(\Lambda)$ admits a decomposition that does not involve any
of the roots $\epsilon_{i}-\delta_{1},\ldots,\epsilon_{1}-\delta_{1}$. Now assume that
$(\Lambda+\rho,\epsilon_{i'}-\delta_{k})>0$ for some $i'>i$. Since
$(\Lambda+\rho,\epsilon_{i}-\delta_{k})<0$, monotonicity in the $i$-direction (step~$1$) implies
that there exists $i''$ with $i\leq i''\leq i'$ such that $(\Lambda+\rho,\epsilon_{i''}-\delta_{k})=0$.
But then Lemma~\ref{lemm::non_existece_roots} shows that none of the roots
$\epsilon_{i''}-\delta_{1},\epsilon_{i''-1}-\delta_{1},\ldots,\epsilon_{1}-\delta_{1}$ can appear.

Applying this argument for each $i'>i$, we conclude that every $\even$-composition factor admits a
decomposition $\gamma=\gamma_{1}+\cdots+\gamma_{r}$ into pairwise distinct positive odd roots with
$(\Lambda+\rho,\gamma_{s})<0$ for all $s$. This proves the claim.
\end{proof}

Combining the preceding lemmas, we obtain the complete complete classification of unitarizable $\mathfrak{g}$-supermodules,
formulated in Theorem~\ref{thm::full_set_ifd} below. The statement is basis-independent; the explicit parameters
are given by the discussion above.

\begin{theorem} \label{thm::full_set_ifd}
A weight $\Lambda$ is the highest weight of a unitarizable highest weight $\gg$-supermodule if and only if
\begin{enumerate}
 \item[a)] $\Lambda$ satisfies the unitarity conditions of Lemma~\ref{lemm::unitarity_conditions}, and
 \item[b)] one of the following holds:
 \begin{enumerate}
 \item[(i)] $(\Lambda+\rho, -\epsilon_{m}+\delta_{n}) < 0$ and $(\Lambda+\rho, \epsilon_{i}-\delta_{1}) = 0$ for $1 \leq i \leq i_{0}$;
 \item[(ii)] $(\Lambda+\rho, -\epsilon_{m-j}+\delta_{n}) = 0$ and $(\Lambda+\rho, \epsilon_{i}-\delta_{1}) = 0$ for $0 \leq j \leq j_{0}$ and $1 \leq i \leq i_{0}$;
 \item[(iii)] $(\Lambda+\rho, -\epsilon_{m-j}+\delta_{n}) = 0$ and $(\Lambda+\rho, \epsilon_{1}-\delta_{1}) < 0$ for $0 \leq j \leq j_{0}$.
 \item[(iv)] $(\Lambda+\rho, -\epsilon_{m}+\delta_{n})<0$ and $(\Lambda+\rho, \epsilon_{1}-\delta_{1})<0$.
 \end{enumerate}
\end{enumerate}
\end{theorem}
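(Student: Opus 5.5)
The plan is to assemble Theorem~\ref{thm::full_set_ifd} from the lemmas of this subsection, organised along the three-step analysis of the parameter $x$ from Section~\ref{subsec::the_method_ifd}. Necessity of a) is immediate from Lemma~\ref{lemm::unitarity_conditions}, so the work is to show that, under a), condition b) is equivalent to unitarizability. Fix $\Lambda_{0}$ and regard $\Lambda=\Lambda(x)$. By Lemma~\ref{lemm::min_max_ifd}, the sign pattern of the numbers $(\Lambda+\rho,\gamma)$ for $\gamma\in\Delta_{\bar 1}^{+}$ is governed by the two corner values $(\Lambda+\rho,\epsilon_{1}-\delta_{1})$ (maximal on $A$) and $(\Lambda+\rho,-\epsilon_{m}+\delta_{n})$ (maximal on $B$). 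Accordingly, I would partition the real line of $x$ into the following regions:
\begin{itemize}
\item the open interval where both corner values are negative;
\item the residual intervals $I^{L}$ and $I^{R}$;
\item the complement $x<x_{\min}^{L}$ or $x>x_{\max}^{R}$.
\end{itemize}

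For sufficiency, case (iv) is exactly Lemma~\ref{lemm::unitarity_ifd}. Cases (i), (ii) and (iii) are exactly parts a), b) and c) of the last lemma before the theorem. Those were proved via Lemma~\ref{lemm::non_existece_roots}(b), which removes all roots with non-negative Dirac value from some decomposition of every $\gamma$, and then via the estimate in the proof of Proposition~\ref{prop::continuous_parameter}. In the proof I would write out case b) (two simultaneous atypicalities) explicitly. The point is that the $A$-block and the $B$-block are handled independently, since $(\alpha,\beta)\in\{0,1\}$ makes cross terms only help. So one applies the monotonicity argument of case a) to the $A$-diagram and its mirror image to the $B$-diagram.

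For necessity, assume $L(\Lambda)$ is unitarizable and a) holds, and suppose b) fails.
\begin{enumerate}
\item If $x>x_{\max}^{R}$ or $x<x_{\min}^{L}$, Lemma~\ref{lemm::non_unitarity_ifd} gives a contradiction via the Kac--Shapovalov determinant and the failing Dirac inequality.
\item Otherwise $x$ lies in the open region or in $I^{L}\cup I^{R}$.
 \begin{itemize}
 \item In the open region, (iv) holds.
 \item If $x\in I^{L}\cup I^{R}$ is not a step~$1$ partition point, the non-integrality lemma gives non-unitarity.
 \item If $x$ is a partition point, then some $(\Lambda+\rho,\epsilon_{i}-\delta_{1})=0$ or some $(\Lambda+\rho,-\epsilon_{m-j}+\delta_{n})=0$. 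The other corner value is then either negative or itself zero at an admissible index, by unitarity conditions (iv)/(v) of Lemma~\ref{lemm::unitarity_conditions}. This lands in (i), (ii) or (iii).
 \end{itemize}
\end{enumerate}
Thus the conditions of b) exhaust all unitarizable cases.

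I expect the main obstacle to be this last bookkeeping step. There I must check that the other corner value cannot be positive when $x$ is a partition point on one side, and that the index ranges ($1\le i\le i_{0}$, $0\le j\le j_{0}$) match those in b). I would first try to settle it directly from the inequality chain in unitarity condition (i) of Lemma~\ref{lemm::unitarity_conditions} together with $x_{\min}^{L}\le x_{\max}^{R}$. The overlapping situation where $I^{L}$ and $I^{R}$ intersect would be handled by case (ii).
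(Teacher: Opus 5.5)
Your proposal is correct and follows essentially the paper's own route: the theorem is obtained there simply by combining Lemma~\ref{lemm::unitarity_ifd}, Lemma~\ref{lemm::non_unitarity_ifd}, the non-integrality lemma and the final three-case lemma, organised along the same three-step analysis of $x$ that you use. The bookkeeping step you worry about is settled exactly by unitarity conditions (iv) and (v) of Lemma~\ref{lemm::unitarity_conditions}, which in fact show directly that a) forces one of (i)--(iv). The inequality chain in condition (i) would not suffice for this, since it still allows the other corner value to be positive.
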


\begin{remark}
All highest weight supermodules are understood to have even highest weight vector. The cases with odd highest weight vector are obtained by applying the parity reversion functor $\Pi$. This gives the full set of all unitarizable highest weight $\gg$-supermodules.
\end{remark}

\subsection{The Case \texorpdfstring{$\mathfrak{psl}(n\vert n)$}{}}
If $m=n$, the special linear Lie superalgebra $\mathfrak{sl}(n\vert n)$ is not simple, as it contains a non-trivial ideal generated by the identity matrix $E_{2n}$. The quotient Lie superalgebra $\mathfrak{psl}(n\vert n)\coloneqq \mathfrak{sl}(n\vert n)/\CC E_{2n}$ is simple and is called the \emph{projective special linear Lie superalgebra}. Our classification result for $p,q\neq0$ provides a complete classification of unitarizable $\mathfrak{psl}(n\vert n)$-supermodules, after imposing the additional condition that $\Lambda=(\lambda^{1},\ldots,\lambda^{n}\vert\mu^{1},\ldots,\mu^{n})$ satisfies $\sum_{i=1}^{n}\lambda^{i}-\sum_{j=1}^{n}\mu^{j}=0$.

\thispagestyle{empty}